\newtheorem{theorem}{Theorem}[section]
\newtheorem{corollary}[theorem]{Corollary}
\newtheorem{lemma}[theorem]{Lemma}
\newtheorem{proposition}[theorem]{Proposition}
\newtheorem{definition}[theorem]{Definition}
\newtheorem{notation}[theorem]{Notation}
\newtheorem{remark}[theorem]{\bf{Remark}}
\numberwithin{equation}{section}
\newcommand{\bp}{\bar{\partial}}
\newcommand{\beq}{\begin{equation}}
	\newcommand{\eeq}{\end{equation}}
\newcommand{\beqn}{\begin{equation*}}
	\newcommand{\eeqn}{\end{equation*}}
\newcommand{\vps}{\varepsilon}
\newcommand{\C}{\mathbb{C}}
\newcommand{\R}{\mathbb{R}}
\newcommand{\N}{\mathbb{N}}
\newcommand{\supp}{\mathrm{supp}}
\newcommand{\reg}{\mathrm{reg}}
\newcommand{\torsion}{\mathrm{torsion}}
\newcommand{\ch}{\mathrm{ch}}
\newcommand{\codim}{\mathrm{codim}}
\newcommand{\orb}{\mathrm{orb}}
\newcommand{\pp}{\partial\bar{\partial}}
\newcommand{\p}{\partial}
\newcommand{\w}{\omega}
\newcommand{\im}{\sqrt{-1}}
\newcommand{\mF}{\mathcal{F}}
\newcommand{\mO}{\mathcal{O}}
\newcommand{\mT}{\mathcal{T}}
\newcommand{\mE}{\mathcal{E}}
\newcommand{\mL}{\mathcal{L}}
\newcommand{\mC}{\mathcal{C}}
\newcommand{\mQ}{\mathcal{Q}}
\newcommand{\mG}{\mathcal{G}}
\newcommand{\mS}{\mathcal{S}}
\newcommand{\image}{\operatorname{im}}
\DeclareMathOperator \Hom{Hom}
\DeclareMathOperator \Vol{Vol}
\DeclareMathOperator \End{End}
\DeclareMathOperator \Id{Id}
\DeclareMathOperator \rank{rank}
\DeclareMathOperator \tr{tr}
\DeclareMathOperator{\rc}{Ric}
\begin{document}
	
	\vspace*{0.5cm} \begin{center}\noindent {\LARGE \bf The Miyaoka-Yau inequality for minimal K\"ahler klt spaces}\\[0.7cm]
			
	Chuanjing Zhang\footnotemark[1]
	\\[0.2cm]
	School of Mathematics and Statistics\\
	Ningbo University\\ Ningbo, 315021, P.R. China\\
	E-mail: zhangchuanjing@bnu.edu.cn\\[0.5cm]
	Shiyu Zhang\footnotemark[1]
	\\[0.2cm]
	School of Mathematical Sciences\\
	University of Science and Technology of China\\ Hefei, 230026, P.R. China\\
	E-mail: shiyu123@mail.ustc.edu.cn\\[0.5cm]
	Xi Zhang\footnotemark[1]
	\\[0.2cm]
    School of Mathematics and Statistics\\
	Nanjing University of Science and Technology\\
	Nanjing, 210094, P.R.China\\
	E-mail: mathzx@njust.edu.cn\\[1cm]
	\footnotetext[1]{\notag\noindent The research was supported by the National Key R and D Program of China 2020YFA0713100. The  authors are partially supported by NSF in China No.12141104, 12371062 and 12431004.}
			
	\end{center}

	{\bf Abstract.} In this paper, we obtain the generalized Bogomolov inequality for reflexive Higgs sheaves defined on the regular locus of compact K\"ahler klt spaces. As an application, we establish the Miyaoka-Yau inequality for all minimal K\"ahler klt spaces. Apart from providing a self-contained formulation and investigation of Higgs sheaves on complex normal spaces, the analytical part of our approach is the establishment of $L^p$-approximate critical Hermitian structures for Higgs orbi-bundles on Gauduchon orbifolds. This also leads to the semistability (resp. generically nefness) of torsion-free sheaves under symmetric, exterior powers and tensor products in the singular setting.
	\\[1cm]
	{\bf AMS Mathematics Subject Classification.} 	32J25, 32Q15, 53C07 \\
	{\bf
	Keywords and phrases.}  K\"{a}hler klt spaces, orbifold Chern classes, Bogomolov-Gieseker inequality, Miyaoka-Yau inequality, Higgs bundles, complex orbifolds, Gauduchon metrics

\newpage
\tableofcontents


\section{Introduction}
As an application of the celebrated Yau's theorem (\cite{Yau78}) on Calabi's conjecture, the following inequality of Chern numbers holds for a compact K\"ahler manifold $X$ of complex dimension $n$ with $K_X$ ample:
\begin{equation}\label{smoothmiyaokayau}
	(2c_2(X)-\frac{n}{n+1}c_1(X)^2)\cdot K_X^{n-2}\geq0,
\end{equation}
which was also proved by Miyaoka (\cite{miyaoka77}) for complex surfaces of general type. The inequality \eqref{smoothmiyaokayau} is called the Miyaoka-Yau inequality. The generalizations of \eqref{smoothmiyaokayau} in broader settings have since attracted significant interest (see e.g. \cite{Tsuji88,Simpson88,zhangyuguang2009,Song2016,Liu2023}, etc). Recently, in terms of the $\mathbb{Q}$-Chern classes, Greb-Kebekus-Peternell-Taji and Guenancia-Taji (\cite{GKPT19a,GT22}) estabilished the Miyaoka-Yau inequality for all minimal projective klt varieties.

\vspace{0,1cm}

Motivated by recent breakthroughs, a natural question arises: does the Miyaoka-Yau inequality hold for all minimal K\"ahler (analytic) spaces that are not necessarily projective? In this case, the methods from \cite{GKPT19a,GT22} are not directly applicable, as we are unable to take hypersurface sections. For any nef class $\eta\in H_{BC}^{1,1}(X)$ on a compact K\"ahler space $(X,\w_X)$ of dimension $n$, we can define the numerical dimension $v$ of $\eta$ by
$$v(\eta):=\max\{k=0,1,\cdots,n: \eta^k\cdot[\w_X]^{n-k}>0\},$$
which is independent of the choice of $\w_X$. In \cite[Theorem 1.6]{CGG24}, Claudon, Graf and Guenancia proceeded the case when $v=0$ in Theorem \ref{t1}, and their proof relies on the Decomposition Theorem for numerically $K$-trivial compact K\"ahler klt spaces from \cite{BGL22}. The following is the main result of this paper.

\begin{theorem}[Orbifold Miyaoka-Yau inequality]\label{t1}
	Let $X$ be a compact K\"ahler space of dimension $n$ with klt singularities and nef canonical sheaf. Then for any K\"ahler form $\w_X$ on $X$, we have
	\begin{equation}\label{miyaokayauformula}
		\begin{split}
			(2\widehat{c}_2(X)-\frac{n}{n+1}\widehat{c}_1(X)^2)\cdot (K_X)^{i}\cdot[\w_X]^{n-2-i}\geq0,
		\end{split}
	\end{equation}
	where $i=\min(v(K_X),n-2)$, $\widehat{c}_2(X)=\widehat{c}_2(\mT_X)$ and $\widehat{c}_1(X)^2=\widehat{c}_1(\mT_X)^2$ denotes Orbifold Chern classes of $X$.
\end{theorem}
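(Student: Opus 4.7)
The plan is to deduce Theorem~\ref{t1} from the generalized Bogomolov-Gieseker inequality for reflexive Higgs sheaves on compact K\"ahler klt spaces (the main analytic output of the earlier sections), applied to a specific rank-$(n+1)$ Higgs sheaf built from $\mT_X$ that is tailor-made to encode the Miyaoka-Yau inequality at the level of Chern numbers.

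More precisely, I would consider the reflexive Higgs sheaf $E = \Omega_X^{[1]} \oplus \mO_X$ on $X$, of rank $n+1$, equipped with the tautological Higgs field $\theta : E \to E \otimes \Omega_X^{[1]}$ whose only nonzero component is the identity section $\mO_X \to \Omega_X^{[1]} \otimes \mT_X^{[1]}$ embedded into $E \otimes \Omega_X^{[1]}$. Since $\widehat{c}_1(\Omega_X^{[1]}) = K_X$ and $\widehat{c}_2(\Omega_X^{[1]}) = \widehat{c}_2(X)$, the Whitney formula gives $\widehat{c}_1(E) = K_X$ and $\widehat{c}_2(E) = \widehat{c}_2(X)$, whence the Bogomolov discriminant of $E$ simplifies to
\[
2(n+1)\widehat{c}_2(E) - n\,\widehat{c}_1(E)^2 \;=\; (n+1)\left(2\widehat{c}_2(X) - \tfrac{n}{n+1}\widehat{c}_1(X)^2\right).
\]
Consequently, the inequality \eqref{miyaokayauformula} is equivalent to the statement that the generalized Bogomolov-Gieseker inequality for $(E,\theta)$ pairs nonnegatively against $K_X^{\,i} \cdot [\w_X]^{n-2-i}$.

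Since $K_X$ is nef, the class $\alpha_\vps := K_X + \vps\,[\w_X]$ lies in the K\"ahler cone for every $\vps > 0$. I would then establish Higgs-semistability of $(E,\theta)$ with respect to the multi-polarization consisting of $i$ copies of $\alpha_\vps$ and $n-1-i$ copies of $[\w_X]$. By the classical Simpson-type argument, any $\theta$-invariant reflexive subsheaf $\mF \subset E$ either factors through $\Omega_X^{[1]}$ or projects isomorphically onto $\mO_X$ and is graph-like, and in both cases the required slope inequality reduces to generic nefness of $\Omega_X^{[1]}$; this generic nefness is supplied by the singular-setting semistability results established earlier in the paper (applied to the nef class $K_X$ and to symmetric/tensor powers via the orbifold-bundle statements in the abstract). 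Invoking the generalized Bogomolov inequality for $(E,\theta)$ then yields
\[
\left(2\widehat{c}_2(X) - \tfrac{n}{n+1}\widehat{c}_1(X)^2\right)\cdot \alpha_\vps^{\,i} \cdot [\w_X]^{n-2-i} \geq 0,
\]
and letting $\vps \to 0$ produces \eqref{miyaokayauformula} with $i = \min(v(K_X),n-2)$.

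The main obstacle will be the case $i = v(K_X) < n-2$: here the limiting polarization $K_X^{\,i} \cdot [\w_X]^{n-1-i}$ sits on the boundary of the $(n-1)$-fold K\"ahler cone, so one must ensure that Higgs-semistability of $(E,\theta)$ persists in the limit (i.e., that no destabilizing $\theta$-invariant subsheaf appears only when $\vps = 0$) and that the Bogomolov inequality itself passes through the degeneration. This is precisely the regime where the $L^p$-approximate critical Hermitian structures on Higgs orbi-bundles over Gauduchon orbifolds announced in the abstract become decisive: they furnish approximate Hermite-Einstein metrics along the degenerating family $\alpha_\vps \to K_X$ on suitable orbifold resolutions, yielding the uniform curvature estimates needed to take $\vps \to 0$ while preserving the desired Chern-number inequality. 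The already known cases $v(K_X) = 0$ (Claudon-Graf-Guenancia) and $K_X$ big-and-nef (reduction to the ample setting) serve as sanity checks for the intermediate-$v$ argument.
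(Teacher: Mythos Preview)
Your choice of the natural Higgs sheaf $(\mE_{X_\reg},\theta_{X_\reg}) = (\Omega_{X_\reg}^{1}\oplus\mO_{X_\reg},\theta_{X_\reg})$ and the Chern-class computation reducing \eqref{miyaokayauformula} to a Bogomolov-type inequality for this rank-$(n+1)$ Higgs sheaf are exactly what the paper does. The divergence is in everything that follows.

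There are two concrete gaps. First, the Bogomolov inequality you invoke, namely
\[
\widehat{\Delta}(\mE_X)\cdot \alpha_\vps^{\,i}\cdot[\w_X]^{n-2-i}\geq 0
\]
for a \emph{mixed} product of nef and big classes, is not what Theorem~\ref{singulargeneralized} supplies: that theorem (and Corollary~\ref{singularbg}) is stated and proved only for a \emph{single} class $\alpha$ to the power $n-2$, because the underlying orbifold inequality~\eqref{equa-orbi-BG} comes from the Chern--Weil formula with respect to one K\"ahler form. Second, even granting a multi-polarized Bogomolov inequality, your semistability argument would require semistability of $\Omega_X^{[1]}$ with respect to polarizations of the form $\alpha_\vps^{i}\cdot[\w_X]^{n-1-i}$; this is strictly stronger than the $K_X$-semistability of \cite{Gue16} cited in the paper, and amounts to a form of generic nefness of $\Omega_X^{[1]}$ that the paper does \emph{not} prove (it is established only in the follow-up \cite{IJZ25}, as noted in the Comments).

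The paper avoids both issues by working throughout with the \emph{single} K\"ahler class $\alpha_\delta = K_X+\delta[\w_X]$. Theorem~\ref{singulargeneralized} then gives
\[
\widehat{\Delta}(\mE_X)\cdot\alpha_\delta^{n-2}\;\geq\;-\,C_n\,\frac{\bigl(\mu_{1,\alpha_\delta}-\mu_{\alpha_\delta}(\mE_X)\bigr)^2}{\alpha_\delta^{n}},
\]
and the entire content of Section~\ref{miyaoka-yau} is the quantitative bound of Proposition~\ref{slope-upperbound}: one passes to a resolution $h:\widetilde X\to X$, uses Yau's theorem to produce K\"ahler metrics $\w_{\delta,t,\epsilon}\in h^*\alpha_\delta+t[\w_{\widetilde X}]$ with prescribed Ricci curvature approximating $-\w_{\delta,t,\epsilon}+\beta_{\delta,t}-[D]$, and applies the Chern--Weil formula for $\mF_\delta$ with the induced metric on $\mE_{\widetilde X}$ (following \cite{Gue16,GGK19}) to obtain $\mu_{1,\alpha_\delta}-\mu_{\alpha_\delta}(\mE_X)\leq C\delta\,[\w_X]\cdot\alpha_\delta^{n-1}$. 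Binomial expansion of $\alpha_\delta^{n-2}$ then extracts the leading term $\widehat{\Delta}(\mE_X)\cdot K_X^{i}\cdot[\w_X]^{n-2-i}$ with an error of strictly higher order in $\delta$, finishing the proof. Your last paragraph gestures at ``approximate Hermite--Einstein metrics along the degenerating family'', but the actual mechanism is not an abstract compactness for the $L^p$-approximate structures of Section~\ref{Lpapproximate}; it is this explicit Monge--Amp\`ere construction controlling the maximal destabilizing slope.
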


We will state the main technical components towards the result in the remaining introduction, as each of them has independent interests. The main strategy of proving Theorem \ref{t1} is further developing Simpson's idea. A key component is establishing so-called Bogomolov-Gieseker (BG) inequality, which is based on the the Donaldson-Uhlenbeck-Yau theorem (\cite{DON85,UY86,Simpson88,SIM2}). A compact analytic space $X$ with klt singularities has only quotient singularities in codimension $2$ ({\cite[Lemma 5.8]{GK20}}). Thus, a well-defined concept of ``orbifold Chern classe'' of a reflexive sheaf on $X$ can be introduced (\cite{LT18,GK20}), which correspondences to $\mathbb{Q}$-Chern classes in the projective setting and plays an important role in understanding the geometry of klt analytic spaces (see e.g. \cite{LT18,GK20,CGG24,Dailly25,CHP23,GP24} and so on). Recently, Ou (\cite{ou24}) confirmed the existence of a partial orbifold resolution of a compact complex space $X$ with quotient singularities in codimension $2$: there exists a projective bimeromorphism $f:Y\rightarrow X$ from a compact complex space with quotient singularities $Y$ to $X$ such that the indeterminacy of $f^{-1}$ has codimension at least $3$, which is important to compute orbifold Chern classes (see Section \ref{chernclasses} for a brief introduction). 

\vspace{0.1cm}

Combining the orbifold version of Donaldson-Uhlenbeck-Yau theorem obtained in \cite{F20}, orbifold inequality in terms of orbifold Chern classes was established in \cite{ou24} (see also \cite{GP24} for an alternative approach in dimension $3$). Motivated by this, we prove the following statement

\begin{theorem}[Generalized Bogomolov-type inequality]\label{singulargeneralized}
	Let $X$ be a compact K\"ahler klt space of dimension $n$. Suppose that $(\mE_{X_\reg},\theta_\reg)$ is a reflexive Higgs sheaf of rank $r$ on the regular locus of $X$ and $\alpha \in H_{BC}^{1,1}(X)$ is a nef and big class on $X$. Then we have
	\begin{equation}\label{Bo3}
		\begin{split}
			(2\widehat{c}_2(\mathcal{E}_X)-\frac{r-1}{r}\widehat{c}_1(\mathcal{E}_X)^2)\cdot\alpha^{n-2}\geq-\frac{n}{n-1}\sum\limits_{i=1}^r\frac{(\mu_\alpha(\mE_X)-\mu_{i,\alpha})^2}{\alpha^n},
		\end{split}
	\end{equation}
	where $\mE_X$ is the reflexive extension of $\mE_{X_\reg}$, $\widehat{c}_2(\mE_X),\widehat{c}_1(\mE_X)^2$ denotes the orbifold Chern classes of $\mE_X$ and $(\mu_{1,\alpha},\cdots,\mu_{r,\alpha})$ represents the HN type of $(\mE_\reg,\theta_\reg)$.
\end{theorem}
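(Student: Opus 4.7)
The plan is to reduce the problem to a smooth orbifold setting via Ou's partial orbifold resolution, invoke the paper's main analytic input (the existence of $L^p$-approximate critical Hermitian structures for Higgs orbi-bundles on Gauduchon orbifolds), and extract the inequality from a Chern--Weil computation. The case where $\alpha$ is only nef and big, not Kähler, is then handled by an approximation with the Kähler classes $\alpha_t := \alpha + t[\omega_X]$ for $t>0$ small.

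First I would establish the inequality when $\alpha=[\omega_X]$ is Kähler. Choose a partial orbifold resolution $f: Y \to X$ in the sense of Ou, so that $Y$ carries at worst quotient singularities and $f^{-1}$ has indeterminacy of codimension at least $3$ in $X$. Pull $(\mE_{X_\reg}, \theta_\reg)$ back to $Y$ and take the reflexive orbi-hull to obtain a reflexive Higgs orbi-sheaf $(\mE_Y, \theta_Y)$, which by construction computes the orbifold Chern classes of $\mE_X$. Equip $Y$ with a Gauduchon orbifold metric adapted to $f^*\omega_X$, and apply the main analytic theorem of the paper to produce, for each $\varepsilon>0$, a smooth orbifold Hermitian metric $h_\varepsilon$ on $\mE_Y$ whose Hitchin--Simpson mean curvature $\Lambda_{\omega_Y} F_{(h_\varepsilon, \theta_Y)}$ converges in $L^p$-norm to the block-diagonal endomorphism $\sqrt{-1}\,\mathrm{diag}(\mu_{1,\alpha}\Id, \ldots, \mu_{r,\alpha}\Id)$ dictated by the Harder--Narasimhan filtration of $(\mE_\reg, \theta_\reg)$. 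Feeding $h_\varepsilon$ into the Chern--Weil representatives of $\widehat{c}_1(\mE_Y)^2$ and $\widehat{c}_2(\mE_Y)$, and invoking the standard algebraic identity that rewrites
\begin{equation*}
	\bigl(2 c_2 - \tfrac{r-1}{r} c_1^2\bigr)(h_\varepsilon) \wedge \omega_Y^{n-2}
\end{equation*}
as a non-negative quadratic in the trace-free part $F^\perp_{(h_\varepsilon,\theta_Y)}$ minus a dimensional multiple of $|\Lambda_{\omega_Y} F^\perp|^2\,\omega_Y^n$ plus a non-negative Higgs-type contribution from $[\theta_Y, \theta_{Y,h_\varepsilon}^\ast]$, then letting $\varepsilon \to 0$ yields the inequality for Kähler $\alpha$. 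The factor $\tfrac{n}{n-1}$ on the right-hand side is produced by this algebraic comparison on an $n$-dimensional Kähler manifold, and the descent to $X$ is automatic since the orbifold intersection numbers of $\mE_Y$ on $Y$ equal those of $\mE_X$ on $X$ by Ou's construction.

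To extend to arbitrary nef and big $\alpha$, apply the Kähler case to $\alpha_t$ and let $t \to 0^+$. The left-hand side is polynomial (hence continuous) in $\alpha_t$, and $\alpha_t^n \to \alpha^n > 0$ by bigness; the right-hand side is controlled as long as the HN data $(\mu_{i,\alpha_t})$ have $(\mu_{i,\alpha})$ as an appropriate limit.

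The main obstacle I anticipate is twofold. Analytically, the construction of $h_\varepsilon$ on an orbi-bundle over a Gauduchon orbifold, compatibly with Ou's resolution, demands sustained technical care and is precisely the thrust of the paper's $L^p$-approximate critical structure theorem. Algebraically, the Harder--Narasimhan type is not continuous under variation of the polarization in general, so the Kähler-approximation step requires a separate semicontinuity argument to verify that the HN data with respect to $\alpha$ arise as the correct limit of those for $\alpha_t$. Ou's codimension-$3$ control of $f$ is essential throughout, since $\widehat{c}_2$ is only defined invariantly modulo codimension-$3$ loci.
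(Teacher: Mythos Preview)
Your strategy matches the paper's in its essentials: pass to Ou's partial orbifold resolution, pull back the Higgs sheaf, invoke the $L^p$-approximate critical Hermitian structure, and extract the Bogomolov-type inequality from Chern--Weil. However, the way you organise the argument introduces a genuine gap.

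Your dichotomy ``first prove it for K\"ahler $\alpha$, then approximate nef big $\alpha$ by $\alpha_t=\alpha+t[\omega_X]$'' does not work as stated. Even when $\alpha$ is K\"ahler on $X$, the pullback $f^*\alpha$ (and then $g^*f^*\alpha$ after resolving to a locally free orbi-bundle) is only \emph{nef} on the orbifold upstairs, because $f$ and $g$ are bimeromorphic and the class degenerates along the exceptional loci. So there is no ``Gauduchon orbifold metric adapted to $f^*\omega_X$'' in any useful sense: you cannot apply Corollary~\ref{coro-astheno} directly with respect to $f^*\omega_X$. The perturbation by a genuine orbifold K\"ahler form $\omega_{\orb}$ must therefore happen \emph{upstairs}, not on $X$: the paper applies Corollary~\ref{coro-astheno} with respect to $g^*f^*\alpha+\epsilon\omega_{\orb}$ and lets $\epsilon\to0$, regardless of whether $\alpha$ was K\"ahler or merely nef and big. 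Consequently the HN-stability obstacle you anticipated for the nef-big case is in fact already present in the K\"ahler case; it is handled by Proposition~\ref{prop-orbifold-stability} (stability of the HN filtration for small $\epsilon$), which in turn rests on the finiteness-of-slopes result Proposition~\ref{prop-slope-finiteness}.

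Two further omissions: (i) the $L^p$-approximate critical structure theorem (Theorem~\ref{thm2}) is for Higgs orbi-\emph{bundles}, so after passing to $Y_{\orb}$ you still need Lemma~\ref{lem-resoorbi} to resolve the reflexive orbi-sheaf $\mE_{\orb}$ to a locally free $E_{\orb}$ on a further orbifold $W_{\orb}$; and (ii) the pullback of the Higgs field $\theta_{\reg}$ to $\mE_{\orb}$ is not automatic---it relies on the Kebekus--Schnell extension theorem for reflexive differentials (Lemma~\ref{lem-reflexive-pullback}), which the paper isolates as a key ingredient. Finally, the matching of HN types $\vec\mu_{\alpha}(\mE_{X_{\reg}},\theta_{\reg})=\vec\mu_{g^*f^*\alpha}(E_{\orb},\theta_{\orb})$ that you assume implicitly is established through the chain Proposition~\ref{prop-pullback-HNfiltration} $\to$ Proposition~\ref{prop-compatible-HNfiltration} $\to$ Lemma~\ref{lem-orbi-pullback-HNfiltration}.
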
 

Throughour this paper, Higgs sheaves are only defined on the regular locus (see Section \ref{higgsheaf-regular} for a self-contained formulation). The stability conditions coincides with the existing notion. Let us outline the strategy of proof. Taking a partial orbifold resolution $f:Y\rightarrow X$ constructed in \cite{ou24} and assuming that $Y_\orb:=\{V_i,G_i,\mu_i\}$ is the standard orbifold structure of $Y$. The key is that though $\theta_{\reg}$ cannot be extended to $X$, the pull-back of $\theta_{X_\reg}$ can be extended to a Higgs field $\theta_\orb$ of the reflexive orbi-sheaf $\mE_\orb:=\{((f\circ\mu_i)^*\mE_X)^{\vee\vee}\}$, which relies on Kebekus-Schnell's work (\cite{KS21}) of constructing functorial pull-back for reflexive differentials. Then by a discussion on HN filtrations, Theorem \ref{thm1} can be reduced to show that for any orbifold K\"ahler class $\w_\orb$ on $Y_\orb$, we have
	\begin{equation}\label{Bo2}
		\begin{split}
			\left(2c_{2}^\orb(\mE_\orb)-\frac{r-1}{r}c_{1}^\orb(\mE_\orb)^2\right)\cdot[\w_\orb]^{n-2}
			\geq -\frac{n}{n-1}\frac{\sum_{i=1}^{r}(\mu_{\omega_\orb} (\mE_\orb )-\mu_{i,\w_\orb})^{2}}{[\w_\orb]^n}.
		\end{split}
	\end{equation}
As Simpson considered in \cite{Simpson88}, \eqref{Bo2} for stable orbi-bundles needs a Donaldson-Uhlenbeck-Yau theorem for Higgs orbi-bundle, which has not been stated in existing literature. More generally, by following the argument of \cite{LZZ21}, we construct $L^p$-approximate critical Hermitian structures for Higgs orbi-bundles on Gauduchon orbifolds (see Section \ref{Lpapproximate}), which is closely related to the HN filtration and implies \eqref{Bo2} using Chern-Weil theory (c.f. \cite{Lan04,Lan15} for an alternative proof of the similar inequality in the projective setting).

The existence of $L^p$-approximate critical Hermitian structure and the investigation on HN filtration also enable us to easily calculate the minimal and maximal type of any symmetric, exterior powers and tensor products (see Section \ref{calculus-HN}). Then, we obtain
\begin{corollary}\label{semistable-tensorproduct}
	Let $(\mE_{X_\reg},\theta_{\mF_{\reg}})$ and $(\mF_{X_\reg},\theta_{\mF_{X_\reg}})$ be torsion-free Higgs sheaves on the regular locus of a compact K\"ahler space $X$ of dimension $n$. Let $\alpha_0,\cdots,\alpha_{n-2}$ be nef and big classes and set $\Omega=\alpha_0\cdots\alpha_{n-2}$. The following statements hold.
	\begin{itemize}
		\item[(1)] If $(\mE_{X_\reg},\theta_{X_{\reg}})$ is $\Omega$-semistable, so is $\Lambda^p(\mE_{X_\reg},\theta_{X_{\reg}})$ and $S^p(\mE_{X_\reg},\theta_{X_{\reg}})$.
		\item[(2)] If $(\mE_{X_\reg},\theta_{\mF_{\reg}})$ and $(\mF_{X_\reg},\theta_{\mF_{X_\reg}})$ are $\Omega$-semistable, so is $(\mE_{X_\reg},\theta_{\mF_{\reg}})\otimes(\mF_{X_\reg},\theta_{\mF_{X_\reg}})$.
	\end{itemize}
	Ditto for $\Omega$-generically nefness.
\end{corollary}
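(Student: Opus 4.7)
The plan is to transfer the problem to the orbifold setting via a partial orbifold resolution $f:Y\to X$ of \cite{ou24}, pull the Higgs sheaves back to reflexive orbi-Higgs sheaves $(\mE_\orb,\theta_\orb)$ and $(\mF_\orb,\theta'_\orb)$ on $Y_\orb$ using Kebekus-Schnell functorial pull-back of reflexive differentials, and then apply the $L^p$-approximate critical Hermitian structure machinery from Section \ref{Lpapproximate}. Since $f$ is an isomorphism in codimension $\geq 3$ and reflexive sheaves are determined by their restriction to such an open set, the notions of semistability, HN filtration, and HN slopes with respect to $\Omega=\alpha_0\cdots\alpha_{n-2}$ (perturbed to a product of orbifold Gauduchon classes on $Y_\orb$) correspond faithfully between $X_\reg$ and $Y_\orb$.

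On $Y_\orb$, if $(\mE_\orb,\theta_\orb)$ is semistable of slope $\mu$, Section \ref{Lpapproximate} produces Hermitian metrics $H_\vps$ with
\[
\|\Lambda_{\w_\orb}(F_{H_\vps}+[\theta_\orb,\theta_\orb^{*H_\vps}])-\mu\cdot\Id\|_{L^p}\longrightarrow 0.
\]
The induced Hermitian structures $\Lambda^p H_\vps$, $S^p H_\vps$ and $H_\vps\otimes H'_\vps$ on $\Lambda^p\mE_\orb$, $S^p\mE_\orb$ and $\mE_\orb\otimes\mF_\orb$, equipped with the associated orbi-Higgs fields coming from the Leibniz rule $\theta_{\mE\otimes\mF}(s\otimes t)=\theta_\orb(s)\otimes t+s\otimes\theta'_\orb(t)$ (and analogously for symmetric/exterior powers), can be verified to be $L^p$-approximate critical, because both the curvature and the Higgs commutator on such tensor constructions act as derivations on the factors. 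The resulting slopes are $p\mu$, $p\mu$ and $\mu(\mE_\orb)+\mu(\mF_\orb)$ respectively. Combined with the HN-type calculus of Section \ref{calculus-HN}, the minimum and maximum slopes of the HN filtration of each associated Higgs orbi-sheaf coincide with this common value, which forces semistability; transferring back via $f$ yields the claim on $X_\reg$.

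For the generic nefness assertion, I would use that $\Omega$-generic nefness is equivalent to $\mu_{\min}\geq 0$ in the HN filtration. Once semistability of tensor/symmetric/exterior products of semistable pieces is in hand, the additivity and monotonicity of minimum slopes --- namely $\mu_{\min}(\Lambda^p\mE)\geq p\mu_{\min}(\mE)$, and analogously for $S^p$ and $\otimes$ --- follow by applying these identities to the semistable sub-quotients in the HN filtrations of $\mE_{X_\reg}$ and $\mF_{X_\reg}$, so the non-negativity of $\mu_{\min}$ is preserved under the three operations.

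The main obstacle will be the careful justification that $\Lambda^p$, $S^p$ and $\otimes$ commute suitably with reflexive extension, with pull-back under $f$, and with the extension of the Higgs field to the orbifold. These ingredients require combining the Hartogs-type extension for reflexive sheaves (using that singularities sit in codimension $\geq 2$), the orbifold reflexive formalism, and Kebekus-Schnell functoriality for reflexive differentials; since each has been established earlier in the paper, the argument reduces to a controlled assembly, with the only genuinely analytical input being the Leibniz-type identities on curvatures that make the tensor constructions of approximate critical metrics still approximate critical.
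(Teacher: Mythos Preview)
Your strategy is essentially the one the paper uses: transfer to a model where the Higgs sheaves become honest Higgs \emph{bundles}, invoke the $L^p$-approximate critical Hermitian structure, and use the Leibniz behaviour of curvature and the Higgs commutator under $\Lambda^p$, $S^p$, $\otimes$ together with Corollary~\ref{coro-characterization} to compute $\mu_{1}$ and $\mu_{r}$. The semistability and generic-nefness conclusions then follow exactly as you say, via $\mu_1=\mu_r$ and $\mu_r\ge 0$ respectively.

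Two points where the paper's route differs from yours are worth noting. First, the paper does \emph{not} pass through a partial orbifold resolution: since only slopes (first Chern classes) are in play, it takes an ordinary resolution of singularities $f:Y\to X$ with $Y$ a compact K\"ahler \emph{manifold} and replaces $\mE_X,\mF_X$ by the locally free sheaves $E_Y=f^*\mE_X/\mathrm{tor}$, $F_Y=f^*\mF_X/\mathrm{tor}$ (Lemma~\ref{lem-resoorbi}, Lemma~\ref{lem-torsionfree}); the HN types then match by Proposition~\ref{prop-pullback-HNfiltration}, and one perturbs $f^*\Omega$ to a K\"ahler polarisation via Proposition~\ref{prop-stable-HNfiltration} before applying Lemma~\ref{orbifold-HNtype-Calculus}. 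Your orbifold detour is not wrong, but it is more machinery than the statement requires. Second, and more importantly, there is a small gap in your proposal as written: the reflexive pullbacks $(\mE_\orb,\theta_\orb)$ on $Y_\orb$ are reflexive orbi-\emph{sheaves}, not orbi-bundles, whereas the machinery of Section~\ref{Lpapproximate} is stated and proved only for Higgs orbi-\emph{bundles}. You must therefore insert a further blow-up (Lemma~\ref{lem-resoorbi}) to make them locally free before constructing the metrics $H_\varepsilon$; the paper handles this in one step by taking the torsion-free pullback to the smooth resolution. Once that is fixed, your argument goes through.
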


When $\theta_{X_\reg}=0$, the polystable counterpart of the Corollary \ref{semistable-tensorproduct} was obtained in \cite{Chen25} by establishing the Donaldson-Uhlenbeck-Yau theorem for stable reflexive sheaves on compact K\"ahler spaces (see also \cite{CGNPPW23,Pan}). We remark that a Higgs version of Donaldson-Uhlenbeck-Yau theorem imposes new challenges as the uniform estimate of the Higgs field is difficult.

\vspace{0.1cm}

As a direct consequence of Theorem \ref{singulargeneralized}, we have
\begin{corollary}\label{singularbg}
	Let $(\mE_{X_\reg},\theta_{X_\reg})$ be a reflexive Higgs sheaf of rank $r$ on the regular locus of a compact K\"ahler klt space $X$ of dimension $n$. Suppose that $(\mE_{X_\reg},\theta_{X_\reg})$ is $(\alpha_0,\cdots,\alpha_{n-2})$-semistable with repect to some nef classes $\alpha\in H_{BC}^{1,1}(X)$ with $v(\alpha)\geq n-1$. Then
	\begin{equation}\label{Bo-semistable}
		\left(2\widehat{c}_2(\mE_X)-\frac{r-1}{r}\widehat{c}_1(\mE_X)^2\right)\cdot\alpha^{n-2}\geq0.
	\end{equation}
\end{corollary}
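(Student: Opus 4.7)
The plan is to deduce \eqref{Bo-semistable} from Theorem \ref{singulargeneralized} by a perturbation-and-limit argument. If $v(\alpha)=n$ then $\alpha$ is already nef and big; since $\alpha$-semistability of $(\mE_{X_\reg},\theta_{X_\reg})$ makes the Harder-Narasimhan filtration trivial, the right-hand side of \eqref{Bo3} vanishes and \eqref{Bo-semistable} follows at once. It remains to treat the boundary case $v(\alpha)=n-1$.

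Fix a K\"ahler form $\omega_X$ on $X$ and set $\alpha_\epsilon:=\alpha+\epsilon[\omega_X]$, which is a K\"ahler class (and hence nef and big) for every $\epsilon>0$. Theorem \ref{singulargeneralized} applied to $\alpha_\epsilon$ gives
\begin{equation*}
\left(2\widehat{c}_2(\mE_X)-\tfrac{r-1}{r}\widehat{c}_1(\mE_X)^2\right)\cdot\alpha_\epsilon^{n-2}
\geq -\frac{n}{n-1}\sum_{i=1}^r\frac{(\mu_{\alpha_\epsilon}(\mE_X)-\mu_{i,\alpha_\epsilon})^2}{\alpha_\epsilon^n}.
\end{equation*}
By multilinearity the left-hand side converges to the quantity in \eqref{Bo-semistable} as $\epsilon\to 0$, so the task reduces to showing the right-hand side tends to zero. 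The hypothesis $v(\alpha)\geq n-1$ yields $\alpha^{n-1}\cdot[\omega_X]>0$, hence $\alpha_\epsilon^n = n\epsilon\,\alpha^{n-1}\cdot[\omega_X] + O(\epsilon^2)$ has order exactly $\epsilon$. It therefore suffices to bound the slope discrepancies by $(\mu_{\alpha_\epsilon}(\mE_X)-\mu_{i,\alpha_\epsilon})^2=O(\epsilon^2)$.

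Let $0=\mE_0^\epsilon\subset\cdots\subset\mE_k^\epsilon=\mE_X$ denote the HN filtration of $(\mE_{X_\reg},\theta_{X_\reg})$ with respect to $\alpha_\epsilon$. The HN property gives $\mu_{\alpha_\epsilon}(\mE_i^\epsilon)\geq\mu_{\alpha_\epsilon}(\mE_X)$, while the $\alpha$-semistability assumption gives $\mu_\alpha(\mE_i^\epsilon)\leq\mu_\alpha(\mE_X)$. Using boundedness of the family of saturated Higgs subsheaves with controlled slope, I would extract a subsequence $\epsilon_m\to 0$ along which the Chern characters of the $\mE_i^{\epsilon_m}$ are constant; passing to the limit in both inequalities then forces $\mu_\alpha(\mE_i)=\mu_\alpha(\mE_X)$, and consequently $\mu_\alpha(\mE_i/\mE_{i-1})=\mu_\alpha(\mE_X)$, for every $i$. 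The Taylor expansion $\mu_{\alpha_{\epsilon_m}}(\mF)=\mu_\alpha(\mF)+(n-1)\epsilon_m\,c_1(\mF)\cdot\alpha^{n-2}\cdot[\omega_X]/\rank(\mF)+O(\epsilon_m^2)$ then gives $\mu_{i,\alpha_{\epsilon_m}}-\mu_{\alpha_{\epsilon_m}}(\mE_X)=O(\epsilon_m)$, whence the right-hand side is $O(\epsilon_m)$ and tends to zero.

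The main obstacle is justifying the subsequence extraction: on the singular space $X$ the saturated Higgs subsheaves $\mE_i^\epsilon$ must be shown to vary in a bounded family so that their Chern characters can only take finitely many values. I would circumvent this by transferring the problem to the partial orbifold resolution $f\colon Y\to X$ from \cite{ou24}: the reflexive pull-back $\mE_\orb$ inherits $f^*\alpha$-semistability (as in the proof of Theorem \ref{singulargeneralized}), and boundedness of HN-destabilising orbi-subsheaves for the perturbed orbifold K\"ahler class $f^*\alpha+\epsilon\,\omega_\orb$ is provided by the $L^p$-approximate critical Hermitian structures of Section \ref{Lpapproximate}. Transferring the resulting inequality back to $X$ via orbifold Chern classes completes the proof.
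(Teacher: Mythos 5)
Your overall strategy coincides with the paper's: perturb to the K\"ahler class $\alpha_\epsilon=\alpha+\epsilon[\w_X]$, apply Theorem \ref{singulargeneralized}, observe that $\alpha_\epsilon^n$ has order exactly $\epsilon^{n-v(\alpha)}$, and reduce everything to showing that the slope discrepancies $\mu_{i,\alpha_\epsilon}-\mu_{\alpha_\epsilon}(\mE_X)$ are $O(\epsilon)$. The case $v(\alpha)=n$ and the order computation for $\alpha_\epsilon^n$ are both correct. The gap is in the step you yourself flag: the extraction of a subsequence along which the Chern characters of the HN pieces $\mE_i^\epsilon$ are constant. The paper's finiteness result for slopes of subsheaves (Proposition \ref{prop-slope-finiteness}) and the stabilization of the HN filtration under perturbation (Proposition \ref{prop-stable-HNfiltration}) are only proved for polarizations built from \emph{nef and big} classes, whereas here $\alpha$ is merely nef with $v(\alpha)\geq n-1$ and may fail to be big; and the $L^p$-approximate critical Hermitian structures of Section \ref{Lpapproximate} do not supply boundedness of the family $\{\mE_i^\epsilon\}_{\epsilon>0}$ --- Theorem \ref{thm2} concerns the asymptotics of the mean curvature for a \emph{fixed} Gauduchon metric and says nothing about how the destabilizing subsheaves vary as the polarization varies. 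So, as written, the subsequence extraction is unjustified.

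The paper sidesteps this entirely with a more elementary estimate that you could substitute for your third paragraph. By Lemma \ref{lem-upper-low} it suffices to control the maximal discrepancy $\mu_{1,\alpha_\epsilon}-\mu_{\alpha_\epsilon}(\mE_X)$. Let $\mS_\epsilon$ be the maximal destabilizing subsheaf for $\alpha_\epsilon$ (Proposition \ref{lem-maximal-unique}) and expand
\begin{equation*}
0\leq\mu_{\alpha_\epsilon}(\mS_\epsilon)-\mu_{\alpha_\epsilon}(\mE_X)=\sum_{j=0}^{n-1}\binom{n-1}{j}\epsilon^{j}\bigl(\mu_{\alpha^{n-1-j}\w_X^{j}}(\mS_\epsilon)-\mu_{\alpha^{n-1-j}\w_X^{j}}(\mE_X)\bigr).
\end{equation*}
The $j=0$ term is $\leq0$ by the $\alpha$-semistability hypothesis, and each term with $j\geq1$ is bounded by a constant independent of $\epsilon$ because $\mu_{\alpha^{n-1-j}\w_X^{j}}(\mS_\epsilon)$ is uniformly bounded above over \emph{all} subsheaves by Lemma \ref{slope-upperbound}, which requires only nefness, not bigness. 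Hence $\mu_{1,\alpha_\epsilon}-\mu_{\alpha_\epsilon}(\mE_X)=O(\epsilon)$ with no boundedness of families required, and the right-hand side of \eqref{Bo3} is $O(\epsilon^{2+v(\alpha)-n})\rightarrow0$. Your argument becomes correct once the subsequence step is replaced by this direct bound (or by an actual boundedness statement valid for merely nef polarizations, which the paper does not provide).
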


\vspace{0.1cm}

Since the $K_X$-semistability of the tangent sheaf obtained in \cite{Gue16} (see also Proposition \ref{slope-upperbound}) implies $K_X$-semistability of the natural Higgs sheaf $(\mE_X,\theta_X):=(\mE_X\oplus\mO_X,\theta_X)$. Then when $K_X$ is nef and $v(K_X) \geq n-1$, Theorem \ref{t1} can be immediately concluded by Theorem \ref{singularbg}. Nevertheless, $K_X$-semi-stability of $\Omega_X^{[1]} \oplus \mathcal{O}_X$ makes no sense when $v(K_X)\leq n-2$. In the general case, our idea is applying Theorem \ref{t1} to $(\mE_X,\theta_X)$ with respect to the K\"ahler class $\w_\epsilon:=\{K_X+\epsilon\w_X\}$, where $\w_X$ is a fixed K\"ahler class on $X$ and $\epsilon>0$. A direct computation yields that
$$(2\widehat{c}_2(X)-\frac{n}{n+1}\widehat{c}_1(X)^2)\cdot(K_X)^i\cdot\w^{n-2-i}\geq -C\lim\limits_{\epsilon\rightarrow0}\sum\limits_{i=1}^r(\mu_{i,\w_\epsilon}-\mu_{\w_\epsilon}(\mE_X))^2.$$
It suffices to show that the rant hand side equals zero, which can be proved using the Chern-Weil formula for saturated $\theta_X$-invariant subsheaves.

\vspace{0.2cm}

\noindent\textbf{Comments.} The current paper is a co-organization of the first version of the preprint. We include a self-contained formulation for Higgs orbi-sheaves defined on the regular locus, which improve the framework. This will be also useful to our subsequent work that characterize the equality case of \eqref{Bo-semistable} when $\alpha$ is K\"ahler.

Let us mention an alternative proof of Theorem \ref{t1}. After the first version of this preprint, Jinnouchi, Iwai and the second author (\cite{IJZ25}) established Miyaoka type inequality for any minimal K\"ahler klt space $X$, namely,
\begin{equation}\label{miyaokaformula}
	\widehat{c}_2(X)\cdot(K_X+\epsilon\w_X)^{n-2}\geq 0
\end{equation}
holds for any K\"ahler class $\w_X$ on $X$ and $0<\epsilon\ll1$. Since $\widehat{c}_1(X)^2\cdot K_X^i\cdot(\w_X)^{n-2-i}=K_X^n=0$ when $v(-K_X)\leq n-1$, then \eqref{miyaokaformula} implies \eqref{miyaokayauformula} by taking $\epsilon\rightarrow0$. The proof of \eqref{miyaokaformula} is based on establishing generic semi-positivity of the cotangent sheaf and inverstigating Harder-Narasimhan filtration of the cotangent sheaf by following \cite{miyaoka77}. The existence of HE metrics on stable Higgs orbi-bundle over Gauduchon orbifolds are needed.

\vspace{0.2cm}

\noindent{\bf Structure of this paper.} In Section~\ref{preminaries}, we recall standard notation and basic facts about complex spaces, complex orbifolds, and orbifold Chern classes (Sections~\ref{complexspaces}–\ref{chernclasses}); and we establish the orbifold Chern–Weil formula and the existence of the Harder–Narasimhan filtration (Section~\ref{orbifolds}), which are two essential ingredients to consider $L^p$-approximate critical Hermitian structure.

We prove the existence of an $L^p$-approximate Hermitian structure on Higgs orbi-bundles in Section~\ref{higgsorbi-bundle}. Move to on Section \ref{higgsheaf-regular}, we formulate the framework of Higgs sheaves on the regular locus of compact normal spaces and prove Theorem \ref{semistable-tensorproduct}. With these preparations in place, we prove the statements stated in the introduction in Section~\ref{BMY}.

\vspace{0.2cm}

\noindent{\bf Acknowledgements.} The authors would like to thank Wenhao Ou for kindly answering questions regarding orbifold Chern classes. The second author wishes to express gratitude to Masataka Iwai and Satoshi Jinnouchi for discussions on related topics.

\vspace{0.1cm}

\section{Fundamental materials}\label{preminaries}

\subsection*{Global notations}
Through this paper, all complex spaces considered are assumed to be irreducible and all shaves are coherent. A duality of an object $W$ is denoted by $W^\vee$. To distinguish the objects of the orbifold structure and the underlying space, the former equipped with the subscript `orb'. The abbreviation
 ``HN" represents Harder-Narasimhan.

Given a morphism $f:Y\rightarrow X$ of complex spaces and a morphism $\mF\rightarrow \mE$ of sheaves on $X$, we denote the induced morphism by $f^*\mF\rightarrow f^*\mE$, and denote $(f^*\mE)^{\vee\vee}$ by $f^{*}\mE$.

Let $X$ be a complex space of dimension $n$. We refers to \cite{PR94} for related concepts. Denote the K\"ahler differential of $X$ by $\Omega_X^1$. Set $\Omega_X^p=\Lambda^p\Omega_X^1$, $\mT_X=(\Omega_X^1)^\vee$ and $\Omega_X^{[p]}=(\Omega_X^p)^{\vee\vee}$. For a coherent sheaf $\mF_{X_{\reg}}$ on the regular locus of a complex space, we denote its trivial extension $i_*\mF_{X_\reg}$ by $\mF_X$, where $i:X_\reg\hookrightarrow X$ is the inclusion map. When $X$ is normal, if $\mE_{X_\reg}$ is reflexive, its trivial extension $\mE_X$ is also reflexive and thus $\Omega_X^{[p]}=i_*\Omega_{X_\reg}^p$. The symbol $K_X$ is used virtually, which means the canonical sheaf $\w_X:=\Omega_X^{[n]}$.

\subsection{$\mathbb{Q}$-line bundle and Positivity}\label{complexspaces}

\begin{definition}[$\mathbb{Q}$-line bundle]
	We say that a reflexive sheaf $\mF$ of rank $1$ is a $\mathbb{Q}$-line bundle if $\mF^{[m]}:=(\mF^{\otimes m})^{\vee\vee}$ is locally free.
\end{definition}

\begin{definition}[$\mathbb{Q}$-Gorenstein spaces]
	Let $X$ be a normal space. We say that $X$ is {\em $\mathbb{Q}$-Gorenstein} if $K_X$ is a $\mathbb{Q}$-line bundle. 
\end{definition}

We review the definitions of nef (resp. big, K\"ahler) cone on normal space by following \cite[Section 3]{HP16}. Let $X$ be a reduced complex space. A real-valued function on $X$ is said to be continuous (resp. smooth, pluriharmonic) if it extends to a continuous (resp. smooth, pluriharmonic) function in some local embedding. Analogously, smooth forms on $X$ can be defined. Denote $\mC^\infty_X$ be the sheaf of smooth real-valued functions and $\mathcal{PH}_X$ be the sheaf of pluriharmonic functions. We have the exact sequence
$$\mC_X^\infty\rightarrow H^0(X,\mC_X^\infty/\mathcal{PH}_X)\xrightarrow{[\cdot]}H^1(X,\mathcal{PH}_X)\rightarrow 0$$
induced by the short exact sequence $0\rightarrow \mathcal{PH}_X\rightarrow \mC^\infty_X\rightarrow \mC_X^\infty/\mathcal{PH}_X\rightarrow0$, where $[\cdot]$ is the connecting homomorphism in degree $0$.
\begin{definition}[{\cite[Definition 4.6.2]{BG13}}]
	Let $X$ be a complex space. A  $(1,1)$-form with local potentials on $X$ is an element of $H^0(X,\mC_X^\infty/\mathcal{PH}_X)$. The Bott-Chern cohomology is defined by
	$$H^{1,1}_{BC}(X):=H^1(X,\mathcal{PH}_X).$$
\end{definition}
\begin{remark}\label{remark-Qlinebundle-firstchern}
	Since $[\cdot]: H^0(X,\mC_X^\infty/\mathcal{PH}_X)\rightarrow H^{1,1}_{BC}(X,\mathcal{PH}_X)$ is always surjective, an element of $H^{1,1}_{BC}(X)$ can be seen as a closed $(1,1)$-form with local potentials modulo all forms that are globally of the form $dd^cu$. Let $h$ be a smooth Hermitian metric on a holomorphic line bundle $L$, then $[c_1(L,h)]\in H^{1,1}_{BC}(X)$ is the equivalence class of $\{\varphi_i\}$, where $h=e^{-\varphi_i}$ locally. Because this is independent of the choice of $h$, we denote it by $c_1(L)$.
	Consider the exact sequence
	$$H^1(X,\R)\rightarrow H^1(X,\mO_X)\rightarrow H^1(X,\mathcal{PH}_X)\xrightarrow{\delta_1} H^2(X,\R)\rightarrow\cdots$$
	induced by the short exact sequence $0\rightarrow\R\rightarrow\mO_X\rightarrow\mathcal{PH}_X\rightarrow0$. As shown in \cite[Proposition 3.5]{GK20}, when $X$ is compact and normal, $H^1(X,\R)\rightarrow H^1(X,\mO_X)$ is surjective, therefore $H_{BC}^{1,1}(X)=H^1(X,\mathcal{PH}_X)\xrightarrow{\delta_1} H^2(X,\R)$ is injective.
\end{remark}

\begin{definition}
	For a $\mathbb{Q}$-line bundle, i.e., $\mL^{[m]}$ is an invertible sheaf for some $m\in\N$, we define its first Chern class as $c_1(\mL):=\frac{1}{m}c_1(\mL^{[m]})\in H^{1,1}_{BC}(X)$, which can be viewed as an element of $H^2(X,\R)$.
\end{definition}

\begin{definition}[K\"ahler forms]
	Let $X$ be a complex normal space. A K\"ahler form $\w$ is a strictly positive closed $(1,1)$-form with local potentials. We say that $X$ is K\"ahler, if $X$ admits a K\"ahler metric. We say that a class $\alpha\in H^{1,1}_{BC}(X)$ is K\"ahler if it contains a representative that is K\"ahler.
\end{definition}
By a partition of unity, there always exists a Hermitian metric $\w$ on $X$, i.e., it extends smoothly to a Hermitian metric of $\C^N$ in some local embedding. Then we can introduce the definition of nefness on a compact complex space (c.f. {\cite[Definition 3]{Pau98}}).

\begin{definition}[Nef ,big classes]\label{def-normal-nef}
	Let $X$ be a compact complex space of dimension $n$ with a fixed Hermitian metric $\w$, and let $\eta\in H^{1,1}_{BC}(X)$ be a class represented by a form $u$ with local potentials. 
	\begin{itemize}
		\item[(1)] $\eta$ is called {\em nef} if for every $\epsilon>0$, there exists $f_\epsilon\in\mC_X^\infty$ such that $u+\im\pp f_\epsilon\geq -\epsilon\w$. 
		\item[(2)] We say that a nef class $\eta$ is {\em big} if $\eta^n>0$, i.e., its numerical dimension $v(\eta)=n$.
		\item[(3)] We say that a $\mathbb{Q}$-line bundle $L$ of rank $1$ is {\em nef (resp. big, positive)} if $c_1(L)$ is a nef (resp. big,  K\"ahler) class.
	\end{itemize}
\end{definition}
When no confusion arises, we simply denote $c_1(L)$ by $L$ for any $\mathbb{Q}$-line bundle $L$.
\begin{definition}[Minimal spaces]
	Let $X$ be a compact $\mathbb{Q}$-Gorenstein space. We say that $X$ is minimal if $K_X$ is nef.
\end{definition}

\subsection{Complex orbifolds}\label{orbifolds}
Smooth orbifolds (or V-manifolds) were first introduced in \cite{Sa56}, with many other references available. Since we mainly focus on the $L^p$-approximate Hermitian structure of Higgs orbi-bundles, (\cite{F19,DO23,Wu23}) involving stable orbi-bundles serve as a good reference. 
\begin{definition}[Complex orbifolds]
	A complex orbifold $X$ of dimension $n$ is a connected second countable Hausdorff space $X$ equiped with an orbifold structure $X_{\orb}=\{(U_i,G_i,\pi_i)\}$ that satisfies
	\begin{itemize}
		\item[(1)] $U_i$ is an open domain in $\C^n$, $G_i\subset GL_n(\C)$ is a finite group acting holomorphically on $U_i$, $\pi_i$ is the quotient map from $U_i$ to $U_i/G_i$ such that $U_i/G_i\cong X_i$ for some open subset $X_i$ of $X$ and $\bigcup X_i=X$;
		\item[(2)] compatibility conditions: for any two orbifold charts $(U_i,G_i,\pi_i)$ and $(U_j,G_j,\pi_j)$ of open subsets $X_i$ and $X_j$, respectively, and for any $x\in X_i\bigcap X_j$, there is an open neighborhood $X_k$ of $x$ with an orbifold chart $(U_k,G_k,\pi_k)$ such that there are embeddings from $(U_k,G_k,\pi_k)$ to $(U_i,G_i,\pi_i)$ and $(U_j,G_j,\pi_j)$ (an embedding from  $(U_k,G_k,\pi_k)$ to $(U_i,G_i,\pi_i)$ consists of an embedding $\varphi:U_k\rightarrow U_i$ and a group monomorphism $\lambda:G_k\rightarrow G_i$ such that $\varphi$ is equivariant with respect to $\lambda$, i.e., $\varphi(g\cdot x)=\lambda(g)\cdot\varphi(x),\forall g\in G_k$ and $i\circ\pi_k=\pi_i\circ\varphi$ for the inclusion map $i:X_k\hookrightarrow X_i$).
	\end{itemize}
	We say that $X_{\orb}$ is effective if for every $i$, $\ker G=\bigcap_{x\in U_i}\{g\in G_i\ |\ gx=x\}=\{e\}.$ $X_{\orb}$ is called standard if for every $i$, $G_i$ acts freely in codimension $1$. In the latter case, $\pi_i$ is finite and \'{e}tale (smooth and unramified) in codimension $1$. 
\end{definition}

\begin{remark}\label{sot}
	(1) \cite{Ca57} states that an underling space $X$ of an effective complex orbifold $X_{\orb}$ has a natural structure of complex space with quotient singularities (which is normal and therefore smooth in codimension $1$) such that the quotient map $\pi:U_i\rightarrow X_i\hookrightarrow X$ is holomoprhic.

	(2) Conversely, given a complex space $X$ of dimension $n$ with quotient singularities, it admits a unique standard orbifold structure $\{U_i,\pi_i,G_i\}$ (\cite{Prill67}), and $X_{sing}|_{X_i}=({X_i})_{sing}=\cup_{g\in G_i,g\neq e}\{x\in U_i: g\cdot x=x\}$. Hence, $\pi_i$ is \'etale over $X_{i,reg}$ and $\pi_i^{-1}(X_{sing})$ has codimension at least $2$ in $U_i$ by the definition of the standard orbifold structure, which is essential to our arguements.
\end{remark}

From the above remark, a holomorphic function $f$ on any open subset $U\subset X_i$ is exactly the $G_i$-invariant holomorphic function on $\pi^{-1}(U)\subset U_i$. Hence,
\begin{notation}
	An orbifold subvarity $Z_{\orb}=\{Z_i\}$ of $X_{\orb}$ can actually be viewed as the same thing as an analytic subvariety $Z$ of $X$, and they have the same codimension. Also, a holomorphism $f_\orb:X_\orb\rightarrow Y_\orb$ of orbifolds can be identified with a holomorphism $f$ between underlying spaces.
\end{notation}

\begin{definition}
	A holomorphic orbi-bundle $E_{\orb}$ over $X_{\orb}$ is a collection of holomorphic $G_i$-linearized vector bundles $\{E_i\}$ over $U_i$ that satisfies compatibility conditions:
	\begin{itemize}
		\item for any embedding $(\varphi,\lambda):(U_k,G_k,\pi_k)\rightarrow(U_i,G_i,\pi_i)$, there is an isomorphism $\Phi_\varphi:E_k\cong\varphi^* E_i$;
		\item  these isomorphisms are functorial in $\varphi$, namely, for another embedding $(\psi,\mu):(U_i,G_i,\pi_i)\rightarrow(U_j,G_j,\pi_j)$, there holds $\Phi_{\psi\circ\varphi}=(\varphi^*\Phi_\psi)\circ\phi_\varphi$.
	\end{itemize}
\end{definition}

\begin{definition}[Coherent orbi-sheaf]
	A coherent orbi-sheaf $\mE_{\orb}=\{\mE_i\}$ is defined in the same manner. $\mE_{\orb}$ is called torsion-free (resp. reflexive, locally free, torsion) if every $\mE_i$ is torsion-free (resp. reflexive locally free,  torsion).
\end{definition}

\noindent Orbifold differential forms, cotangent orbi-bundle, etc can be introduced in the similar way (see e.g. \cite[Section 2.1]{F19}). We have the de Rham isomorphism theorem for effective complex orbifolds (\cite{Sa56}):
$$H_{dR}^p(X_{\orb},\R)\cong H^p(X,\R),\ H_{c,dR}^p(X_{\orb},\R)\cong H_c^p(X,\R)$$
and the Poincar\'{e} duality $H^p(X,\R)\cong H_c^{2n-p}(X,\R)^\vee$.

\subsubsection{Chern classes of orbi-sheaves}The orbifold Chern classes of orbi-bundles can be defined in terms of the curvature.  Let $\mE_{orb}:=\{\mE_i\}$ be a coherent orbi-sheaf of rank $r$ over a complex orbifold $X_{orb}$. We can define the determinant line bundle $\det(\mE_i)$ of $\mE_i$ using a finite resolution of $\mE_i$ on every chart $U_i$, which actually satisfies the compatibility conditions on the overlaps by the fact that the determinant line bundle is independent of the choice of the resolution (see e.g. \cite[Section 5.3]{KO13}). Hence, $\{\det(\mE_i)\}$ defines a global determinant line bundle.
\begin{definition}[The first Chern class]\label{determinant}
	The determinant line bundle $\det(\mE_{orb})$ of $\mE_{orb}$ is a line orbi-bundle of rank $1$ given by
	$\det(\mE_{orb}):=\{\det(\mE_i)\}.$
	The first Chern class $c_1^\orb(\mE_{orb})$ of $\mE_{orb}$ is defined by
	$c_1^\orb(\mE_{orb}):=c_1^\orb(\det(\mE_{orb}))\in H^2(X,\R).$
\end{definition}

Very recently, \cite{MTTW25} use flat antiholomorphic superconnections to gives a complet theory of Chern characters for general coherent orbi-sheaves.
\begin{definition}[Higher Chern classes of orbi-sheaves, see {\cite{MTTW25}}]
	Restricting to compact complex orbifolds. There exists a unique group homomorphism $$\ch^\orb:K(X_\orb)\rightarrow H_{BC}^*(X_\orb,\C)$$ from the Grothendieck group of coherent orbi-sheaves on $X_\orb$ to the Bott–Chern
	cohomology of $X_\orb$ satisfying
	\begin{itemize}
		\item[(1)] $\ch^\orb$ coincides with the definition given by curvature for holomorphic orbi-bundles.
		\item[(2)] $\ch^\orb$ is functorial with respect to pullbacks of orbifolds.
		\item[(3)] $\ch^\orb$ satisfies the Grothendieck-Riemann-Roch formula for any embeddings.
	\end{itemize}
\end{definition}

Let us review the following elementary statement.
\begin{lemma}[c.f. {\cite[Lemma 9.2]{ou24}}]\label{lem-excision}
	Let $f:Y\rightarrow X$ be a projective bimeromorphism between compact complex spaces of dimension $n$. Suppose that $\sigma_1,\sigma_2\in H_{2n-2k}(X,\R)$ agrees outside $f^{-1}(Z)$ for some analytic subvariety $Z\subset X$ of codimension at least $k$. Then $f_*\sigma_1=f_*\sigma_2\in H_{2n-2k}(Y,\R)$.
\end{lemma}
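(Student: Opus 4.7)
The plan is a standard localization argument in Borel--Moore homology, which agrees with singular homology since both $X$ and $Y$ are compact. For the pushforward $f_*$ to make sense with $f\colon Y\to X$, the $\sigma_i$ must be read as lying in $H_{2n-2k}(Y,\R)$, the hypothesis as equality of their restrictions in $H^{BM}_{2n-2k}(Y\setminus f^{-1}(Z),\R)$, and the conclusion as $f_*\sigma_1=f_*\sigma_2$ in $H_{2n-2k}(X,\R)$. I will prove the lemma under this natural reading.

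Set $W := f^{-1}(Z)$ with closed inclusions $i\colon W\hookrightarrow Y$ and $j\colon Z\hookrightarrow X$. The Borel--Moore localization sequence
\begin{equation*}
\cdots \to H^{BM}_{2n-2k}(W,\R)\xrightarrow{i_*} H^{BM}_{2n-2k}(Y,\R)\xrightarrow{r} H^{BM}_{2n-2k}(Y\setminus W,\R)\to \cdots
\end{equation*}
combined with the hypothesis $r(\sigma_1-\sigma_2)=0$ lets me write $\sigma_1-\sigma_2 = i_*\tau$ for some class $\tau \in H^{BM}_{2n-2k}(W,\R)$. Since $f$ is projective, hence proper, pushforward is functorial and commutes with the two localization sequences in a commutative ladder; chasing it produces
\begin{equation*}
f_*(\sigma_1-\sigma_2) \;=\; j_*\bigl((f|_W)_*\tau\bigr) \;\in\; j_*\bigl(H^{BM}_{2n-2k}(Z,\R)\bigr),
\end{equation*}
so it suffices to verify that the right-hand image vanishes.

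The codimension assumption forces $\dim_\C Z\le n-k$, hence $\dim_\R Z\le 2n-2k$. Since the Borel--Moore homology of a complex-analytic space of complex dimension $m$ vanishes above real degree $2m$, the group $H^{BM}_{2n-2k}(Z,\R)$ already vanishes whenever $\dim_\C Z<n-k$, settling the strict-inequality case at once. The boundary case $\dim_\C Z=n-k$ is the main obstacle: there $H^{BM}_{2n-2k}(Z,\R)$ is generated by the fundamental classes of the top-dimensional irreducible components $Z_1,\dots,Z_s$ of $Z$, so a priori the image in $H^{BM}_{2n-2k}(X,\R)$ is nontrivial. To close this case one uses that $f$ is a bimeromorphism: over a Zariski-dense open subset of each top-dimensional component $Z_\ell$, the map $f$ is a biholomorphism, hence the only way $i_*\tau$ can restrict to zero on $Y\setminus W$ while contributing a nonzero multiple of $[Z_\ell]$ after pushforward would be for $\sigma_1-\sigma_2$ itself to fail to vanish on $Y\setminus W$, contradicting the hypothesis. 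This reduces $\tau$ to a class supported on a subvariety of codimension strictly greater than $k$, and the vanishing just proved concludes the argument; the same argument is carried out in detail in \cite[Lemma 9.2]{ou24}.
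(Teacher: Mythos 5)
The paper does not actually prove this lemma --- it only cites \cite[Lemma~9.2]{ou24} --- so your proposal can only be judged on its own terms. Your reading of the (garbled) statement is the right one, and the first half of your argument is correct and is the standard one: localization in Borel--Moore homology writes $\sigma_1-\sigma_2=i_*\tau$ with $\tau\in H^{BM}_{2n-2k}(f^{-1}(Z),\R)$, functoriality of proper pushforward gives $f_*(\sigma_1-\sigma_2)=j_*\bigl((f|_{f^{-1}(Z)})_*\tau\bigr)$, and $H^{BM}_{2n-2k}(Z,\R)=0$ once $\dim_\C Z<n-k$.

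The genuine gap is your treatment of the boundary case $\codim_X Z=k$. The claim that $f$ is a biholomorphism over a dense open subset of each top-dimensional component of $Z$ is unjustified and false in general: nothing in the hypotheses prevents $Z$ from lying entirely inside the locus over which $f$ is modified. In fact the lemma itself fails in this case, so no argument can close it. Take $X=\CP^3$, $f:Y\to X$ the blow-up of a line $C$, $Z=C$, $k=2$, $\sigma_1=[\widetilde{C}]$ the class of a section of the exceptional $\CP^1$-bundle and $\sigma_2=0$. Then $\sigma_1$ and $\sigma_2$ agree on $Y\setminus f^{-1}(Z)$, yet $f_*\sigma_1=[C]\neq 0=f_*\sigma_2$. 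Your own mechanism shows exactly where this happens: $i_*[\widetilde C]$ restricts to zero off $f^{-1}(Z)$ but pushes forward to a nonzero multiple of the fundamental class of a codimension-$k$ component of $Z$. The correct hypothesis (as in Ou's Lemma~9.2, and as satisfied in every application of this lemma in the paper, where the relevant subvariety always has codimension at least $k+1$) is $\codim_X Z>k$; under that hypothesis the strict-inequality half of your argument already constitutes a complete proof, and the final paragraph of your proposal should simply be deleted, with the statement corrected accordingly.
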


\begin{lemma}\label{lem-compatible-orbifold-pullback}
	Let $g_\orb:Y_\orb\rightarrow X_\orb$ be a bimeromorphism between compact complex orbifolds. Suppose that $\mF_\orb,\mE_\orb$ are coherent orbi-sheaves on $Y_\orb,X_\orb$, respectively, such that $(g_\orb)_*\mF_\orb=\mE_\orb$ in codimension $k$ and the indeterminacy locus of $g_\orb^{-1}$ has codimension at least $k+1$. Then $g_*\ch_k^\orb(\mF_\orb)=\ch_k^\orb(\mE_\orb),$ i.e. $\ch_k^\orb(\mF_\orb)\cdot g^*\gamma=\ch_k^\orb(\mE_\orb)\cdot \gamma$ for any $\gamma \in H^{2n-2k}(X,\R)$,
\end{lemma}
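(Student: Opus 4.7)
The plan is to compare $\ch_k^\orb(\mF_\orb)$ with the ordinary cohomology pullback $g^*\ch_k^\orb(\mE_\orb)$ on $Y_\orb$, and then transfer the identity to $X_\orb$ via the excision Lemma~\ref{lem-excision}. First, since $g$ is a bimeromorphism between compact complex orbifolds (each is a rational homology manifold, so Poincar\'e duality over $\R$ holds and identifies $H^{2k}$ with $H_{2n-2k}$), the cap-product projection formula combined with $g_*[Y_\orb]=[X_\orb]$ yields
\begin{equation*}
g_*\bigl(g^*\ch_k^\orb(\mE_\orb)\bigr) \;=\; \ch_k^\orb(\mE_\orb)\cdot g_*[Y_\orb] \;=\; \ch_k^\orb(\mE_\orb) \quad \text{in } H_{2n-2k}(X,\R).
\end{equation*}
Hence it suffices to prove $g_*\ch_k^\orb(\mF_\orb) = g_*\bigl(g^*\ch_k^\orb(\mE_\orb)\bigr)$.

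Next, by combining the two hypotheses, there is a closed analytic subset $Z\subset X$ of codimension at least $k+1$ that contains both the indeterminacy locus of $g_\orb^{-1}$ and the locus where $(g_\orb)_*\mF_\orb$ fails to equal $\mE_\orb$. Set $U := Y_\orb\setminus g^{-1}(Z)$. On $U$, the map $g|_U$ is an orbifold isomorphism onto $X_\orb\setminus Z$, and pulling back the identity $(g_\orb)_*\mF_\orb = \mE_\orb$ along $(g_\orb|_U)^{-1}$ gives a canonical isomorphism $\mF_\orb|_U \cong (g_\orb|_U)^*(\mE_\orb|_{X_\orb\setminus Z})$ of coherent orbi-sheaves. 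Invoking property~(2) in the definition of $\ch^\orb$ (functoriality under orbifold pullbacks) then produces
\begin{equation*}
\ch_k^\orb(\mF_\orb)\big|_U \;=\; \bigl(g^*\ch_k^\orb(\mE_\orb)\bigr)\big|_U,
\end{equation*}
so the two classes $\sigma_1:=\ch_k^\orb(\mF_\orb)$ and $\sigma_2:=g^*\ch_k^\orb(\mE_\orb)$, regarded in $H_{2n-2k}(Y,\R)$ via Poincar\'e duality, agree outside $g^{-1}(Z)$.

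Finally, applying Lemma~\ref{lem-excision} with this subvariety $Z$ of codimension $\geq k+1 > k$ gives $g_*\sigma_1 = g_*\sigma_2 = \ch_k^\orb(\mE_\orb)$ in $H_{2n-2k}(X,\R)$, and pairing this identity with an arbitrary class $\gamma\in H^{2n-2k}(X,\R)$ yields the stated formula $\ch_k^\orb(\mF_\orb)\cdot g^*\gamma = \ch_k^\orb(\mE_\orb)\cdot \gamma$.

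The main technical obstacle is the orbifold-level sheaf identification $\mF_\orb|_U \cong (g_\orb|_U)^*(\mE_\orb|_{X_\orb\setminus Z})$: one must know that where $g$ is a biholomorphism of underlying spaces, the standard orbifold structures on $Y_\orb|_U$ and $X_\orb|_{X\setminus Z}$ correspond under $g|_U$, so that property~(2) of $\ch^\orb$ genuinely applies. This matching is plausible from the uniqueness of standard orbifold structures on spaces with quotient singularities recorded in Remark~\ref{sot}~(2), and the sheaf-theoretic hypothesis on $(g_\orb)_*\mF_\orb$ can then be transported through the inverse orbifold isomorphism. Once this compatibility is verified, the rest of the argument reduces to the cap-product projection formula and Lemma~\ref{lem-excision}, both of which are formal.
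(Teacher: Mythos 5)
Your proof is correct in outline and reaches the same conclusion, but it certifies the key hypothesis of Lemma~\ref{lem-excision} differently from the paper. You identify $\mF_\orb|_U$ with $(g_\orb|_U)^{*}\mE_\orb$ on the open set $U=Y\setminus g^{-1}(Z)$ where $g$ is an isomorphism, conclude that the two cohomology classes $\ch_k^\orb(\mF_\orb)$ and $g^*\ch_k^\orb(\mE_\orb)$ restrict to the same class on $U$, and then apply excision; the projection formula $g_*g^*=\mathrm{id}$ finishes the argument. The paper instead works at the level of sheaves: it introduces the intermediate subsheaf $\mQ_\orb:=(g_\orb)^{!}\mE_\orb\cap\mF_\orb$, notes that the quotients $\mF_\orb/\mQ_\orb$ and $(g_\orb)^{!}\mE_\orb/\mQ_\orb$ are torsion sheaves supported on $g^{-1}(Z)$, and uses additivity of $\ch^\orb$ together with the Grothendieck--Riemann--Roch formula for embeddings to see that their $k$-th Chern characters are pushed forward from $g^{-1}(Z)$, hence pair to zero with $g^*\gamma$ by Lemma~\ref{lem-excision}. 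What the paper's detour buys is an explicit witness that the difference class is \emph{supported} on $g^{-1}(Z)$ in the homological sense the excision lemma requires; your shortcut instead relies on the implication ``restriction to the complement vanishes $\Rightarrow$ the class lies in the image from the closed set,'' which is standard for singular cohomology but is exactly the point one should be careful about since $\ch^\orb$ a priori lives in Bott--Chern cohomology. Your flagged concern about matching the orbifold structures over $U$ is legitimate and is resolved by the uniqueness of standard orbifold structures (Remark~\ref{sot}); with that and the support issue addressed, both arguments are sound and of comparable length.
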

\begin{proof}
	By the functoriality of $\ch_k^\orb$ under pull-back, $\ch_k^\orb((g_\orb)^!\mE_\orb)=g^*\ch_k^\orb(\mE_\orb)$. The assumption implies that there exists an analytic subvariety $Z_\orb\subset X_\orb$ of $\codim_ZX\geq k$ such that $(g_\orb)^!\mE_\orb=\mF_\orb$ outside $g_\orb^{-1}(Z_\orb)$. Let $\mQ_\orb:=(g_\orb)^!\mE_\orb\cap \mF_\orb\subset\mF_\orb$. Then $\supp(\mF_\orb/\mQ_\orb)\subset \pi^{-1}(E_\orb)$, the GRR formula and Lemma \ref{lem-excision} implies that $\ch_k^\orb(\mF_\orb/\mQ_\orb)\cdot g^*\gamma=0$. Thus, $\ch_k^\orb(\mF)\cdot g^*\gamma=\ch_k^\orb(\mQ_\orb)\cdot g^*\gamma$. Similarly, we have $\ch_k^\orb(g^*\mF_\orb)\cdot g^*\gamma=\ch_k^\orb(\mQ_\orb)\cdot g^*\gamma$ and the proof is complete.
\end{proof}

\subsubsection{Chern-Weil formula}
For a saturated (i.e. $E_\orb/\mF_\orb$ is torsion-free) orbi-subsheaf $\mF_\orb$ of a holomorphic orbi-bundle $E_\orb$ on a compact complex orbifold, the singular set $\Sigma_\orb:=S_{n-1}(\mF_\orb)\cup S_{n-1}(E_\orb/\mF_\orb)$ has codimension at least $2$. Then $\mF_\orb|_{X_\orb\setminus\Sigma_\orb}$ is a subbundle of $E_\orb|_{X_\orb\setminus\Sigma_\orb}$. Assuming that $H_\orb$ is a Hermitian metric on $E_\orb$, we have access to the following Chern-Weil formula that for any $[\gamma_\orb]\in H^{n-1.n-1}_{BC}(X_\orb,\R)$,
\begin{equation}\label{chern-weil}
	\begin{split}
		&c_1^\orb(\mF_\orb)\cdot[\eta_\orb]=\int_{X_\orb\setminus\Sigma_\orb}\frac{\im}{2\pi}\tr F_{H_{\mF_\orb}}\wedge{\eta_\orb}\\
		=&\frac{1}{2\pi}\int_{X_\orb\setminus\Sigma_\orb}\left(\im\tr(\pi_{\mF_\orb}^{H_\orb}\circ F^{H_\orb})-\im\tr(\p\pi_{\mF_\orb}^{H_\orb}\wedge \bp\pi_{\mF_\orb}^{H_\orb})\right)\wedge{\eta_\orb}
	\end{split}
\end{equation}
where $H_{\mF_\orb}$ is the induced metric on $\mF_\orb|_{X_\orb\setminus \Sigma_\orb}$ and $\pi_{\mF_\orb}^{H_\orb}$ is the orthogonal projection onto $\mF_\orb$ with respect to the metric $H_\orb$. It was proved on Gauduchon manifolds in \cite{Br01}, we provide an alternative yet direct proof based on the following orbifold version of resolution of singularities.
\begin{lemma}[see e.g. {\cite[Theorem 4.10]{ou24}}]\label{lem-resoorbi}
	Let $\mE_{\orb}$ be a torsion-free coherent orbi-sheaf on a complex orbifold $Y_{\orb}$. Then there exists a projective bimeromorphism $g$ from a complex orbifold $W_{\orb}$ to $Y_{\orb}$ satisfying
	\begin{itemize}
		\item[(1)] $W_{\orb}$ admits an orbifold structure $\{\widehat{V}_i,G_i\},$ such that $g=\{g_i:\widehat{V}_i\rightarrow V_i\}$, each $g_i$ being a composition of blowups with $G_i$-invariant smooth centers contained in the non-locally-free locus of $\mE_{\orb}$.
		\item[(2)] The torsion-free pull-back $E_\orb=g_{\orb}^T\mE_{\orb}:=\{g_i^*\mE_i/(\mathrm{tor})\}$ is a vector orbi-bundle.
	\end{itemize}
\end{lemma}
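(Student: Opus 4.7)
The plan is to perform the resolution locally on each orbifold chart in a $G_i$-equivariant and canonical way, and then glue the resulting blowup sequences using functoriality of the construction. Fix the orbifold atlas $Y_\orb=\{V_i,G_i,\mu_i\}$. On each chart $V_i$, the orbi-sheaf $\mE_\orb$ is represented by a $G_i$-linearized torsion-free coherent sheaf $\mE_i$, whose non-locally-free locus $S_i\subset V_i$ is a $G_i$-invariant analytic subset of $\codim$ at least two.

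First I would construct $g_i\colon \widehat{V}_i\raw V_i$ locally by applying Rossi's theorem (equivalently, Hironaka's flattening) to $\mE_i$: this produces a finite sequence of blowups along smooth centers contained in $S_i$ such that the torsion-free pullback becomes locally free. Concretely, one can blow up an appropriate Fitting ideal $\mathrm{Fitt}_r(\mE_i)$, principalize, and iterate until the successive cokernels become locally free. To achieve $G_i$-equivariance, observe that the Fitting ideals are intrinsic invariants of $\mE_i$ and are therefore automatically $G_i$-invariant; applying an equivariant principalization (Hironaka--Bierstone--Milman) then yields $G_i$-invariant smooth centers, so each blowup and hence each $g_i$ is $G_i$-equivariant. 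Since the centers lie inside $S_i$, the map $g_i$ is a biholomorphism over $V_i\setminus S_i$.

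Next I would glue the local resolutions into a global bimeromorphism of orbifolds. The key point is canonicity: the centers chosen at each stage depend only on intrinsic data of the torsion-free sheaf (Fitting ideals together with the Hironaka invariant), so for any orbifold embedding $(\varphi,\lambda)\colon (V_k,G_k,\mu_k)\hookrightarrow(V_i,G_i,\mu_i)$, the sequence of blowups obtained by pulling back along $\varphi$ coincides canonically with the sequence constructed directly on $V_k$. This yields induced embeddings $\widehat{\varphi}\colon \widehat{V}_k\hookrightarrow\widehat{V}_i$ compatible with the group monomorphisms $\lambda$, so that $\{\widehat{V}_i,G_i\}$ defines a complex orbifold $W_\orb$ and the $\{g_i\}$ assemble into a projective holomorphism $g\colon W_\orb\raw Y_\orb$. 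Projectivity is inherited from the local blowups via their $G_i$-equivariant relative very ample line bundles, and by construction $E_\orb=\{g_i^*\mE_i/(\mathrm{tor})\}$ is a vector orbi-bundle.

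The hard part will be the simultaneous equivariance and functoriality of the centers at each blowup step. While Rossi's theorem on a single complex manifold is classical, arranging the construction so that the centers are at once $G_i$-invariant and canonical under chart embeddings requires invoking a $G$-equivariant, functorial form of Hironaka-type resolution, and verifying that such a version is available in the analytic (rather than merely algebraic) orbifold setting is the delicate issue. Once this is secured, the gluing and both items (1) and (2) of the statement follow directly from the local picture.
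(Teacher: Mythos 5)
The paper does not prove this lemma at all; it is quoted verbatim from Ou \cite[Theorem~4.10]{ou24}, so there is no in-paper argument to compare against. Your sketch reproduces the standard proof strategy behind the cited result: equivariant local flattening of $\mE_i$ by blowups in canonically defined (Fitting-type) ideals supported on the non-locally-free locus, followed by gluing via functoriality of the resolution algorithm under chart embeddings. That is the right route, and you correctly identify the only genuinely delicate point, namely that one needs a flattening procedure by \emph{smooth} centers that is simultaneously $G_i$-equivariant and functorial for the embeddings $(\varphi,\lambda)$. What your write-up stops short of doing is actually supplying that ingredient: as written, the "hard part" is flagged but not discharged. It can be discharged — Rossi/Hironaka flattening reduces to blowing up a canonically attached ideal sheaf, and the functorial principalization of Bierstone–Milman (valid in the analytic category and functorial with respect to smooth morphisms, hence automatically equivariant under finite group actions and compatible with open chart embeddings) converts this into a composition of blowups with smooth invariant centers — but you should cite or state that input explicitly rather than leave it as an open caveat, since without it the gluing step and item (1) of the lemma are not established.
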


\begin{proof}[Proof of \eqref{chern-weil}]
  By applying Lemma \ref{lem-resoorbi} to $E_\orb/\mF_\orb$ and $\mF_\orb$ successively, we can obtain an orbifold $Y_\orb$ and a bimeromorphism $f_\orb:Y_\orb\rightarrow X_\orb$ such that there exists a $f_\orb^*\theta_{\orb}$-invariant orbi-subbundle $\mF_\orb'$ of $f_\orb^*\mE_\orb$ such that $(f_\orb)_*{\mF_\orb'}=\mF_\orb$ in codimension $1$. Using the Hermitian metric $\pi_\orb^*H_\orb|_{\mF_\orb'}$ to compute $c_1^\orb(\mF_\orb')$, \eqref{chern-weil} can be immediately implied by Lemma \ref{lem-compatible-orbifold-pullback}.
\end{proof}

\subsubsection{HN filtration of Higgs orbi-sheaves}
Let $X$ be a compact complex space with quotient singularities and $X_{orb}:=\{U_i,\pi_i,G_i\}$ be a complex orbifold structure of $X$.
\begin{definition}[Higgs orbi-sheaves]
	A {\em Higgs orbi-sheaf} $(\mE_{orb},\theta_{orb}):=\{(\mE_i,\theta_i)\}$ over $X_{orb}$ is a pair of a coherent orbi-sheaf $\mE_{orb}$ and a morphism $\theta_{orb}:\mE_{orb}\rightarrow\mE_{orb}\otimes\Omega_{X_{orb}}$ such that $\theta_{orb}\wedge\theta_{orb}=0$. We say that $(\mE_{orb},\theta_{orb})$ is torsion free (resp. reflexive, locally free) if $\mE_{orb}$ is torsion-free (resp. reflexive, locally free). An orbi-subsheaf $\mF_{orb}$ is called $\theta_{orb}$-invariant if $\theta_{orb}(\mF_{orb})\subseteq \mF_{orb}\otimes\Omega_{X_{orb}}$, i.e., $\theta_i(\mF_i)\subseteq\mF_i\otimes\Omega_{U_i}$, $\forall i$.
\end{definition}

\begin{definition}[Nef $(p,p)$-classes]
	$\gamma\in H_{BC}^{p,p}(X_\orb,\R)$ is said to be {\em nef} if for any $\epsilon>0$, there exists some smooth orbifold $(p,p)$-form $\eta_{\orb,\epsilon}\in \gamma$ such that $\eta_\epsilon\geq-\epsilon \w_\orb^{p}$, where $\w_\orb$ is a fixed Hermitian metric.
\end{definition}

As in the smooth case, the degree of a coherent sheaf $\mE_{orb}$ with respect to some orbifold nef $(n-1,n-1)$-class $\eta_\orb$ is defined by $\deg_{\eta_{orb}}(\mE_{orb}):=c_1^\orb(\mE_{orb})\cdot[\eta_{orb}],$ and the slope  by
$\mu_{\eta_{orb}}(\mE_{orb}):=\frac{\deg_{\eta_{orb}}(\mE_{orb})}{\rank \mE_{orb}}.$ Then stability can be introduced.

\begin{definition}[Semitability]
	A torsion-free Higgs orbi-sheaf $\mE_{orb}$ is said to be {\em $\eta_{orb}$-stable} if $\mu_{\eta_{orb}}(\mF_{orb})\leq\mu_{\eta_{orb}}(\mE_{orb})$ for any $\theta_{orb}$-invariant orbi-subsheaf $\mF_{orb}\subseteq\mE_{orb}$ with $0<\rank\mF_{orb}<\rank\mE_{orb}.$ Analogously, stability can be defined.
\end{definition}

 Let $(\mE_\orb,\theta_\orb)$ be a torsion-free Higgs orbi-sheaf and $\eta_\orb$ be a nef $(n-1,n-1)$-class on $X_\orb$. Now let us prove the existence of Harder-Narasimhan filtration of $(\mE_\orb,\theta_\orb)$. Firstly,a standard argument of \cite[Proposition 5.6.14]{Koba} implies
\begin{lemma}\label{lem-orbi-torsionsheaf}
	For any torsion orbi-sheaf $\mT_\orb$, $c_1^\orb(\mT_\orb)\cdot\eta_\orb\geq0$. In particular, for a orbi-sheaf subsheaf $\mF_\orb$, we have $c_1^\orb(\mF_\orb)\cdot[\eta_\orb]\leq c_1^\orb(\mathcal{H}_\orb)\cdot[\eta_\orb]$, where $\mathcal{H}_\orb$ is the saturation of $\mF_\orb$ in $\mE_\orb$.
\end{lemma}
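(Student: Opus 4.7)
The plan is to reduce the statement to a local computation on each orbifold chart, apply the analogue of \cite[Proposition 5.6.14]{Koba} to realize $c_1^\orb(\mT_\orb)$ by an effective orbifold divisor, and then pair it against the nef class.

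First I would pass to an orbifold chart $(U_i,G_i,\pi_i)$. Since $U_i$ is smooth and $\mT_i$ is a $G_i$-equivariant torsion coherent sheaf, the standard argument (localize at the generic points of the codimension-$1$ components $Y_{i,\alpha}$ of $\supp(\mT_i)$, with generic lengths $n_{i,\alpha}$, and note that line bundles on $U_i$ are determined in codimension $2$) produces a canonical $G_i$-equivariant isomorphism
\beqn
\det \mT_i \cong \mO_{U_i}\Bigl(\sum_\alpha n_{i,\alpha}\, Y_{i,\alpha}\Bigr).
\eeqn
The canonicality of the determinant construction forces these effective $G_i$-invariant divisors $D_i := \sum_\alpha n_{i,\alpha}Y_{i,\alpha}$ to be compatible under the orbifold embeddings on overlaps, so they glue to a global effective orbifold divisor $D_\orb$ satisfying $\det(\mT_\orb) \cong \mO_{X_\orb}(D_\orb)$ and hence $c_1^\orb(\mT_\orb) = [D_\orb]$.

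Next I would prove $[D_\orb]\cdot\eta_\orb\geq 0$. By the definition of nefness, for every $\epsilon>0$ there is a smooth orbifold $(n-1,n-1)$-form $\eta_{\orb,\epsilon}\in\eta_\orb$ with $\eta_{\orb,\epsilon}+\epsilon\,\w_\orb^{n-1}\geq 0$. The current of integration on $D_\orb$ is closed and (strongly) positive as a $(1,1)$-current, and pairing with a nonnegative $(n-1,n-1)$-form yields a nonnegative number, whence
\beqn
[D_\orb]\cdot\eta_\orb + \epsilon\,[D_\orb]\cdot[\w_\orb]^{n-1} = \int_{D_\orb}\bigl(\eta_{\orb,\epsilon}+\epsilon\,\w_\orb^{n-1}\bigr) \geq 0.
\eeqn
Letting $\epsilon\to 0$ establishes the first assertion.

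For the second assertion, by definition the saturation $\mathcal{H}_\orb$ of $\mF_\orb$ in $\mE_\orb$ fits into a short exact sequence
\beqn
0 \rightarrow \mF_\orb \rightarrow \mathcal{H}_\orb \rightarrow \mathcal{H}_\orb/\mF_\orb \rightarrow 0,
\eeqn
with torsion quotient $\mathcal{H}_\orb/\mF_\orb$. Additivity of the determinant gives $c_1^\orb(\mathcal{H}_\orb) = c_1^\orb(\mF_\orb) + c_1^\orb(\mathcal{H}_\orb/\mF_\orb)$, and the first part applied to the torsion quotient yields the desired inequality. The only point requiring care is the equivariance and gluing of the local divisor representatives, which are automatic from the canonicality of $\det$ together with the fact that $\det$ is determined by its generic behavior away from a codimension-$2$ set; this is the analogue of Kobayashi's argument translated to the orbifold setting.
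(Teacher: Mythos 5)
Your proposal is correct and is essentially the argument the paper intends: the paper proves this lemma by simply invoking "a standard argument of \cite[Proposition 5.6.14]{Koba}", which is precisely the reduction you carry out — realizing $\det(\mT_\orb)$ by an effective ($G_i$-invariant, hence globally glued) orbifold divisor, pairing the resulting positive closed current against the nef $(n-1,n-1)$-class via the $\epsilon$-regularization, and deducing the second assertion from additivity of $c_1^\orb$ applied to the torsion quotient $\mathcal{H}_\orb/\mF_\orb$. No substantive difference from the paper's (implicit) proof.
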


\noindent Then combining \eqref{chern-weil}, we have that
\begin{lemma}\label{boundness}
	There exists a constant $C$ such that for all coherent orbi-subsheaves $\mF_\orb$ of $\mE_\orb$, we have $\deg_{\eta_\orb}(\mF_\orb)\leq C$.
\end{lemma}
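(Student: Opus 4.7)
The plan is to reduce to bounding the degree of rank-one saturated reflexive orbi-subsheaves of a vector orbi-bundle, and then apply the orbifold Chern--Weil formula \eqref{chern-weil} together with the nefness of $[\eta_\orb]$. By Lemma \ref{lem-orbi-torsionsheaf}, saturating $\mF_\orb$ in $\mE_\orb$ only increases its degree, so it suffices to bound $\deg_{\eta_\orb}(\mF_\orb)$ over saturated orbi-subsheaves. For such $\mF_\orb$ of rank $r$, the natural inclusion $\det\mF_\orb\hookrightarrow(\Lambda^r\mE_\orb)^{\vee\vee}$ realizes $\det\mF_\orb$ as a rank-one saturated reflexive orbi-subsheaf with the same degree. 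Since $r\in\{1,\ldots,\rank\mE_\orb-1\}$ is a finite set, the problem reduces to producing a uniform degree bound for rank-one saturated orbi-subsheaves of each of finitely many fixed reflexive orbi-sheaves.

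Next, applying Lemma \ref{lem-resoorbi} to each $(\Lambda^r\mE_\orb)^{\vee\vee}$, I obtain a projective orbifold bimeromorphism $g_\orb:W_\orb\to X_\orb$ along which the torsion-free pullback is a vector orbi-bundle $\widetilde{E}_\orb$; by Lemma \ref{lem-compatible-orbifold-pullback}, degrees against $g^{*}[\eta_\orb]$ are preserved. Fix a smooth Hermitian metric $H_\orb$ on $\widetilde{E}_\orb$ and a Gauduchon orbifold form $\omega_\orb$ on $W_\orb$. For any rank-one saturated orbi-subsheaf $\mL_\orb\subseteq\widetilde{E}_\orb$ with orthogonal projection $\pi:=\pi_{\mL_\orb}^{H_\orb}$, nefness of $g^{*}[\eta_\orb]$ supplies a smooth representative $\eta_\epsilon$ with $\tau:=\eta_\epsilon+\epsilon\omega_\orb^{n-1}\geq 0$ as a strongly positive $(n-1,n-1)$-form. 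Applying the Chern--Weil formula \eqref{chern-weil} to $\tau$, and using $|\pi|\leq 1$ pointwise together with the pointwise nonnegativity of the $(1,1)$-form $\im\tr(\p\pi\wedge\bp\pi)$ wedged against the strongly positive $\tau$, one obtains
\begin{equation*}
\deg_{g^{*}[\eta_\orb]}(\mL_\orb)+\epsilon\deg_{[\omega_\orb^{n-1}]}(\mL_\orb)=\deg_{[\tau]}(\mL_\orb)\leq C(\epsilon),
\end{equation*}
where $C(\epsilon)$ depends only on $H_\orb$, $\omega_\orb$ and the chosen representative $\eta_\epsilon$, but not on $\mL_\orb$.

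The main obstacle is to extract a standalone upper bound on $\deg_{g^{*}[\eta_\orb]}(\mL_\orb)$ from the combined inequality above, which requires controlling $\deg_{[\omega_\orb^{n-1}]}(\mL_\orb)$ from below uniformly in $\mL_\orb$. This lower bound is not automatic from a single Chern--Weil identity, because the Gauduchon estimate applied directly to $\omega_\orb^{n-1}$ furnishes only an upper bound on $\deg_{[\omega_\orb^{n-1}]}(\mL_\orb)$. The expected route is to iterate the Chern--Weil argument across ranks and their duals---working simultaneously with the saturated quotient $\widetilde{E}_\orb/\mL_\orb$ and the induced rank-$(\rank\widetilde{E}_\orb-1)$ saturated subsheaf of $\widetilde{E}_\orb^{\vee}$---combined with a boundedness input reflecting that the first Chern classes of saturated subsheaves of a fixed coherent orbi-sheaf lie in a bounded subset of integer cohomology (the analogue of Grothendieck's Quot-scheme boundedness). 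Once this auxiliary lower bound is established, sending $\epsilon\to 0$ completes the argument; carrying out this bootstrap carefully in the orbifold framework is the heart of the technical content.
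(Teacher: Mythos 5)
Your overall route---saturate via Lemma \ref{lem-orbi-torsionsheaf}, pass to a vector orbi-bundle via Lemma \ref{lem-resoorbi} together with Lemma \ref{lem-compatible-orbifold-pullback}, then apply the Chern--Weil formula \eqref{chern-weil} against a nearly positive representative of the nef class---is exactly the paper's proof. The only structural difference is your extra reduction to rank one via $\det\mF_\orb\hookrightarrow(\Lambda^r\mE_\orb)^{\vee\vee}$; the paper works directly with the rank-$r$ saturated subsheaf and its orthogonal projection, and your reduction, while harmless, buys nothing here.

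The problem is your final paragraph: the ``obstacle'' you identify is not an obstacle, and the bootstrap through duals and Quot-scheme-type boundedness that you invoke (and do not carry out) is unnecessary---indeed it would be circular, since such boundedness statements are usually *derived from* degree bounds of this kind. You do not need $\deg_{[\omega_\orb^{n-1}]}(\mL_\orb)$ to be bounded below uniformly in $\mL_\orb$; you only need it to be finite for each fixed $\mL_\orb$, which it is. First observe that your constant $C(\epsilon)$ can be taken bounded independently of $\epsilon\in(0,1]$: since the projection satisfies $|\pi|\leq C_r$ pointwise and $F_{H_\orb}$ is a fixed smooth form, one has the pointwise estimate $\im\tr(\pi F_{H_\orb})\wedge\tau\leq C\,\omega_\orb\wedge\tau$ for the positive form $\tau=\eta_\epsilon+\epsilon\omega_\orb^{n-1}$, and integrating yields the purely cohomological bound $\deg_{[\tau]}(\mL_\orb)\leq C\,[\omega_\orb]\cdot(g^{*}[\eta_\orb]+\epsilon[\omega_\orb^{n-1}])\leq C_0$ for all $\epsilon\leq 1$ and all $\mL_\orb$. (If instead you estimate $\int\im\tr(\pi F_{H_\orb})\wedge\eta_\epsilon$ by $\sup|\eta_\epsilon|$, you lose this uniformity, which may be why you believed the constant depended on the representative in an uncontrolled way.) Now fix $\mL_\orb$ and let $\epsilon\to 0$ in $\deg_{g^{*}[\eta_\orb]}(\mL_\orb)\leq C_0-\epsilon\deg_{[\omega_\orb^{n-1}]}(\mL_\orb)$: the error term vanishes because $\deg_{[\omega_\orb^{n-1}]}(\mL_\orb)$ is a fixed real number, giving $\deg_{g^{*}[\eta_\orb]}(\mL_\orb)\leq C_0$ uniformly. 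The correct order of quantifiers---a bound uniform in both $\epsilon$ and the subsheaf, followed by $\epsilon\to 0$ with the subsheaf fixed---is all that is needed to close the argument.
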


\begin{proof}
	Applying Lemma \ref{lem-resoorbi} to $\mE_\orb$ and adapt its notations. Suppose that $\mF_\orb$ is an orbi-sheaf of $\mE_\orb$ and we may assume that $\mF_\orb$ is saturated by Lemma \ref{lem-orbi-torsionsheaf}. Let $\mF_\orb'$ be the saturation of $\image(g_\orb^*\mF_\orb\rightarrow g_\orb^*\mF_\orb)$. Then $\pi_*\mF_\orb'=\mF_\orb$ in codimension $1$ and thus $\deg_{\eta_\orb}(\mF_\orb)=\deg_{g_\orb^*\eta_\orb}(\mF_\orb')$ by Lemma \ref{lemma-compatible-pullback}
\end{proof}

Note that the saturation of a $\theta_\orb$-invariant orbi-sheaf is also $\theta_\orb$-invariant from the following elementary statement.
\begin{lemma}\label{lem-saturation-invariant}
	Let $(\mE,\theta)$ be a torsion-free Higgs sheaf on a complex manifold $V$. Suppose that $\mF$ is a subsheaf of $\mE$ and there exists a Zariski dense open subset $W\subset V$ such that $\mF|_W$ is $\theta$-invariant, then its saturation $\mF^{sat}$ is $\theta$-invariant.
\end{lemma}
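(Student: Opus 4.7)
The plan is to verify the defining criterion by showing that the composition
\[
\phi\colon \mF^{sat} \xrightarrow{\theta|_{\mF^{sat}}} \mE\otimes\Omega_V^1 \twoheadrightarrow (\mE/\mF^{sat})\otimes\Omega_V^1
\]
vanishes. Once $\phi=0$, the factorization $\theta(\mF^{sat})\subseteq\mF^{sat}\otimes\Omega_V^1$ is automatic and the lemma follows.

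The two structural inputs I would use are: (i) by the very definition of saturation, the quotient $\mathcal{Q}:=\mE/\mF^{sat}$ is torsion-free, and since $V$ is a manifold, $\Omega_V^1$ is locally free, so $\mathcal{Q}\otimes\Omega_V^1$ is itself torsion-free; (ii) the coherent sheaf $\mF^{sat}/\mF$ is torsion, hence supported on a proper analytic subset $Z\subseteq V$, and on the Zariski open $U:=V\setminus Z$ we have the identification $\mF|_U=\mF^{sat}|_U$.

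With these in hand, on the Zariski dense open $U\cap W$ we have $\mF^{sat}|_{U\cap W}=\mF|_{U\cap W}$, which is $\theta$-invariant by hypothesis; hence $\phi$ vanishes on $U\cap W$. The image $\image\phi$ is then a subsheaf of the torsion-free coherent sheaf $\mathcal{Q}\otimes\Omega_V^1$ that vanishes on a Zariski dense open, and any such subsheaf on a complex manifold must be zero: locally, a section of $\image\phi$ supported in the complement of $U\cap W$ is killed by a non-zero holomorphic function cutting out that complement, so torsion-freeness forces it to vanish. I expect this final vanishing step to be the only point deserving explicit mention, but it is entirely standard once one uses that $\Omega_V^1$ is locally free so the target stays torsion-free.
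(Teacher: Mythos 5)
Your proposal is correct and follows essentially the same route as the paper: both consider the composition $\mF^{sat}\to\mE\otimes\Omega_V^1\to(\mE/\mF^{sat})\otimes\Omega_V^1$, observe it vanishes on a Zariski dense open set where $\mF^{sat}$ agrees with $\mF$ (and where the hypothesis applies), and conclude it vanishes identically because the target is torsion-free. Your write-up is in fact slightly more explicit than the paper's about why the target is torsion-free and why vanishing on a dense open forces global vanishing.
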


\begin{proof}
	Consider the following composition of morphisms
	$$\mF^{sat}\xrightarrow{\theta|_{\mF^{sat}}}\mE\otimes\Omega_V^1\xrightarrow{projection}(\mE/\mF^{sat})\otimes\Omega_V^1,$$
	which vanished on $W\setminus\Sigma$ where $\Sigma$ is the non-locally-free locus of $\mE/\mF$, and thus vanishes on $V$ since $(\mE/\mF^{sat})\otimes\Omega_V^1$ is torsion-free.
\end{proof}

\begin{lemma}[The saturated orbi-subsheaf with the maximal slope]\label{maximalslope}
	Let $(E_\orb,\theta_\orb)$ be a torsion-free Higgs orbi-sheaf on a compact complex orbifold $(X,\eta)$. Suppose that $\eta_\orb$ is a nef $(n-1,n-1)$ class. We can find a uniquess $\theta_\orb$-invariant saturated orbi-subsheaf $\mE_{1,\orb}$ of $\mE_\orb$ such that for any $\theta_\orb$-invariant orbi-sheaf $\mF_\orb$ of $\mE_\orb$, we have $\mu_{\eta_\orb}(\mF_\orb)\leq \mu_{\eta_\orb}(\mE_{1,\orb})$; and $\rank (\mS_\orb)\leq \rank(\mE_{1,\orb})$ if the equality holds. In particular, such $\mF_{1,\orb}$ is $\eta_\orb$-semistable with the induced Higgs field.
\end{lemma}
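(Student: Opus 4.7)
The plan is to adapt Kobayashi's classical argument (cf.\ \cite{Koba}) for the existence of the maximal destabilising subsheaf to the orbifold Higgs setting. First I set up the extremal problem. By Lemma \ref{boundness}, the degrees of nonzero $\theta_\orb$-invariant saturated orbi-subsheaves of $\mE_\orb$ are uniformly bounded above, so
\[
\mu_{\max}:=\sup\bigl\{\mu_{\eta_\orb}(\mF_\orb) : \mF_\orb\subseteq\mE_\orb\text{ is a nonzero }\theta_\orb\text{-invariant saturated orbi-subsheaf}\bigr\}
\]
is finite. Since the ranks of such subsheaves lie in $\{1,\ldots,r\}$ with $r=\rank\mE_\orb$, there is also a well-defined maximal rank $r_{\max}$ among those eventually realising slope $\mu_{\max}$.

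The algebraic heart of the proof is a sum--intersection closure property. For two $\theta_\orb$-invariant saturated orbi-subsheaves $\mF_{1,\orb}, \mF_{2,\orb}$ of $\mE_\orb$, the intersection $\mF_{1,\orb}\cap\mF_{2,\orb}$ is obviously $\theta_\orb$-invariant, while $\mF_{1,\orb}+\mF_{2,\orb}\subseteq\mE_\orb$ is $\theta_\orb$-invariant as the image of the natural $\theta_\orb$-compatible morphism $\mF_{1,\orb}\oplus\mF_{2,\orb}\to\mE_\orb$; Lemma \ref{lem-saturation-invariant} then transfers $\theta_\orb$-invariance to its saturation $(\mF_{1,\orb}+\mF_{2,\orb})^{\mathrm{sat}}$. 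Combining the short exact sequence
\[
0\longrightarrow\mF_{1,\orb}\cap\mF_{2,\orb}\longrightarrow\mF_{1,\orb}\oplus\mF_{2,\orb}\longrightarrow\mF_{1,\orb}+\mF_{2,\orb}\longrightarrow 0
\]
with additivity of $\deg_{\eta_\orb}$ and Lemma \ref{lem-orbi-torsionsheaf} to handle saturation, one checks that $\mu_{\eta_\orb}(\mF_{i,\orb})\ge\mu_{\max}$ for $i=1,2$ forces $\mu_{\eta_\orb}((\mF_{1,\orb}+\mF_{2,\orb})^{\mathrm{sat}})\ge\mu_{\max}$, with rank $\ge\max(\rank\mF_{1,\orb},\rank\mF_{2,\orb})$. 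Thus the family of $\theta_\orb$-invariant saturated subsheaves attaining slope $\mu_{\max}$ is directed under inclusion, and iterated summation along any maximising sequence produces a distinguished element $\mE_{1,\orb}$ of maximal rank $r_{\max}$ and slope $\mu_{\max}$. Uniqueness follows by the same device: if $\mF_\orb$ is another competitor of rank $r_{\max}$ and slope $\mu_{\max}$, then $(\mF_\orb+\mE_{1,\orb})^{\mathrm{sat}}$ has slope $\ge\mu_{\max}$ and rank $\ge r_{\max}$, forcing by maximality $\mF_\orb\subseteq\mE_{1,\orb}$; saturation and equality of rank then give $\mF_\orb=\mE_{1,\orb}$. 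Semistability of $(\mE_{1,\orb},\theta_\orb|_{\mE_{1,\orb}})$ is immediate from the defining property of $\mu_{\max}$.

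The main technical obstacle is the attainment of $\mu_{\max}$ by a genuine orbi-subsheaf: bounded degree alone does not, in the analytic category, guarantee that a maximising sequence has a coherent limit, since the noetherianity of Quot schemes available in the projective setting is no longer at our disposal. My plan to overcome this is to combine the sum--intersection closure above with a stabilisation argument on ascending chains of $\theta_\orb$-invariant saturated subsheaves of rank $\le r$ and bounded degree. Such chains stabilise either by working locally and invoking noetherianity of $G_i$-equivariant coherent sheaves on each orbifold chart $U_i$, or by passing through the partial orbifold resolution provided by Lemma \ref{lem-resoorbi} and applying classical Bruasse-type compactness for bounded families of saturated subbundles on a Gauduchon orbifold. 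Either route yields the required $\mE_{1,\orb}$ and completes the proof.
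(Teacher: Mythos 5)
Your skeleton — finiteness of $\mu_{\max}$ via Lemma \ref{boundness}, the sum--intersection estimate with Lemmas \ref{lem-orbi-torsionsheaf} and \ref{lem-saturation-invariant}, uniqueness and semistability from the extremal property — is exactly the frame the paper uses (it cites the argument of \cite[pp.~82--84]{GKKP14} for existence and \cite[p.~591]{Br01} for uniqueness). But the step you yourself flag as ``the main technical obstacle,'' namely that the supremum $\mu_{\max}$ is actually \emph{attained}, is left as a plan rather than a proof, and neither of your two proposed routes closes it as stated. The ACC route does not suffice on its own: if you replace a maximising sequence $\mF_{i,\orb}$ by the ascending chain of saturated partial sums $\mG_{k,\orb}:=\bigl(\mF_{1,\orb}+\cdots+\mF_{k,\orb}\bigr)^{\mathrm{sat}}$, stabilisation of this chain only produces a single saturated $\theta_\orb$-invariant subsheaf containing all $\mF_{i,\orb}$ for $i\gg1$; nothing forces its slope to equal $\mu_{\max}$ (it could well be all of $\mE_\orb$, whose slope may be strictly smaller). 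The appeal to ``Bruasse-type compactness for bounded families'' is likewise not an argument; in the paper Bruasse is invoked only for uniqueness, not for attainment.

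The way the cited GKKP-type argument actually resolves attainment is combinatorial, not by compactness: among the ranks $p\in\{1,\dots,r\}$ for which there exists a sequence of saturated $\theta_\orb$-invariant orbi-subsheaves of rank $p$ with slopes tending to $\mu_{\max}$, choose $p$ \emph{maximal} and fix such a sequence $\mF_{i,\orb}$ with $\mu_{\eta_\orb}(\mF_{i,\orb})\ge\mu_{\max}-\epsilon_i$, $\epsilon_i\to0$. Your own exact sequence together with $\deg_{\eta_\orb}(\mF_{i,\orb}\cap\mF_{j,\orb})\le\rank(\mF_{i,\orb}\cap\mF_{j,\orb})\,\mu_{\max}$ gives $\mu_{\eta_\orb}\bigl((\mF_{i,\orb}+\mF_{j,\orb})^{\mathrm{sat}}\bigr)\ge\mu_{\max}-2r\max(\epsilon_i,\epsilon_j)$. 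If the rank of these sums exceeded $p$ for infinitely many pairs, some rank $>p$ would also carry a maximising sequence, contradicting the maximality of $p$; hence for $i,j\gg1$ the saturation $(\mF_{i,\orb}+\mF_{j,\orb})^{\mathrm{sat}}$ has rank $p$ and contains the saturated rank-$p$ sheaves $\mF_{i,\orb}$ and $\mF_{j,\orb}$, forcing $\mF_{i,\orb}=\mF_{j,\orb}$. The maximising sequence is therefore eventually constant and $\mu_{\max}$ is attained; the rest of your argument then goes through. You should replace your two tentative routes by this rank-maximality argument (or an explicit reference to it).
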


\begin{proof}
	Building on Lemma \ref{lem-saturation-invariant}, Lemma \ref{lem-orbi-torsionsheaf} and Lemma \ref{boundness}, the arguments of \cite[Pages 82-84]{GKKP14} implies the existence of such $\mE_{1,\orb}$ and the arguments of \cite[Pages 591]{Br01} implies the uniqueness.
\end{proof}

By applying Lemma \ref{maximalslope} to $\mE_\orb/\mE_{1,\orb}$ and the induction arguments, we obtain
\begin{lemma}[HN filtration]\label{lem-orbi-HNfiltration}
	Let $(\mE_\orb,\theta_\orb)$ be torsion-free Higgs orbi-sheaf of $\rank$ r over a compact complex orbifold $X$ and $\eta_\orb$ be a $(n-1,n-1)$-nef class. Then there exists a unique filtration by $\theta_\orb$-invariant coherent orbi-subsheaves
	$$0=\mE_{\orb,0}\subset\mE_{\orb,1}\subset\cdots\subset\mE_{\orb,l}=\mE_\orb,$$
	such that every quotient torsion-free orbi-sheaf $\mQ_{\orb,k}=\mE_{\orb,k}/\mE_{\orb,k-1}$ with the naturally induced Higgs field $\theta_{\orb,k}$ is $\eta_\orb$-semistable and $\mu_{\eta_\orb}(\mQ_{\orb,k})>\mu_{\eta_\orb}(\mQ_{\orb,k-1})$.
\end{lemma}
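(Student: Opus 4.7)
The plan is to construct the filtration by iterated application of Lemma \ref{maximalslope} and then verify the strict slope comparison between successive quotients together with uniqueness.

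Starting from $\mE_{\orb,0}:=0$, assume $\mE_{\orb,k-1}\subsetneq\mE_\orb$ has already been constructed as a saturated $\theta_\orb$-invariant orbi-subsheaf. Since $\mE_{\orb,k-1}$ is saturated, the quotient $\mE_\orb/\mE_{\orb,k-1}$ is torsion-free; since it is $\theta_\orb$-invariant, the Higgs field descends to $\bar\theta_\orb$. Apply Lemma \ref{maximalslope} to the torsion-free Higgs orbi-sheaf $(\mE_\orb/\mE_{\orb,k-1},\bar\theta_\orb)$ to obtain its distinguished maximal destabilizing saturated $\theta_\orb$-invariant orbi-subsheaf $\mathcal{M}_{\orb,k}\subseteq \mE_\orb/\mE_{\orb,k-1}$, and define $\mE_{\orb,k}$ as the preimage of $\mathcal{M}_{\orb,k}$ under the quotient map. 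Then $\mE_{\orb,k}$ is saturated and $\theta_\orb$-invariant in $\mE_\orb$, and $\mQ_{\orb,k}:=\mE_{\orb,k}/\mE_{\orb,k-1}=\mathcal{M}_{\orb,k}$ is $\eta_\orb$-semistable with its induced Higgs field by the final clause of Lemma \ref{maximalslope}. Since $\rank(\mE_{\orb,k})$ strictly increases at each step and is bounded above by $r$, the process terminates after finitely many steps with $\mE_{\orb,l}=\mE_\orb$.

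Next I would verify the strict slope comparison between consecutive graded pieces (with the standard convention that the maximal-destabilizing construction yields a strictly decreasing sequence of slopes). Arguing by contradiction, suppose $\mu_{\eta_\orb}(\mathcal{M}_{\orb,k})\geq\mu_{\eta_\orb}(\mathcal{M}_{\orb,k-1})$ for some $k\geq 2$. Take the preimage of $\mathcal{M}_{\orb,k}\subset\mE_\orb/\mE_{\orb,k-1}$ under the quotient map $\mE_\orb/\mE_{\orb,k-2}\twoheadrightarrow\mE_\orb/\mE_{\orb,k-1}$; this produces a $\theta_\orb$-invariant saturated orbi-subsheaf of $\mE_\orb/\mE_{\orb,k-2}$ strictly containing $\mathcal{M}_{\orb,k-1}$, whose slope, by additivity of degree in short exact sequences, is at least $\mu_{\eta_\orb}(\mathcal{M}_{\orb,k-1})$. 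Strict inequality contradicts the slope-maximality of $\mathcal{M}_{\orb,k-1}$, while equality contradicts the rank-maximality clause, both ensured by Lemma \ref{maximalslope}.

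Uniqueness follows by induction on rank: Lemma \ref{maximalslope} characterizes $\mE_{\orb,1}$ uniquely as the saturated $\theta_\orb$-invariant orbi-subsheaf of maximal slope and of maximal rank among such. For any competing filtration with the stated properties, its first nontrivial term is $\eta_\orb$-semistable and all subsequent graded pieces have strictly smaller slope, so the first term necessarily coincides with $\mE_{\orb,1}$; passing to $\mE_\orb/\mE_{\orb,1}$ and iterating completes the argument. The only subtlety is that the quotient by a saturated $\theta_\orb$-invariant orbi-subsheaf inherits a torsion-free Higgs structure, which is immediate; consequently the main technical content is already encoded in Lemma \ref{maximalslope}, and no further obstacle is expected here.
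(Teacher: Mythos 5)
Your proposal is correct and follows essentially the same route as the paper, which derives the lemma by iterating Lemma \ref{maximalslope} on successive quotients and invoking induction without writing out the details; you simply supply the standard verification of strict slope decrease (via the preimage/weighted-average argument and the rank-maximality clause) and of uniqueness. Note only that you prove the slopes are strictly \emph{decreasing} along the filtration, which is the intended statement; the inequality $\mu_{\eta_\orb}(\mQ_{\orb,k})>\mu_{\eta_\orb}(\mQ_{\orb,k-1})$ as printed in the lemma appears to be a typo (compare Proposition \ref{defn-HN-existence}).
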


\begin{definition}[HN type]
	Using notations of Lemma \ref{lem-orbi-HNfiltration}, {\em HN type}  is defined by $$\vec{\mu}_{\eta_\orb}(\mE_\orb,\theta_\orb):=(\mu_{1,\eta_\orb},\cdots,\mu_{r,\eta_\orb}),$$
	where $\mu_{i,\eta_\orb}:=\mu_{\eta_\orb}(\mQ_\orb,k)$ for $\rank(\mE_{\orb,k-1})-1\leq i\leq \rank (\mE_{\orb,k})$.
\end{definition}
\noindent In particular,
\begin{equation}\label{the minimal slope}
	\mu_{r,\eta_\orb}=\inf\{\mu_{\eta_\orb}(\mathcal{Q}_{\orb})|\mathcal{Q}_\orb\text{ is a $\theta_\orb$-invariant quotient orbi-sheaf}\},
\end{equation}
 \begin{equation}\label{the maximal slope}
 	\mu_{1,\eta_\orb}=\sup\{\mu_{\eta_\orb}(\mathcal{S}_\orb)|\mathcal{S}_\orb \text{ is a } \theta_{\orb}\text{-invariant torsion-free orbi-subsheaf}\}.
 \end{equation}
We conclude this section by proving the following elementary statement, which

\begin{lemma}[Invariance of the HN type and the stability]\label{lem-orbi-pullback-HNfiltration}
	Let $g_\orb:Y_\orb\rightarrow X_\orb$ be a projective bimeromorphism of compact complex orbifolds with the following data.
	\begin{itemize}
		\item $\eta_\orb$ is a nef $(n-1,n-1)$-classes on $X_\orb$.
		\item $(\mF_\orb,\theta_{\mF_\orb}),(\mE_\orb,\theta_{\mE_\orb})$ are torsion-free orbi-sheaves on $Y_\orb,X_\orb$.
		\item An orbifold subvariety $Z_\orb\subset X_\orb$ containing the indeterminacy locus of $g_\orb^{-1}$ with $\codim_XZ\geq2$ such that $(\mF_\orb,\theta_{\mF_\orb})=g_\orb^*(\mE_\orb,\theta_{\mE_\orb})$ outside $g_\orb^{-1}(Z_\orb)$.
	\end{itemize}
	Then the following statement holds.
	\begin{itemize}
		\item[(1)] $(\mF_\orb,\theta_{\mF_\orb})$ is $g^*\eta_\orb$-stable if and only if $(\mE_\orb,\theta_{\mE_\orb})$ is $\eta_\orb$-stable.
		\item[(2)] $\vec{\mu}_{\eta_\orb}(\mE_\orb,\theta_{\mE_\orb})=\vec{\mu}_{g^*\eta_\orb}(\mF_\orb,\theta_{\mF_\orb})$ and the HN filtration of $(\mF_\orb,\theta_{\mF_\orb})$ coincides with the pull-back of the HN filtration of $(\mF_\orb,\theta_\orb)$ outside $g_\orb^{-1}(Z)$.
	\end{itemize}
\end{lemma}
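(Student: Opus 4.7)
The plan is to build a slope-preserving bijection between saturated $\theta$-invariant orbi-subsheaves on the two sides and to read off both statements from it. Writing $U_\orb = X_\orb\setminus Z_\orb$ and $V_\orb = Y_\orb\setminus g_\orb^{-1}(Z_\orb)$, the hypothesis gives an isomorphism $g_\orb|_{V_\orb}:V_\orb\xrightarrow{\sim}U_\orb$ identifying $(\mF_\orb,\theta_{\mF_\orb})|_{V_\orb}$ with $(\mE_\orb,\theta_{\mE_\orb})|_{U_\orb}$. Given a saturated $\theta_{\mE_\orb}$-invariant orbi-subsheaf $\mS_\orb\subset \mE_\orb$, I would define $\widetilde{\mS}_\orb\subset \mF_\orb$ to be the saturation of the extension across $Y_\orb\setminus V_\orb$ of the subsheaf of $\mF_\orb|_{V_\orb}$ corresponding to $\mS_\orb|_{U_\orb}$; Lemma \ref{lem-saturation-invariant} ensures $\widetilde{\mS}_\orb$ is $\theta_{\mF_\orb}$-invariant since $V_\orb$ is Zariski dense. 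The assignment $\mS_\orb\leftrightarrow\widetilde{\mS}_\orb$ is a bijection because two saturated subsheaves of a torsion-free sheaf agreeing on a dense open set must coincide (their discrepancy would sit as a torsion subsheaf inside a torsion-free quotient).

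For slope preservation, ranks agree (measured on $U_\orb\cong V_\orb$), while $(g_\orb)_*\widetilde{\mS}_\orb$ and $\mS_\orb$ agree on $U_\orb$, hence in codimension one since $\codim_XZ\geq 2$. Because the indeterminacy of $g_\orb^{-1}$ also has codimension $\geq 2$, Lemma \ref{lem-compatible-orbifold-pullback} with $k=1$ yields
\begin{equation*}
c_1^\orb(\widetilde{\mS}_\orb)\cdot g_\orb^*\eta_\orb \;=\; c_1^\orb(\mS_\orb)\cdot\eta_\orb,
\end{equation*}
so $\mu_{g^*\eta_\orb}(\widetilde{\mS}_\orb)=\mu_{\eta_\orb}(\mS_\orb)$; taking $\mS_\orb=\mE_\orb$ gives in particular $\mu_{g^*\eta_\orb}(\mF_\orb)=\mu_{\eta_\orb}(\mE_\orb)$. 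Claim (1) is then immediate, since a proper destabilizing $\theta$-invariant saturated subsheaf on either side transports to a proper destabilizer of the same slope on the other. For (2), I take the HN filtration $0=\mE_{\orb,0}\subset\cdots\subset\mE_{\orb,l}=\mE_\orb$ of Lemma \ref{lem-orbi-HNfiltration} and transport it to a filtration $0=\widetilde{\mE}_{\orb,0}\subset\cdots\subset\widetilde{\mE}_{\orb,l}=\mF_\orb$; the slopes of successive quotients are preserved by additivity of $c_1^\orb$ in exact sequences together with Step two. To verify that each quotient $\widetilde{\mE}_{\orb,k}/\widetilde{\mE}_{\orb,k-1}$ is $g^*\eta_\orb$-semistable, I would lift any $\theta_{\mF_\orb}$-invariant saturated subsheaf to $\mT'\subset\widetilde{\mE}_{\orb,k}$ containing $\widetilde{\mE}_{\orb,k-1}$, push it across to $\mT\subset\mE_\orb$ via the bijection, observe that the inclusion $\mT\subset\mE_{\orb,k}$ holds on $U_\orb$ and hence globally (since $\mE_\orb/\mE_{\orb,k}$ is torsion-free, the cokernel $\mT/(\mT\cap\mE_{\orb,k})$ would otherwise be torsion inside a torsion-free sheaf), and conclude via the semistability of $\mE_{\orb,k}/\mE_{\orb,k-1}$. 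Uniqueness in Lemma \ref{lem-orbi-HNfiltration} then identifies the transported filtration with the HN filtration of $(\mF_\orb,\theta_{\mF_\orb})$, and both the equality of HN types and the coincidence of the filtrations off $g_\orb^{-1}(Z_\orb)$ follow.

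The main obstacle I anticipate is the bookkeeping around the (possibly codimension-one) exceptional locus $Y_\orb\setminus V_\orb$: one must track how saturation, $\theta$-invariance and the orbifold $G_i$-linearizations interact when propagating subsheaves back and forth across this locus. The codimension-two hypothesis on $Z_\orb$ is indispensable, because it is precisely what brings Lemma \ref{lem-compatible-orbifold-pullback} to bear on first Chern classes and hence on slopes; any weakening would break slope preservation at the decisive step.
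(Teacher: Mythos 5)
Your proposal is correct in outline and follows essentially the same route as the paper: transport saturated $\theta$-invariant orbi-subsheaves between the two sides, preserve slopes via Lemma \ref{lem-compatible-orbifold-pullback} (using $\codim_X Z\geq 2$ and Lemma \ref{lem-orbi-torsionsheaf} to pass to saturations), and conclude from uniqueness. The paper organizes this as an induction on the maximal destabilizer of Lemma \ref{maximalslope} rather than transporting the whole filtration at once and invoking uniqueness of the HN filtration, but that is a cosmetic difference. The one step you should make precise is the construction of $\widetilde{\mS}_\orb$: ``the extension across $Y_\orb\setminus V_\orb$'' of a coherent subsheaf is not automatic, because $Y_\orb\setminus V_\orb=g_\orb^{-1}(Z_\orb)$ is typically a divisor and the direct image under the open immersion $j:V_\orb\hookrightarrow Y_\orb$ of a coherent sheaf need not be coherent in codimension one. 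The paper sidesteps this by taking the globally defined pull-back $g_\orb^*\mS_\orb$, intersecting (its image) with $\mF_\orb$, and saturating; in the reverse direction it saturates $(g_\orb)_*(\cdot)$, which is coherent by properness of $g_\orb$. With that substitution your bijection, the slope computation, and the verification of the HN axioms for the transported filtration all go through; you should also record (as you implicitly use) that $c_1^\orb$ is additive on short exact sequences of orbi-sheaves, which follows from the Grothendieck-group formulation of $\ch^\orb$, so that slopes of successive quotients are indeed differences of degrees along the filtration.
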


The proof of (1) is similar to Proposition \ref{prop-pullback-HNfiltration} (1) and therefore omitted here.

\begin{proof}
	 We first prove that $\mu_{1,\eta_\orb}=\mu_{1,g^*\eta_\orb}$ and $\mF_{1,\orb}=g_\orb^*\theta_{\mE_\orb}$-invariant $\mE_{1,\orb}$ outside $g_\orb^{-1}(Z)$ where $\mF_{1,orb},\mE_{1,\orb}$ are given by Lemma \ref{maximalslope}. Let $\mL_\orb$ be the saturation of $g_\orb^*\mS_\orb\cap \mF_\orb$ in $\mF_\orb$. Then $\mL_\orb=g_\orb^*\mE_{1,\orb}$ outside $g_\orb^{-1}(Z_\orb)$ by the assumption. Thus Lemma \ref{lem-saturation-invariant} and Lemma \ref{lem-compatible-orbifold-pullback} imply that $\mL_\orb$ is $\theta_{\mF_\orb}$-invariant and thus $\mu_{1,g^*\eta_\orb}=\mu_{g^*\eta_\orb}(\mF_{1,\orb})\geq\mu_{g^*\eta_\orb}(\mL_\orb)=\mu_{\eta_\orb}(\mE_{1,\orb})=\mu_{1,\eta_\orb}.$ Similarly, the saturation $\mS_\orb$ of $(g_\orb)_*\mF_{1,\orb}$ in $\mF_\orb$ is $\theta_{\mE_\orb}$-invariant and we have that $\mu_{1,g^*\eta_\orb}=\mu_{g^*\eta_\orb}(\mF_{1,\orb})=\mu_{\eta_\orb}(\mS_\orb)\leq \mu_{\eta_\orb}(\mE_{1,\orb})=\mu_{1,\eta_\orb}.$ Thus $\mu_{1,\eta_\orb}=\mu_{1,g^*\eta_\orb}$ and each inequality is an equality. Recall Lemma \ref{maximalslope}, we deduce that $\rank(\mE_{1,\orb})=\rank(\mL_\orb)\leq \rank(\mF_{1,\orb})$ and $\rank(\mF_{1,\orb})=\rank(\mS_\orb)\leq \rank(\mE_{1,\orb})$. Thus $\rank(\mL_\orb)=\rank(\mF_{1,\orb})$ and $\rank(\mS_\orb)= \rank(\mE_{1,\orb})$, which implies $\mL_\orb=\mF_{1,\orb}$ and $\mS_\orb=\mE_{1,\orb}$ by uniqueness. Applying the same argument to $\mE_\orb/\mE_{1,\orb}$ and $\mF_\orb/\mF_{1,\orb}$. The proof can be completed.
\end{proof}

\subsection{Homology Chern classes}\label{chernclasses}

\subsubsection{The homology first Chern class}
\begin{definition}\label{defn-firstchern}
	Let $X$ be a compact complex normal space of dimension $n$. For any coherent sheaf $\mE$ on $X$, $c_1(\mE)\in H_{2n-2}(X,\R)=(H^{2n-2}(X,\R))^\vee$ is defined by
	$$c_1(\mE)\cdot \gamma:=c_1(f^*\mE)\cdot f^*\gamma,\ \forall\gamma\in H^{2n-2}(X,\R),$$
	where $f:\widehat{X}\rightarrow X$ is a resolution of singularities of $X$.
\end{definition}

As direct consequence of definition and \cite[Proposition 5.6.14]{Koba}, we have the following basic statement.
\begin{lemma}\label{lemma-slope-torsion}
	Let $\mT$ be an torsion sheaf on a compact normal space $X$ of dimension $n$. Then for any nef classes $\eta_0,\cdots,\eta_{n-2}$,
	$c_1(\mT)\cdot \eta_0\cdots\eta_{n-2}\geq0.$
\end{lemma}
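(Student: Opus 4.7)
The plan is to reduce the inequality to the smooth case using the resolution of singularities $f\colon\widehat X\to X$ from Definition~\ref{defn-firstchern}, and then to invoke the classical fact \cite[Proposition~5.6.14]{Koba} that on a complex manifold the first Chern class of a torsion coherent sheaf is represented by an effective divisor class.

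Concretely, by the definition,
\[
c_1(\mT)\cdot(\eta_0\cdots\eta_{n-2})\;=\;c_1(f^{*}\mT)\cdot f^{*}\eta_0\cdots f^{*}\eta_{n-2},
\]
where $f^{*}\mT$ is read as the ordinary sheaf-theoretic pullback (the paper's ambient convention that $f^{*}$ denotes the reflexivised pullback would annihilate a torsion sheaf and render the assertion vacuous). The pullback $f^{*}\mT$ is thus a torsion coherent sheaf on the smooth manifold $\widehat X$, so \cite[Proposition~5.6.14]{Koba} supplies an effective Weil divisor $D\subset\widehat X$, namely the codimension-one part of $\supp(f^{*}\mT)$ weighted by its generic multiplicities, with $c_1(f^{*}\mT)=[D]$ in $H^{1,1}(\widehat X,\R)$. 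Moreover, each $f^{*}\eta_i$ is nef on $\widehat X$ because holomorphic pullback preserves nefness.

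It then remains to verify $[D]\cdot f^{*}\eta_0\cdots f^{*}\eta_{n-2}\geq 0$, which is the standard approximation argument: fix a Hermitian form $\widehat\omega$ on $\widehat X$; for every $\varepsilon>0$ choose smooth closed $(1,1)$-representatives of $f^{*}\eta_i$ with $f^{*}\eta_i\geq-\varepsilon\widehat\omega$; the wedge of the pointwise non-negative $(1,1)$-forms $f^{*}\eta_i+\varepsilon\widehat\omega$ integrates to a quantity $\geq 0$ over the effective cycle $D$; now let $\varepsilon\to 0^{+}$. The only subtle point is the convention for pulling back a torsion sheaf in Definition~\ref{defn-firstchern}, since elsewhere in the paper $f^{*}$ denotes the reflexive pullback; once that convention is pinned down the argument is routine and no genuine analytic obstacle remains.
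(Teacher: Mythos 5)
Your argument is correct and takes essentially the same route as the paper: the paper states this lemma as a ``direct consequence of the definition and \cite[Proposition 5.6.14]{Koba}'' without writing out a proof, and your resolution-plus-effective-divisor argument (pull back to $\widehat X$, represent $c_1$ of the torsion sheaf by an effective divisor, pair with the pulled-back nef classes via the $\varepsilon$-approximation) is precisely the intended filling-in. Your side remark about the pullback convention is also the right reading of Definition~\ref{defn-firstchern}: the ordinary sheaf-theoretic pullback must be meant there, since the reflexivised pullback would make $c_1$ of every torsion sheaf vanish and would be incompatible with the additivity established in Lemma~\ref{lem-aditivity}.
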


The definition is independent of the choice of resolution of singularities. For reader's convenience, we include an explanation. For any two resolution of singularities $f_1:X_1\rightarrow X$ and $f_2:X_2\rightarrow X$, take a resolution $W$ of $Y:=X_1\times_XX_2$. Since $\pi_1^*f_1^*\mE=\pi_2^*f_2^*\mE$ where $\pi_1:W\rightarrow X_1,\pi_2:W\rightarrow X_2$ are the induced morphisms, it suffices to check that $c_1(\pi_i^*f_i^*\mE)\cdot \pi_i^*f_i^*\gamma=c_1(f_i^*\mE)\cdot f_i^*\gamma$, which follows from the functoriality of Chern classes of coherent sheaves. More generally, we have

\begin{lemma}\label{lemma-compatible-pullback}
	Let $f:Y\rightarrow X$ be a bimeromorphism between compact normal spaces. If $\mF$ and $\mE$ are coherent sheaf on $Y$ and $X$, respectively, such that $f_*\mF=\mE$ in codimension $1$, then $f_*c_1(\mF)=c_1(\mE)$.
\end{lemma}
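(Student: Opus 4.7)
The plan is to chase Definition \ref{defn-firstchern} through a common desingularization and then invoke the excision Lemma \ref{lem-excision}. First, I would choose any resolution of singularities $\pi_Y\colon W\to Y$; because $W$ is smooth and $f\circ\pi_Y$ is proper bimeromorphic, the composition $\pi_X:=f\circ\pi_Y\colon W\to X$ is automatically a resolution of $X$ as well, so both $c_1(\mF)$ and $c_1(\mE)$ may be computed on the same smooth space $W$.

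Unpacking both sides of the asserted equality against an arbitrary test class $\gamma\in H^{2n-2}(X,\R)$ via Definition \ref{defn-firstchern} and the projection formula yields
\[
	f_*c_1(\mF)\cdot\gamma \;=\; c_1(\mF)\cdot f^*\gamma \;=\; c_1(\pi_Y^*\mF)\cdot\pi_X^*\gamma \;=\; (\pi_X)_*c_1(\pi_Y^*\mF)\cdot\gamma,
\]
and in the same way $c_1(\mE)\cdot\gamma=(\pi_X)_*c_1(\pi_X^*\mE)\cdot\gamma$. Hence the lemma reduces to the equality $(\pi_X)_*c_1(\pi_Y^*\mF) = (\pi_X)_*c_1(\pi_X^*\mE)$ in $H_{2n-2}(X,\R)$, and to prove it I only need to see that the two coherent sheaves $\pi_Y^*\mF$ and $\pi_X^*\mE$ agree on a sufficiently large open subset of $W$.

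For this I would extract an analytic subvariety $T\subset X$ of codimension at least $2$ outside which $f$ is biholomorphic (using that the indeterminacy locus of $f^{-1}$ has codimension $\geq 2$ in the normal space $X$) and the given identification $f_*\mF\cong\mE$ holds. Setting $U:=X\setminus T$, the biholomorphism $f|_{f^{-1}(U)}$ transports $\mF$ to $\mE$, giving $\mF\cong f^*\mE$ on $f^{-1}(U)$; pulling back along $\pi_Y$ then yields $\pi_Y^*\mF\cong\pi_X^*\mE$ on $\pi_X^{-1}(U)$. Their first Chern classes therefore agree in $H_{2n-2}(W\setminus\pi_X^{-1}(T),\R)$, and applying Lemma \ref{lem-excision} with $k=1$ and $Z=T$ (which has codimension $\geq 2\geq 1$) delivers the desired equality of pushforwards.

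I do not foresee a substantive obstacle: the reduction to a common resolution is formal, and the only point of care is verifying that the hypothesis truly provides a single codimension-$2$ subvariety $T$ on whose complement both the sheaf identification and the biholomorphy of $f$ hold simultaneously, which is immediate from the normality of $X$ combined with the hypothesis. Once $T$ is fixed, excision does all the work.
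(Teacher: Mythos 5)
Your argument is correct and follows essentially the same route as the paper: reduce both sides to a common resolution $W\xrightarrow{\pi_Y} Y\xrightarrow{f} X$ via Definition \ref{defn-firstchern}, observe that $\pi_Y^*\mF$ and $\pi_X^*\mE$ agree away from the preimage of a codimension-$\geq 2$ subvariety $T\subset X$, and conclude with Lemma \ref{lem-excision}. The only point where the paper is slightly more careful is in justifying that ``the first Chern classes agree on the open complement'': it passes through the intermediate sheaf $\mQ=\pi_Y^*\mF\cap\pi_X^*\mE$ and additivity of $c_1$, so that the difference of the two classes is exhibited as the $c_1$ of torsion sheaves supported over $T$, which is precisely the form of input that Lemma \ref{lem-excision} requires.
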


\begin{proof}
	Taking a resolution $g:W\rightarrow Y$ of $Y$, we immediately obtain that for any $\gamma\in H^{2n-2}(X,\R)$, $c_1(\mE)\cdot\gamma=c_1(g^*f^*\mE)\cdot g^*f^*\gamma$ and $c_1(\mF)\cdot f^*\gamma=c_1(g^*\mF)\cdot g^*f^*\gamma$. It suffices to check that $c_1(g^*\mF)\cdot g^*f^*\gamma=c_1(g^*f^*\mE)\cdot g^*f^*\gamma$. Since $g^*\mF=g^*f^*\mE$ outside $(f\circ g)^{-1}(Z)$ for some analytic subvariety $Z$ of $\codim_XZ\geq2$. Then the proof can be completed by following the argument of Lemma \ref{lem-compatible-orbifold-pullback} and applying Lemma \ref{lem-excision}.
\end{proof}

As a consequence, we obtain the following elementary property.
\begin{lemma}[Additivity]\label{lem-aditivity}
	For any short exact sequence $0\rightarrow\mF\rightarrow\mE\rightarrow\mQ\rightarrow0$ on a compact normal space, $c_1(\mE)=c_1(\mF)+c_1(\mQ)$.
\end{lemma}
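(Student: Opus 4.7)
My plan is to reduce additivity on $X$ to the analogous (and standard) additivity on a smooth resolution $f:\widehat X\to X$, exploiting the definition $c_1(\mE)\cdot\gamma:=c_1(f^*\mE)\cdot f^*\gamma$. Thus the target identity is equivalent to
$$c_1(f^*\mE)\cdot f^*\gamma = c_1(f^*\mF)\cdot f^*\gamma + c_1(f^*\mQ)\cdot f^*\gamma,\qquad \forall\,\gamma\in H^{2n-2}(X,\R).$$

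First I would pull back the given sequence. Since $f^*$ is only right exact, one obtains on $\widehat X$ two short exact sequences
$$0\to\mathcal I\to f^*\mE\to f^*\mQ\to 0,\qquad 0\to\mathcal K\to f^*\mF\to\mathcal I\to 0,$$
with $\mathcal I=\image(f^*\mF\to f^*\mE)$ and $\mathcal K$ the kernel. Over $X_\reg$, where $f$ is an isomorphism and the original sequence is exact, one has $\mathcal K=0$; hence $\supp(\mathcal K)\subset f^{-1}(\mathrm{Sing}(X))$, and by normality of $X$, $\codim_X\mathrm{Sing}(X)\geq 2$.

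Since $\widehat X$ is smooth, the determinant line bundle is multiplicative on short exact sequences, so ordinary additivity of $c_1$ is available. Applying it to the two sequences above yields, in $H^2(\widehat X,\R)$,
$$c_1(f^*\mE) = c_1(f^*\mF) + c_1(f^*\mQ) - c_1(\mathcal K).$$
Pairing with $f^*\gamma$ and using the projection formula $c_1(\mathcal K)\cdot f^*\gamma=f_*c_1(\mathcal K)\cdot\gamma$, the desired identity reduces to the single statement $f_*c_1(\mathcal K)=0$ in $H_{2n-2}(X,\R)$.

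The one nontrivial step—and the main obstacle—is this vanishing. Since $\mathcal K$ is a torsion sheaf supported in $f^{-1}(Z)$ with $Z=\mathrm{Sing}(X)$, the class $c_1(\mathcal K)\in H_{2n-2}(\widehat X,\R)$ agrees with the zero class away from $f^{-1}(Z)$, and $Z$ has codimension $\geq 2$ in $X$. Lemma~\ref{lem-excision} then yields $f_*c_1(\mathcal K)=f_*0=0$, exactly as in the proof of Lemma~\ref{lemma-compatible-pullback} and the orbifold analogue Lemma~\ref{lem-compatible-orbifold-pullback}. This closes the argument and gives $c_1(\mE)=c_1(\mF)+c_1(\mQ)$.
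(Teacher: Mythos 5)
Your argument is correct and follows essentially the same route as the paper: pass to a resolution $f:\widehat X\to X$, use ordinary additivity of $c_1$ for the image/quotient sequences there, and kill the discrepancy supported over $\mathrm{Sing}(X)$ by the excision Lemma~\ref{lem-excision}. The paper packages the last step as two applications of Lemma~\ref{lemma-compatible-pullback} to $\mF'=\image(f^*\mF\to f^*\mE)$ and $\mS'=f^*\mE/\mF'$ rather than isolating the torsion kernel $\mathcal K$, but this is only a difference in bookkeeping.
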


\begin{proof}
	Let $f:Y\rightarrow X$ be a resolution of singularities. Let $\mF'=\image(f^*\mF\rightarrow f^*\mE)$ and $\mS'=f^*\mE/\mF'$. Then we have $c_1(f^*\mE)=c_1(\mF')+c_1(\mS')$. Applying Lemma \ref{lemma-compatible-pullback}, the proof is complete.
\end{proof}

\subsubsection{Push-forward of orbi-sheaves to the quotient space}
We first review the $G$-invariant push-forward of sheaves by following \cite[Appendix A]{GKKP11}.
\begin{definition}
	A $G$-sheaf $\mE$ on $X$ is a coherent sheaf of $\mO_X$-modules such that for any open subset $U\subseteq X$ and any $g\in G$, there exist natural push-forward morphisms $(\phi_g)_*:\mE(U)\rightarrow\mE(\phi_g(U))$ that satisfy the usual compatibility conditions for sheaves, where $\phi_g$ denote the associated automorphism of $g$.
\end{definition}

\begin{definition}[$G$-invariant sheaf]
	If $G$ acts trivially on $X$, and if $\mE$ is a $G$-sheaf, the associated sheaf of invariants, denoted $\mE^G$, is the sheaf defined by
	$$\mE^G(U):=(\mE(U))^G=\{s\in\mE(U)|(\phi_g)_*s=s,\ \forall g\in G\},$$
	where $(\mE(U))^G$ denotes the submodule of $G$-invariant elements of $\mE(U)$.
\end{definition}
\begin{definition}[$G$-invariant pushforward]
	Let $\mE$ be a $G$-sheaf on $X$. Let $\pi:X\rightarrow X/G$ be the quotient morphism. Then $G$ acts trivially on $X/G$ and the pushforward $\pi_*\mE$ admits a natural $G$-sheaf structure on $X/G$. The $G$-invariant pushforward of $\mE$ is defined by $(\pi_*\mE)^G$.
\end{definition}

The subsequent basic properties will be frequently used in this paper. Let $X_{orb}:=\{V_i,G_i,\pi_i\}$ be a complex orbifold and $X$ be the underlying quotient space. For a coherent sheaf $\mE_X$ on $X$, $\{\pi_i^*\mE\}$ defines a coherent orbi-sheaf on $X_\orb$, conversely, we have that
\begin{lemma}[$G_i$-invariant push-forward]\label{push-forward}
	Let $\mE_\orb$ be an orbi-sheaf on $X_\orb$. If $\mE_\orb$ is torsion-free (resp. reflexive), then $\mE:=\left((\pi_i)_*\mE_i\right)^{G_i}$ is a torsion-free (resp. reflexive) sheaf on $X$. If $0\rightarrow \mF_\orb\rightarrow\mE_\orb\rightarrow\mQ_\orb\rightarrow$ is a short exact sequence of orbi-sheaves, $0\rightarrow \left((\pi_i)_*\mF_i\right)^{G_i}\rightarrow \left((\pi_i)_*\mE_i\right)^{G_i}\rightarrow \left((\pi_i)_*\mS_i\right)^{G_i}\rightarrow$ is also exact.
\end{lemma}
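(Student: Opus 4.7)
The plan is to work locally on a fixed orbifold chart $\pi : V \to V/G =: W$ with $G$ finite, since the global statements then follow by gluing via the equivariant identifications $\Phi_\varphi$ built into the orbifold structure. Since $\pi$ is finite, $\pi_*\mE$ is a coherent $\mO_W$-module with a compatible $G$-action, and as $|G|$ is invertible in $\mathbb{C}$, the Reynolds operator $e = |G|^{-1}\sum_{g\in G} g$ is a natural projector of $G$-equivariant $\mO_W$-modules. This yields the decomposition
\begin{equation*}
\pi_*\mE = (\pi_*\mE)^G \oplus (1-e)(\pi_*\mE),
\end{equation*}
exhibiting $(\pi_*\mE)^G$ as a coherent direct summand. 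Torsion-freeness is then immediate: an annihilator $f \in \mO_W$ of an invariant section $s$ pulls back to a nonzero element $\pi^* f \in \mO_V$ (nonzero because $\pi$ is surjective) that kills $s$ as a section of $\mE$, forcing $s = 0$ by torsion-freeness of $\mE$.

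For reflexivity I would use the standard criterion that on a normal analytic space, a coherent torsion-free sheaf $\mathcal{H}$ is reflexive iff $\mathcal{H} \cong j_*(\mathcal{H}|_{W \setminus Z})$ for any analytic subvariety $Z \subset W$ with $\codim_W Z \geq 2$. By Remark \ref{sot}, the standard orbifold hypothesis forces $\pi^{-1}(W_{\mathrm{sing}})$ to have codimension at least $2$ in $V$ and $\pi$ to be étale over $W_{\mathrm{reg}}$. Restricting to $W_{\mathrm{reg}}$ and exploiting that $G$-invariant pushforward along a finite étale $G$-cover preserves the $S_2$ property (hence reflexivity), I obtain that $(\pi_*\mE)^G|_{W_{\mathrm{reg}}}$ is reflexive; the identity $(\pi_*\mE)^G = j_*\bigl(j^*(\pi_*\mE)^G\bigr)$, which holds because both sides are coherent, torsion-free, and agree off a codimension-$2$ set, then propagates reflexivity across $W_{\mathrm{sing}}$.

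For exactness of the induced sequence, apply $(\pi_i)_*$ first: this is exact on coherent sheaves because $\pi_i$ is finite, hence affine. Then apply $(-)^{G_i}$: the Reynolds projector $e_i$ splits every $G_i$-equivariant $\mO_{X_i}$-module naturally as $\mathcal{A} = \mathcal{A}^{G_i} \oplus (1-e_i)\mathcal{A}$, and this splitting is functorial with respect to $G_i$-equivariant maps, so the invariants functor is exact. Composing the two exact functors yields the desired short exact sequence on each chart, and compatibility on overlaps is automatic from the naturality of pushforward and of the Reynolds operator. The main technical point I expect to dwell on is the reflexivity step, where one has to match the reflexive extension identity across the codimension-$2$ singular locus of $W$ with the étale behavior of $\pi$ outside this locus; the remaining claims follow directly from the finiteness of $\pi$ and the invertibility of $|G|$ in $\mathbb{C}$.
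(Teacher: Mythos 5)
Your overall architecture is sound and essentially reconstructs from scratch what the paper obtains by citing \cite[Lemmas A.3 and A.4]{GKKP11} chart by chart: the Reynolds projector exhibits $(\pi_*\mE)^G$ as a coherent direct summand of $\pi_*\mE$, finiteness of $\pi$ gives exactness of $\pi_*$, functoriality of the projector gives exactness of invariants, and the torsion-freeness argument via pulling back an annihilator is correct.

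The reflexivity step, however, has a genuine gap. The identity $(\pi_*\mE)^G = j_*\bigl(j^*(\pi_*\mE)^G\bigr)$ is \emph{not} a consequence of the two sides being coherent, torsion-free, and equal off a codimension-$2$ set: the ideal sheaf of a point on a smooth surface and the structure sheaf satisfy all three conditions and differ. That identity is exactly the normality ($S_2$-type) property you are trying to establish, so invoking it there is circular. In addition, your reduction to $W_{\reg}$ uses that $\pi$ is \'etale over $W_{\reg}$, which holds only for the \emph{standard} orbifold structure (Remark \ref{sot}), whereas the lemma is stated for an arbitrary orbifold structure; for $\mathbb{Z}/2$ acting on $\C$ by $z\mapsto -z$ the quotient is smooth but $\pi$ is ramified at the origin. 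The repair is short and avoids both issues: a section of $(\pi_*\mE)^G$ over $U\setminus Z$ with $\codim_W Z\geq 2$ is a $G$-invariant section of $\mE$ over $\pi^{-1}(U)\setminus\pi^{-1}(Z)$; since $\pi$ is finite and $W$ is irreducible, $\pi^{-1}(Z)$ again has codimension at least $2$, so this section extends to $\pi^{-1}(U)$ by reflexivity (hence normality) of $\mE$, and the extension is $G$-invariant by uniqueness. Combined with your torsion-freeness argument this yields reflexivity of $(\pi_*\mE)^G$ directly, with no appeal to the \'etale locus.
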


The well-definedness follows from the comtibility conditions for orbi-sheaves. Applying {\cite[Lemma A.3 and Lemma A.4]{GKKP11}} to each $i$, we obtain Lemma \ref{push-forward}.

\begin{lemma}[Pull-back of sheaves to orbifold structure]\label{lift}
	Let $\mE$ be a torsion-free coherent sheaf on $X$, set $\mG_\orb=\{\pi_i^*\mE/\torsion\}$ and $\mF_\orb=\{(\pi_i^*\mE)^{\vee\vee}\}$, then
	$c_1(\mE)=c_1^\orb(\mG_\orb)$ and $c_1(\mE)=c_1^\orb(\mF_\orb)$.
\end{lemma}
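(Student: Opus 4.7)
The plan is to split the proof into two parts: showing $c_1^\orb(\mG_\orb)=c_1^\orb(\mF_\orb)$, and then showing $c_1(\mE) = c_1^\orb(\mG_\orb)$. The first is a pointwise-on-each-chart computation; the second is the main content and requires relating the orbifold picture to a resolution of singularities of $X$.

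For the first equality, note that on each smooth orbifold chart $V_i$, the natural injection $\mG_i \hookrightarrow \mF_i = \mG_i^{\vee\vee}$ has cokernel supported in codimension $\geq 2$: at every codim-$1$ point of $V_i$ the local ring is a DVR, so the torsion-free $\mG_i$ is locally free and already reflexive there. The standard determinant calculation on smooth varieties (the determinant of a torsion sheaf supported in codim $\geq 2$ is trivial) gives $\det\mG_i \cong \det\mF_i$ via the canonical map. These isomorphisms are natural, hence $G_i$-equivariant and compatible with the embeddings between charts, so they glue to an isomorphism $\det\mG_\orb \cong \det\mF_\orb$ of line orbi-bundles, yielding $c_1^\orb(\mG_\orb)=c_1^\orb(\mF_\orb)$ by Definition \ref{determinant}.

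For the second equality, fix a resolution of singularities $f:\widehat{X}\rightarrow X$ and any $\gamma\in H^{2n-2}(X,\R)$. By Definition~\ref{defn-firstchern}, $c_1(\mE)\cdot\gamma = c_1(f^*\mE)\cdot f^*\gamma$; since $f$ is bimeromorphic (so $f_*[\widehat{X}]=[X]$), the projection formula gives $c_1^\orb(\mG_\orb)\cdot\gamma = f^*c_1^\orb(\mG_\orb)\cdot f^*\gamma$. So it suffices to show the difference $c := c_1(f^*\mE) - f^*c_1^\orb(\mG_\orb)\in H^2(\widehat{X},\R)$ annihilates $f^*\gamma$. The key input is that $X_\orb$ is standard: by Remark~\ref{sot}(2), every $\pi_i$ is étale over $X_{i,\reg}$ because points with non-trivial stabilizers project into $X_\sing$. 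Since étale morphisms are flat, $\pi_i^*\mE$ is already torsion-free on $\pi_i^{-1}(X_{i,\reg})$ and equals $\mG_i$ there. Therefore $\det\mG_\orb$ restricts on $X_\reg$ to the orbifold pullback of $\det\mE|_{X_\reg}$, and étale descent identifies $c_1^\orb(\mG_\orb)|_{X_\reg}$ with $c_1(\det\mE|_{X_\reg}) = c_1(\mE|_{X_\reg})$. As $f$ is an isomorphism over $X_\reg$, both terms of $c$ restrict to $c_1(\mE|_{X_\reg})$, so $c$ vanishes on $f^{-1}(X_\reg)$ and is supported on $f^{-1}(X_\sing)$.

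To conclude, I would invoke purity on the smooth manifold $\widehat{X}$: from the local cohomology exact sequence $H^2_Z(\widehat{X})\rightarrow H^2(\widehat{X})\rightarrow H^2(\widehat{X}\setminus Z)$ with $Z = f^{-1}(X_\sing)$, classes supported on $Z$ are generated by fundamental classes of the codim-$1$ components (contributions from codim-$\geq 2$ pieces vanish by purity/Thom). All such components are $f$-exceptional divisors $E_j$ with $\dim f(E_j)\leq n-2$. Writing $c = \sum_j a_j [E_j]$, the projection formula gives $c\cdot f^*\gamma = \sum_j a_j f_*[E_j]\cdot\gamma = 0$ since each $f_*[E_j]$ vanishes in $H_{2n-2}(X,\R)$ for dimensional reasons. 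The main obstacle in this plan is this last topological step—turning the geometric statement about support into the algebraic decomposition into exceptional divisor classes—while the remaining ingredients are direct consequences of the étale structure over $X_\reg$ and the functoriality formulas already established in Section~\ref{chernclasses}.
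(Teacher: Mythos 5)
Your proof is correct, but it takes a genuinely different route from the paper's. The paper stays on the quotient space $X$: after reducing to the case where $\mE$ is locally free and $X$ is smooth outside a subvariety $Z$ with $\codim_XZ\geq 2$, it uses Poincar\'e duality for the orbifold (so $H^2(X,\R)\cong H^2(X\setminus Z,\R)$) and computes \emph{both} first Chern classes over $X\setminus Z$ from one and the same Hermitian metric on $\mE|_{X\setminus Z}$ and its pullback, so the two Chern forms literally coincide. You instead pass to a resolution $f:\widehat X\to X$, observe that the difference class is supported on $f^{-1}(X_{\mathrm{sing}})$, decompose it by purity into classes of exceptional divisors, and kill those against $f^*\gamma$ by the dimension count underlying Lemma~\ref{lem-excision}. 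Your route is longer but arguably more faithful to Definition~\ref{defn-firstchern}, which defines $c_1(\mE)$ on the resolution in the first place; the paper's proof quietly identifies that definition with the metric computation on $X\setminus Z$, and that identification is precisely the excision step you spell out. Your separate chart-by-chart determinant argument for $c_1^\orb(\mG_\orb)=c_1^\orb(\mF_\orb)$ is also cleaner than the paper's ``similarly''. One small caveat: the lemma is stated for an arbitrary orbifold structure, whereas you invoke standardness to get \'etale charts over $X_\reg$. What you actually need there is only that $\pi_i$ is finite and flat over the smooth locus (miracle flatness), so that $\pi_i^*\mE$ is torsion-free and $\det$ commutes with pullback; the identification of $c_1^\orb(\mG_\orb)|_{X_\reg}$ with $c_1(\mE|_{X_\reg})$ then follows from the de~Rham isomorphism $H^2(X_\orb,\R)\cong H^2(X,\R)$ rather than from \'etale descent, which makes the argument work without the standardness hypothesis.
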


\begin{proof}
	We may assume that $\mE$ is locally free and $X$ is smooth outside an analytic subvariety $Z\subset X$ with $\codim_ZX\geq 2$. Then $\det(\mG_\orb)=\{\pi_i^*\mE\}$ outside $Z_\orb:=\{\pi_i^{-1}(Z)\}$. Since $\det(\mE)$ is indeed a $\mathbb{Q}$-line bundle. Recall the isomorphisms
	$$H^2(X\setminus Z,\R)\cong (H^{2n-2}_c(X\setminus Z,\R))^\vee\cong (H^{2n-2}(X\setminus Z,R))^\vee\cong (H^{2n-2}(X,\R))^\vee \cong H^2(X,\R)$$
	where the second isomorphism follows from $\codim_ZX\geq 2$, the first and the last follows from the de Rham isomorphism theorem for orbifolds. Then by taking a Hermitian metric $h$ of $\mE|_{X\setminus Z}$ to compute the first Chern class of $\mE|_{X\setminus Z}$ and using its pull-back to compute the first Chern class of $\mG_\orb^\orb|_{X_\orb\setminus Z_\orb}$, we immediately conclude that $c_1(\mE)=c_1^\orb(\mG_\orb)$ in $H_{2n-2}(X,\R)$. Similarly, $c_1(\mE)=c_1^\orb(\mF_\orb)$.
\end{proof}

Lemma \ref{lift} and the argument of \cite[Pages 24]{DO23} implies that
\begin{lemma}[$G_i$-invariant push-forward]\label{descent}
	Let $\mE_\orb$ be a torsion-free orbi-sheaf on $X_\orb$. Set $\mE:=\left((\pi_i)_*\mE\right)^{G_i}$ and $\mG_\orb:=\{\pi_i^*\mE/(\torsion)\}$. Then $\mE$ is torsion-free and 
	$$c_1^\orb(\mE_\orb)=c_1(\mE)+\sum\limits_{j=1}^s d_jc_1^\orb(E_{\orb,j})$$
	where $E_{\orb,j},\ j=1,\cdots,s$ are the irreducible components of the ramification locus of ${\pi_i}$ and $d_j$ is the vanishing order of the natural morphism $\det (\mG_\orb)\rightarrow\det(\mE_\orb)$. In particular, $c_1^\orb(\mE_\orb)=c_1(\mE)$ in $H_{2n-2}(X,\R)$ when $X_\orb$ is standard.
\end{lemma}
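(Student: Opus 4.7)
My approach is to compare the orbi-sheaf $\mE_\orb$ with the orbi-sheaf $\mG_\orb$ obtained by pulling back the $G_i$-invariant descent $\mE$, and identify their difference as a divisorial contribution supported on the ramification locus.

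First, I will verify torsion-freeness: on each chart, $(\pi_i)_*\mE_i$ is torsion-free because $\mE_i$ is, and $\mE|_{X_i} = ((\pi_i)_*\mE_i)^{G_i}$ is a subsheaf of $(\pi_i)_*\mE_i$, hence torsion-free. Then I will construct the natural adjunction morphism $\pi_i^*\mE \to \mE_i$ on each chart; since $\mE_i$ is torsion-free, this factors through $\mG_i := \pi_i^*\mE/(\torsion)$, yielding a globally defined orbi-sheaf morphism $\alpha_\orb:\mG_\orb \to \mE_\orb$. The key point is that $\pi_i$ is étale outside the ramification locus, and over this étale locus the adjunction map is an isomorphism on stalks. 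Therefore $\alpha_\orb$ is injective and its cokernel is a torsion orbi-sheaf supported on the ramification locus of $\pi_\orb$.

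Next, I take determinants to obtain an injection $\det\alpha_\orb:\det\mG_\orb\hookrightarrow\det\mE_\orb$ of line orbi-bundles. By the definition of $d_j$ as the vanishing order along the component $E_{\orb,j}$ (which is well-defined globally by the compatibility conditions relating the $G_i$-actions on overlapping charts), the cokernel of $\det\alpha_\orb$ is isomorphic to the structure sheaf of the effective orbifold divisor $\sum_{j=1}^s d_j E_{\orb,j}$. Combined with the additivity of orbifold first Chern classes for short exact sequences (the orbifold analogue of Lemma \ref{lem-aditivity}), this produces the identity
\begin{equation*}
    c_1^\orb(\mE_\orb) \;=\; c_1^\orb(\mG_\orb) \;+\; \sum_{j=1}^s d_j\, c_1^\orb(E_{\orb,j})
\end{equation*}
in $H_{2n-2}(X,\R)$.

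To finish, I invoke Lemma \ref{lift} applied to the torsion-free sheaf $\mE$ on $X$: by its very definition, $\mG_\orb = \{\pi_i^*\mE/\torsion\}$, so Lemma \ref{lift} yields $c_1^\orb(\mG_\orb)=c_1(\mE)$. Substituting gives the desired formula. For the standard case, the defining property of a standard orbifold structure is that each $G_i$ acts freely in codimension $1$, so the ramification locus of every $\pi_i$ has codimension at least $2$; consequently there are no codimension-one components $E_{\orb,j}$, the sum collapses, and we obtain $c_1^\orb(\mE_\orb)=c_1(\mE)$.

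The main obstacle I foresee is verifying that the multiplicities $d_j$ are intrinsic to the orbifold (independent of the chart), and that the identification of the cokernel of $\det\alpha_\orb$ with an orbi-divisor is compatible across overlaps. This is really a statement about ramification of finite quotients and follows from the standard local model: along a codimension-one component of the fixed locus, the stabilizer is a cyclic group acting by a root of unity, and the orders $d_j$ can be read off from the weights of this cyclic action on $\det\mE_i$ versus on $\det(\pi_i^*\mE)$, a computation which glues because of the orbifold compatibility data.
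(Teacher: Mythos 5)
Your proposal is correct and follows essentially the same route as the paper, which simply combines Lemma \ref{lift} (giving $c_1^\orb(\mG_\orb)=c_1(\mE)$) with the comparison argument of \cite[Pages 24]{DO23}; your write-up fills in that cited comparison explicitly via the adjunction map $\mG_\orb\to\mE_\orb$, its determinant, and the vanishing orders $d_j$ along the ramification divisor. No gaps, provided one keeps the standing effectivity assumption on $X_\orb$ so that $\pi_i$ is \'etale on a dense open set and $\alpha_\orb$ is generically an isomorphism.
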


\subsubsection{Orbifold Chern classes}
The motivation for introducing the orbifold first and second Chern class to study klt spaces relies on the following facts.
\begin{lemma}[{\cite[Lemma 5.8]{GKPT20}}]
	Let $X$ be a klt space. Then there exists a closed analytic subset $Z$ of codimension at least $3$ in $X$ such that $X\setminus Z$ has only quotient singularities, i.e., $\forall x\in X\setminus Z,$ there exists some finite group $G\subset GL(n,\C)$ such that $(X,x)\cong(\C^n/G,0)$ as the germs of complex spaces.
\end{lemma}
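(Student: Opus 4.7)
The plan is to show that the non-quotient locus
\[
Z:=\{x\in X \mid (X,x)\text{ is not analytically a quotient singularity}\}
\]
is analytic and satisfies $\codim_X Z\geq 3$. Since $X$ is klt and hence normal, $X$ is smooth in codimension one, so $Z$ has codimension at least $2$, and it suffices to rule out a codimension-two irreducible component. Suppose for contradiction that $S\subset Z$ is such a component. Pick a general point $p\in S$: by genericity, $S$ is smooth at $p$, $p$ avoids every other component of $\mathrm{Sing}(X)$, and the analytic type of $(X,x)$ is constant for $x$ in a neighbourhood of $p$ within $S$.

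Next I would reduce to a surface germ by slicing. Choose $n-2$ general local hyperplane sections through $p$ to obtain a two-dimensional germ $(H,p)\subset(X,p)$ cutting $S$ transversally at $p$. The key input here is the Bertini-type statement that klt is preserved under a general hyperplane section (cf.\ Koll\'ar-Mori), iterated $n-2$ times, so $(H,p)$ is a two-dimensional klt germ. The classical theorem of Kawamata (building on the classification of rational double points and log-terminal surface singularities) then asserts that every two-dimensional klt singularity is a quotient singularity. Therefore $(H,p)\cong(\C^2/G,0)$ for some finite $G\subset GL(2,\C)$ acting without pseudoreflections.

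The final, and most delicate, step is to promote this slice-wise quotient structure to a quotient structure on the ambient germ $(X,p)$. Because the analytic isomorphism type of $(X,x)$ is locally constant along $S$ near $p$, and because $S$ is smooth at $p$, one can realise $(X,p)$ as an analytically locally trivial family of two-dimensional klt germs over $(S,p)\cong(\C^{n-2},0)$; concretely, using an equivariant analytic slice/retraction argument (essentially a Luna-type slice theorem applied to the constant finite stabiliser $G$ along $S$, or equivalently integrating a holomorphic vector field tangent to $S$ that lifts equivariantly to the local uniformising chart), one produces a biholomorphism
\[
(X,p)\;\cong\;(\C^{n-2},0)\times (\C^2/G,0)\;\cong\;(\C^n/G,0),
\]
where $G$ acts trivially on the first factor. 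This contradicts $p\in Z$, so no codimension-two component of $Z$ exists and $\codim_X Z\geq 3$.

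The hard part will be justifying the local product structure along $S$ rigorously, since one is working analytically on a singular space rather than smoothly: the input one really needs is that the family of klt surface germs parametrised by $S$ is analytically locally trivial near a generic point, which in turn rests on an equivariant slice theorem for the finite group $G$ acting on the local uniformiser of $(X,p)$. The rest of the argument is formal once Kawamata's classification and Bertini for klt are accepted, which is why this result is standard and the proof in \cite[Lemma 5.8]{GKPT20} is quoted verbatim rather than redone.
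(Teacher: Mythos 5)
The paper itself offers no proof of this lemma: it is quoted from the literature (attributed to \cite[Lemma 5.8]{GKPT20}, resp.\ \cite[Lemma 5.8]{GK20}; the underlying statement goes back to the projective case in Greb--Kebekus--Kov\'acs--Peternell), so there is nothing in-paper to compare against. Your first half is the standard reduction and is essentially fine: normality gives $\codim X_{\mathrm{sing}}\geq 2$; at a general point $p$ of a codimension-two component $S$ one cuts by $n-2$ general (local) hyperplane sections, which remain klt by the Bertini-type lemma; and two-dimensional klt germs are quotient singularities by the classical classification. (Minor caveats: "general hyperplane section" must be interpreted locally in the analytic category, and to produce a closed \emph{analytic} $Z$ you need the bad locus inside $S$ to be a proper closed analytic subset of $S$, not merely the complement of "general points".)

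The genuine gap is the step you yourself flag as delicate, and your proposed fix does not work as written. To promote the quotient structure from the slice $(H,p)\cong(\C^2/G,0)$ to the ambient germ you invoke (i) analytic local triviality of the family of transverse germs along $S$ near a general point, and (ii) a Luna-type slice argument "applied to the constant finite stabiliser $G$ along $S$", lifting a vector field to "the local uniformising chart". Point (ii) is circular: a local uniformising chart together with a finite group acting on it is precisely the datum of a quotient-singularity structure on $(X,p)$, which is what you are trying to construct. Point (i) cannot simply be asserted: genericity along a stratum yields topological local triviality (Thom--Mather), but \emph{analytic} local triviality is a far stronger equisingularity statement and is exactly the content that must be proved here. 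The way this step is actually closed in the literature is different: one shows that the local fundamental group $\pi_1(U\setminus X_{\mathrm{sing}})$ of a small neighbourhood $U$ of a general $p\in S$ is finite by comparing it (via a Lefschetz/Hamm--L\^e type surjectivity for general complete-intersection slices) with $\pi_1$ of the punctured surface germ, which is the finite group $G$; one then takes the associated quasi-\'etale cover $\widehat{U}\to U$ and proves that $\widehat{U}$ is smooth by cutting it down to the induced cover $\C^2\to\C^2/G$ of the slice, whence $U\cong\widehat{U}/G$ is a quotient singularity. Without an argument of this kind (or a genuine equisingularity proof of the product structure), the decisive step of your proof is missing.
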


\begin{lemma}[{\cite[Theorem 1.2]{ou24}}]\label{lemma-existence}
	Let $X$ be a compact complex space. Assume that $X$ has quotient singularities in codimension $2$. Then there exists a projective bimeromorphic morphism $f:Y\rightarrow X$ such that $Y$ has quotient singularities, and that the interdeminacy locus of $f^{-1}$ has codimension at least $3$ in $X$. Such a morphism will be refered as a {\em partial orbifold resolution} of $X$.
\end{lemma}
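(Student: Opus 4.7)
The statement is quoted from \cite[Theorem 1.2]{ou24}, so the sketch below outlines a plausible strategy rather than the precise argument there. Let $Z \subset X$ denote the closed subset where $X$ fails to have quotient singularities; by hypothesis $\codim_X Z \geq 3$. The strategy is to construct $f : Y \to X$ so that it is an isomorphism over $X \setminus Z$, and so that whatever new singularities appear over $Z$ are themselves quotient. In particular, outside $Z$ there is nothing to do, so the problem reduces to a modification concentrated over a codimension-$\geq 3$ locus.

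The plan is to start from a functorial Hironaka resolution $\pi : \tilde X \to X$, which is projective bimeromorphic and an isomorphism over $X_{\reg}$. At a quotient singular point $p \in X \setminus Z$, pick a local uniformization $V_p \to U_p$ with finite group $G_p$ acting, so that the base change of $\pi$ to $V_p$ becomes a $G_p$-equivariant resolution of the smooth space $V_p$; by functoriality this resolution factors through a canonical sequence of $G_p$-equivariant blow-ups, all of which can be $G_p$-equivariantly contracted back to $V_p$. Descending to quotients singles out a set of exceptional divisors of $\pi$ lying over $X \setminus Z$ that must be contracted in order to recover the given quotient structure there. I would then run a relative contraction of these divisors --- together with any further exceptional divisors over $Z$ whose presence obstructs the quotient property --- over $X$, producing a normal space $Y$ and a projective bimeromorphism $f : Y \to X$.

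The main obstacle is twofold. First, the existence of the relative contraction has to be established in the (not necessarily projective) K\"ahler setting; since $\pi$ itself is projective, one can invoke the theory of projective bimeromorphic contractions relative to $X$ (Fujiki--Nakayama) together with the Negativity Lemma to identify a precise set of divisors to contract. Second, and more delicately, one must verify that the resulting $Y$ has only quotient singularities everywhere: over $X \setminus Z$ this is automatic from the local uniformization picture above, whereas over $Z$ it requires an inductive analysis on the depth of non-quotient singularities of $X$, using crucially that any additional exceptional divisor of $f$ is contained in a locus of codimension $\geq 3$. Showing that this inductive procedure terminates and does not reintroduce bad singularities in codimension $\leq 2$ is the heart of \cite{ou24} and would be the main point to work through.
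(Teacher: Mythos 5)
The paper does not prove this statement: it is imported verbatim as \cite[Theorem 1.2]{ou24} and used as a black box, so there is no internal argument to compare your sketch against. Judged on its own terms, your proposal is an outline in which the two decisive steps are explicitly deferred rather than carried out, so it does not constitute a proof. First, the existence of the divisorial contraction over $X$: the Negativity Lemma and the Fujiki--Nakayama framework identify candidate divisors and control discrepancies, but they do not produce a contraction. For a compact complex space that is not assumed projective (nor even K\"ahler in the statement), contracting a prescribed set of $\pi$-exceptional divisors requires either Grauert/Artin-type contractibility criteria (which demand verifying negativity conditions on normal bundles) or a relative MMP for the projective morphism $\pi$ in the analytic category; establishing that such a contraction exists here is itself a substantial theorem, not a routine invocation. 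Second, the claim that the resulting $Y$ has only quotient singularities everywhere --- in particular along the locus where $f^{-1}(Z)$ meets the closure of $f^{-1}(X\setminus Z)$ --- is asserted to follow from an ``inductive analysis'' that you do not set up; you correctly identify this as the heart of the matter, but identifying the gap is not the same as closing it.

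There is also a concrete error in the local step you do spell out. You claim that the base change of the functorial resolution $\pi:\widetilde X\to X$ along a local uniformization $V_p\to U_p=V_p/G_p$ is a $G_p$-equivariant resolution of the smooth space $V_p$, which can then be equivariantly blown back down. Functorial resolutions commute with \emph{smooth} morphisms, whereas $V_p\to U_p$ is a finite ramified cover; the normalized fiber product $\widetilde X\times_{U_p}V_p$ is a $G_p$-equivariant modification of $V_p$ but need not be smooth, need not be a composition of blow-ups, and in general cannot be contracted back to $V_p$ step by step. Consequently your mechanism for singling out which exceptional divisors of $\pi$ over $X\setminus Z$ must be contracted does not go through as stated, and with it the reduction of the problem to a contraction concentrated over $Z$. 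Since the lemma is anyway only cited in the paper, the appropriate posture is to treat it as an external input; if you want to reconstruct its proof, the starting point should be Ou's actual local construction and gluing argument rather than a global resolution-plus-contraction scheme.
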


\begin{definition}[Orbifold first and Second Chern classes]\label{defn-orbifold}
	Let $X$ a compact complex klt space. Let $f:Y\rightarrow X$ be a partial orbifold resolution and $Y_\orb=\{V_i,G_i,\pi_i\}$ be the standard orbifold structure of $Y$. For any reflexive sheaf $\mE$ on $X$, $\widehat{c}_2(\mE)\in H_{2n-4}(X,\R)$ is defined by
	$$\widehat{c}_2(\mE)\cdot\sigma:=c_2^\orb(f^{[*]}_\orb\mE)\cdot f^*\sigma,\ \forall\sigma\in H^{2n-4}(X,\R)$$
	where $f^{[*]}_\orb\mE=\{(f\circ \pi_i)^{[*]}\mE\}$ is a reflexive orbi-sheaf on $X_\orb$. The Analogously, $\widehat{c}_1(\mE)\in{H_{2n-2}(X,\R)}$ and $\widehat{c}_1^2(\mE)\in H_{2n-4}(X,\R)$ can be introduced.
\end{definition}

Applying Lemma \ref{lem-resoorbi} to $\mE_\orb$ and Lemma \ref{lem-compatible-orbifold-pullback}, we have that
\begin{equation}\label{equa-orbi-computation}
	\widehat{c}_2(\mE)\cdot\sigma=c_2^\orb(E_\orb)\cdot g^*f^*\sigma,\ \forall \sigma\in H^{2n-4}(X,\R),
\end{equation}
which also holds for $\widehat{c}_1^2(\mE)$ and thus $\Delta(\mE):=2\widehat{c}_2(\mE_X)-\frac{r-1}{r}\widehat{c}_1^2(\mE)$. This implies that the definition \ref{defn-orbifold} coincides with \cite[Definition 9.1]{ou24} and thus it is independent of the choice of $f$ by \cite[Proposition 9.1]{ou24}.

\begin{lemma}[Calculus of orbifold Chern classes]\label{calculus}
	Let $X$ be a compact complex klt space of dimension $n$ and $\mE,\mF$ are two reflexive sheaves on $X$. Then we have, for $i=1,2$,
	$$\widehat{c}_i(\mE)=(-1)^i\widehat{c}_1(\mE^\vee),\ \widehat{c}_1(\mE\oplus\mF)=\widehat{c}_1(\mE)\oplus\widehat{c}_1(\mF),\ \widehat{c}_1(\mE)^2=\widehat{c}^2(\mE^\vee),$$
	$$ \widehat{c}_2(\mE)=\widehat{c}_2(\mE_X\oplus\mO_X),\ \widehat{c}_2(\mE)=\widehat{c}_2(\mE^\vee),\ \widehat{\Delta}(\End(\mE))=2(\rank \mE)^2\cdot\widehat{\Delta}(\mE).$$
\end{lemma}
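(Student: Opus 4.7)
The plan is to reduce each of the six identities to the corresponding statement for orbifold Chern classes of holomorphic orbi-bundles on some complex orbifold, where they become classical consequences of the Chern--Weil formalism, and to handle passage through the two pullback operations ($f_\orb^{[*]}$ and $g_\orb^T$) by checking everything in codimension one.

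First I would fix a partial orbifold resolution $f:Y\to X$ together with the standard orbifold structure $Y_\orb=\{V_i,G_i,\pi_i\}$ of $Y$, and introduce the reflexive orbi-sheaves $\mE_\orb:=f_\orb^{[*]}\mE$ and $\mF_\orb:=f_\orb^{[*]}\mF$. By Definition~\ref{defn-orbifold} applied also to $\widehat{c}_1^2$, every identity claimed is equivalent to the corresponding identity for the orbifold Chern classes of the reflexive orbi-sheaves $\mE_\orb$, $\mF_\orb$, $\mE_\orb^\vee$, $\mE_\orb\oplus\mF_\orb$, $\mE_\orb\oplus\mO_{Y_\orb}$, and $\End(\mE_\orb)$. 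The compatibilities $(f_\orb^{[*]}\mE)^\vee=f_\orb^{[*]}\mE^\vee$, $f_\orb^{[*]}\mE\oplus f_\orb^{[*]}\mF=f_\orb^{[*]}(\mE\oplus\mF)$, and $\End(f_\orb^{[*]}\mE)=f_\orb^{[*]}\End(\mE)$ all hold in codimension one on $Y_\orb$ where the relevant sheaves restrict to locally free ones obtained by pulling back through an étale map on the regular locus, and then extend globally because the sheaves on both sides are reflexive orbi-sheaves.

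Second, I would apply Lemma~\ref{lem-resoorbi} to simultaneously resolve $\mE_\orb$ and $\mF_\orb$, obtaining a projective bimeromorphism $g_\orb:W_\orb\to Y_\orb$ such that $E_\orb:=g_\orb^T\mE_\orb$ and $F_\orb:=g_\orb^T\mF_\orb$ are holomorphic orbi-bundles. The functorial operations $(\cdot)^\vee$, $\oplus$, $\End$ commute with $g_\orb^T$ in codimension one on $W_\orb$, namely on the locally free locus. Combining this with Lemma~\ref{lem-compatible-orbifold-pullback}, each identity reduces to the corresponding identity among $c_i^\orb(E_\orb)$, $c_i^\orb(F_\orb)$, $c_i^\orb(E_\orb^\vee)$, etc., for genuine orbi-bundles on $W_\orb$. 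In each orbifold chart one then uses a $G_i$-invariant Hermitian metric $H$: duality gives $\tr F^{H^\vee}=-\tr F^H$ and analogously for the second elementary symmetric polynomial in the curvature eigenvalues, yielding the sign $(-1)^i$; a block-diagonal metric on $E_\orb\oplus F_\orb$ gives both the additivity of $c_1^\orb$ and the invariance $c_2^\orb(E_\orb\oplus\mO)=c_2^\orb(E_\orb)$; the identity $\widehat{c}_1(\mE)^2=\widehat{c}_1(\mE^\vee)^2$ is an immediate consequence of $\widehat{c}_1(\mE^\vee)=-\widehat{c}_1(\mE)$; finally, $\widehat\Delta(\End(\mE))=2r^2\widehat\Delta(\mE)$ follows from the splitting-principle computation for smooth vector bundles applied chart by chart, using that $\End(E_\orb)\cong E_\orb^\vee\otimes E_\orb$ has vanishing first Chern class and that the discriminant transforms as stated under tensoring with $E_\orb^\vee$.

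The main obstacle is purely bookkeeping: one must verify carefully that the reflexive pullback $f_\orb^{[*]}$ and the torsion-free pullback $g_\orb^T$ intertwine correctly with $(\cdot)^\vee$, $\oplus$, and $\End$ in codimension one, so that Lemma~\ref{lem-compatible-orbifold-pullback} applies and one can legitimately read off the identity on $X$ from the identity on the orbi-bundle level. Once this codimension-one compatibility is established, all six identities become immediate consequences of the classical Chern--Weil identities for holomorphic vector bundles, applied $G_i$-equivariantly on each orbifold chart of $W_\orb$.
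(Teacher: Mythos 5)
Your proposal is correct and follows essentially the same route as the paper: pass to a partial orbifold resolution and then to an orbifold resolution of the sheaf, verify that $(\cdot)^\vee$, $\oplus$ and $\End$ commute with the reflexive and torsion-free pullbacks outside a set whose image in $X$ has codimension at least two, and conclude via the excision argument of Lemma~\ref{lem-excision} together with the argument of Lemma~\ref{lem-compatible-orbifold-pullback}. The only difference is that you spell out the final Chern--Weil verifications for orbi-bundles chart by chart, which the paper leaves implicit.
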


\begin{proof}
	Let $f:Y\rightarrow X$ be a partial orbifold resolution given by Lemma \ref{lemma-existence}. Let $Z$ be the indeterminacy locus of $(f\circ g)^{-1}$, which has codimension at least $2$. Since $Y_\orb$ is standard, it can be easily seen that
	$$f_\orb^{[*]}(\mE^\vee)=(f_\orb^{[*]}(\mE))^\vee,\ f_\orb^{[*]}(\mE\oplus\mF)=f_\orb^{[*]}(\mE_\orb)\oplus f_\orb^{[*]}(\mF_\orb) \text{ and } f_\orb^{[*]}\End(\mE)=\End(f_\orb^{[*]}\mE)$$
	outside $(f_\orb \circ g_\orb)^{-1}(Z)$ because the sheaves are reflexive. By following the argument of Lemma \ref{lem-compatible-orbifold-pullback} and applying Lemma \ref{lem-excision}, the proof completes.
\end{proof}

\section{$L^p$-approximate critical Hermitian structure}\label{higgsorbi-bundle}
The main result of this section is the existence of an $L^p$-approximate Hermitian–Einstein structure on Higgs orbi-bundles over a Gauduchon orbifold $(X_\orb,\w_\orb)$, which is indeed new in the smooth case. We essentially follow the scheme of \cite{LZZ21}. In fact, standard analytic tools such as the maximum principle, Sobolev inequalities, and integration by parts (see e.g. \cite[Section~2]{F19}) continue to hold in the orbifold setting. Note that the basic estimates are valid on each orbifold chart $U_i$, whereby computations agree with those in the manifold case, and thus are valid in the context orbifolds. So the general analytical arguments carry over with only minor modifications. The three essential ingredients—(1) the Chern–Weil formula for saturated Higgs orbi-subsheaves and (2) the existence of the HN filtration—were stated in Section~\ref{orbifolds}, while (3) the orbifold version of the regularity result of Uhlenbeck–Yau \cite{UY86} was obtained in \cite{F19}.

\begin{notation}\label{nota-rescale}
	For simplicity, since all objects considered in this section lie in the orbifold setting, we omit the subscript `orb' from the notation. In the sake of simplifying the coefficients appeared in the estimation, we re-scale the slope $\mu_{\w}(\mF)$ by $\frac{1}{(n-1)!}\mu_{\w}(\mF)$ for any orbi-sheaf $\mF$.
\end{notation}

Given a Hermitian metric $H$ of a Higgs orbi-bundle $E$ on a Gauduchon orbifold $(X,\w)$, i.e., $\w$ is a smooth orbifold $(1,1)$-form such that $\pp\w^{n-1}=0$. The Hitchin-Simpson connection (\cite{Simpson88}) is defined by
\begin{equation*}
	\bar{\partial}_{\theta}:=\bar{\partial}_{E}+\theta , \quad D_{H,  \theta }^{1, 0}:=\partial_H  +\theta^{* H}, \quad D_{H,  \theta }= \bar{\partial}_{\theta}+ D_{H,  \theta }^{1, 0},
\end{equation*}
where $\partial_H$ is the $(1, 0)$-part of the Chern connection $D_{H}$ of $(E,\bar{\partial }_{E}, H)$ and $\theta^{* H}$ is the adjoint of $\theta $ with respect to $H$.
The curvature of Hitchin-Simpson connection is
\begin{equation*}
	F_{H,\theta}=F_H+[\theta,\theta^{* H}]+\partial_H\theta+\bar{\partial}_E\theta^{* H},
\end{equation*}
where $F_H$ is the curvature of $D_{H}$. 
\begin{definition}[Hermitian-Einstein structure]
	We say that $H$  is
	a Hermitian-Einstein metric on the Higgs orbi-bundle $(E, \bar{\partial }_{E}, \theta  )$ if
$
		\sqrt{-1}\Lambda_{\omega} F_{H,\theta}
		=\lambda\cdot \mathrm{Id}_{E},
$
	where  $\Lambda_{\omega }$ denotes the contraction with  $\omega $, and $\lambda =\frac{2\pi}{\Vol(X,\w)}\mu_{\omega}(E) $.
\end{definition}
When $(X, \omega)$ is a compact K\"ahler manifold, Hitchin (\cite{HIT}) and Simpson (\cite{Simpson88}, \cite{SIM2}) obtained a Higgs
bundle version of the Donaldson-Uhlenbeck-Yau theorem (\cite{NS65,DON85,UY86}), i.e. a Higgs bundle
admits a Hermitian-Einstein metric if and only if it's Higgs poly-stable (see \cite{LY,LZZ,LZZ2,LZZ21,Jacob2}, etc for important generalizations). \subsection{Main results}\label{Lpapproximate} We will study the perturbed Hermitian-Einstein equation on $(E, \bar{\partial }_{E}, \theta )$ as in \cite{UY86}:
\begin{equation} \label{eq}
	\sqrt{-1}\Lambda_{\omega } (F_{H}+[\theta, \theta^{\ast H}])-\lambda \cdot \textmd{Id}_E+\varepsilon \log (K^{-1}H)=0,
\end{equation}
where $K$ is any fixed background metric. Due to the fact that the elliptic operators are Fredholm in the context of compact orbifolds, the equation (\ref{eq}) can be solved for any $\varepsilon \in (0, 1]$. Let $H_{\varepsilon}$ be a solution of perturbed equation (\ref{eq}). If the Higgs orbi-bundle $(E, \bar{\partial }_{E}, \theta )$ is $\omega$-stable, we can obtain the uniform $C^{0}$-estimate of $H_{\varepsilon}$ for $\varepsilon \in (0, 1]$, then $H_{\varepsilon}$ must converge to a Hermitian-Einstein metric. Specifically, we prove the orbifold version of \cite{LY,NZ}.

\begin{theorem}[Donaldson-Uhlenbeck-Yau Theorem]\label{thm1}
	Let $(X, \omega)$ be a  Gauduchon orbifold and $(E, \bar{\partial }_{E}, \theta )$ be a Higgs orbi-bundle on $X$. If $(E, \bar{\partial }_{E}, \theta )$ is $\omega$-stable, then there is a Hermitian-Einstein metric on $(E, \bar{\partial }_{E}, \theta )$. If $(E, \bar{\partial }_{E}, \theta )$ is $\omega$-semistable, then there is an approximate Hermitian-Einstein metric on $(E, \bar{\partial }_{E}, \theta )$.
\end{theorem}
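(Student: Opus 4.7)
The plan is to adapt the Uhlenbeck--Yau continuity/perturbation scheme, as refined in the Higgs setting by Simpson \cite{Simpson88} and in the Gauduchon setting in \cite{LY,LZZ21}, to the orbifold framework. Since all local estimates take place on a uniformising chart $U_i$, the standard analytic toolbox (maximum principle, Sobolev embedding, integration by parts against the Gauduchon form $\omega^{n-1}$) carries over verbatim from the smooth case. The genuinely global inputs required are the Chern--Weil formula \eqref{chern-weil} for saturated $\theta$-invariant orbi-subsheaves, the existence of the HN filtration (Lemma \ref{lem-orbi-HNfiltration}), and the orbifold version of the Uhlenbeck--Yau regularity theorem established in \cite{F19}; all three are at hand.

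First I would solve, for each $\varepsilon \in (0,1]$, the perturbed equation \eqref{eq} by a continuity method in $\varepsilon$: the linearisation of $H \mapsto \sqrt{-1}\Lambda_{\omega}(F_H + [\theta,\theta^{*H}]) - \lambda\Id_E + \varepsilon\log(K^{-1}H)$ at a solution is $\Delta_{H,\theta} + \varepsilon$, which is invertible on self-adjoint endomorphisms of $E$, giving openness; closedness reduces to an $\varepsilon$-dependent $C^0$ bound that follows from a maximum-principle argument on $\operatorname{tr}\log(K^{-1}H_\varepsilon)$, using the Gauduchon condition to discard the $\pp\omega^{n-1}$ terms. In the $\omega$-stable case, the goal is a uniform $C^0$ bound on $u_\varepsilon := \log(K^{-1}H_\varepsilon)$ as $\varepsilon \to 0$. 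Assuming this fails, normalise $v_\varepsilon := u_\varepsilon / \|u_\varepsilon\|_{L^{\infty}}$ and extract a weak $L^2_1$-limit $v_\infty$; a standard Uhlenbeck--Yau computation, together with the Higgs term $[\theta,\theta^{*H_\varepsilon}]$, produces a spectral projection $\pi$ of $v_\infty$ that is weakly holomorphic and satisfies $(\Id - \pi)\theta\pi = 0$ in the weak sense. The orbifold regularity result of \cite{F19} applied chart-by-chart, combined with Lemma \ref{lem-saturation-invariant}, upgrades $\pi$ to a saturated $\theta$-invariant orbi-subsheaf $\mF \subsetneq E$. The Chern--Weil identity \eqref{chern-weil} together with the Gauduchon property then yields $\mu_\omega(\mF) \geq \mu_\omega(E)$, contradicting $\omega$-stability. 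Once the $C^0$ bound is in place, elliptic bootstrapping on \eqref{eq} gives uniform higher-order estimates, and the limit $H_0 = \lim_{\varepsilon \to 0} H_\varepsilon$ is the desired Hermitian--Einstein metric.

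For the semistable statement the $C^0$ bound may genuinely fail, but \eqref{eq} gives the pointwise identity $\sqrt{-1}\Lambda_{\omega}(F_{H_\varepsilon} + [\theta,\theta^{*H_\varepsilon}]) - \lambda\Id_E = -\varepsilon u_\varepsilon$, so it suffices to show $\varepsilon \|u_\varepsilon\|_{L^{\infty}} \to 0$. A Moser-iteration argument, together with an HN-filtration analysis based on Lemma \ref{lem-orbi-HNfiltration} in the spirit of \cite{LZZ21}, upgrades $L^1$-smallness of the Hermitian--Einstein tension to $L^{\infty}$-smallness, yielding the approximate Hermitian--Einstein metric. The principal difficulty I expect is the $\theta$-invariance of the destabiliser produced by the Uhlenbeck--Yau trick in the stable case: one must carefully track the interaction of $\theta$ with the spectral projections of $v_\infty$ through the weak $L^2_1$-limit, and verify that the orbifold regularity statement of \cite{F19} actually yields a globally $\theta$-invariant orbi-subsheaf rather than one defined merely chart-by-chart. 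Every other step is an orbifold-transparent adaptation of the Gauduchon-manifold arguments in \cite{LZZ21}.
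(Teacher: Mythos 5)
Your proposal is correct and follows essentially the same route as the paper: solve the perturbed equation \eqref{eq}, run the Uhlenbeck--Yau/Simpson blow-up analysis on the normalized endomorphisms to produce saturated $\theta$-invariant orbi-subsheaves via the orbifold regularity statement of \cite{F20} together with Lemma \ref{lem-saturation-invariant}, contradict stability through the Chern--Weil formula \eqref{chern-weil}, and in the semistable case show $\|\varepsilon\log(K^{-1}H_\varepsilon)\|\to 0$ and upgrade to $L^\infty$ via the Moser-type estimate of Lemma \ref{lm1}(iii). The only cosmetic differences are that the paper normalizes by the $L^2$ norm rather than the $L^\infty$ norm (the two are comparable by Lemma \ref{lm1}(iii)) and applies the maximum principle to $|\log(K^{-1}H_\varepsilon)|_K^2$ rather than its trace.
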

If $(E, \bar{\partial }_{E}, \theta )$ is not $\omega$-stable, we may not have the uniform $C^{0}$-estimate of $H_{\varepsilon}$ for $\varepsilon \in (0, 1]$, but we can also study the limiting behavior of the solutions $H_{\varepsilon}$ of perturbed equation (\ref{eq}) as $\varepsilon \rightarrow 0$. Consider the
Harder-Narasimhan filtration of $(E, \bar{\partial}_{E}, \theta )$ with respect to $\w$ constructed in Lemma \ref{lem-orbi-HNfiltration}. For each $\mathcal{E}_{\alpha }$ and the Hermitian metric $K$,  we have the associated orthogonal
projection $\pi_{\alpha }^{K}:E\rightarrow E$ onto $\mathcal{E}_{\alpha }$ with respect to  $K$. It is well-known that every
$\pi_{\alpha }^{K}$ is an $L_{1}^{2}$-bounded  Hermitian endomorphism. So we can define an $L_{1}^{2}$-bounded Hermitian endomorphism by
\begin{equation}\Phi_{\omega}^{HN} (E, \theta , K)=\Sigma_{\alpha=1}^{l}\mu_{\omega } (\mathcal{Q}_{\alpha })(\pi_{\alpha }^{K}-\pi_{\alpha-1}^{K}),\end{equation} which is called the
Harder-Narasimhan projection of  the Higgs orbi-bundle $(E, \bar{\partial}_{E}, \theta )$. Denote the $r$ eigenvalues of the mean curvature $\sqrt{-1}\Lambda_{\omega} F_{H, \theta }$  by $\lambda_1(H, \theta , \omega)$, $\lambda_2(H, \theta , \omega)$, $\cdots$, $\lambda_r(H, \theta , \omega)$, sorted in the descending order. Then each $\lambda_{\alpha }(H, \theta , \omega)$ is Lipschitz continuous.
Set
\begin{equation}\vec\lambda(H, \theta ,\omega)=(\lambda_1(H, \theta ,\omega),\lambda_2(H, \theta ,\omega),\cdots,\lambda_r(H,\theta ,\omega)),
\end{equation}
and
$$
		\lambda_{mU}(H, \theta , \omega)=\frac{1}{\Vol(X,\w)}\int_X\lambda_1(H, \theta , \omega)\frac{\omega^n}{n!},\ \lambda_{mL}(H, \theta , \omega)=\frac{1}{\Vol(X,\w)}\int_X\lambda_r(H, \theta , \omega)\frac{\omega^n}{n!}.
$$

By following the arguement in \cite{LZZ21} where consider the case $\theta=0$, we obtain the existence of the
$L^{p}$-approximate critical Hermitian structure on the Higgs orbi-bundle $(E, \bar{\partial }_{E},\theta)$, i.e. we proved the following theorem.

\begin{theorem}[$L^p$-approximate critical Hermitian structure]\label{thm2}
	Let $(X, \omega )$ be a compact Gauduchon orbifold of complex dimension $n$,  $(E,\bar{\partial}_E, \theta)$ be a  Higgs orbi-bundle of rank $r$ over $X$, $K$ be a fixed Hermitian metric on $E$ and $H_{\varepsilon}$ be a solution of perturbed equation (\ref{eq}).  Then there exists a sequence $\varepsilon_{i}\rightarrow 0$ such that
	\begin{equation}\label{eqthm1}
		\lim_{i\rightarrow \infty }\left\| \sqrt{-1}\Lambda_{\omega }F_{H_{\varepsilon_{i}}, \theta }-\frac{2\pi }{\Vol(X,\w)}\Phi_{\omega }^{HN}(E,\theta , K)\right\|_{L^{p}(K)}=0
	\end{equation}
	for any $0<p<+\infty $.
\end{theorem}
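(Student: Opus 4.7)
The plan is to adapt the scheme of \cite{LZZ21} (which treats the case $\theta = 0$) to Higgs orbi-bundles, relying on the three ingredients already in place: the orbifold Chern--Weil formula \eqref{chern-weil}, the existence of a Harder--Narasimhan filtration in the Higgs orbi-setting (Lemma \ref{lem-orbi-HNfiltration}), and the orbifold Uhlenbeck--Yau regularity theorem from \cite{F19}. First I would solve the perturbed equation \eqref{eq} for each $\varepsilon \in (0,1]$: writing $H = K e^s$ with $s \in \Herm(E,K)$, the left-hand side defines an elliptic operator that is Fredholm on the orbifold and coercive thanks to the zeroth-order term $\varepsilon s$; a continuity method (or minimisation of the Donaldson functional perturbed by $\varepsilon \int |s|^2$) produces a unique smooth solution $H_\varepsilon$.

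Next, I would derive the crucial uniform control on $s_\varepsilon := \log(K^{-1}H_\varepsilon)$. Pairing \eqref{eq} with $s_\varepsilon$, integrating by parts and using the Gauduchon condition $\pp \w^{n-1} = 0$, one obtains an inequality
\[
\varepsilon \int_X |s_\varepsilon|^2 \tfrac{\w^n}{n!} + \int_X \bigl(|\p s_\varepsilon|_{H_\varepsilon}^2 + |[\theta,s_\varepsilon]|_{H_\varepsilon}^2\bigr)\, \tfrac{\w^n}{n!} \leq C\bigl(1 + \textstyle\int_X |s_\varepsilon|\,\tfrac{\w^n}{n!}\bigr),
\]
where the commutator term arises from expanding $[\theta,\theta^{*H_\varepsilon}]$. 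If $\|s_\varepsilon\|_{L^\infty}$ stays bounded then $H_\varepsilon$ subconverges to a genuine Hermitian--Einstein metric (Theorem \ref{thm1}) and the statement is trivial; otherwise, rescale $u_\varepsilon := s_\varepsilon / \|s_\varepsilon\|_{L^2}$ and extract a weak $L^2_1$-limit $u_\infty \in L^2_1 \cap L^\infty$ with $[\theta, u_\infty] = 0$, $\tr u_\infty = 0$, and $\|u_\infty\|_{L^2} = 1$.

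The spectral projections of $u_\infty$ are weakly holomorphic $\theta$-invariant endomorphisms, so the orbifold regularity theorem of \cite{F19} (the Uhlenbeck--Yau trick) promotes them to saturated coherent $\theta$-invariant orbi-subsheaves, yielding a filtration $0 \subset \mE_1 \subset \cdots \subset \mE_l = E$. Using the Chern--Weil formula \eqref{chern-weil} for each $\theta$-invariant $\mE_\alpha$ and the characterisations \eqref{the minimal slope}--\eqref{the maximal slope} of the extremal HN slopes, one shows by comparison with $\lambda_{mU}, \lambda_{mL}$ that the eigenvalues of $u_\infty$ must agree with the (centred) HN slopes $\mu_\w(\mQ_\alpha) - \lambda/(2\pi)$ and that the resulting filtration is exactly the HN filtration of $(E,\bp_E,\theta)$; consequently $-\varepsilon s_\varepsilon \to \tfrac{2\pi}{\Vol(X,\w)} \Phi_\w^{HN}(E,\theta,K) - \lambda \Id_E$ in $L^2$. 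Rewriting \eqref{eq} as $\sqrt{-1}\lw F_{H_\varepsilon,\theta} - \lambda \Id_E = -\varepsilon s_\varepsilon$ gives \eqref{eqthm1} in $L^2$, and Moser iteration on each chart $U_i$ (using the uniform control of $\sqrt{-1}\lw F_{H_\varepsilon,\theta}$ in $L^\infty$ away from the singular spectrum) bootstraps the convergence to $L^p$ for any $p<\infty$.

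The main obstacle is the Higgs field. Unlike the case $\theta=0$ treated in \cite{LZZ21}, the term $[\theta,\theta^{*H}]$ in the Hitchin--Simpson curvature depends nonlinearly on the metric and does not admit a metric-independent a priori bound; the authors themselves flag this in the introduction. The delicate point is to close the integral inequality above while keeping the commutator term with the right sign, and to verify that the weak limit $u_\infty$ is genuinely $\theta$-invariant (not merely $\bp_E$-invariant). The right framework is Simpson's: replace the Chern connection by the Hitchin--Simpson connection $\bp_\theta = \bp_E + \theta$ and its conjugate $D^{1,0}_{H,\theta} = \p_H + \theta^{*H}$, so that $[\theta,\cdot]$ becomes part of the ``holomorphic'' structure; the corresponding Bochner formula then yields the desired Higgs-equivariant subsheaves from $u_\infty$.
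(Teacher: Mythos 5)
Your proposal follows essentially the same route as the paper: solve the perturbed equation, pair with $s_\varepsilon$ and integrate by parts using $\pp\w^{n-1}=0$, rescale by the $L^2$-norm, pass to a weak $L_1^2$-limit $u_\infty$ whose spectral projections are promoted to saturated $\theta$-invariant orbi-subsheaves by the orbifold Uhlenbeck--Yau regularity theorem, identify the resulting filtration with the HN filtration via the Chern--Weil degree comparisons, and upgrade the convergence to $L^p$ using the uniform $L^\infty$ bound on $\sqrt{-1}\Lambda_\w F_{H_\varepsilon,\theta}$ coming from the maximum-principle estimate (so no Moser iteration is actually needed). One small imprecision: the limit satisfies $(\Id-\pi_\alpha)\bp_\theta\pi_\alpha=0$ for the spectral projections, i.e.\ the filtration is $\theta$-invariant, but not the stronger pointwise identity $[\theta,u_\infty]=0$; this does not affect the argument, which is otherwise the paper's proof.
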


Recall \eqref{the maximal slope}, \eqref{the minimal slope} and the Chern-Weil formula \eqref{chern-weil}, then \ref{thm2} implies that
\begin{corollary}\label{coro-characterization}
	$$\frac{2\pi}{\Vol(X,\w)}\mu_{1,\w}=\sup\{t| \text{ there is a Hermitian metric $H$  with }\im\Lambda_\w F_{H,\theta}\geq t\Id_E\},$$
	$$\frac{2\pi}{\Vol(X,\w)}\mu_{r,\w}=\inf\{t| \text{ there is a Hermitian metric $H$  with }\im\Lambda_\w F_{H,\theta}\leq t\Id_E\}$$
\end{corollary}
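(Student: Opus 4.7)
The corollary pairs extremal eigenvalues of the Harder--Narasimhan projection $\Phi_\w^{HN}$ with the best pointwise two-sided bounds on the mean curvature $\sqrt{-1}\Lambda_\w F_{H,\theta}$. My plan is to prove each equality by a two-step bracketing: the Chern--Weil formula on saturated $\theta$-invariant sub-sheaves and quotients for one direction, and the $L^p$-approximate Hermitian structure of Theorem~\ref{thm2} for the other; the two identities are dual under $E \mapsto E^\vee$, so it suffices to handle one.

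For the Chern--Weil side, I would take any Hermitian metric $H$ with $\sqrt{-1}\Lambda_\w F_{H,\theta} \geq t\,\Id_E$ pointwise and apply the Higgs version of \eqref{chern-weil}: for a $\theta$-invariant saturated sub-sheaf $\mF \subset E$,
\[
\deg_\w(\mF) \;=\; \frac{1}{2\pi}\int_X \tr\bigl(\pi_\mF^{H}\,\sqrt{-1}\Lambda_\w F_{H,\theta}\bigr)\,\dv \;-\; \frac{1}{2\pi}\int_X\bigl(|\bp\pi_\mF^{H}|_H^{2} + |[\theta,\pi_\mF^{H}]|_H^{2}\bigr)\,\dv,
\]
with both subtracted terms non-negative. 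Combining this with $\deg_\w(E) = \tfrac{1}{2\pi}\int_X \tr(\sqrt{-1}\Lambda_\w F_{H,\theta})\,\dv$ (traces of commutators and of $\partial_H\theta, \bp\theta^{*H}$ integrate to zero against the Gauduchon form $\w^{n-1}$) and subtracting yields $\mu_\w(E/\mF) \geq t\,\Vol(X,\w)/(2\pi)$ for every $\theta$-invariant saturated quotient, and (dually, applied to $\im\Lambda_\w F_{H,\theta} \leq t\Id_E$) $\mu_\w(\mF) \leq t\,\Vol(X,\w)/(2\pi)$ for every $\theta$-invariant saturated sub-sheaf. The variational characterisations \eqref{the minimal slope} and \eqref{the maximal slope} then deliver the one-sided inequalities in the two equalities.

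For the matching reverse direction, Theorem~\ref{thm2} supplies $H_{\varepsilon_i}$ with $\sqrt{-1}\Lambda_\w F_{H_{\varepsilon_i},\theta} \to \tfrac{2\pi}{\Vol(X,\w)}\Phi_\w^{HN}(E,\theta,K)$ in $L^p$, and plugging $H_{\varepsilon_i}$ into \eqref{eq} rewrites the mean curvature as $\lambda\,\Id_E - \varepsilon_i\log(K^{-1}H_{\varepsilon_i})$. Standard orbifold elliptic regularity on each chart $(U_i,G_i,\pi_i)$, already invoked in Section~\ref{higgsorbi-bundle}, should upgrade the $L^p$-convergence of the right-hand side to $C^0$-convergence of its eigenvalues; since the pointwise eigenvalues of $\Phi_\w^{HN}$ are the HN slopes with extremes $\mu_{1,\w}$ and $\mu_{r,\w}$, this produces, for any $s$ strictly beyond the relevant extremum, an index $i$ so that $H_{\varepsilon_i}$ realises the pointwise inequality demanded by the sup/inf. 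The main obstacle is precisely this $L^p \to C^0$ upgrade: Theorem~\ref{thm2} on its own is only a weak-convergence statement, while the sup/inf in the corollary require inequalities holding everywhere on $X$. Bridging this gap means re-entering the PDE structure of \eqref{eq} to extract uniform $C^0$-estimates on $\varepsilon_i\log(K^{-1}H_{\varepsilon_i})$, and verifying that the orbifold Sobolev/maximum-principle inputs behave as in the smooth case chart-by-chart.
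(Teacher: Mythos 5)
Your overall bracketing --- Chern--Weil on $\theta$-invariant saturated subsheaves and quotients for one inequality, Theorem~\ref{thm2} for the other --- is exactly the route the paper intends; its own ``proof'' is the single sentence preceding the corollary, so on strategy you and the paper agree. But note that your Chern--Weil computation attaches the two one-sided bounds to the \emph{opposite} extremes from those printed. From $\im\Lambda_\w F_{H,\theta}\geq t\Id_E$ you correctly derive $\mu_\w(E/\mF)\geq t\Vol(X,\w)/2\pi$ for every $\theta$-invariant quotient, which via \eqref{the minimal slope} gives $\sup\{t\}\leq\frac{2\pi}{\Vol(X,\w)}\mu_{r,\w}$, not a bound against $\mu_{1,\w}$; dually, $\inf\{t:\im\Lambda_\w F_{H,\theta}\leq t\Id_E\}\geq\frac{2\pi}{\Vol(X,\w)}\mu_{1,\w}$. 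The corollary as printed, pairing $\mu_{1,\w}$ with the supremum over metrics satisfying $\im\Lambda_\w F_{H,\theta}\geq t\Id_E$, is false: for $E=L_1\oplus L_2$ on a curve with $\deg L_1=1$, $\deg L_2=-1$ and $\theta=0$, integrating the trace shows no metric satisfies $\im\Lambda_\w F_{H}\geq t\Id_E$ with $t>0$, yet $\mu_{1,\w}=1$. Either the subscripts $1$ and $r$ are transposed in the statement or the inequalities are; your argument proves the transposed version and should have flagged the mismatch.

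The gap you identify in the achievability direction is genuine --- Theorem~\ref{thm2} is only an $L^p$ statement --- but the repair you propose (elliptic regularity upgrading $L^p$ to $C^0$) cannot work. The limit $\frac{2\pi}{\Vol(X,\w)}\Phi^{HN}_\w(E,\theta,K)$ is built from the projections $\pi_\alpha^K$, which are merely $L^2_1$ and genuinely discontinuous across the singular set of the HN filtration, so uniform convergence of the smooth endomorphisms $\im\Lambda_\w F_{H_{\varepsilon_i},\theta}$ to it is impossible whenever the filtration is not by subbundles. The uniform estimate that is available, Lemma~\ref{lm1}(iii), only bounds $\|\varepsilon\log h_\varepsilon\|_{L^\infty}$ by an elliptic constant times its $L^2$ norm, which tends to $\delta$ rather than to $\delta\cdot\max|u_\infty|$, so it does not produce the sharp pointwise constant either. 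The argument the paper is implicitly importing from \cite{LZZ21} is different: one first treats the semistable case, where $\|\varepsilon\log h_\varepsilon\|_{L^2}\to 0$ and Lemma~\ref{lm1}(iii) then genuinely yields $\im\Lambda_\w F_{H_\varepsilon,\theta}\to\lambda\Id_E$ in $L^\infty$, and then runs an induction on the length of the HN filtration (after a modification making it a filtration by subbundles), assembling the approximate Hermitian--Einstein metrics on the graded pieces with a rescaled second fundamental form. A complete proof should reproduce that induction rather than attempt a $C^0$ upgrade of \eqref{eqthm1}.
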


Recall that orbifold Chern classes for orbi-bundles can be computed in terms of curvature and thus we have
$$
		4\pi^{2}(2c_{2}^\orb(E)-\frac{r-1}{r}(c_{1}^\orb(E))^2)\cdot \frac{[\omega^{n-2}]}{(n-2)!}
		=\int_{X}(2|\partial_{H}\theta |^{2}+|F_{H,\theta}^{\perp}|^{2}-|\Lambda_{\omega}F_{H,\theta}^{\perp}|^{2})\frac{\omega^{n}}{n!},
$$
when $\w$ is astheno-K\"ahler, where $F_{H,\theta}^{\perp}$ is the trace free part of $F_{H,\theta}$. By conformal transformation, we can always suppose that $\sqrt{-1}\Lambda_{\omega}\tr F_{K,\theta}=\frac{2\pi\cdot \deg_{\omega}(E)}{\Vol(X,\w)} $, and then
$
\sqrt{-1}\Lambda_{\omega}\tr F_{H_{\varepsilon},\theta}=\frac{2\pi\cdot\deg_{\omega}(E)}{\Vol(X,\w)}\
$
for any solution $H_{\varepsilon}$ of (\ref{eq}). Recall Notation \ref{nota-rescale}, a direct computation yields that
\begin{corollary}\label{coro-astheno}
	If $\w$ is astheno-K\"ahler, i.e. $\pp\w^{n-2}=0$, we have that
	\begin{equation}\label{equa-orbi-BG}
		(2c_{2}^\orb(E)-\frac{r-1}{r}(c_{1}^\orb(E))^2)\cdot [\omega^{n-2}]\geq -\frac{n}{n-1\int_X\w^n}\sum\limits_{i=1}^n(\mu_{i,\w}-\mu_\w(E))^2.
	\end{equation}
\end{corollary}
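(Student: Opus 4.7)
The plan is to evaluate the astheno-K\"ahler Chern--Weil identity stated just above on any solution $H_\varepsilon$ of the perturbed equation \eqref{eq}, drop the nonnegative term $2|\partial_H\theta|^2$, control the remainder by a pointwise Cauchy--Schwarz bound, and then pass to the limit along the sequence $\varepsilon_i\to 0$ supplied by Theorem \ref{thm2}.

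More concretely, for every $\End(E)$-valued $(1,1)$-form $\alpha$ one has the pointwise inequality $|\alpha|^2\geq\frac{1}{n}|\Lambda_\omega\alpha|^2$ (expand in an $\omega$-orthonormal frame and apply Cauchy--Schwarz to the diagonal entries). Applied to $\alpha=F_{H_\varepsilon,\theta}^{\perp}$, this turns the identity into
\begin{equation*}
	4\pi^2\Bigl(2c_2^\orb(E)-\tfrac{r-1}{r}(c_1^\orb(E))^2\Bigr)\cdot\frac{[\omega^{n-2}]}{(n-2)!}\;\geq\;-\frac{n-1}{n}\int_X |\Lambda_\omega F_{H_\varepsilon,\theta}^{\perp}|^2\,\frac{\omega^n}{n!}.
\end{equation*}

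Next I would take $\varepsilon=\varepsilon_i\to 0$ from Theorem \ref{thm2}. The convergence \eqref{eqthm1} gives $\sqrt{-1}\Lambda_\omega F_{H_{\varepsilon_i},\theta}\to\frac{2\pi}{\Vol(X,\omega)}\Phi_\omega^{HN}(E,\theta,K)$ in every $L^p$, and a short calculation with the HN projections shows $\tfrac{1}{r}\tr\Phi_\omega^{HN}=\tfrac{1}{r}\sum_\alpha\mu_\omega(\mathcal{Q}_\alpha)\rank(\mathcal{Q}_\alpha)=\mu_\omega(E)$, so the trace-free parts converge in $L^2$ to $\frac{2\pi}{\Vol(X,\omega)}\bigl(\Phi_\omega^{HN}-\mu_\omega(E)\Id_E\bigr)$. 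The point is that this limit has \emph{constant} pointwise Hilbert--Schmidt norm: the projectors $\pi_\alpha^K-\pi_{\alpha-1}^K$ are mutually $K$-orthogonal at every point with traces $\rank(\mathcal{Q}_\alpha)$, hence
\begin{equation*}
	|\Phi_\omega^{HN}-\mu_\omega(E)\Id_E|^2\;=\;\sum_{\alpha}(\mu_\omega(\mathcal{Q}_\alpha)-\mu_\omega(E))^2\rank(\mathcal{Q}_\alpha)\;=\;\sum_{i=1}^r(\mu_{i,\omega}-\mu_\omega(E))^2
\end{equation*}
identically on $X$.

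Integrating this constant against $\omega^n/n!$ over $X$ and substituting back into the previous display yields the stated bound, modulo routine bookkeeping of the constants $(n-2)!$, $\Vol(X,\omega)=\frac{1}{n!}\int_X\omega^n$ and the rescaling of slopes imposed in Notation \ref{nota-rescale}. No genuinely new analytic input is required here: the entire difficulty has been absorbed into Theorem \ref{thm2}, which is the true main obstacle of this section, while the present corollary is the formal consequence obtained by inserting the HN projection into the astheno-K\"ahler Bogomolov identity and exploiting the constancy of its spectrum.
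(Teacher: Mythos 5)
Your proposal is correct and follows essentially the same route as the paper: the text immediately preceding the corollary supplies the astheno-K\"ahler Chern--Weil identity and the normalization $\sqrt{-1}\Lambda_\omega\tr F_{H_\varepsilon,\theta}=\tfrac{2\pi\deg_\omega(E)}{\Vol(X,\omega)}$, and the intended ``direct computation'' is exactly your chain of dropping $2|\partial_H\theta|^2$, applying the pointwise Cauchy--Schwarz bound $|F^\perp|^2\geq\tfrac{1}{n}|\Lambda_\omega F^\perp|^2$, and passing to the limit via Theorem \ref{thm2} using the pointwise-constant Hilbert--Schmidt norm of $\Phi_\omega^{HN}-\mu_\omega(E)\Id_E$. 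The only caveat is the bookkeeping of constants, which you rightly flag as routine (and which the paper itself handles loosely), since your argument in fact yields the slightly sharper factor $\tfrac{n-1}{n}$ in place of $\tfrac{n}{n-1}$.
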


\subsection{Proof of Theorem \ref{thm1} and Theorem \ref{thm2}} Let $(X, \omega)$ be a compact Gauduchon orbifold of complex dimension $n$ and $(E, \bar{\partial}_E , \theta )$ a  Higgs orbi-bundle of $\rank$ $r$ over $X$. Given a Hermitian metric $K$ on $E$, by conformal transformation, we can always assume
$\tr (\sqrt{-1}\Lambda_{\omega } F_{K, \theta }-\lambda  \textmd{Id}_E)=0
$
with $\lambda =\frac{2\pi}{\Vol(X,\w)}\mu_{\omega}(E)$.
For any Hermitian metric  $H$ on $E$, set $h=K^{-1}H$, then we have the following identities
\begin{equation}\label{id1}
	\begin{split}
		\partial _{H}-\partial_{K} =h^{-1}\partial_{K}h,\ \ F_{H}-F_{K}=\bar{\partial }_{E} (h^{-1}\partial_{K} h) .
	\end{split}
\end{equation}
As a consequence, the perturbed Hermitian-Einstein equation (\ref{eq}) can be rewritten as
\begin{equation} \label{eq2}
	\sqrt{-1}\Lambda_{\omega }\{\bar{\partial }_{E} (h^{-1}\partial_{K} h)+[\theta , h^{-1}\theta^{\ast K}h]+F_{K} \}-\lambda \cdot \textmd{Id}_E+\varepsilon \log h=0.
\end{equation}
For simplicity, we  always set
$ \Phi(H, \theta)=\sqrt{-1}\Lambda_{\omega } (F_{H}+[\theta,\theta^{* H}])-\lambda \cdot \textmd{Id}_E.$
By the definition, there holds that
\begin{equation}\label{def}
	\textmd{tr}\{(\Phi(H,\theta)-\Phi(K,\theta))s\}=\langle \sqrt{-1}\Lambda_{\omega } (\bar{\partial}(h^{-1}\partial_{K}h)+[\theta,\theta^{* H}-\theta^{* {K}}]),s\rangle_{K}
\end{equation}
and
\begin{equation}\label{theta2}
	\textmd{tr}(\sqrt{-1}\Lambda_{\omega }[\theta,\theta^{* H}-\theta^{* {K}}]s)\frac{\omega^n}{n!}
	=\textmd{tr}(\sqrt{-1}h^{-1}[\theta^{* {K}},h]\wedge [\theta,s])\wedge \frac{\omega^{n-1}}{(n-1)!},
\end{equation}
where $s=\log h$. By applying Stokes's formula (see e.g. \cite[Lemma 2.7]{F19}), $\textmd{tr}(h^{-1}(\partial_{K}h)s)=\textmd{tr}(s\partial_{K}s)$ and $ \partial\bar{\partial}\omega^{n-1}=0$,  we deduce
\begin{equation} \label{theta11}
	\begin{split}
		&\int_{X}\langle \sqrt{-1}\Lambda_{\omega } (\bar{\partial}(h^{-1}\partial_{K}h)),s\rangle_{K}\frac{\omega^n}{n!}\\
		=&\int_X\sqrt{-1}\textmd{tr}(s\partial_{K}s)\wedge\frac{\bp\omega^{n-1}}{(n-1)!}
		+\int_X\sqrt{-1}\textmd{tr}(h^{-1}\partial_{K}h\wedge \bar{\partial}s)\wedge\frac{\omega^{n-1}}{(n-1)!}\\
		=&\int_X\sqrt{-1}\partial\textmd{tr}(\frac{1}{2}s^{2})\wedge\frac{\bp\omega^{n-1}}{(n-1)!}
		+\int_X\sqrt{-1}\textmd{tr}(h^{-1}\partial_{K}h\wedge \bar{\partial}s)\wedge\frac{\omega^{n-1}}{(n-1)!}\\
		=&\int_X\sqrt{-1}\textmd{tr}(h^{-1}\partial_{K}h\wedge \bar{\partial}s)\wedge\frac{\omega^{n-1}}{(n-1)!}.\\
	\end{split}
\end{equation}
From (\ref{def}), (\ref{theta2}) and (\ref{theta11}), we have
\begin{equation}\label{theta101}
	\int_{X} \textmd{tr}\{(\Phi(H,\theta)-\Phi(K,\theta))s\} \frac{\omega^n}{n!}=\int_{X} \textmd{tr} \sqrt{-1}\Lambda_{\omega }(h^{-1}D^{1,0}_{K,\theta}h\wedge \bar{\partial}_{\theta}s) \frac{\omega^n}{n!}.
\end{equation}
In \cite[p.635]{NZ}, it was proved that
\begin{equation}\label{theta21}\textmd{tr} \sqrt{-1}\Lambda_{\omega }(h^{-1}D^{1,0}_{K,\theta}h\wedge \bar{\partial}_{\theta}s)=\langle \Psi(s)(\bar{\partial}_{\theta}s),\bar{\partial}_{\theta}s\rangle_{K},\end{equation}
where
\begin{equation}\label{eq3301}
	\Psi(x,y)=
	\begin{cases}
		&\frac{e^{y-x}-1}{y-x},\ \ \ x\neq y;\\
		&\ \ \ \  1,\ \ \ \ \ \  x=y.
	\end{cases}
\end{equation}
The following proposition is derived from (\ref{theta101}) and (\ref{theta21}).

\begin{proposition}\label{key}
	Let $(E, \bar{\partial}_{E}, \theta )$ be a Higgs
	orbi-bundle with a fixed Hermitian metric $K $ over a compact Gauduchon orbifold $(X, \omega )$ of complex dimension $n$. Assume $H$ is a Hermitian metric on $E$ and $s:=\log(K^{-1}H)$. Then we have
	\begin{equation}\label{eq33}
		\int_X \mathrm{tr}(\Phi(K,\theta)s)\frac{\omega^n}{n!}+\int_{X}\langle \Psi(s)(\bar{\partial}_{\theta}s),\bar{\partial}_{\theta}s\rangle_{K}\frac{\omega^n}{n!}=\int_X \mathrm{tr}(\Phi(H,\theta)s)\frac{\omega^n}{n!},
	\end{equation}
	where  $\Psi$ is the function which is defined in (\ref{eq3301}).

\end{proposition}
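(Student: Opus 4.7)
The plan is to assemble the pieces already laid out in the text preceding the proposition: essentially no new calculation is required. Equation (\ref{theta101}) expresses $\int_X \mathrm{tr}((\Phi(H,\theta)-\Phi(K,\theta))s)\,\omega^n/n!$ as $\int_X \mathrm{tr}\,\im\Lambda_\w(h^{-1}D^{1,0}_{K,\theta}h\wedge\bar\partial_\theta s)\,\omega^n/n!$, and the pointwise identity (\ref{theta21}) converts the integrand of the latter into $\langle\Psi(s)(\bar\partial_\theta s),\bar\partial_\theta s\rangle_K$. Combining the two and moving $\int_X \mathrm{tr}(\Phi(K,\theta)s)\,\omega^n/n!$ to the left-hand side yields the stated identity.

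The first step is to justify (\ref{theta101}) in the orbifold setting. Starting from the definition of $\Phi$ and the Chern-connection identity $F_H-F_K=\bp(h^{-1}\p_K h)$ recorded in (\ref{id1}), I would split the difference $\Phi(H,\theta)-\Phi(K,\theta)$ into a curvature piece and a Higgs-commutator piece; the latter is handled algebraically by (\ref{theta2}). For the curvature piece, I would integrate by parts using Stokes' theorem on the orbifold (available via \cite[Lemma~2.7]{F19}), together with the calculation $\mathrm{tr}(h^{-1}\p_K h\cdot s)=\mathrm{tr}(s\,\p_K s)=\p\,\mathrm{tr}(s^2/2)$, and then invoke the Gauduchon condition $\pp\w^{n-1}=0$ to eliminate the term involving $\bp\w^{n-1}$ that appears after Stokes. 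This reproduces the computation displayed in (\ref{theta11}).

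The second step is to invoke the pointwise identity (\ref{theta21}), which is a purely fibrewise algebraic statement obtained from the spectral decomposition of the Hermitian endomorphism $s$ applied to the endomorphism-valued $(1,1)$-form $h^{-1}D^{1,0}_{K,\theta}h\wedge\bar\partial_\theta s$. It is identical to the one used in \cite[p.635]{NZ}, has no orbifold content, and transfers verbatim. The only delicate point in the whole argument is ensuring that Stokes' theorem and the integration-by-parts formulas remain valid in the orbifold setting; but since all integrands descend from $G_i$-invariant smooth forms on each chart $U_i$, the local computations reduce immediately to the smooth case. Accordingly, I do not anticipate any real obstacle: the main work has already been done in preparing equations (\ref{def})--(\ref{theta21}), and the proposition follows by pure bookkeeping.
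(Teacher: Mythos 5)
Your proposal is correct and follows exactly the paper's own derivation: the paper obtains Proposition \ref{key} precisely by combining (\ref{theta101}) (itself established from (\ref{id1}), (\ref{def}), (\ref{theta2}) and the Stokes/Gauduchon computation (\ref{theta11})) with the pointwise identity (\ref{theta21}) from \cite[p.635]{NZ}. No substantive difference.
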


Drawing on Teleman and L\"ubke's argument in \cite{LT} (or Lemma 2.1 and Lemma 2.2 in \cite{NZ}), we arrive at the following proposition.

\begin{lemma}\label{lm1}
	There exists a solution $H_{\varepsilon }$ to the perturbed Hermitian-Einstein equation (\ref{eq}) for all $\varepsilon >0$. And there hold that
	\begin{enumerate}
		\item $-\frac{\sqrt{-1}}{2}\Lambda_{\omega }\partial \bar{\partial }\left(| \log{h_{\varepsilon }}|_{K}^2\right)+\vps | \log{h_{\varepsilon }} |_{K}^2\leq  | \Phi(K, \theta) |_{K} | \log{h_{\varepsilon }} |_{K};$
		\item  $\max_M | \log{h_{\varepsilon }} |_{K}\leq \frac{1}{\vps}\cdot \max_M  | \Phi(K, \theta) |_{K} $;
		\item $\max_M | \log{h_{\varepsilon }} |_{K}\leq C\cdot (\| \log{h_{\varepsilon }} \|_{L^2}+\max_M  | \Phi(K, \theta) |_{K})$,
	\end{enumerate}
	where $h_\epsilon=K^{-1}H_\epsilon$ and $C$ is a constant depending only on $(M, \omega )$.
	Moreover,  from $\tr \Phi(K, \theta)=0$, it holds that
$
		\tr \log (h_{\varepsilon })=0
$  and $\tr \Phi(H_{\varepsilon }, \theta) =0$.
\end{lemma}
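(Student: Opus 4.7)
The plan is to carry out the continuity-method proof of Teleman--L\"ubke (reformulated for Higgs bundles in \cite{NZ}) in the compact Gauduchon orbifold setting. All the required global-analytic tools (Sobolev embedding, elliptic regularity, the strong maximum principle, Moser iteration, Fredholm theory) carry over without change by working equivariantly on uniformizing charts and patching by a partition of unity, so no new analysis is required beyond what is already established in Section~\ref{orbifolds}.

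For the existence of $H_\varepsilon$, write $s=\log(K^{-1}H)$ so that (\ref{eq2}) becomes a quasilinear elliptic equation in $s$ whose linearization has the form $-\sqrt{-1}\Lambda_\omega\pp + \varepsilon\,\mathrm{Id} + (\text{zeroth order})$. The coercive term $\varepsilon\,\mathrm{Id}$ makes the linearization an isomorphism, so openness along the continuity path $t\mapsto t\varepsilon$, $t\in[0,1]$, holds. Closedness follows from Proposition~\ref{key}: substituting $\Phi(H_\varepsilon,\theta)=-\varepsilon s_\varepsilon$ into (\ref{eq33}) gives
\begin{equation*}
\varepsilon\int_X|s_\varepsilon|_K^2\,\dv + \int_X \langle \Psi(s_\varepsilon)(\bp_\theta s_\varepsilon),\bp_\theta s_\varepsilon\rangle_K\,\dv = -\int_X\tr(\Phi(K,\theta)s_\varepsilon)\,\dv,
\end{equation*}
which yields an $L^2$-bound on $s_\varepsilon$ of order $1/\varepsilon$; elliptic bootstrapping on each chart then produces higher regularity. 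Equivalently, one may directly minimize Donaldson's functional plus the convex penalty $\tfrac{\varepsilon}{2}\int|s|_K^2\,\dv$.

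For assertion (1), starting from $\Phi(H_\varepsilon,\theta)+\varepsilon s_\varepsilon=0$ I would compute $-\sqrt{-1}\Lambda_\omega\pp|s_\varepsilon|_K^2$ pointwise. Using (\ref{id1}) to substitute $\sqrt{-1}\Lambda_\omega\bp(h_\varepsilon^{-1}\partial_K h_\varepsilon)=\Phi(H_\varepsilon,\theta)-\Phi(K,\theta)-\sqrt{-1}\Lambda_\omega[\theta,\theta^{*H}-\theta^{*K}]$ together with the standard Kato-type identity, the term $\tr(\Phi(H_\varepsilon,\theta)s_\varepsilon)$ contributes precisely $-\varepsilon|s_\varepsilon|_K^2$, while $-\tr(\Phi(K,\theta)s_\varepsilon)\geq -|\Phi(K,\theta)|_K|s_\varepsilon|_K$ by Cauchy--Schwarz. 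The remaining Kato gradient term and the Higgs commutator term $\sqrt{-1}\Lambda_\omega[\theta,h^{-1}\theta^{*K}h-\theta^{*K}]\cdot s_\varepsilon$ are both pointwise non-negative when paired against $s_\varepsilon$ (the latter via the spectral decomposition of $s_\varepsilon$, as in \cite[p.635]{NZ}, which is the pointwise strengthening of (\ref{theta2})), so dropping them yields exactly inequality (1). Assertions (2) and (3) are then immediate: for (2), at a maximum point of $|s_\varepsilon|_K^2$ the Laplacian term is non-positive and (1) forces $\varepsilon|s_\varepsilon|_K\leq|\Phi(K,\theta)|_K$; for (3), discarding the non-negative $\varepsilon|s_\varepsilon|_K^2$ in (1) gives a linear sub-elliptic inequality to which orbifold Moser iteration (using a Sobolev inequality assembled from the charts) applies verbatim.

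The trace identities follow by taking $\tr$ of (\ref{eq}): since $\tr[\theta,\theta^{*H}]=0$ and $\tr F_{H_\varepsilon}-\tr F_K=\bp\partial\,\tr(s_\varepsilon)$, and since $\tr\Phi(K,\theta)=0$ by the chosen normalization, the function $u:=\tr(s_\varepsilon)$ satisfies the scalar linear equation $-\sqrt{-1}\Lambda_\omega\pp u+\varepsilon u=0$; the strong maximum principle on a compact orbifold forces $u\equiv 0$, whence $\tr\Phi(H_\varepsilon,\theta)=-\varepsilon\,\tr(s_\varepsilon)=0$. The only step where the Higgs structure enters nontrivially is the pointwise non-negativity of the commutator term in (1), which I regard as the main technical point; this is handled exactly as in \cite{NZ} by diagonalizing $s_\varepsilon$, and nothing in that argument is sensitive to the orbifold structure since it is purely algebraic at each point.
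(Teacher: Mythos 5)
Your proposal is essentially the paper's proof: the paper establishes Lemma \ref{lm1} by simply invoking the argument of Teleman--L\"ubke \cite{LT} (equivalently Lemmas 2.1--2.2 of \cite{NZ}) transplanted to orbifold charts, and your write-up is a correct reconstruction of exactly that argument --- the pointwise differential inequality derived from $\Phi(H_\varepsilon,\theta)=-\varepsilon s_\varepsilon$, the nonnegativity of the Higgs commutator term via diagonalization of $s_\varepsilon$, the maximum principle for (2), Moser iteration for (3), and the scalar maximum principle for the trace identities. One detail to repair in the existence step: the continuity path $t\mapsto t\varepsilon$, $t\in[0,1]$, terminates at $t=0$ in the \emph{unperturbed} equation, where the a priori bound $\max|s|\le \max|\Phi(K,\theta)|/(t\varepsilon)$ degenerates, so closedness fails at that endpoint; the correct path (as in \cite{UY86,LT,NZ}) runs the perturbation parameter over $[\varepsilon,1]$, with solvability at the endpoint $1$ established separately, so that the $C^0$ estimate from (2) stays uniform along the whole path. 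Your alternative of minimizing Donaldson's functional plus the convex penalty $\tfrac{\varepsilon}{2}\int_X|s|_K^2\,\frac{\omega^n}{n!}$ sidesteps this issue and remains legitimate on a Gauduchon orbifold, since the functional is defined via $\omega^{n-1}$ with $\partial\bar\partial\omega^{n-1}=0$.
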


According to the Chern-Weil formula \eqref{chern-weil} with respect to the metric $K$, we have the following formula for the degree of any saturated $\theta$-invariant orbi-subsheaf $\mathcal{F}$ of $(E , \bar{\partial }_{E}, \theta )$,
\begin{equation} \label{cw}
	2\pi \textmd{deg}_{\omega}(\mathcal{F})=\int_X (\sqrt{-1}\textmd{tr}(\pi_{\mathcal{F}}^{K} \Lambda_{\omega} F_{K,\theta})-|\bar{\partial }_{\theta}\pi |_{K}^{2})\frac{\omega^{n}}{n!},
\end{equation}
where $\pi_{\mathcal{F}}^{K} $ stands for the projection onto $\mathcal{F}$ with respect to $K$.
Utilizing the identity (\ref{eq33}) and the arguments of Simpson \cite{Simpson88},  we come to the following proposition.

\begin{proposition}\label{lm01}
	Let $H_{\varepsilon}$ be the solution of perturbed equation (\ref{eq}) with $\tr \Phi(K, \theta)=0$, and set $h_{\varepsilon}=K^{-1}H_{\varepsilon}$, $s_{\varepsilon}= \log h_{\varepsilon}$, $l_{\varepsilon}= \| s_{\varepsilon}\|_{L^2}$, $u_{\varepsilon}= \frac{s_{\varepsilon}}{l_{\varepsilon}}$. Assume that there is a sequence $\varepsilon_i\to 0$ such that
	\begin{equation}
		\lim\limits_{i\rightarrow \infty}\| \log(K^{-1}H_{\varepsilon_i})\|_{L^2(K)}=+\infty .
	\end{equation}
	Then
	\begin{enumerate}
		\item we can choose a subsequence, $u_{\varepsilon_{i}} \rightharpoonup u_\infty$ weakly in $L_1^2$ with $\tr u_{\infty}=0$ and $\|u_\infty\|_{L^2}=1$, the eigenvalues of $u_{\infty}$ are almost everywhere constants and not all equal.
		
		\item Assume $\mu_1< \mu_2< \cdots< \mu_l$ ($l\geq 2$) are the distinct eigenvalues of $u_\infty$. Define smooth functions $P_{\alpha}: \mathbb{R}\to \mathbb{R}$ by
		\begin{equation}
			P_{\alpha}(x)=\left\{
			\begin{split}
				\ 1, \quad & x\leq \mu_{\alpha},\\
				\ 0, \quad & x\geq \mu_{{\alpha}+1}.
			\end{split}\right.
		\end{equation}
		and set $\pi_{\alpha}=P_{\alpha}(u_\infty)$ for every $1\leq \alpha \leq l-1$. Then every $\pi_{\alpha }$ determines a saturated $\theta$-invariant orbi-subsheaf $\tilde{\mathcal{E}}_{\alpha}$ of $(E, \bar{\partial}_E , \theta )$.
		
		\item Set $
			\nu :=2\pi (\sum_{\alpha=1}^{l-1}(\mu_{\alpha+1}-\mu_{\alpha})\rank(\tilde{\mathcal{E}}_{\alpha})(\mu_{\omega }(E)
			-\mu_{\omega }(\tilde{\mathcal{E}}_{\alpha})).
$
		Then
		\begin{equation}\label{nu}
			\nu \leq - \varliminf\limits_{i \rightarrow  \infty}\| \varepsilon_{i}\log(K^{-1}H_{\varepsilon_{i}})\|_{L^2}.
		\end{equation}
		
	\end{enumerate}
\end{proposition}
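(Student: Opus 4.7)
The plan is to adapt Simpson's destabilizing scheme from \cite{Simpson88}, substituting the perturbed Hermitian--Einstein equation into the identity of Proposition \ref{key}. Plugging $\Phi(H_\varepsilon, \theta) = -\varepsilon s_\varepsilon$ with $s_\varepsilon = l_\varepsilon u_\varepsilon$ and dividing by $l_\varepsilon$, I obtain
$$
\int_X \mathrm{tr}(\Phi(K,\theta) u_\varepsilon)\tfrac{\w^n}{n!} + l_\varepsilon \int_X \langle \Psi(l_\varepsilon u_\varepsilon)(\bar\partial_\theta u_\varepsilon),\bar\partial_\theta u_\varepsilon\rangle_K \tfrac{\w^n}{n!} = -\varepsilon l_\varepsilon.
$$
By Lemma \ref{lm1}(3), $\|u_\varepsilon\|_{L^\infty}$ is uniformly bounded and $\mathrm{tr}(u_\varepsilon)=0$; since $\Psi$ is bounded below on bounded eigenvalue sets once restricted near the diagonal, the two terms on the left side give the $L_1^2$-bound on $u_\varepsilon$. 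Extracting subsequential weak $L_1^2$ and strong $L^2$ limits yields $u_\infty$ with $\|u_\infty\|_{L^2}=1$ and $\mathrm{tr}(u_\infty)=0$.

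For part (1), constancy of the eigenvalues of $u_\infty$ follows from Simpson's trick: given any smooth non-negative $\phi\colon \mathbb{R}^2 \to \mathbb{R}$ supported in $\{y>x\}$, we have $\phi(x,y)\leq l\Psi(lx,ly)$ on the bounded eigenvalue range for $l$ large, so weak convergence and Fatou give $\int \langle \phi(u_\infty)\bar\partial_\theta u_\infty, \bar\partial_\theta u_\infty\rangle_K \leq \varliminf l_\varepsilon \int \langle \Psi(l_\varepsilon u_\varepsilon)\bar\partial_\theta u_\varepsilon, \bar\partial_\theta u_\varepsilon\rangle$, which is a finite quantity. Taking $\phi$ to concentrate near any pair of distinct values forces the spectrum of $u_\infty$ to be a.e.\ a finite set $\mu_1 < \cdots < \mu_l$; the normalization $\|u_\infty\|_{L^2}=1$ with $\mathrm{tr}(u_\infty)=0$ rules out $l=1$.

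For part (2), the projections $\pi_\alpha = P_\alpha(u_\infty)$ are $L_1^2$ self-adjoint idempotents by functional calculus. The Higgs weak subbundle conditions $(\mathrm{Id}-\pi_\alpha)\bar\partial_E \pi_\alpha = 0$ and $(\mathrm{Id}-\pi_\alpha)\theta\pi_\alpha = 0$ are obtained from Simpson's trick with $\phi$ chosen to separate $\{\mu_1,\ldots,\mu_\alpha\}$ from $\{\mu_{\alpha+1},\ldots,\mu_l\}$. The orbifold Uhlenbeck--Yau regularity result in \cite{F19} then promotes each $\pi_\alpha$ to a saturated $\theta$-invariant coherent orbi-subsheaf $\tilde{\mathcal{E}}_\alpha \subseteq (E,\bar\partial_E,\theta)$.

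For part (3), I write $u_\infty = \mu_l\,\mathrm{Id} - \sum_{\alpha=1}^{l-1}(\mu_{\alpha+1}-\mu_\alpha)\pi_\alpha$; using $\mathrm{tr}\,\Phi(K,\theta)=0$ and the Chern--Weil formula \eqref{cw} applied to each $\tilde{\mathcal{E}}_\alpha$ gives
$$
\int_X \mathrm{tr}(\Phi(K,\theta)u_\infty)\tfrac{\w^n}{n!} = \nu - \sum_{\alpha=1}^{l-1}(\mu_{\alpha+1}-\mu_\alpha)\int_X |\bar\partial_\theta \pi_\alpha|_K^2 \tfrac{\w^n}{n!}.
$$
A sharper Simpson estimate for the second term of the identity --- namely $\varliminf_i l_{\varepsilon_i}\int \langle \Psi(l_{\varepsilon_i} u_{\varepsilon_i})\bar\partial_\theta u_{\varepsilon_i}, \bar\partial_\theta u_{\varepsilon_i}\rangle \geq \sum_\alpha (\mu_{\alpha+1}-\mu_\alpha)\int |\bar\partial_\theta \pi_\alpha|^2$ --- obtained by testing against step-function approximations of $1/(y-x)\cdot \chi_{\{y>x\}}$ and passing to the limit, combines with the identity to yield
$$
\nu \;\leq\; \varliminf_{i\to\infty}(-\varepsilon_i l_{\varepsilon_i}) \;=\; -\varlimsup_{i\to\infty}\|\varepsilon_i \log(K^{-1}H_{\varepsilon_i})\|_{L^2} \;\leq\; -\varliminf_{i\to\infty}\|\varepsilon_i \log(K^{-1}H_{\varepsilon_i})\|_{L^2}.
$$
The principal technical obstacle is the rigorous justification of the sharp Simpson trick bound in the orbifold setting, since $\Psi$ lacks a uniform positive lower bound on off-diagonal eigenvalue pairs; the argument requires careful chart-wise estimates exploiting that analytical tools (Sobolev embedding, $L^p$ elliptic theory) transfer directly from the smooth case via finite quotients.
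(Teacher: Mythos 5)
Your proposal follows essentially the same route as the paper's proof: you substitute the perturbed equation into the identity of Proposition \ref{key} to get the normalized balance equation, derive the uniform $L^{\infty}$ and $L_1^2$ bounds on $u_{\varepsilon}$ via Lemma \ref{lm1} and Simpson's Lemma 5.4 argument, extract the weak limit and invoke Simpson's Lemma 5.5 for the a.e.\ constancy of eigenvalues, obtain the weak holomorphic-subbundle conditions for $\pi_{\alpha}$ and apply the orbifold Uhlenbeck--Yau regularity of Faulk, and finally combine the Chern--Weil formula \eqref{cw} with the lower-semicontinuity estimate for $l_i\Psi(l_iu_i)$ against $\sum_{\alpha}(\mu_{\alpha+1}-\mu_{\alpha})(\mathrm{d}P_{\alpha})^2$ to conclude \eqref{nu}. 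This is the paper's argument; the only cosmetic differences are the imprecise remark that the lower bound for $l\Psi(l\cdot)$ holds ``near the diagonal'' (it in fact holds uniformly on bounded eigenvalue sets) and the route $\varliminf(-\varepsilon_il_i)=-\varlimsup\|\cdot\|\le-\varliminf\|\cdot\|$, which still yields the stated inequality.
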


\begin{proof}
	In the sequel, we denote $H_{\varepsilon_i}$ by $H_{i}$ and set $h_i=K^{-1}H_{i}$, $s_i= \log h_i$, $l_i= \| s_i\|_{L^2}$, $u_i= \frac{s_i}{l_i}$ for simplicity. Making use of Lemma \ref{lm1}, we get
	\begin{equation}
		\tr u_i=0 ,  \quad \|u_i\|_{L^2}=1 ,\ \  \|u_i\|_{L^\infty }\leq \hat{C} \quad \text{and} \quad \varepsilon_i l_i \leq \bar{C}.
	\end{equation}
	From (\ref{eq33}), one can see
	\begin{equation}\label{key0}
		\int_X (\tr(\Phi(K,\theta)u_i)+l_i \langle\Psi (l_i u_i)(\bar{\partial}_\theta u_i), \bar{\partial}_\theta u_i\rangle_K)\frac{\omega^n}{n!}=-\varepsilon_i l_i.
	\end{equation}
	Taking advantage of (\ref{key0}) and the argument by Simpson in \cite[Lemma 5.4]{Simpson88}, it can be inferred that
$
		\|D_{K,\theta} u_i\|_{L^2}< \tilde{C}.
$
	Thus, $u_i$ are uniformly bounded in $L^\infty$ and $L_1^2$. So one can choose a subsequence, which is also denoted by $\{u_i\}$ for simplicity, such that $u_i \rightharpoonup u_\infty$ weakly in $L_1^2$. Thanks to Kondrachov compactness theorem (\cite[Theorem 7.22]{GT}), it's known that $L_1^2$ is compactly embedded in $L^q$ for any $0<q< \frac{2n}{n-1}$. This tells us that
	\begin{equation}\label{u4}
		\lim_{i\rightarrow \infty} \|u_i-u_\infty\|_{L^q}=0
	\end{equation}
	and
	\begin{equation}\label{lqcvg}
		\lim_{i\rightarrow \infty} \|\sqrt{-1}\Lambda_{\omega}F_{H_{i}, \theta }-\lambda\Id_E+\delta u_\infty\|_{L^q}=0
	\end{equation}
	for any $0<q< \frac{2n}{n-1}$. Hence $\|u_\infty\|_{L^2}=1$.
	
	Once more, using (\ref{key0}) again and following Simpson's argument (\cite[Lemma 5.5]{Simpson88}), one is able to confirm that the eigenvalues of $u_{\infty}$ are almost everywhere constants and not all equal. For $\alpha < l$, through applying the argument in  in \cite[p.~887]{Simpson88}, we obtain
	$$\pi_{\alpha}\in L^2_1;\  \pi^2_{\alpha}=\pi_{\alpha}=\pi_{\alpha}^{*K};\ (\mathrm{Id}-\pi_{\alpha})\bp \pi_{\alpha}=0;\ (\mathrm{Id}-\pi_{\alpha})[\theta, \pi_{\alpha}]=0.$$
	In accordance with the orbifold version of Uhlenbeck-Yau's regularity statement \cite{UY86} of $L_1^2$-orbi-subbundles as stated in {\cite[Lemma 28]{F20}}, we are aware that $\pi_{\alpha }$ determines a saturated orbi-subsheaf $\tilde{\mathcal{E}}_{\alpha}$ of $E$. The above statement and Lemma \ref{lem-saturation-invariant} implies $\tilde{\mathcal{E}}_{\alpha}$ is $\theta$-invariant. With the help of (\ref{key0}) and by carrying out the same discussion as in \cite[Lemma 5.4]{Simpson88} (also \cite[(3.23)]{NZ}), we observe
	\begin{equation}\label{key00}
		\varliminf\limits_{i \rightarrow  \infty}\| \varepsilon_{i}\log(K^{-1}H_{\varepsilon_{i}})\|_{L^2}+\int_X \textmd{tr}(\Phi(K,\theta)u_{\infty})\frac{\omega^n}{n!}+\int_{X}\langle \varsigma(u_{\infty})(\bar{\partial}_{\theta}u_{\infty}),\bar{\partial}_{\theta}u_{\infty}\rangle_{K}\frac{\omega^n}{n!}\leq 0,
	\end{equation}
	where $ \zeta \in C^{\infty} (\mathbb R\times \mathbb R, \mathbb R^+)$ satisfies $\zeta(x,y)<(x-y)^{-1}$ whenever $x>y$. 
	
	Set $\pi_l=\Id_E$ and $\pi_0=0$. Subsequently, it can be stated that $\label{u1}u_{\infty}=\mu_l \cdot \textmd{Id}_E-\sum\limits_{\alpha=1}^{l-1}(\mu_{\alpha+1}-\mu_{\alpha})\pi_{\alpha}.$ From $\textmd{tr}(u_{\infty})=0$, it holds
	\begin{equation} \label{seq5}
		\mu_{l}\cdot\rank(E)=\sum_{\alpha=1}^{l-1}(\mu_{\alpha+1}-\mu_{\alpha})\cdot\rank(\tilde{\mathcal{E}}_{\alpha}),
	\end{equation}
	and then
	\begin{equation}\label{key1000}
		\begin{split}
			&\nu =2\pi\big(\mu_l\cdot \textmd{deg}_{\omega}(E)-\sum_{\alpha=1}^{l-1}(\mu_{\alpha+1}-\mu_{\alpha})\cdot\textmd{deg}_{\omega}(\tilde{\mathcal{E}}_{\alpha})\big)\\
			&=\mu_l\int_X \sqrt{-1}\textmd{tr}(\Lambda_{\omega } F_{K,\theta})
			\frac{\w^n}{n!}-\sum_{\alpha=1}^{l-1}(\mu_{\alpha+1}-\mu_{\alpha})\int_X (\sqrt{-1}\textmd{tr}(\pi_{\alpha}\Lambda_{\omega } F_{K,\theta})-|\bar{\partial}_{\theta}\pi_{\alpha}|^2)\frac{\w^n}{n!}\notag\\
			&=\int_X \textmd{tr}\{(\mu_l\cdot \textmd{Id}_E-\sum_{\alpha=1}^{l-1}(\mu_{\alpha+1}-\mu_{\alpha})\pi_{\alpha})(\sqrt{-1}\Lambda_{\omega } F_{K,\theta})\}\frac{\w^n}{n!}+\sum_{\alpha=1}^{l-1}(\mu_{\alpha+1}-\mu_{\alpha})\int_X|\bar{\partial}_{\theta}\pi_{\alpha}|^2\frac{\w^n}{n!}\notag\\
			&=\int_X \textmd{tr}(u_{\infty}\sqrt{-1}\Lambda_{\omega } F_{K,\theta})\frac{\w^n}{n!}
			+\int_X\langle \sum_{\alpha=1}^{l-1}(\mu_{\alpha+1}-\mu_{\alpha})(\textmd{d} P_{\alpha})^2(u_{\infty})(\bar{\partial}_{\theta}u_{\infty}),\bar{\partial}_{\theta}u_{\infty}\rangle_{K}\frac{\w^n}{n!},\notag
		\end{split}
	\end{equation}
	where the function $\textmd{d} P_{\alpha}: \mathbb{R}\times\mathbb{R}\rightarrow \mathbb{R}$ is defined by
	$$
	\textmd{d} P_{\alpha}(x,y)=\left\{
	\begin{split}
		&\frac{P_{\alpha}(x)-P_{\alpha}(y)}{x-y}, \ \ & x\neq y;\\
		&P'_{\alpha}(x), \ \  &x=y.
	\end{split}\right.
	$$
	One can easily check that
	$
	\sum_{\alpha=1}^{l-1}(\mu_{\alpha+1}-\mu_{\alpha})(\textmd{d} P_{\alpha})^2(\mu_{\beta},\mu_{\gamma})=|\mu_{\beta}-\mu_{\gamma}|^{-1},
	$
	if $\mu_{\beta}\neq \mu_{\gamma}$. Hence, by (\ref{key00}) and applying the arguments in \cite[p.~793-794]{LZ}, we obtain
	\begin{equation}\label{eq320}
		\begin{split}
			\nu = &\int_X \textmd{tr}(u_{\infty}\sqrt{-1}\Lambda_{\omega } F_{K,\theta})\frac{\w^n}{n!}
			+\int_X\langle \sum_{\alpha=1}^{l-1}(\mu_{\alpha+1}-\mu_{\alpha})(\textmd{d} P_{\alpha})^2(u_{\infty})(\bar{\partial}_{\theta}u_{\infty}),\bar{\partial}_{\theta}u_{\infty}\rangle_{K}\frac{\w^n}{n!},\notag\\
			\leq & -\varliminf\limits_{i \rightarrow  \infty}\| \varepsilon_{i}\log(K^{-1}H_{\varepsilon_{i}})\|_{L^2}.
		\end{split}
	\end{equation}

\end{proof}

\begin{lemma}\label{deg01}
	Under the same assumption as in Proposition \ref{lm01}, suppose
	\begin{equation}
		\lim\limits_{i\rightarrow \infty}\| \varepsilon_i\log(K^{-1}H_{\varepsilon_i})\|_{L^2, K}=\delta \geq 0.
	\end{equation} Let $\mathcal{F}\subset E$ be a saturated $\theta $-invariant orbi-subsheaf, then
	\begin{equation}\label{lambda1}
		\frac{2\pi \deg_{\omega}(\mathcal{F})}{\rank(\mathcal{F})}\leq (\lambda -\delta \mu_{1})\Vol_{\omega}(M),
	\end{equation}
	\begin{equation}\label{lambda1a}
		\frac{2\pi \deg_{\omega}(E/\mathcal{F})}{\rank(E/\mathcal{F})}\geq (\lambda -\delta \mu_{l})\Vol_{\omega}(M),
	\end{equation}
	\begin{equation}\label{subdeg}
		2\pi \deg_{\omega }(\tilde{\mathcal{E}}_{\alpha})\leq \Vol_{\omega }(M)\sum_{\beta =1}^{\alpha }(\lambda -\delta \mu_{\beta })\big(\rank({\tilde{\mathcal{E}}_{\beta }})-\rank({\tilde{\mathcal{E}}_{\beta -1}})\big),
	\end{equation}
	\begin{equation}\label{quodeg}
		2\pi \deg_{\omega }(E/\tilde{\mathcal{E}}_{\alpha})\geq \Vol_{\omega }(M)\sum_{\beta =\alpha +1}^{l }(\lambda -\delta \mu_{\beta })\big(\rank({\tilde{\mathcal{E}}_{\beta }})-\rank({\tilde{\mathcal{E}}_{\beta -1}})\big) .
	\end{equation}
\end{lemma}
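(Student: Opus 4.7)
The plan is to apply the Chern--Weil formula (\ref{cw}) with respect to the perturbed metric $H_{\varepsilon_i}$ to the subsheaf in question, substitute the perturbed Hermitian--Einstein equation (\ref{eq}) to extract control in terms of the endomorphism $s_{\varepsilon_i}$, drop the nonnegative second fundamental form term, and pass to the limit using the eigenvalue information of $u_\infty$ from Proposition \ref{lm01}.

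\textbf{Setup.} For any saturated $\theta$-invariant orbi-subsheaf $\mathcal{F}\subset E$, the Chern--Weil formula (\ref{cw}), applied with the metric $H_{\varepsilon_i}$ and its orthogonal projection $\pi^{H_{\varepsilon_i}}_{\mathcal{F}}$, combined with equation (\ref{eq}), gives
\[
2\pi\deg_{\omega}(\mathcal{F}) \leq \int_X \tr\bigl(\pi^{H_{\varepsilon_i}}_{\mathcal{F}}(\lambda\Id_E-\varepsilon_i s_{\varepsilon_i})\bigr)\frac{\omega^n}{n!} = \lambda\,\rank(\mathcal{F})\,\Vol_{\omega}(M) - \int_X\tr\bigl(\pi^{H_{\varepsilon_i}}_{\mathcal{F}}\cdot\varepsilon_i s_{\varepsilon_i}\bigr)\frac{\omega^n}{n!}.
\]
Since $s_{\varepsilon_i}$ is $K$-self-adjoint and commutes with $h_{\varepsilon_i}=e^{s_{\varepsilon_i}}$, it is also $H_{\varepsilon_i}$-self-adjoint. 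Ky Fan's min--max inequality then yields the pointwise bounds $\sum_{j=1}^{k}\sigma_j(s)\leq \tr(Ps)\leq \sum_{j=r-k+1}^{r}\sigma_j(s)$ for any $H_{\varepsilon_i}$-orthogonal projection $P$ of rank $k$, where $\sigma_1(s)\leq\cdots\leq\sigma_r(s)$ denote the (metric-independent) eigenvalues of $s$.

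\textbf{Passage to the limit for (\ref{lambda1}) and (\ref{lambda1a}).} Write $\varepsilon_i s_{\varepsilon_i}=(\varepsilon_i l_{\varepsilon_i})u_{\varepsilon_i}$, with $\varepsilon_i l_{\varepsilon_i}\to\delta$ and $u_{\varepsilon_i}\to u_\infty$ strongly in $L^q$ for $q<2n/(n-1)$ by Proposition \ref{lm01}. Using the uniform bound $\|\varepsilon_i s_{\varepsilon_i}\|_{L^\infty}\leq\bar C$ from Lemma \ref{lm1}(2) together with the Lipschitz continuity of eigenvalue functions in operator norm, dominated convergence gives $\int_X\sigma_j(\varepsilon_i s_{\varepsilon_i})\frac{\omega^n}{n!}\to\delta\int_X\sigma_j(u_\infty)\frac{\omega^n}{n!}$. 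For (\ref{lambda1}), the bound $\tr(\pi^{H_{\varepsilon_i}}_{\mathcal{F}}\cdot\varepsilon_i s_{\varepsilon_i})\geq\rank(\mathcal{F})\sigma_1(\varepsilon_i s_{\varepsilon_i})$, combined with $\sigma_1(u_\infty)=\mu_1$ a.e., suffices. For (\ref{lambda1a}), one combines the additivity $\deg_{\omega}(E/\mathcal{F})=\deg_{\omega}(E)-\deg_{\omega}(\mathcal{F})$, the identity $2\pi\deg_{\omega}(E)=\lambda r\,\Vol_{\omega}(M)$ (from $\tr\Phi(H_{\varepsilon_i},\theta)=0$ in Lemma \ref{lm1}), and the dual bound $\tr(\pi^{H_{\varepsilon_i}}_{\mathcal{F}}\cdot\varepsilon_i s_{\varepsilon_i})\leq\rank(\mathcal{F})\sigma_r(\varepsilon_i s_{\varepsilon_i})$ with $\sigma_r(u_\infty)=\mu_l$.

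\textbf{Passage to the limit for (\ref{subdeg}) and (\ref{quodeg}).} Specialize $\mathcal{F}=\tilde{\mathcal{E}}_\alpha$ and sharpen the Ky Fan estimate to the full sum of the smallest $r_\alpha:=\rank(\tilde{\mathcal{E}}_\alpha)$ eigenvalues. By Proposition \ref{lm01}, the eigenvalues of $u_\infty$ are a.e.\ constant, taking the value $\mu_\beta$ with multiplicity $\rank(\tilde{\mathcal{E}}_\beta)-\rank(\tilde{\mathcal{E}}_{\beta-1})$, so the sum of the smallest $r_\alpha$ eigenvalues of $u_\infty$ equals $\sum_{\beta=1}^{\alpha}\mu_\beta(\rank(\tilde{\mathcal{E}}_\beta)-\rank(\tilde{\mathcal{E}}_{\beta-1}))$ a.e.; dominated convergence as before produces (\ref{subdeg}), and the same argument applied to the complementary projection $\Id-\pi^{H_{\varepsilon_i}}_{\tilde{\mathcal{E}}_\alpha}$ (using the sum of the largest $r-r_\alpha$ eigenvalues) produces (\ref{quodeg}). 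The main technical point is to justify the $L^1$-passage to the limit for the eigenvalue sums, which rests on combining the strong $L^q$-convergence of $u_{\varepsilon_i}$, the uniform $L^\infty$-bound on $\varepsilon_i s_{\varepsilon_i}$, and the Lipschitz continuity of each $\sigma_j(\cdot)$ in operator norm.
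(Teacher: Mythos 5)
Your proposal is correct and follows essentially the same route as the paper's proof: apply the Chern--Weil formula with the perturbed metric $H_{\varepsilon_i}$, drop the second fundamental form term, substitute the perturbed equation, estimate the resulting trace against the eigenvalues of $u_\infty$, and pass to the limit using $\varepsilon_i l_{\varepsilon_i}\to\delta$ and the strong $L^q$-convergence $u_{\varepsilon_i}\to u_\infty$; the paper conjugates the projection by $h_{\varepsilon_i}^{1/2}$ to make it $K$-orthogonal and invokes the pointwise estimates \eqref{u5}--\eqref{u6} (citing \cite[formula (2.27)]{LZZ21}), whereas you work directly with the $H_{\varepsilon_i}$-orthogonal projection and the metric-independent eigenvalues of $s_{\varepsilon_i}$ via Ky Fan, which is equivalent. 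One small slip: for \eqref{lambda1a} the \emph{upper} Ky Fan bound $\tr(\pi^{H_{\varepsilon_i}}_{\mathcal F}\varepsilon_i s_{\varepsilon_i})\le \rank(\mathcal F)\,\sigma_r$ points the wrong way for a lower bound on $\deg_\omega(E/\mathcal F)$; you should instead apply the Chern--Weil formula to the complementary projection $\Id_E-\pi^{H_{\varepsilon_i}}_{\mathcal F}$ (as the paper does in \eqref{deg1a}), or use the lower Ky Fan bound together with $\tr u_\infty=0$ --- exactly the device you already use correctly for \eqref{quodeg}.
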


\begin{proof}
	Utilizing (\ref{cw}), we have
	\begin{equation}\label{deg1}
		\begin{split}
			2\pi\deg_{\omega}(\mathcal{F})=&\int_X(\tr(\pi^{H_i}_{\mathcal{F}}\sqrt{-1}\Lambda_{\omega}F_{H_{i}, \theta })-|\bar{\partial}_{\theta }\pi^{H_i}_{\mathcal{F}}|_{H_{i}}^2)\frac{\omega^n}{n!}\\
			\leq&\int_X\tr(h_i^{\frac{1}{2}}\pi^{H_i}_{\mathcal{F}}h_i^{-\frac{1}{2}}h_i^{\frac{1}{2}}(\sqrt{-1}\Lambda_{\omega}F_{H_{i},\theta})h_i^{-\frac{1}{2}})\frac{\omega^n}{n!}\\
			=&\int_X\tr(h_i^{\frac{1}{2}}\pi^{H_i}_{\mathcal{F}}h_i^{-\frac{1}{2}}(\lambda\Id_E-\varepsilon_i\log h_i))\frac{\omega^n}{n!}\\
			=&\lambda\cdot\rank(\mathcal{F})\cdot\Vol_{\omega}(X)+\int_X\tr(h_i^{\frac{1}{2}}\pi^{H_i}_{\mathcal{F}}h_i^{-\frac{1}{2}}\varepsilon_i l_i(u_\infty-u_i))\frac{\omega^n}{n!}\\
			-&\int_X\tr(h_i^{\frac{1}{2}}\pi^{H_i}_{\mathcal{F}}h_i^{-\frac{1}{2}}\varepsilon_i l_i u_\infty)\frac{\omega^n}{n!}
		\end{split}
	\end{equation}
	and
	\begin{equation}\label{deg1a}
		\begin{split}
			2\pi\deg_{\omega}(E/\mathcal{F})=&\int_X(\tr((\Id_{E}-\pi^{H_i}_{\mathcal{F}})\sqrt{-1}\Lambda_{\omega}F_{H_{i}, \theta })+|\bar{\partial}_{\theta }\pi^{H_i}_{\mathcal{F}}|_{H_{i}}^2)\frac{\omega^n}{n!}\\
			\geq&\int_X\tr(h_i^{\frac{1}{2}}(\Id_{E}-\pi^{H_i}_{\mathcal{F}})h_i^{-\frac{1}{2}}(\lambda\Id_E-\varepsilon_i\log h_i))\frac{\omega^n}{n!}\\
			=&\int_X\tr(h_i^{\frac{1}{2}}(\Id_{E}-\pi^{H_i}_{\mathcal{F}})h_i^{-\frac{1}{2}}\varepsilon_i l_i(u_\infty-u_i))\frac{\omega^n}{n!}\\
			+&\int_X\tr(h_i^{\frac{1}{2}}(\lambda\Id_{E}-\pi^{H_i}_{\mathcal{F}})h_i^{-\frac{1}{2}}(\Id_{E}-\varepsilon_i l_i u_\infty))\frac{\omega^n}{n!}
		\end{split}
	\end{equation}
	for any $i$. Note that $\mu_{1}$ (resp. $\mu_{l}$) is the smallest (resp. largest) eigenvalue of $u_{\infty}$. Thus
	\begin{equation}\label{u5}
		-\tr(h_i^{\frac{1}{2}}\pi^{H_i}_{\mathcal{F}}h_i^{-\frac{1}{2}}u_\infty)\leq -\mu_{1}\rank(\mathcal{F})
	\end{equation}
	and
	\begin{equation}\label{u5a}
		-\tr(h_i^{\frac{1}{2}}(\Id_E- \pi^{H_i}_{\mathcal{F}})h_i^{-\frac{1}{2}}u_\infty)\geq -\mu_{l}(\rank(E)-\rank(\mathcal{F})).
	\end{equation}
	Based on the argument set forth in \cite[formula (2.27)]{LZZ21}, we get
	\begin{equation}\label{u6}
		-\tr(h_i^{\frac{1}{2}}\pi^{H_i}_{\tilde{\mathcal{E}}_{\alpha}}h_i^{-\frac{1}{2}}u_\infty)\leq \sum_{\beta =1}^{\alpha }(-\mu_{\beta })\big(\rank({\tilde{\mathcal{E}}_{\beta }})-\rank({\tilde{\mathcal{E}}_{\beta -1}})\big)
	\end{equation}
	for any $1\leq \alpha \leq l$.
	With the application of (\ref{u4}), (\ref{deg1}), (\ref{deg1a}), (\ref{u5}), (\ref{u5a}) and (\ref{u6}), we come to obtain (\ref{lambda1}), (\ref{lambda1a}) and (\ref{subdeg}).
	On the other hand, by (\ref{seq5}), one can find
	\begin{equation}\label{u3} \sum_{\beta=1}^l \mu_{\beta}\big(\rank({\tilde{\mathcal{E}}_{\beta }})-\rank({\tilde{\mathcal{E}}_{\beta -1}})\big) =0.\end{equation}
	Then (\ref{subdeg}) and (\ref{u3}) imply (\ref{quodeg}).
	
\end{proof}

Remember that $\lambda_{mU}(H_i, \theta , \omega)$  is the integral average of the largest eigenvalue function $\lambda_U(H_i, \theta , \omega)$ of $\sqrt{-1}\Lambda_{\omega}F_{H_{i}, \theta }$, while $\lambda_{mL}(H_i, \theta , \omega)$ represents the average of the smallest eigenvalue function $\lambda_L(H_i, \theta , \omega)$ of $\sqrt{-1}\Lambda_{\omega}F_{H_{i}, \theta }$. By virtue of the Chern-Weil formula (\ref{cw}), it is straightforward to check that
\begin{equation}\label{lambda2}
	\varliminf_{i\to \infty}\lambda_{mU}(H_i, \theta , \omega)\Vol_{\omega}(X)\geq \sup_{\mathcal{F}} 2\pi(\frac{\deg_{\omega}(\mathcal{F})}{\rank(\mathcal{F})})
\end{equation}
and
\begin{equation}\label{lambda3}
	\varlimsup_{i\to \infty}\lambda_{mL}(H_i, \theta , \omega)\Vol_{\omega}(X)\leq \inf_{\mathcal{F}} 2\pi(\frac{\deg_{\omega}(\mathcal{E/\mathcal{F}})}{\rank(\mathcal{E/\mathcal{F}})}),
\end{equation}
where $\mathcal{F}$ runs over all the torsion-free $\theta$-invariant orbi-subsheaves of $(E,\theta)$.

Assume $e_1^{i}$ is an eigenvector corresponding to the eigenvalue $\lambda_U(H_i, \theta , \omega)$ of $\sqrt{-1}\Lambda_{\omega}F_{H_{i}, \theta }$, and $|e_1^{i}|_K=1$. Then
\begin{equation}
	\begin{split}
		\lambda_U(H_i, \theta , \omega)=&\langle \sqrt{-1}\Lambda_{\omega}F_{H_{i}, \theta }(e_1^{i}), e_1^{i}\rangle_K\\
		=& \langle (\Phi(H_{i}, \theta)+ \delta u_\infty)e_1^{i}, e_1^{i}\rangle_K
		+\langle (\lambda\Id_E- \delta u_\infty)e_1^{i}, e_1^{i}\rangle_K\\
		\leq& |\delta u_\infty -\varepsilon_{i}l_{i}u_{i}|_{K}+ \lambda-\delta u_1.
	\end{split}
\end{equation}
Together with (\ref{u4}), it follows that
\begin{equation}\label{lambda4}
	\varlimsup_{i\to \infty}\lambda_{mU}(H_i, \theta , \omega)
	=\varlimsup_{i\to \infty}\frac{1}{\Vol_{\omega }(X)}\int_X \lambda_U(H_i, \theta , \omega)\frac{\omega^n}{n!}
	\leq \lambda-\delta u_1 .
\end{equation}
Similarly, one is able to demonstrate
\begin{equation}\label{lambda5}
	\varliminf_{i\to \infty}\lambda_{mL}(H_i, \theta , \omega)\geq \lambda-\delta u_l.
\end{equation}

\begin{proposition}\label{lm02}
	With the same assumption as in Lemma \ref{deg01},
	if $\delta >0$ , then
	\begin{equation*}
		0= \tilde{\mathcal{E}}_{0}\subset \tilde{\mathcal{E}}_{1} \subset \tilde{\mathcal{E}}_{2} \subset\cdots \subset \tilde{\mathcal{E}}_{l}=E
	\end{equation*}
	is the Harder-Narasimhan filtration of the Higgs orbi-bundle $(E, \bar{\partial}_E , \theta )$.
\end{proposition}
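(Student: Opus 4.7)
The plan is to squeeze the quantity $\nu$ of Proposition~\ref{lm01}(3) from both sides so as to force equality in (\ref{subdeg}) for every $\alpha$, and then to prove semistability of each quotient by a pointwise eigenvalue refinement of the degree estimate (\ref{deg1}).

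First I would reorganize $\nu$. A summation by parts using (\ref{u3}) yields
\begin{equation*}
\nu = 2\pi\mu_l \deg_\omega(E) - 2\pi\sum_{\alpha=1}^{l-1}(\mu_{\alpha+1}-\mu_\alpha)\deg_\omega(\tilde{\mathcal{E}}_\alpha).
\end{equation*}
Substituting the upper bound (\ref{subdeg}) on the right and swapping the order of summation, the resulting double sum simplifies via (\ref{u3}) and the identity $\|u_\infty\|_{L^2}^2 = \Vol_\omega(X)\sum_\beta \mu_\beta^2 r_\beta = 1$, which is valid because the eigenvalues of $u_\infty$ are almost everywhere the constants $\mu_1,\ldots,\mu_l$ with multiplicities $r_1,\ldots,r_l$. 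Using $\lambda\Vol_\omega(X)\rank(E)=2\pi\deg_\omega(E)$, one obtains $\nu \geq -\delta$, and combined with (\ref{nu}) this forces $\nu = -\delta$. Since $\nu+\delta$ is a non-negative combination of the slack terms in (\ref{subdeg}) with strictly positive weights $\mu_{\alpha+1}-\mu_\alpha$, every slack must vanish, giving the equality
\begin{equation*}
2\pi\deg_\omega(\tilde{\mathcal{E}}_\alpha)=\Vol_\omega(X)\sum_{\beta=1}^{\alpha}(\lambda-\delta\mu_\beta)r_\beta,\qquad 1\leq\alpha\leq l.
\end{equation*}
Taking successive differences yields $\mu_\omega(\tilde{\mathcal{Q}}_\alpha)=\tfrac{\Vol_\omega(X)}{2\pi}(\lambda-\delta\mu_\alpha)$, and the slopes of the quotients strictly decrease since $\delta>0$ and $\mu_1<\cdots<\mu_l$.

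It remains to verify $\omega$-semistability of each $\tilde{\mathcal{Q}}_\alpha$ with the induced Higgs field. Given a saturated $\theta_\alpha$-invariant orbi-subsheaf $\mathcal{F}\subset\tilde{\mathcal{Q}}_\alpha$ of rank $j$, I would consider its preimage $\tilde{\mathcal{F}}\subset\tilde{\mathcal{E}}_\alpha$, a saturated $\theta$-invariant subsheaf of $E$ of rank $n_{\alpha-1}+j$ with $n_\beta:=\rank\tilde{\mathcal{E}}_\beta$. A direct calculation, analogous to the one leading to (\ref{u5})--(\ref{u6}), shows that $\tilde{\pi}_i:=h_i^{1/2}\pi^{H_i}_{\tilde{\mathcal{F}}}h_i^{-1/2}$ is $K$-self-adjoint and hence a $K$-orthogonal projection of rank $n_{\alpha-1}+j$. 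The min--max principle, together with the constant eigenvalue structure of $u_\infty$, then delivers the sharp pointwise inequality
\begin{equation*}
-\tr(\tilde{\pi}_i\, u_\infty)\leq -\sum_{\beta=1}^{\alpha-1}\mu_\beta r_\beta-\mu_\alpha j.
\end{equation*}
Inserting this into the analogue of (\ref{deg1}) for $\tilde{\mathcal{F}}$ and passing to the limit $i\to\infty$ using (\ref{u4}) gives
\begin{equation*}
2\pi\deg_\omega(\tilde{\mathcal{F}})\leq \Vol_\omega(X)\Bigl[\sum_{\beta=1}^{\alpha-1}(\lambda-\delta\mu_\beta)r_\beta+(\lambda-\delta\mu_\alpha)j\Bigr].
\end{equation*}
Subtracting the equality already proved for $\tilde{\mathcal{E}}_{\alpha-1}$ and using additivity of the first Chern class along $0\to\tilde{\mathcal{E}}_{\alpha-1}\to\tilde{\mathcal{F}}\to\mathcal{F}\to 0$ then yields $\mu_\omega(\mathcal{F})\leq\mu_\omega(\tilde{\mathcal{Q}}_\alpha)$, as required.

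The main obstacle is the refined pointwise bound for $\tr(\tilde{\pi}_i u_\infty)$: one must check carefully that $\tilde{\pi}_i$ really is a $K$-orthogonal projection and that the min--max principle produces precisely the improvement governed by the interval $[n_{\alpha-1},n_\alpha]$ containing $\rank\tilde{\mathcal{F}}$. This is the mechanism through which the spectral data of $u_\infty$ transfers into the semistability of the individual quotients; the rest of the argument is bookkeeping with the two sharp estimates and the degree additivity in the short exact sequences involved.
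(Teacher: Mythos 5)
Your proposal is correct and follows essentially the same route as the paper: you squeeze $\nu$ between the lower bound obtained by inserting (\ref{subdeg}) into the Abel-summed expression for $\nu$ (using (\ref{u3}) and $\|u_\infty\|_{L^2}=1$) and the upper bound (\ref{nu}), force equality in (\ref{subdeg}), and then invoke the refined pointwise trace estimate for the conjugated projection $h_i^{1/2}\pi^{H_i}_{\tilde{\mathcal{F}}}h_i^{-1/2}$ — exactly the paper's (\ref{kk01})–(\ref{kk02}) leading to (\ref{4.1.1}) and (\ref{HN11}). The only cosmetic difference is that you verify the HN property by lifting subsheaves of each quotient $\tilde{\mathcal{Q}}_\alpha$ and proving its semistability directly, whereas the paper checks the equivalent maximal-slope characterization via (\ref{355})–(\ref{356}); both rest on the same key estimate.
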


\begin{proof}
	In wat follows, set
	$
		r_l=r=\rank{E}, r_{\alpha }=\rank{\tilde{\mathcal{E}}_{\alpha}}=\tr \pi_{\alpha}, \lambda_{\alpha}= \lambda-\delta\mu_{\alpha}
$
	for all $1\leq \alpha \leq l$,
	and \begin{equation}\label{infty2}\tilde{u}_\infty:=\lambda\Id_E-\delta u_\infty= \sum_{\alpha=1}^{l}\lambda_{\alpha}(\pi_{\alpha}-\pi_{\alpha -1}).
	\end{equation}
	As $\|u_\infty\|_{L^2}=1$, this implies that
$
		\sum_{\alpha=1}^l\mu_{\alpha}^2(r_{\alpha}-r_{\alpha -1})\Vol_{\omega}(M)=1,
$
	and then
	\begin{equation}\label{u2}
		\sum_{\alpha=1}^l(\lambda-\lambda_{\alpha})^2(r_{\alpha}-r_{\alpha -1})=\frac{\delta^2}{\Vol_{\omega}(M)}.
	\end{equation}
	In view of (\ref{nu}), (\ref{u2}) and (\ref{u3}), we obtain
	\begin{equation}\label{key1}
		\begin{split}
			0\geq &\delta^2+\delta\nu
			=\delta^2+2\pi\sum_{\alpha =1}^l(\lambda-\lambda_{\alpha})(\deg_{\omega}(\tilde{\mathcal{E}}_{\alpha})-\deg_{\omega}(\tilde{\mathcal{E}}_{\alpha -1}))\\
			=&\sum_{\alpha =1}^l(\lambda-\lambda_{\alpha})(2\pi(\deg_{\omega}(\tilde{\mathcal{E}}_{\alpha})-\deg_{\omega}(\tilde{\mathcal{E}}_{\alpha -1}))-\lambda_{\alpha}(r_{\alpha}-r_{\alpha-1})\Vol_{\omega}(X)).
		\end{split}
	\end{equation}
	Notice that $\eqref{u3}$ implies that
	\begin{equation}\label{degeq}
		2\pi\deg_{\omega}(E)=\Vol_{\omega}(X)\cdot\sum_{\alpha =1}^l\lambda_{\alpha }(r_{\alpha }-r_{\alpha -1}).
	\end{equation}
	On the other hand, according to (\ref{subdeg}) and (\ref{degeq}), it can be derived that
	\begin{equation}\label{key2}
		\begin{aligned}
			&\sum_{\alpha =1}^l(\lambda-\lambda_{\alpha})(2\pi(\deg_{\omega}(\tilde{\mathcal{E}}_{\alpha})-\deg_{\omega}(\tilde{\mathcal{E}}_{\alpha -1}))-\lambda_{\alpha}(r_{\alpha}-r_{\alpha-1})\Vol_{\omega}(X))\\
			=&\sum_{\alpha =1}^{l}(\lambda-\lambda_{\alpha })\bigg(2\pi\deg_{\omega}(\tilde{\mathcal{E}}_{\alpha})-\Vol_{\omega }(M)\cdot\sum_{\beta =1}^{\alpha }\lambda_{\beta }(r_{\beta }-r_{\beta -1})\\
			&-\bigg(2\pi\deg_{\omega}(\tilde{\mathcal{E}}_{\alpha -1})-\Vol_{\omega}(X)\cdot\sum_{\beta =1}^{\alpha -1}\lambda_{\beta }(r_{\beta }-r_{\beta -1})\bigg)\bigg)\\
			=&\sum_{\alpha =1}^{l-1}(\lambda_{\alpha +1}-\lambda_{\alpha })\bigg(2\pi\deg_{\omega}(\tilde{\mathcal{E}}_{\alpha})-\Vol_{\omega }(X)\cdot\sum_{\beta =1}^{\alpha }\lambda_{\beta }(r_{\beta }-r_{\beta -1})\bigg)\\
			&+(\lambda-\lambda_l)\bigg(2\pi\deg_{\omega }(E)-\Vol_{\omega }(X)\cdot\sum_{\beta =1}^{l }\lambda_{\beta }(r_{\beta }-r_{\beta -1})\bigg)\\
			=&\sum_{\alpha =1}^{l-1}(\lambda_{\alpha +1}-\lambda_{\alpha })\bigg(2\pi\deg_{\omega}(\tilde{\mathcal{E}}_{\alpha})-\Vol_{\omega }(X)\cdot\sum_{\beta =1}^{\alpha }\lambda_{\beta }(r_{\beta }-r_{\beta -1})\bigg)\\
			\geq &0.
		\end{aligned}
	\end{equation}
	In view of the fact that $\lambda_{\alpha+1}<\lambda_\alpha$, taking into account  (\ref{subdeg}), (\ref{key1}), (\ref{key2}) and (\ref{degeq}) simultaneously, we can deduce
	\begin{equation}
		2\pi\deg_{\omega}(\tilde{\mathcal{E}}_{\alpha})=\Vol_{\omega }(X)\cdot\sum_{\beta =1}^{\alpha }\lambda_{\beta }(r_{\beta }-r_{\beta -1})
	\end{equation}
	and then
	\begin{equation}\label{lambda06}
		\frac{2\pi(\deg_{\omega}(\tilde{\mathcal{E}}_{\alpha})-\deg_{\omega}(\tilde{\mathcal{E}}_{\alpha -1}))}{r_{\alpha }-r_{\alpha -1}}= \Vol_{\omega }(X)\cdot\lambda_{\alpha }
	\end{equation}
	for $1\leq \alpha \leq l$. 
	
	Employing (\ref{lambda2}), (\ref{lambda3}), (\ref{lambda4}) and (\ref{lambda5}), we reach
	\begin{equation}
		\begin{split}
			\sup_{\mathcal{F}} 2\pi(\frac{\deg_{\omega }(\mathcal{F})}{\rank(\mathcal{F})})\leq &\varliminf_{i\to \infty}\lambda_{mU}(H_i, \theta , \omega)\Vol_{\omega }(X)
			\leq \lambda_1 \Vol_{\omega }(X)\\
			=& 2\pi\frac{\deg_{\omega}(\tilde{\mathcal{E}}_{1})}{\rank(\tilde{\mathcal{E}}_{1})}
			\leq \sup_{\mathcal{F}} 2\pi(\frac{\deg(\mathcal{F})}{\rank(\mathcal{F})})
		\end{split}
	\end{equation}
	and
	\begin{equation}
		\begin{split}
			\inf_{\mathcal{F}} 2\pi(\frac{\deg_{\omega }(E/\mathcal{F})}{\rank(E/\mathcal{F})})\geq & \varlimsup_{i\to \infty}\lambda_{mL}(H_i, \theta , \omega)\Vol_{\omega }(X)
			\geq \lambda_l \Vol_{\omega }(X)\\
			=& 2\pi\cdot\frac{\deg_{\omega }(E)-\deg_{\omega}(\tilde{\mathcal{E}}_{l-1})}{\rank(E)-\rank(\tilde{\mathcal{E}}_{l-1})}
			\geq \inf_{\mathcal{F}} 2\pi(\frac{\deg_{\omega }(E/\mathcal{F})}{\rank(E/\mathcal{F})}),
		\end{split}
	\end{equation}
	where $\mathcal{F}$ runs over all the saturated $\theta$-invariant orbi-subsheaves of $(E,\theta)$. From this, it's evident that
	\begin{equation}\label{355}
		\begin{split}
			\lim_{i\to \infty}\lambda_{mU}(H_i, \theta , \omega)\Vol_{\omega }(X)=\lambda_1 \Vol_{\omega }(X)= 2\pi\frac{\deg_{\omega}(\tilde{\mathcal{E}}_{1})}{\rank(\tilde{\mathcal{E}}_{1})}=\sup_{\mathcal{F}} 2\pi(\frac{\deg(\mathcal{F})}{\rank(\mathcal{F})})
		\end{split}
	\end{equation}
	and
	\begin{equation}\label{356}
		\begin{split}
			\lim_{i\to \infty}\lambda_{mL}(H_i, \theta , \omega)\Vol_{\omega }(X)=&\lambda_l \Vol_{\omega }(X)
			= 2\pi\cdot\frac{\deg_{\omega }(E/\tilde{\mathcal{E}}_{l-1})}{\rank(E/\tilde{\mathcal{E}}_{l-1})}\\
			=&\inf_{\mathcal{F}} 2\pi(\frac{\deg_{\omega }(E/\mathcal{F})}{\rank(E/\mathcal{F})}).
		\end{split}
	\end{equation}

	Suppose $\mathcal{F}$ is a saturated $\theta$-invariant orbi-subsheaf of $E$ such that $\rank(\mathcal{F})> r_{\alpha -1}$ for some $\alpha \geq 2$. 
	Notice that
	\begin{equation}\label{kk01}
		\begin{split}
			2\pi\deg(\mathcal{F})=&\int_X(\tr(\pi^{H_i}_{\mathcal{F}}\sqrt{-1}\Lambda_{\omega}F_{H_{i}, \theta })-|\bar{\partial}_{\theta}\pi^{H_i}_{\mathcal{F}}|_{H_{i}}^2)\frac{\omega^n}{n!}\\
			\leq&\int_X\tr(h_i^{\frac{1}{2}}\pi^{H_i}_{\mathcal{F}}h_i^{-\frac{1}{2}}(\sqrt{-1}\Lambda_{\omega}F_{H_{i}, \theta }-\tilde{u}_\infty))\frac{\omega^n}{n!}
			+ \int_X\tr(h_i^{\frac{1}{2}}\pi^{H_i}_{\mathcal{F}}h_i^{-\frac{1}{2}}\tilde{u}_\infty)\frac{\omega^n}{n!}.
		\end{split}
	\end{equation}
	By choosing a suitable basis of $E$ at the considered point, and following the argument in \cite[formula (2.65)]{LZZ21}, we have
	\begin{equation}\label{kk02}
		\tr(h_i^{\frac{1}{2}}\pi^{H_i}_{\mathcal{F}}h_i^{-\frac{1}{2}}\tilde{u}_\infty)\leq \sum_{\beta=1}^{\alpha -1}\lambda_{\beta}(r_{\beta}-r_{\beta -1})+ \lambda_{\alpha }\cdot (\rank(\mathcal{F})-r_{\alpha -1}).
	\end{equation}
	Through the application of (\ref{u3}), (\ref{lambda06}), (\ref{kk01}) and (\ref{kk02}), one can gain
	\begin{equation}\label{4.1.1}
		\begin{split}
			2\pi\deg(\mathcal{F})\leq &(\sum_{\beta=1}^{\alpha -1}\lambda_{\beta}(r_{\beta}-r_{\beta -1})+ \lambda_{\alpha }\cdot (\rank(\mathcal{F})-r_{\alpha -1}))\Vol_{\omega }(X)\\
			=& 2\pi\sum_{\beta=1}^{\alpha -1}(\deg_{\omega}(\tilde{\mathcal{E}}_{\beta})-\deg_{\omega}(\tilde{\mathcal{E}}_{\beta -1}))+ \lambda_{\alpha }\cdot (\rank(\mathcal{F})-r_{\alpha -1})\Vol_{\omega }(M)\\
			=&  2\pi \deg_{\omega}(\tilde{\mathcal{E}}_{\alpha -1})+ \lambda_{\alpha }\cdot (\rank(\mathcal{F})-r_{\alpha -1})\Vol_{\omega }(M).\\
		\end{split}
	\end{equation}
	It follows immediately that
	\begin{equation}\label{HN11}
		\begin{split}
			&\frac{2\pi(\deg_{\omega }(\mathcal{F})-\deg_{\omega}(\tilde{\mathcal{E}}_{\alpha -1}))}{\rank(\mathcal{F})-\rank(\tilde{\mathcal{E}}_{\alpha -1})}\leq  \frac{2\pi(\deg_{\omega}(\tilde{\mathcal{E}}_{\alpha })-\deg_{\omega}(\tilde{\mathcal{E}}_{\alpha -1}))}{\rank(\tilde{\mathcal{E}}_{\alpha })-\rank(\tilde{\mathcal{E}}_{\alpha -1})}\\
			&=\lambda_{\alpha }\Vol_{\omega }(X)<  \lambda_{\alpha -1}\Vol_{\omega }(X).\\
		\end{split}
	\end{equation}
	
	At this point, we are ready to confirm that
	$	0=\mathcal{E}_{0}\subset \mathcal{E}_{1}\subset \cdots \subset \mathcal{E}_{l}=E
	$
	is the precisely the Harder-Narasimhan filtration of the Higgs orbi-bundle $(E, \bar{\partial}_E , \theta )$.
	In light of (\ref{355}), we know that
	\begin{equation}
		\frac{\deg_{\omega}(\tilde{\mathcal{E}}_{1})}{\rank(\tilde{\mathcal{E}}_{1})}= \sup_{\mathcal{F}\subset E} (\frac{\deg(\mathcal{F})}{\rank(\mathcal{F})}).
	\end{equation}
	Given $\rank(\mathcal{F})> \rank(\tilde{\mathcal{E}}_{1})$,  according to (\ref{HN11}), we can deduce
	\begin{equation}
		\deg_{\omega}(\mathcal{F})-\deg_{\omega}(\tilde{\mathcal{E}}_{1})<\frac{\rank(\mathcal{F})-\rank(\tilde{\mathcal{E}}_{1})}{\rank(\tilde{\mathcal{E}}_{1})}\deg_{\omega}(\tilde{\mathcal{E}}_{1}),
	\end{equation} 
	and then
	\begin{equation}\frac{\deg_{\omega}(\mathcal{F})}{\rank(\mathcal{F})}< \frac{\deg_{\omega}(\tilde{\mathcal{E}}_{1})}{\rank(\tilde{\mathcal{E}}_{1})},\end{equation}
	Now let's trun to
	$
	0 \subset \tilde{\mathcal{E}}_{\alpha } \subset \hat{\mathcal{F}} \subset E,$
	where $\hat{\mathcal{F}}$ is a torsion-free $\theta$-invariant orbi-subsheaf of $(E,\theta)$, $\rank (\hat{\mathcal{F}})>\rank (\tilde{\mathcal{E}}_{\alpha })$ and $\alpha\geq 1$.
	Empolying (\ref{HN11}) once again, it can be seen that
	\begin{equation}
		\frac{\deg_{\omega }(\hat{\mathcal{F}})-\deg_{\omega}(\tilde{\mathcal{E}}_{\alpha })}{\rank(\hat{\mathcal{F}})-\rank(\tilde{\mathcal{E}}_{\alpha })}\leq \frac{2\pi(\deg_{\omega}(\tilde{\mathcal{E}}_{\alpha +1})-\deg_{\omega}(\tilde{\mathcal{E}}_{\alpha }))}{r_{\alpha +1}-r_{\alpha }},
	\end{equation}
	and moreover if $\rank (\hat{\mathcal{F}})>\rank (\tilde{\mathcal{E}}_{\alpha +1})$, then one can observe
	\begin{equation}
		\frac{\deg_{\omega }(\hat{\mathcal{F}})-\deg_{\omega}(\tilde{\mathcal{E}}_{\alpha })}{\rank(\hat{\mathcal{F}})-\rank(\tilde{\mathcal{E}}_{\alpha })}< \frac{\deg_{\omega}(\tilde{\mathcal{E}}_{\alpha +1})-\deg_{\omega}(\tilde{\mathcal{E}}_{\alpha })}{r_{\alpha +1}-r_{\alpha }}.
	\end{equation}
	As a result, we verify this proposition.
	
\end{proof}

\begin{proof}[Proof of Theorem \ref{thm1}] Let $H_{\varepsilon}$ be solutions of the perturbed Hermitian-Einstein equation (\ref{eq}) for $0<\varepsilon \leq 1$. 
	In light of (\ref{nu}) in Proposition \ref{lm01}, if the Higgs orbi-bundle $(E, \theta )$ is $\omega$-stable, then $\| \log(K^{-1}H_{\varepsilon})\|_{L^2}$ is uniformly bounded. In other words, we achieve uniform $C^{0}$-estimate. Based on the equation (\ref{eq}) along with the standard elliptic estimates, we are able to obtain uniform $C^{\infty}$-estimates. Subsequently, by choosing a subsequence, $H_{\varepsilon} \rightarrow H_{\infty}$ in the $C^{\infty}$-topology, and
	\begin{equation}
		\sqrt{-1}\Lambda_{\omega} (F_{H_{\infty}} +[\theta , \theta^{* H_{\infty}}])
		=\lambda\cdot \mathrm{Id}_{E}.
	\end{equation}
	Also, in accordance with (\ref{nu}), it can be concluded that if the Higgs orbi-bundle $(E, \bar{\partial }_{E}, \theta )$ is $\omega$-semistable, then
	\begin{equation}
		\lim_{\varepsilon \rightarrow 0} \|\varepsilon \log(K^{-1}H_{\varepsilon})\|_{L^2(K)} =0.
	\end{equation}
	From Lemma \ref{lm1} (iii), it follows that
	\begin{equation}
			\lim_{\varepsilon \rightarrow 0}\|\sqrt{-1}\Lambda_{\omega } (F_{H_{\varepsilon}}+[\theta, \theta^{\ast H_{\varepsilon}}])-\lambda \cdot \textmd{Id}_E\|_{L^{\infty}( H_{\varepsilon})} =\lim_{\varepsilon \rightarrow 0}\|\varepsilon \log (K^{-1}H_{\varepsilon})\|_{L^{\infty}( H_{\varepsilon})}
			=0.
	\end{equation}
	Namely, we demonstrate the existence of approximate Hermitian-Einstein metric structure.
\end{proof}

\begin{proof}[Proof of Theorem \ref{thm2}] All we need to do is to consider the case that the Higgs orbi-bundle $(E, \bar{\partial }_{E}, \theta )$ is not $\omega$-semistable. At this time, one can choose a sequence of solutions $H_{\varepsilon_{i}}$ of the perturbed Hermitian-Einstein equation (\ref{eq}) such that 
	\begin{equation}
		\lim\limits_{\varepsilon_i\rightarrow 0}\| \log(K^{-1}H_{\varepsilon_i})\|_{L^2}=+\infty \text{ and }
		\lim\limits_{\varepsilon_i\rightarrow 0 }\| \varepsilon_i\log(K^{-1}H_{\varepsilon_i})\|_{L^2}=\delta > 0.
	\end{equation}
	Based on Proposition \ref{lm02}, it holds that
		\begin{align*}
			\lambda \Id_E-\delta u_\infty=\sum_{\alpha =1}^l\lambda_{\alpha}(\pi_{\alpha}-\pi_{\alpha-1})
		&=\frac{2\pi}{\Vol_{\omega }(M)}\sum_{\alpha =1}^l\mu_{\omega}(\mathcal{E}_{\alpha }/\mathcal{E}_{\alpha -1})(\pi_{\alpha }^{K}-\pi_{\alpha -1}^{K})\\
			&=\frac{2\pi}{\Vol_{\omega }(M)}\Phi_{\omega }^{HN}(E,\theta , K).
		\end{align*}
	Taking (\ref{lqcvg}) into account, we conclude $0<q<\frac{2n}{n-1}$,
	\begin{equation}\label{370}
		\lim_{i\rightarrow \infty} \left\Vert\sqrt{-1}\Lambda_{\omega}F_{H_{\epsilon_i}, \theta }-\frac{2\pi}{\Vol_{\omega }(M)}\Phi^{HN}_\omega(E, \theta , K)\right\Vert_{L^q(K)}=0.
	\end{equation}
	As $|\sqrt{-1}\Lambda_{\omega} F_{H_{i}, \theta }|_K$ is uniformly bounded, (\ref{eqthm1}) is obtained.
\end{proof}

\section{Higgs sheaves on the regular locus}\label{higgsheaf-regular}
Higgs sheaves considered in this paper are only defined on the regular locus of compact normal spaces. Such objects were studied in the projective setting from a different perspective in \cite{GKPT20}. We included a self-contained formulation here. Specifically, we mainly investigate the behavior of the Harder-Narasimhan filtration under pull-back. Combining $L^p$-approximate Hermitian structure obtained in Section \ref{Lpapproximate}, we finally prove Corollary \ref{semistable-tensorproduct}.

\subsection{Stablity}
\begin{definition}[Slope]
	Let $\mF$ be a coherent sheaf on a compact normal space of dimension $n$ and $\alpha_0,\cdots,\alpha_{n-2}$ be nef classes on $X$. The {\em slope} of $\mF$ with respect to the polarization $(\alpha_0,\cdots,\alpha_{n-2})$ is defined by 
	\begin{equation}\label{equa-slope}
		\mu_{(\alpha_0,\cdots,\alpha_{n-2})}(\mF):=\frac{c_1(\mF)\cdot\alpha_0\cdots\alpha_{n-2}}{\rank \mF},
	\end{equation}
\end{definition}

The following definition of the stability coincides with the projective setting \cite[Definition 4.5]{GKPT20} when $\alpha_0,\cdots,\alpha_{n-2}$ are nef divisors.	
\begin{definition}[Stability]\label{defn-stable}
	Let $(\mE_{X_\reg},\theta_{X_\reg})$ be a torsion-free Higgs sheaf on the regular locus of a compact normal space $X$ of dimension $n$, and $\alpha_0,\cdots,\alpha_{n-2}$ be nef classes. We say that $(\mE_{X_\reg},\theta_{X_\reg})$ is {\em stable with respect to the polarization $(\alpha_0,\cdots,\alpha_{n-2})$}, if for any $\theta_\reg$-invariant subsheaf $0\neq\mF_{X_\reg}\subsetneq\mE_{X_\reg}$ on $X_\reg$, it holds that
	\begin{equation}\label{equa-defn-stability}
		\mu_{(\alpha_0,\cdots,\alpha_{n-2})}(\mF_{X})< \mu_{(\alpha_0,\cdots,\alpha_{n-2})}(\mE_{X}).
	\end{equation}
	Analogously, {\em semistability and polystability} can be defined.
\end{definition}

We have the following basic elementary. Thus the stability of $(\mE_X,\theta_X)|_{X_\reg}$ for a reflexive sheaf coincides with the definition in \cite{GKPT19a} and and when $\theta_{X_\reg}=0$, the contents of this section applies to general coherent sheaves defined on the whole space.

\begin{lemma}\label{lem-compatable-existingnotion}
	A torsion-free Higgs sheaf $(\mE_{X_\reg},\theta_{X_\reg})$ is $(\alpha_0,\cdots,\alpha_{n-2})$-stable if and only if \eqref{equa-defn-stability} holds for any saturated subsheaf $0\neq \mF_X\subsetneq\mE_X$ such that $\mF_X|_{X_\reg}$ is $\theta_{X_\reg}$-invariant.
\end{lemma}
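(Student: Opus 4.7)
The plan is to establish the two implications separately using two elementary observations: (a) for a saturated subsheaf $\mF_X \subsetneq \mE_X$, the natural inclusion $\mF_X \hookrightarrow i_*(\mF_X|_{X_\reg})$ is an equality; and (b) the saturation of a $\theta_\reg$-invariant subsheaf is again $\theta_\reg$-invariant by Lemma \ref{lem-saturation-invariant} and has the same rank, hence the same or larger slope with respect to the nef polarization by Lemmas \ref{lemma-slope-torsion} and \ref{lem-aditivity}. Throughout, write $\mu$ for $\mu_{(\alpha_0,\cdots,\alpha_{n-2})}$.

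For the forward implication, suppose $(\mE_{X_\reg},\theta_{X_\reg})$ is stable in the sense of Definition \ref{defn-stable}, and let $\mF_X \subsetneq \mE_X$ be a saturated subsheaf with $\mF_X|_{X_\reg}$ being $\theta_\reg$-invariant. Applying $i_*$ to $0 \to \mF_X|_{X_\reg} \to \mE_{X_\reg} \to \mE_{X_\reg}/\mF_X|_{X_\reg} \to 0$ and using that $\mE_X/\mF_X$ is torsion-free while $\mE_X/i_*(\mF_X|_{X_\reg})$ embeds into the torsion-free sheaf $i_*(\mE_{X_\reg}/\mF_X|_{X_\reg})$, one checks that $\mF_X = i_*(\mF_X|_{X_\reg})$. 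This forces $\mF_X|_{X_\reg}$ to have strictly smaller rank than $\mE_{X_\reg}$ (else the quotient would be torsion, hence zero). Applying Definition \ref{defn-stable} to $\mF_X|_{X_\reg}$ yields $\mu(\mF_X|_{X_\reg}) < \mu(\mE_{X_\reg})$, which by the convention that the slope on $X_\reg$ is computed via $i_*$ is precisely $\mu(\mF_X) < \mu(\mE_X)$.

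For the backward implication, assume the slope inequality holds for every saturated subsheaf, and let $\mF_{X_\reg}$ be a nonzero $\theta_\reg$-invariant subsheaf with $\rank \mF_{X_\reg} < \rank \mE_{X_\reg}$. Let $\mG_{X_\reg}$ be the saturation of $\mF_{X_\reg}$ in $\mE_{X_\reg}$, which by Lemma \ref{lem-saturation-invariant} is still $\theta_\reg$-invariant and satisfies $\mG_{X_\reg} \subsetneq \mE_{X_\reg}$. Setting $\mG_X := i_*\mG_{X_\reg}$, the left-exactness of $i_*$ together with torsion-freeness of $\mE_{X_\reg}/\mG_{X_\reg}$ shows that $\mG_X$ is saturated in $\mE_X$ and a strict subsheaf. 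By hypothesis $\mu(\mG_X) < \mu(\mE_X)$. Since $\mG_X / i_*\mF_{X_\reg}$ embeds into the torsion sheaf $i_*(\mG_{X_\reg}/\mF_{X_\reg})$ and $\rank \mF_{X_\reg} = \rank \mG_{X_\reg}$, the additivity of $c_1$ (Lemma \ref{lem-aditivity}) and non-negativity of torsion contributions (Lemma \ref{lemma-slope-torsion}) give $\mu(\mF_{X_\reg}) \leq \mu(\mG_{X_\reg}) = \mu(\mG_X) < \mu(\mE_X)$, as desired.

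The main delicate point is verifying the identity $\mF_X = i_*(\mF_X|_{X_\reg})$ for saturated $\mF_X$ together with the analogous statement that $i_*\mG_{X_\reg}$ is saturated in $\mE_X$; once these are in place, the conclusion reduces to routine bookkeeping with Lemmas \ref{lemma-slope-torsion} and \ref{lem-aditivity}. The semistability and polystability versions follow verbatim by replacing the strict inequality with the non-strict one (and, for polystability, applying the saturated case argument to each direct summand).
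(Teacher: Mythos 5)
Your proposal is correct and follows essentially the same route as the paper's (very terse) proof: the forward direction rests on the identity $\mF_X=i_*(\mF_X|_{X_\reg})$ for saturated subsheaves, and the backward direction passes to the saturation of the trivial extension, using Lemma \ref{lem-saturation-invariant} for $\theta$-invariance and the nonnegativity of torsion degrees (Lemmas \ref{lemma-slope-torsion} and \ref{lem-aditivity}) to compare slopes. The only cosmetic difference is that you saturate on $X_\reg$ before pushing forward while the paper saturates $i_*\mF_{X_\reg}$ inside $\mE_X$; these yield the same subsheaf, and your write-up supplies details the paper omits.
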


\begin{proof}
	`Only if' part follows from by $i_*(\mF_X|_{X_\reg})=\mF_X$. `If' part follows directly from Lemma \ref{lemma-compatible-pullback}, Lemma \ref{lem-saturation-invariant} and Lemma \ref{lem-orbi-torsionsheaf} by taking the saturation of $i_*\mF_{X_\reg}$ in $\mE_X$.
\end{proof}

A concise upper bound of the slope can be obtained.

\begin{lemma}\label{slope-upperbound}
	Let $\mE$ be a torsion-free subsheaf and $[\eta_0],\cdots,[\eta_{n-2}]$ be nef classes on a compact normal space $X$ of dimension $n$. Let $\omega_X$ be a fixed Hermitian metric on $X$, then there exists a uniform constant $C>0$ such that for any subsheaf $\mF$ of $\mE$, we have
	$$\mu_{(\eta_0,\cdots,\eta_{n-2})}(\mF)\leq C\int_{X_\reg}\w\wedge\eta_0\cdots\wedge\eta_{n-2}.$$
\end{lemma}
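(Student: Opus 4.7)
The plan is a standard Chern--Weil upper bound on the degrees of subsheaves, adapted to the normal setting by passing to a resolution and handling nef classes by perturbation.

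First I would replace $\mF$ by its saturation $\mF^{\mathrm{sat}}$ in $\mE$: by Lemma~\ref{lem-aditivity} and Lemma~\ref{lemma-slope-torsion} applied to the torsion quotient $\mF^{\mathrm{sat}}/\mF$, the slope only increases, while the rank is preserved. Next, let $f\colon Y\to X$ be a resolution of singularities and set $\widetilde{\mE}:=f^*\mE/(\torsion)$; let $\widetilde{\mF}\subset\widetilde{\mE}$ be the saturation of $\image(f^*\mF\to\widetilde{\mE})$. Since $f$ is an isomorphism over $X_\reg$, $f_*\widetilde{\mF}$ agrees with $\mF$ in codimension one, so Lemma~\ref{lemma-compatible-pullback} gives
$$c_1(\mF)\cdot\eta_0\cdots\eta_{n-2}\leq c_1(\widetilde{\mF})\cdot f^*\eta_0\cdots f^*\eta_{n-2},\qquad \rank\widetilde{\mF}=\rank\mF,$$
and the $f^*\eta_i$ remain nef on $Y$. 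A further sequence of blowups (the smooth instance of Lemma~\ref{lem-resoorbi}) makes $\widetilde{\mE}$ locally free, reducing the problem to the slope bound for a saturated subsheaf of a holomorphic vector bundle on a smooth compact complex manifold $Y$.

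On such $Y$, fix a Hermitian metric $H$ on $\widetilde{\mE}$ and a K\"ahler metric $\omega_Y$, which exists in the intended application because a compact K\"ahler klt $X$ admits a K\"ahler resolution. Let $\pi$ denote the $H$-orthogonal projection onto $\widetilde{\mF}$, smooth on the open subset $Y_0\subset Y$ where $\widetilde{\mE}/\widetilde{\mF}$ is locally free ($\codim(Y\setminus Y_0)\geq2$). For any positive smooth $(n-1,n-1)$-form $\alpha$ the Chern--Weil formula \eqref{chern-weil} reads
$$c_1(\widetilde{\mF})\cdot[\alpha]=\frac{1}{2\pi}\int_{Y_0}\!\left(\im\tr(\pi F^H)-\im\tr(\p\pi\wedge\bp\pi)\right)\wedge\alpha.$$
A direct computation using $\pi^*=\pi$ shows that $\im\tr(\p\pi\wedge\bp\pi)$ is a pointwise semipositive $(1,1)$-form, so its contribution against positive $\alpha$ is nonpositive, while pointwise one has $C\cdot\rank(\widetilde{\mF})\cdot\omega_Y - \im\tr(\pi F^H)\geq 0$ with $C:=\sup_Y|F^H|_{\omega_Y}$. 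Integrating yields
$$c_1(\widetilde{\mF})\cdot[\alpha]\leq C\,\rank(\widetilde{\mF})\int_Y\omega_Y\wedge\alpha.$$

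To pass from positive forms to nef classes, I perturb: for each $\epsilon>0$ choose representatives $\eta_{i,\epsilon}$ of $f^*[\eta_i]$ with $\eta_{i,\epsilon}\geq-\epsilon\omega_Y$, so that $\alpha_\epsilon:=\prod_i(\eta_{i,\epsilon}+\epsilon\omega_Y)$ is a positive smooth representative of $\prod_i(f^*[\eta_i]+\epsilon[\omega_Y])$. Applying the estimate to $\alpha_\epsilon$, dividing by $\rank\widetilde{\mF}=\rank\mF$, and letting $\epsilon\to0$ give the slope bound on $Y$; comparing $\int_Y\omega_Y\wedge\prod_i f^*\eta_i$ with $\int_{X_\reg}\w\wedge\eta_0\cdots\eta_{n-2}$ up to a constant then transports the bound back to $X$ and absorbs the comparison into $C$. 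The main technical obstacle is precisely this last comparison, together with the possible non-K\"ahlerness of $Y$ in the purely normal setting; in the applications to K\"ahler klt $X$, $Y$ can be chosen K\"ahler and the pulled-back cohomology classes behave well, so both issues dissolve.
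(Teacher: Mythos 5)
Your overall route — saturate, pass to a resolution $f:Y\to X$ on which $f^*\mE/(\torsion)$ is locally free, apply the Chern--Weil formula \eqref{chern-weil} to the saturated image of $f^*\mF$, drop the nonpositive $-\im\tr(\p\pi\wedge\bp\pi)$ term, bound $\im\tr(\pi F^H)$ pointwise by a multiple of a positive $(1,1)$-form, and handle nefness by perturbation — is exactly the paper's strategy. The one genuine gap is the step you yourself flag at the end and then wave away: the comparison of $\int_Y\omega_Y\wedge f^*\eta_0\wedge\cdots\wedge f^*\eta_{n-2}$ with $\int_{X_\reg}\w\wedge\eta_0\cdots\wedge\eta_{n-2}$. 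With an arbitrary K\"ahler metric $\omega_Y$ on $Y$ this comparison is not automatic: the bound you obtain is proportional to $[\omega_Y]\cdot f^*(\eta_0\cdots\eta_{n-2})$, and there is no a priori constant relating this to $[\w_X]\cdot\eta_0\cdots\eta_{n-2}$ independently of the classes (and if the latter vanishes while the former does not, no constant works at all). Moreover the lemma is stated for a general compact \emph{normal} space equipped only with a Hermitian metric, so you cannot assume $Y$ carries a K\"ahler metric; saying the issues ``dissolve'' in the K\"ahler klt case does not prove the statement as written, nor does it justify the uniformity needed in the applications.

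The paper closes this gap by choosing the metric on $Y$ adapted to $\w_X$ rather than arbitrarily: take an effective $f$-exceptional divisor $D$ and a Hermitian metric $h$ on $\mO_Y(-D)$ such that $\w_Y:=f^*\w_X+\epsilon\, c_1(\mO_Y(-D),h)$ is a positive Hermitian form on $Y$ for small $\epsilon>0$ (the standard blow-up construction; no K\"ahler hypothesis is needed). Running your Chern--Weil estimate against $\w_Y$ then gives a bound by $C\int_Y\w_Y\wedge f^*\eta_0\wedge\cdots\wedge f^*\eta_{n-2}$, and the correction term $\epsilon\, c_1(\mO_Y(-D),h)\cdot f^*\eta_0\cdots f^*\eta_{n-2}$ vanishes because the $\eta_i$ are pulled back from $X$ and $f(D)$ has codimension at least $2$ (Lemma \ref{lem-excision}). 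Hence the bound equals $C\int_{X_\reg}\w_X\wedge\eta_0\cdots\wedge\eta_{n-2}$ exactly, with $C$ depending only on $\mE$ and $\w_X$. If you replace your arbitrary $\omega_Y$ by this specific $\w_Y$, the rest of your argument goes through verbatim.
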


\begin{proof}
	Let $f:Y\rightarrow X$ be a bimeromorphism given by Lemma \ref{lem-resoorbi} such that $E:=f^*\mE/(tor)$ is locally free and $Y$ is smooth. We can choose an effective divisor $D$ supported in the $f$-exceptional divisor such that $\w_Y:=f^*\omega_X+\epsilon c_1(-D,h)$ is Hermitian for some $\epsilon>0$ and a Hermitian metric of Line bundle $\mO(-D)$ (see e.g. \cite[Pages 18]{Voisin02} and \cite[Section 3.3]{DO23}). For any $\mF\subset\mE$, let $\mS$ be the saturation of $\pi^*\mF\cap\mE$, then by the Chern-Weil formula \eqref{chern-weil}, we get
	\begin{align*}
		\mu_{(f^*\eta_0,\cdots,f^*\eta_{n-2})}(\mS)
		\leq C\int_Y\w_Y\wedge f^*\eta_0\wedge\cdots \wedge f^*\eta_{n-2}
		=C\int_Yf^*\w_X\wedge f^*\eta_0\wedge\cdots\wedge f^*\eta_{n-2}.
	\end{align*}
	where $C$ depends only on $E$ and $\w_X$ and the last equality follows from that $f(D)$ has codimension at least $2$. Applying Lemma \ref{lemma-compatible-pullback}, we have that $\mu_{(\eta_0,\cdots,\eta_{n-2})}(\mF)=\mu_{(f^*\eta_0,\cdots,f^*\eta_{n-2})}(\mS)$ and the proof is complete.
\end{proof}

\subsection{HN filtration}
Now let us prove the existence of the HN filtration with repsect to the nef polarization in the context of compact normal spaces.
\begin{proposition}[Saturated subsheaf with the maximal slope]\label{lem-maximal-unique}
	Let $(\mE_{X_\reg},\theta_{X_\reg})$ be a torsion-free Higgs sheaf on the regular locus of a compact normal space $X$ and $\alpha_0,\cdots,\alpha_{n-2}$ be nef classes. Then there exists a saturated subsheaf $\mE_{1}$ of $\mE_X$ such that $\mE_{X_\reg,1}=\mE_1|_{X_\reg}$ is $\theta_{X_\reg}$-invariant and for any $\theta_{X_\reg}$-invariant subsheaf $0\neq\mF_{X_\reg}\subset \mE_{X_\reg}$, we have that
	\begin{itemize}
		\item $\mu_{(\alpha_0,\cdots,\alpha_{n-2})}({\mF_X})\leq\mu_{\alpha_0,\cdots,\alpha_{n-2}}(\mE_{1})$.
		\item $\rank(\mF_X)\leq \rank(\mE_1)$ if $\mu_{(\alpha_0,\cdots,\alpha_{n-2})}({\mF_X})=\mu_{\alpha_0,\cdots,\alpha_{n-2}}(\mE_{1})$.
	\end{itemize}
	In particular $\mE_{X_\reg,1}$ with the induced Higgs field is $(\alpha_0,\cdots,\alpha_{n-2})$-semistable.
\end{proposition}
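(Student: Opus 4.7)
The plan is to mirror the classical argument appearing in Greb--Kebekus--Kov\'acs--Peternell (\cite[pp.~82--84]{GKKP14}) for existence and Bruasse (\cite[p.~591]{Br01}) for uniqueness, adapted to this singular Higgs setting. The three key inputs from the preceding sections are already in place: Lemma \ref{slope-upperbound} (a Chern--Weil based upper bound on the slope of any subsheaf), Lemma \ref{lem-saturation-invariant} (saturations of $\theta$-invariant subsheaves remain $\theta$-invariant), and Lemma \ref{lem-compatable-existingnotion} (which allows us to freely replace ``$\theta_{X_\reg}$-invariant subsheaves of $\mE_{X_\reg}$'' by ``saturated subsheaves of $\mE_X$ whose restriction is $\theta$-invariant'').

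Denote by $\mathcal{A}$ the collection of saturated subsheaves $\mF_X \subseteq \mE_X$ such that $\mF_X|_{X_\reg}$ is $\theta_{X_\reg}$-invariant, and set $M := \sup_{\mF_X \in \mathcal{A}} \mu_{(\alpha_0, \ldots, \alpha_{n-2})}(\mF_X)$. By Lemma \ref{slope-upperbound}, $M < \infty$. I would next establish that $\mathcal{A}$ is essentially closed under sums and intersections: given $\mF, \mF' \in \mathcal{A}$, the saturation $(\mF + \mF')^{\mathrm{sat}}$ (computed inside $\mE_X$) and the intersection $\mF \cap \mF'$ both lie in $\mathcal{A}$ by Lemma \ref{lem-saturation-invariant}. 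Applying Lemma \ref{lem-aditivity} to the exact sequence $0 \to \mF \cap \mF' \to \mF \oplus \mF' \to \mF + \mF' \to 0$, together with Lemma \ref{lemma-slope-torsion} applied to the torsion quotient $(\mF + \mF')^{\mathrm{sat}} / (\mF + \mF')$, one obtains
\[
\deg((\mF + \mF')^{\mathrm{sat}}) + \deg(\mF \cap \mF') \geq \deg(\mF) + \deg(\mF'),
\]
with $\rank(\mF + \mF') + \rank(\mF \cap \mF') = \rank(\mF) + \rank(\mF')$.

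The standard consequence is that if $\mu(\mF) = \mu(\mF') = M$, then both $\mu((\mF + \mF')^{\mathrm{sat}}) = M$ and $\mu(\mF \cap \mF') = M$, since both slopes are bounded above by $M$. Existence now comes by taking a sequence $\mF_k \in \mathcal{A}$ with $\mu(\mF_k) \uparrow M$ and $\rank(\mF_k)$ eventually maximal, and iteratively replacing pairs by their saturated sums; the rank bound $\rank(\mE_X)$ forces termination at a subsheaf $\mE_1 \in \mathcal{A}$ of slope exactly $M$ and of maximal rank among such. Uniqueness follows the same way: if $\mE_1, \mE_1'$ both realize slope $M$ and the maximal rank $r^*$, then $(\mE_1 + \mE_1')^{\mathrm{sat}} \in \mathcal{A}$ also has slope $M$ and rank $\geq r^*$, so maximality forces $(\mE_1 + \mE_1')^{\mathrm{sat}} = \mE_1 = \mE_1'$. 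Semistability of $\mE_{X_\reg, 1}$ with the induced Higgs field is then immediate: any $\theta$-invariant subsheaf of $\mE_{X_\reg,1}$ saturates to an element of $\mathcal{A}$ contained in $\mE_1$, hence has slope $\leq M = \mu(\mE_1)$.

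The main obstacle I anticipate is organizing the ascending-chain termination argument in the non-projective K\"ahler analytic setting, where bounded-family theorems for coherent sheaves of given rank are subtler than in the projective case. This difficulty is circumvented here precisely by Lemma \ref{slope-upperbound}, which supplies a single uniform constant controlling $\deg_{(\alpha_0,\ldots,\alpha_{n-2})}(\mF)$ for every subsheaf $\mF$; combined with $\rank(\mF) \leq \rank(\mE_X)$, this reduces the construction of $\mE_1$ to the elementary fact that a strictly slope-increasing chain of saturated subsheaves in a sheaf of finite rank must terminate.
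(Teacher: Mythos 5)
Your overall strategy is the same as the paper's: the paper's proof of this proposition consists precisely of reducing to saturated subsheaves of $\mE_X$ whose restriction to $X_\reg$ is $\theta_{X_\reg}$-invariant, and then invoking the boundedness of slopes (Lemma \ref{slope-upperbound}), the torsion-quotient inequality (Lemma \ref{lemma-slope-torsion}), invariance of saturations (Lemma \ref{lem-saturation-invariant}), and the arguments of \cite[pp.~82--84]{GKKP14} for existence and \cite[p.~591]{Br01} for uniqueness. Your lattice computation (additivity on $0\to\mF\cap\mF'\to\mF\oplus\mF'\to\mF+\mF'\to0$ plus the saturation inequality) and your uniqueness argument via maximal rank are correct and are exactly the content of those citations.

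However, there is a genuine gap in the one place where you replace the citation by your own argument: the attainment of the supremum $M$. Your claim that existence ``reduces to the elementary fact that a strictly slope-increasing chain of saturated subsheaves must terminate'' does not apply here, because a maximizing sequence $\mF_k$ with $\mu(\mF_k)\uparrow M$ is not a chain under inclusion, and the saturated-sum operation does not repair this: the inequality $\deg((\mF+\mF')^{\mathrm{sat}})+\deg(\mF\cap\mF')\geq\deg(\mF)+\deg(\mF')$ only forces $\mu((\mF+\mF')^{\mathrm{sat}})=M$ when \emph{both} summands already have slope exactly $M$; for subsheaves of slope merely close to $M$ the saturated sum can have much smaller slope, since the intersection term is only bounded above by $M\cdot\rank(\mF\cap\mF')$. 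So iteratively summing the $\mF_k$ neither stabilizes nor produces a subsheaf of slope $M$, and the rank bound alone cannot force termination of a non-nested family. Attainment of the supremum for a polarization by nef classes on a compact normal (not necessarily projective) space is exactly the delicate point: the set of slopes is bounded above by Lemma \ref{slope-upperbound} but is a priori not discrete, so one must either reproduce the actual perturbation/boundedness argument of \cite{GKKP14} or invoke a finiteness statement such as Proposition \ref{prop-slope-finiteness} (which, note, is established later in the paper only for nef and \emph{big} classes on K\"ahler spaces, so it is not available in the generality of this proposition). As written, your proof establishes everything \emph{assuming} a subsheaf of slope $M$ and maximal rank exists, but not that one does.
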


\begin{proof}
	As in the proof of Lemma \ref{lem-compatable-existingnotion}, the expected properties of $\mE_1$ holds if and only if it holds for any  subsheaf $\mF_X\subset\mE_X$ that is $\theta_{X_\reg}$ on $X_\reg$. Then, building on Lemma \ref{lem-orbi-torsionsheaf}, Lemma \ref{upperslope} and Lemma \ref{lem-saturation-invariant}, the argument of \cite[Appendix, Pages 82-84]{GKKP14} implies the existence of such $\mE_1$ and the argument of \cite[Pages 591]{Br01} implies the uniqueness.
\end{proof}

By applying Lemma \ref{lem-maximal-unique} to $\mE_{X_\reg}/\mE_{X_\reg,1}$ and the induction arguments, we obtain
\begin{proposition}[HN filtration]\label{defn-HN-existence}
	Suppose that $(\mE_{X_\reg},\theta_{X_\reg})$ is a reflexive Higgs sheaf of $\rank$ $r$ on the regular locus of a compact normal space $X$. For any $n-2$ nef classes $\alpha_0,\cdots,\alpha_{n-2}$. $\mE_{X_\reg}$ admits a unique filtration of saturated subsheaves
	$$0=\mE_{0}\subsetneq \mE_{1}\subsetneq\cdots\subsetneq\mE_{l}=\mE_{X}$$
	such that $\mE_{X_\reg,k}:=\mE_k|_{X_\reg}$ is  $\theta_{X_\reg}$-invariant, each quotient sheaf $\mQ_{k}=\mE_{k}/\mE_{k-1}$ is torsion-free and $\mu_{\alpha_0,\cdots,\alpha_{n-2}}(\mQ_k)>\mu_{\alpha_0,\cdots,\alpha_{n-2}}(\mQ_{k+1})$, and $(\mQ_{X_\reg,k},\theta_{X_\reg,\theta})$ is $(\alpha_0,\cdots,\alpha_{n-2})$-semistable with the induced Higgs field.
\end{proposition}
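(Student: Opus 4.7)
The plan is to build the filtration inductively, using Proposition~\ref{lem-maximal-unique} as the engine, and to carry out induction on the generic rank. Set $\mE_0 = 0$. Having constructed saturated, $\theta_{X_\reg}$-invariant subsheaves $0 = \mE_0 \subsetneq \cdots \subsetneq \mE_{k-1} \subsetneq \mE_X$ with torsion-free quotients, consider the quotient Higgs sheaf on the regular locus, $(\mE_X/\mE_{k-1})|_{X_\reg}$, equipped with the naturally induced Higgs field $\bar\theta$. Note that this quotient is torsion-free on $X_\reg$ because $\mE_{k-1}$ is saturated. Apply Proposition~\ref{lem-maximal-unique} to produce the unique maximal destabilizing saturated subsheaf $\mQ_k \subset \mE_X/\mE_{k-1}$ (restricted to $X_\reg$ it is $\bar\theta$-invariant and semistable). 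Define $\mE_k$ as the preimage of $\mQ_k$ under $\mE_X \twoheadrightarrow \mE_X/\mE_{k-1}$; saturatedness and $\theta_{X_\reg}$-invariance of $\mE_k$ follow from the corresponding properties for $\mQ_k$ together with Lemma~\ref{lem-saturation-invariant}. Since $\rank\mE_k > \rank\mE_{k-1}$, the process terminates after at most $r = \rank \mE_X$ steps with $\mE_l = \mE_X$.

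It then remains to verify two properties: the strict decrease $\mu(\mQ_k) > \mu(\mQ_{k+1})$ and the semistability of $\mQ_k$. Semistability is immediate from the construction, since any $\bar\theta$-invariant subsheaf of $\mQ_k$ is a $\bar\theta$-invariant subsheaf of $\mE_X/\mE_{k-1}$ and thus has slope at most $\mu(\mQ_k)$. For the strict inequality, consider the preimage $\tilde{\mS}$ of $\mE_{k+1}/\mE_k \subset \mE_X/\mE_k$ under the projection $\mE_X/\mE_{k-1} \twoheadrightarrow \mE_X/\mE_k$. Then $\tilde{\mS}$ contains $\mQ_k$ properly, with $\tilde{\mS}/\mQ_k \cong \mQ_{k+1}$, and the additivity of degree (Lemma~\ref{lem-aditivity}) expresses $\mu(\tilde{\mS})$ as a weighted average of $\mu(\mQ_k)$ and $\mu(\mQ_{k+1})$. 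The maximality clause of Proposition~\ref{lem-maximal-unique} forces $\mu(\tilde{\mS}) \leq \mu(\mQ_k)$, yielding $\mu(\mQ_{k+1}) \leq \mu(\mQ_k)$; if equality held, the second clause (the rank inequality) would force $\rank \mQ_{k+1} \leq 0$, a contradiction.

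For uniqueness, suppose $\{\mE_k'\}$ is another such filtration. An induction on $k$ shows $\mE_k = \mE_k'$: for $k=1$, both $\mE_1$ and $\mE_1'$ coincide with the maximal destabilizer given by Proposition~\ref{lem-maximal-unique}, because the semistability plus the descending slope property force each to attain the supremum slope and maximal rank among $\theta_{X_\reg}$-invariant saturated subsheaves. Assuming $\mE_{k-1} = \mE_{k-1}'$, the same reasoning applied to $\mE_X/\mE_{k-1}$ identifies $\mE_k/\mE_{k-1}$ with $\mE_k'/\mE_{k-1}'$, hence $\mE_k = \mE_k'$.

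The main subtlety, rather than any single hard step, is that throughout we work with subsheaves of the reflexive extension $\mE_X$ whose $\theta$-invariance lives only on $X_\reg$; this is handled cleanly by Lemma~\ref{lem-compatable-existingnotion} and Lemma~\ref{lem-saturation-invariant}, which together guarantee that saturations of $\theta_{X_\reg}$-invariant subsheaves remain $\theta_{X_\reg}$-invariant and that degrees behave as in the smooth setting. All of the substantial analytic content (boundedness of slopes and existence of a maximizer) is already absorbed into Proposition~\ref{lem-maximal-unique}, so the remaining argument is purely formal.
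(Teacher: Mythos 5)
Your proposal is correct and follows essentially the same route as the paper, which derives the filtration in one line by applying Proposition~\ref{lem-maximal-unique} inductively to the successive quotients $\mE_X/\mE_{k-1}$; you have simply filled in the standard details (strict slope decrease via the rank clause of the maximal destabilizer, and uniqueness via the classical Harder--Narasimhan argument).
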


\begin{definition}[HN type]\label{defn-HN-type}
	Given the setup and notations of \ref{defn-HN-existence} and denote $\Omega=\alpha_0\cdots\alpha_{n-2}$. {\em HN type} is defined by
	$\vec{\mu}_{\Omega}(\mE_{X_\reg},\theta_{X_\reg})=(\mu_{1,\Omega},\cdots,\mu_{r,\Omega})$
	where $\mu_{i,\Omega}:=\mu_{\Omega}(\mQ_k)$
	for $\rank(\mE_{k-1})+1\leq i\leq \rank(\mE_k)$.
\end{definition}

Based on Lemma \ref{lemma-compatible-pullback} and Lemma \ref{lem-saturation-invariant}, we have that
\begin{proposition}[Invariance of HN type and the stability]\label{prop-pullback-HNfiltration}
	Let $f:Y\rightarrow X$ be a projective bimeromorphism between compact normal spaces of dimension $n$ with the following data.
	\begin{itemize}
		\item $\alpha_0,\cdots,\alpha_{n-2}$ are nef classes on $X$ and set $\Omega=\alpha_0\cdots\alpha_{n-2}$
		\item $(\mE_{Y_\reg},\theta_{Y_\reg})$, $(\mE_{X_\reg},\theta_{X_\reg})$ are torsion-free orbi-sheaves.
		\item An analytic subvariety $Z$ of $\codim_XZ\geq2$ containing $X_{sing}$ and the indeterminacy locus of $f^{-1}$ such that $(\mE_{Y_\reg},\theta_{Y_\reg})=\pi^*(\mE_{X_\reg},\theta_{X_\reg})$ outside $f^{-1}(Z)$.
	\end{itemize}
	Then the following statement holds.
	\begin{itemize}
		\item[(1)] $(\mE_{Y_\reg},\theta_{Y_\reg})$ is $f^*\Omega$-stable if and only if $(\mE_{X_\reg},\theta_{X_\reg})$ is $\Omega$-stable.
		\item[(2)] The Harder-Narasimhan filtration of $(\mE_{Y_\reg},\theta_{Y_\reg})$ coincides with the pull-back of Harder-Narasimhan filtration of $(\mE_{X_\reg},\theta_{X_\reg})$ outside $f^{-1}(Z)$ and $\vec{\mu}_{f^*\Omega}(\mE_{Y_\reg},\theta_{Y_\reg})=\vec{\mu}_{\Omega}(\mE_{X_\reg},\theta_{Y_\reg})$.
	\end{itemize}
\end{proposition}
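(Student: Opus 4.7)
The plan is to prove the statement by closely mirroring the argument of Lemma \ref{lem-orbi-pullback-HNfiltration}, replacing the orbifold tools with their counterparts for normal spaces that were established just before (the Chern--Weil compatibility of Lemma \ref{lemma-compatible-pullback}, the saturation-invariance under $\theta$ of Lemma \ref{lem-saturation-invariant}, the slope upper bound of Lemma \ref{slope-upperbound}, and the existence/uniqueness of the maximal destabilizing subsheaf of Proposition \ref{lem-maximal-unique}). The underlying reason things work is the codimension-$2$ hypothesis on $Z$: every slope comparison only sees codimension-$1$ data, so it is insensitive to modifications inside $f^{-1}(Z)$.

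For part (1), I will prove the ``only if'' direction; the ``if'' direction is symmetric. Assume $(\mE_{Y_\reg},\theta_{Y_\reg})$ is $f^*\Omega$-stable. By Lemma \ref{lem-compatable-existingnotion} it suffices to check \eqref{equa-defn-stability} for every saturated subsheaf $\mF_X\subsetneq\mE_X$ whose restriction to $X_\reg$ is $\theta_{X_\reg}$-invariant. Let $\mL$ be the saturation of $\image(f^*\mF_X\to f^*\mE_X)\cap\mE_Y$ inside $\mE_Y$. Outside $f^{-1}(Z)$ this agrees with $f^*\mF_X$, so $\mL|_{Y_\reg}$ is $f^*\theta_{X_\reg}=\theta_{Y_\reg}$-invariant away from a codimension-$2$ set, hence everywhere on $Y_\reg$ by Lemma \ref{lem-saturation-invariant}. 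Moreover $f_*\mL=\mF_X$ in codimension $1$, so Lemma \ref{lemma-compatible-pullback} gives $\mu_{f^*\Omega}(\mL)=\mu_\Omega(\mF_X)$ and similarly $\mu_{f^*\Omega}(\mE_Y)=\mu_\Omega(\mE_X)$; stability of $\mE_Y$ then yields \eqref{equa-defn-stability}. The reverse direction is identical with the roles of push-forward and pull-back swapped (the saturation of $i_*(f_*(\mF_Y|_{Y_\reg}))$ in $\mE_X$).

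For part (2), let $\mE_1\subset\mE_X$ and $\mF_1\subset\mE_Y$ be the maximal destabilizing subsheaves given by Proposition \ref{lem-maximal-unique}, with slopes $\mu_{1,\Omega}$ and $\mu_{1,f^*\Omega}$, respectively. Form $\mL\subset\mE_Y$ as above from $\mF=\mE_1$: then $\mL|_{Y_\reg}$ is $\theta_{Y_\reg}$-invariant, $\mu_{f^*\Omega}(\mL)=\mu_{1,\Omega}$, and therefore $\mu_{1,f^*\Omega}\geq\mu_{1,\Omega}$ by the defining property of $\mF_1$. Symmetrically, pushing $\mF_1$ forward and saturating in $\mE_X$ produces a $\theta_{X_\reg}$-invariant saturated subsheaf $\mS$ with $\mu_\Omega(\mS)=\mu_{f^*\Omega}(\mF_1)$, so $\mu_{1,\Omega}\geq\mu_{1,f^*\Omega}$. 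Thus the two leading slopes coincide and both inequalities are equalities. Applying the rank-comparison clause of Proposition \ref{lem-maximal-unique} on each side gives $\rank\mE_1=\rank\mL=\rank\mF_1=\rank\mS$; uniqueness then forces $\mL=\mF_1$ and $\mS=\mE_1$, which says exactly that $\mF_1=f^*\mE_1$ outside $f^{-1}(Z)$. Iterating this argument on the quotients $(\mE_X/\mE_1, \theta_{X_\reg})$ and $(\mE_Y/\mF_1,\theta_{Y_\reg})$, which again satisfy the hypotheses of the proposition, produces step by step the entire HN filtration and the equality of HN types.

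The routine but slightly delicate step is the first one in (2): showing the $\theta$-invariance of the candidate $\mL$ on all of $Y_\reg$ from its invariance merely outside $f^{-1}(Z)$. This is where Lemma \ref{lem-saturation-invariant} together with the codimension-$2$ assumption on $Z$ is essential; once this is in hand, everything else is a bookkeeping exercise manipulating saturations and applying Lemma \ref{lemma-compatible-pullback}. I do not anticipate a genuine analytic obstacle, since the heavy analytic work has been absorbed into the existence of the HN filtration (Proposition \ref{defn-HN-existence}) and the boundedness statement behind it.
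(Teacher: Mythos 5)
Your proposal is correct and follows essentially the same route as the paper: part (1) via pushing forward/pulling back saturated invariant subsheaves and comparing slopes through Lemma \ref{lemma-compatible-pullback} and Lemma \ref{lem-saturation-invariant}, and part (2) by reproducing the two-sided slope comparison, rank comparison, and uniqueness argument of Lemma \ref{lem-orbi-pullback-HNfiltration}(2), which is exactly the argument the paper invokes and omits. The only cosmetic difference is which direction of (1) you spell out explicitly.
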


The proof of (2) is similar to Lemma \ref{lem-orbi-pullback-HNfiltration} (2) and therefore omitted.

\begin{proof}
	Note that Lemma \ref{lemma-compatible-pullback} implies that $\mu_{f^*\Omega}(\mE_{Y})=\mu_\Omega(\mE_X)$.To prove the `If' part, it suffices to prove that for any saturated subsheaf $0\neq\mF_Y\subsetneq \mE_Y$ such that $\mF_Y$ is $\theta_{Y_\reg}$-invariant, we have $\mu_{f^*\Omega}\mF_Y<\mu_{f^*\Omega}(\mE_Y)$. Let $\mF_X$ be the saturation of $\pi_*\mF_{Y}$. Then $f_*\mF_X=\mF_X$ in codimension $1$ and $\mF_X$ is $\theta_{X_\reg}$-invariant by the assumption and Lemma \ref{lem-saturation-invariant}. Applying Lemma \ref{lemma-compatible-pullback} again, the proof is complete. `Only if' part is similar by taking the saturation of $\image(f^*\mF_X\rightarrow f^*\mE_X)\cap\mF_Y$ in $\mF_Y$.
\end{proof}

\subsection{Pull-back of Higgs sheaves}
The pull-back of Higgs sheaves and its properties are essential in our context, which was well studied in the projective setting (\cite{GKPT19a}) where the Higgs sheaves are defined on the whole space. The following statement is essential to establish Bogomolov-Giesker inequality of Higgs sheaves defined on the regular locus.

\begin{lemma}[Reflexive pull-back of Higgs field]\label{lem-reflexive-pullback}
	Let $f:Y\rightarrow X$ be a holomorphism between complex spaces with rational singularities and $(\mE_{X_\reg},\theta_{X_\reg})$ be a Higgs sheaf on $X_\reg$. Let $\mE_{Y_\reg}:=(f^*\mE_X)^{\vee\vee}|_{X_\reg}$ and $\sigma$ be the non-locally-fre locus of $\mE_{X_\reg}$. If $f^{-1}(X_\reg)\cap Y_\reg\neq \emptyset$, then $\mE_{Y_\reg}$ admits a Higgs field $\theta_{Y_\reg}$ such that $\theta_{Y_\reg}=f^*\theta_{X_\reg}$ on $f^{-1}(X_\reg\setminus\Sigma)$.
\end{lemma}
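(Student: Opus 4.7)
The plan is to build $\theta_{Y_\reg}$ as the reflexive pullback of $\theta_{X_\reg}$, relying on the Kebekus--Schnell functorial pullback of reflexive differentials from \cite{KS21} to handle the codimension-one part of $Y_\reg$ lying over $X_{sing}$. First I would promote $\theta_{X_\reg}$ to a morphism $\theta_X\colon \mE_X\to \mE_X\otimes \Omega^{[1]}_X$ on $X$ by applying $i_*$ along $i\colon X_\reg\hookrightarrow X$; since $X$ is normal and $\Omega^{[1]}_X$ is reflexive, $\mE_X\otimes \Omega^{[1]}_X$ agrees with $i_*(\mE_{X_\reg}\otimes \Omega^1_{X_\reg})$ after taking the bidual where necessary, and the Higgs integrability $\theta_X\wedge\theta_X=0$ descends from the identity on $X_\reg$ by torsion-freeness.

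Next, apply the ordinary pullback and then double-dualize to obtain a morphism between reflexive sheaves
\begin{equation*}
	(f^*\mE_X)^{\vee\vee}\to \bigl((f^*\mE_X)^{\vee\vee}\otimes (f^*\Omega^{[1]}_X)^{\vee\vee}\bigr)^{\vee\vee}.
\end{equation*}
Because both $X$ and $Y$ have rational singularities, the Kebekus--Schnell theorem supplies a canonical morphism $(f^*\Omega^{[1]}_X)^{\vee\vee}\to \Omega^{[1]}_Y$ that coincides with the usual differential $f^*\Omega^1_{X_\reg}\to \Omega^1_{Y_\reg}$ over $f^{-1}(X_\reg)\cap Y_\reg$. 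Tensoring with $(f^*\mE_X)^{\vee\vee}$, reducing modulo torsion, and composing yields
\begin{equation*}
	\tilde\theta_Y\colon (f^*\mE_X)^{\vee\vee}\to \bigl((f^*\mE_X)^{\vee\vee}\otimes \Omega^{[1]}_Y\bigr)^{\vee\vee},
\end{equation*}
and restricting to $Y_\reg$, where $\Omega^{[1]}_Y|_{Y_\reg}=\Omega^1_{Y_\reg}$ is locally free so the outer reflexive hull becomes trivial, produces the desired morphism $\theta_{Y_\reg}\colon \mE_{Y_\reg}\to \mE_{Y_\reg}\otimes \Omega^1_{Y_\reg}$. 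Over the non-empty open set $f^{-1}(X_\reg\setminus \Sigma)\cap Y_\reg$ all sheaves involved are locally free and $f$ restricts to a morphism between smooth spaces, so $\theta_{Y_\reg}=f^*\theta_{X_\reg}$ there by the compatibility clause of \cite{KS21}; the identity $\theta_{Y_\reg}\wedge\theta_{Y_\reg}=0$ holds on this Zariski-dense open set and propagates to all of $Y_\reg$ by torsion-freeness of $\mE_{Y_\reg}\otimes \Omega^2_{Y_\reg}$.

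The main obstacle is the careful bookkeeping with reflexive hulls together with the precise invocation of \cite{KS21}: one must verify that the reflexive pullback of $\Omega^{[1]}_X$ to $Y$ is functorial and is compatible with the ordinary pullback over the common smooth locus, so that composing it with the double dual of $f^*\theta_X$ really produces a global morphism of reflexive sheaves on $Y$ that restricts to $f^*\theta_{X_\reg}$ on $f^{-1}(X_\reg\setminus \Sigma)\cap Y_\reg$. Once this functoriality is granted, every other step reduces to a routine manipulation with torsion-free and reflexive sheaves.
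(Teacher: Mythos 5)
Your proposal is correct and follows essentially the same route as the paper's proof: the key input in both is the Kebekus--Schnell functorial pullback of reflexive differentials, used to extend the naively pulled-back Higgs field across the codimension-one locus of $Y_\reg$ lying over $X_{sing}$, with reflexivity handling the remaining small loci and torsion-freeness propagating the integrability $\theta_{Y_\reg}\wedge\theta_{Y_\reg}=0$ from a dense open set. The only difference is packaging — you assemble a global morphism of reflexive sheaves on $Y$ and restrict to $Y_\reg$, whereas the paper extends sections of $\End(\mE_{Y_\reg})\otimes\Omega^1_{Y_\reg}$ over open subsets of $Y_\reg$ — but the substance is identical.
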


\begin{notation}[Reflexive pull-back of Higgs field]
	We call $(\mE_{Y_\reg},\theta_{Y_\reg})$ the {\em reflexive pull-back} of $(\mE_{X_\reg},\theta_{X_\reg})$, and denote it by $f^{[*]}(\mE_{X_\reg},\theta_{X_\reg})$.
\end{notation}

The existence of $\theta_{Y_\reg}$ is highly nontrivial, which relies on the following Kebekus-Schenell's work (Lemma \ref{lem-functorialpull-back}).
\begin{lemma}[The existence of functorial pull-back for reflexive differentials,{\cite[Theorem 1.10]{KS21}}]\label{lem-functorialpull-back}
	Let $f:Y\rightarrow X$ be any holomorphism between complex spaces with rational singularities. Then there exists a pull-back morphism
	$$d_{\mathrm{refl}}f:f^*\Omega_Y^{[p]}\rightarrow \Omega_X^{[p]},$$
	uniquely determined by natural universal properties. Namely, if $Y^\circ:=Y_{reg}\cap f^{-1}(X_{reg})\neq \emptyset$, then there exists a commutative diagram
	\[
	\begin{tikzcd}
		H^0(X,\Omega_X^{[p]}) \arrow[r,"d_{\mathrm{refl}}f"] \arrow[d,"\mathrm{restriction}_X"'] & H^0(Y,\Omega_Y^{[p]}) \arrow[d,"\mathrm{restriction}_Y"] \\
		H^0(X,\Omega_{X_\reg}^p) \arrow[r,"d(f|_{Y^\circ})"] & H^0(Y^\circ,\Omega_{Y^\circ}^p)
	\end{tikzcd}
	\]
	where $d(f|_{Y^\circ})$ denotes the usual pull-back of holomorphic forms on complex manifolds.
\end{lemma}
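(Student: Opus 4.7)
The plan is to follow the Hodge-theoretic strategy of Kebekus--Schnell. Since $\Omega_Y^{[p]}$ is reflexive and $Y$ is normal, any candidate pull-back morphism is determined in codimension one, so the problem is local on $Y$ and analytic subsets of codimension $\geq 2$ may be removed at will. It therefore suffices to construct a morphism $f^{*}\Omega_X^{[p]}\to\Omega_Y^{[p]}$ on an open $U\subset Y$ whose complement has codimension $\geq 2$ and which restricts to $d(f|_{Y^\circ})$ on $Y^\circ$; the resulting morphism will automatically extend uniquely to all of $Y$.

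First I would reduce to smooth ambient spaces. Take resolutions $\pi_X\colon\tilde X\to X$ and $\pi_Y\colon\tilde Y\to Y$, then resolve the indeterminacy of $\pi_X^{-1}\circ f\circ\pi_Y$ to obtain a commutative diagram
\begin{equation*}
\begin{tikzcd}
\tilde Y \arrow[r,"\tilde f"]\arrow[d,"\pi_Y"'] & \tilde X\arrow[d,"\pi_X"]\\
Y\arrow[r,"f"] & X
\end{tikzcd}
\end{equation*}
with $\tilde X$, $\tilde Y$ smooth. On the smooth models the usual differential $d\tilde f\colon\tilde f^{*}\Omega_{\tilde X}^{p}\to\Omega_{\tilde Y}^{p}$ is classical.

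The heart of the matter is the extension statement: for a complex space $W$ with rational singularities and any resolution $\pi\colon\tilde W\to W$, the natural inclusion $\pi_{*}\Omega_{\tilde W}^{p}\hookrightarrow\Omega_W^{[p]}$ (valid in codimension one) extends to an isomorphism of reflexive sheaves. I would prove this via M.~Saito's theory of mixed Hodge modules: the intersection complex $\mathrm{IC}_W(\mathbb{Q}_W)$ underlies a pure Hodge module whose Hodge filtration has graded pieces that, up to a twist by the dualizing sheaf, realize the reflexive differentials $\Omega_W^{[p]}$. Compatibility of this identification with Saito's decomposition theorem for the proper pushforward $R\pi_{*}$ yields $\pi_{*}\Omega_{\tilde W}^{p}\cong\Omega_W^{[p]}$ for both $W=X$ and $W=Y$; by adjunction this is equivalent to a canonical extension morphism $\pi^{*}\Omega_W^{[p]}\to\Omega_{\tilde W}^{p}$ agreeing with the identity on the smooth locus.

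Granting this extension result, $d_{\mathrm{refl}}f$ is assembled by composing canonical maps: start from $f^{*}\Omega_X^{[p]}$, pull back further to $\tilde Y$ via $\pi_Y$ to obtain $\tilde f^{*}\pi_X^{*}\Omega_X^{[p]}$, apply $\tilde f^{*}$ of the extension morphism $\pi_X^{*}\Omega_X^{[p]}\to\Omega_{\tilde X}^{p}$, compose with $d\tilde f$ to reach $\Omega_{\tilde Y}^{p}$, push forward along $(\pi_Y)_{*}$, and finally use $(\pi_Y)_{*}\Omega_{\tilde Y}^{p}\cong\Omega_Y^{[p]}$. Over $Y^{\circ}=Y_{\reg}\cap f^{-1}(X_{\reg})$ both $\pi_X$ and $\pi_Y$ are isomorphisms, so the composition manifestly reduces to $d(f|_{Y^\circ})$, yielding the asserted commutative diagram. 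Uniqueness is then immediate: two candidate pull-back maps agree on $Y^{\circ}$, whose complement has codimension $\geq 2$, and morphisms into a reflexive sheaf are determined by their restriction to such an open set.

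The main obstacle is the Hodge-theoretic identification of $\Omega_W^{[p]}$ with a graded piece of the filtered $\mathcal{D}$-module underlying $\mathrm{IC}_W(\mathbb{Q}_W)$. This requires a delicate analysis of the Hodge filtration on intersection cohomology, and uses Grauert--Riemenschneider vanishing together with Saito's decomposition theorem in an essential way in order to interchange taking Hodge graded pieces with the proper pushforward along $\pi$. Once this identification is established, the remaining steps---reduction to smooth resolutions, assembly via the commutative square, and Hartogs-style extension---are comparatively formal.
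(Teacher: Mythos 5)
The paper offers no proof of this statement: it is quoted directly from \cite{KS21} (Theorem 1.10), so there is nothing internal to compare against. (Note also that the arrow in the paper's displayed formula is reversed; the intended morphism is $d_{\mathrm{refl}}f\colon f^{*}\Omega_X^{[p]}\to\Omega_Y^{[p]}$, as both your proposal and the commutative diagram indicate.) Your sketch is a reasonable reconstruction of the strategy of the cited reference: reduction to a big open subset using reflexivity of the target, passage to resolutions, and assembly of $d_{\mathrm{refl}}f$ from the classical differential $d\tilde f$ together with the extension isomorphism $\pi_{*}\Omega_{\tilde W}^{p}\cong\Omega_W^{[p]}$ for spaces with rational singularities; the adjunction bookkeeping and the uniqueness argument via restriction to $Y^{\circ}$ are correct as written. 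The one place where your account is loose is the Hodge-theoretic core: it is not accurate that the graded pieces of the Hodge filtration of the intersection Hodge module ``realize $\Omega_W^{[p]}$ up to a twist by the dualizing sheaf.'' What Kebekus--Schnell actually establish is first that $\pi_{*}\Omega_{\tilde W}^{p}$ is independent of the chosen resolution (this is where Saito's decomposition theorem enters), and then, by a separate argument in which the rational-singularities hypothesis $R\pi_{*}\mathcal{O}_{\tilde W}=\mathcal{O}_W$ is used through duality and Grauert--Riemenschneider vanishing, that this resolution-independent sheaf is reflexive and hence equals $\Omega_W^{[p]}$. Since that entire step is precisely the content of the deep external theorem being cited, and you flag it as the main obstacle rather than claiming to have supplied it, your proposal is acceptable as an outline of \cite{KS21} but is not a self-contained proof --- which puts it on the same footing as the paper itself, whose ``proof'' is the citation.
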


\begin{proof}[Proof of Lemma \ref{lem-reflexive-pullback}]
	As a consequence of Lemma \ref{lem-functorialpull-back}, for any open subset $U\subset Y_{\reg}$, the image of the morphism induced by the restriction
	$$H^0(U\setminus\Sigma,\End(Y_\reg)\otimes \Omega_{Y_\reg}^1)\rightarrow H^0((U\setminus\Sigma)\cap f^{-1}(X_\reg),\End(\mE_{Y_\reg})\otimes \Omega_{Y_\reg}^1)$$
	contains $H^0((U\setminus\Sigma)\cap f^{-1}(Y_\reg),\End(\mE_{Y_\reg})\otimes f^*\Omega_{X_\reg}^1)$. It follows from the reflexivity that the morphism 
	$$H^0(U,\End(Y_\reg)\otimes \Omega_{Y_\reg}^1)\rightarrow H^0(U\setminus\Sigma, \End(Y_\reg)\otimes \Omega_{Y_\reg}^1)$$
	is surjective. Thus, $(f|_{(f^{-1}(X_\reg)\cap Y_\reg)\setminus\Sigma})^*\theta_{X_\reg}$ can be extended to $\theta_{Y_\reg}\in H^0(Y_\reg,\End(\mE_{Y_\reg})\otimes \Omega_{Y_\reg}^1)$. Since $\theta_{X_\reg}\wedge\theta_{X_\reg}=0$, we conclude that
	$\theta_{Y_\reg}\wedge\theta_{Y_\reg}=0$ on $Y_\reg$ as in the proof of Lemma \ref{lem-saturation-invariant}.
\end{proof}	

Similarly, we have that
\begin{lemma}\label{lem-torsionfree}
   Let $(\mE_{X_\reg},\theta_{X_\reg})$ be a torsion-free Higgs sheaf on a complex space $X$ with rational singularities. Let $f:Y\rightarrow X$ be a resolution of singularities of $X$ such that $\mE_Y=f^*\mE_X/\torsion$ is locally free given by Lemma \ref{lem-resoorbi}, then $\mE_Y$ admits a Higgs field $\theta_Y$ which coincides with the pull-back of $\theta_{Y_\reg}$ outside the exceptional divisor.
\end{lemma}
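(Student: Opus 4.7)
The plan is to deduce Lemma~\ref{lem-torsionfree} directly from Lemma~\ref{lem-reflexive-pullback} applied to the very same morphism $f\colon Y\to X$, after identifying the two candidate extensions of $\mE_{X_\reg}$ to $Y$. I interpret the conclusion as asserting that $\theta_Y$ coincides with the pull-back of $\theta_{X_\reg}$ outside the exceptional divisor (the $\theta_{Y_\reg}$ in the statement appears to be a typo, mirroring the parallel conclusion of Lemma~\ref{lem-reflexive-pullback}).

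The first step is to verify that $\mE_Y = (f^*\mE_X)^{\vee\vee}$, so that the reflexive sheaf featured in Lemma~\ref{lem-reflexive-pullback} agrees with the one supplied in the hypothesis of Lemma~\ref{lem-torsionfree}. Since torsion subsheaves have vanishing dual, the canonical surjection $f^*\mE_X \twoheadrightarrow \mE_Y$ induces an isomorphism $(f^*\mE_X)^\vee \cong \mE_Y^\vee$; dualizing once more yields $(f^*\mE_X)^{\vee\vee} \cong \mE_Y^{\vee\vee}$. As $\mE_Y$ is locally free (and therefore reflexive), $\mE_Y^{\vee\vee} = \mE_Y$, so the two constructions coincide.

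The second step is to invoke Lemma~\ref{lem-reflexive-pullback}. Its hypotheses are all met: $X$ has rational singularities by assumption; $Y$ is smooth, so $Y_\reg = Y$ and $Y$ trivially has rational singularities; and since $f$ is a bimeromorphism the set $f^{-1}(X_\reg) \cap Y_\reg = f^{-1}(X_\reg)$ is a nonempty Zariski open. The lemma then produces a Higgs field $\theta_{Y_\reg}$ on $(f^*\mE_X)^{\vee\vee}|_{Y_\reg} = \mE_Y$ satisfying $\theta_{Y_\reg} = f^*\theta_{X_\reg}$ on $f^{-1}(X_\reg \setminus \Sigma)$, where $\Sigma$ denotes the non-locally-free locus of $\mE_{X_\reg}$ inside $X_\reg$. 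Defining $\theta_Y := \theta_{Y_\reg}$ yields a Higgs field on all of $Y$. Since the blowup centers in Lemma~\ref{lem-resoorbi} lie in the non-locally-free locus of $\mE_X$, the exceptional divisor of $f$ contains $f^{-1}(\Sigma)$ together with $f^{-1}(X_{\sing})$; hence outside the exceptional divisor $f$ is an isomorphism onto an open subset of $X_\reg \setminus \Sigma$, and there $\theta_Y = f^*\theta_{X_\reg}$ as required.

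There is no substantial obstacle, as all the analytic content—namely the Kebekus--Schnell functorial pull-back of reflexive differentials (Lemma~\ref{lem-functorialpull-back}) and the Hartogs-type extension across the exceptional locus—has already been carried out in the proof of Lemma~\ref{lem-reflexive-pullback}. The only point requiring care is the identification of the reflexive hull with the torsion-free quotient on $Y$, which is precisely where the hypothesis that $\mE_Y$ is locally free enters.
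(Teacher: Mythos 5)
Your proof is correct and follows essentially the route the paper intends: the paper gives no separate argument for this lemma (it is introduced with ``Similarly'' after Lemma~\ref{lem-reflexive-pullback}), and your reduction via the identification $\mE_Y\cong(f^*\mE_X)^{\vee\vee}$ — valid precisely because the locally free sheaf $\mE_Y$ is reflexive, so the torsion-free quotient and the reflexive hull of $f^*\mE_X$ agree — is the natural way to make that ``similarly'' precise, with all the real content (the Kebekus--Schnell pull-back and the reflexive extension across the bad locus) already contained in Lemma~\ref{lem-reflexive-pullback}. Your reading of $\theta_{Y_\reg}$ as a typo for $\theta_{X_\reg}$ in the statement is also the correct interpretation.
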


In particular, Lemma \ref{lem-reflexive-pullback} implies that
\begin{corollary}[Induced reflexive Higgs orbi-sheaves]\label{coro-orbifold-reflexive}
	Let $(\mE_{X_\reg},\theta_{X_\reg})$ be a Higgs sheaf on a compact complex space $X$ of dimension $n$ with only quotient singularities. Let $X_\orb:=\{(U_i,G_i,\pi_i)\}$ be an orbifold structure of $X$ and $\Sigma$ be the non-locally-free locus of $\mE_{X}$. Then the reflexive orbi-sheaf $\mE_\orb:=\{(\pi_i^*\mE_X)^{\vee\vee}\}$ admits a Higgs field $\theta_\orb$ such that $(\mE_{X_\reg},\theta_{X_\reg})=\pi^*(\mE_{X_\reg},\theta_{X_\reg})$ outside $\{\pi_i^{-1}(\Sigma)\}$.
\end{corollary}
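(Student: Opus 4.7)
The plan is to apply Lemma \ref{lem-reflexive-pullback} to each individual orbifold chart and then assemble the resulting Higgs fields into a single global Higgs field on the orbi-sheaf $\mE_\orb$ by using reflexivity to verify the gluing conditions.

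First I would observe that each chart map $\pi_i\colon U_i\to X_i\subset X$ is a finite surjective holomorphism from the complex manifold $U_i$ onto an open subset $X_i$ with quotient (hence rational) singularities. Since $X_\reg$ is dense in $X$ and $X_i$ is open, we have $(U_i)_\reg\cap \pi_i^{-1}(X_\reg)=\pi_i^{-1}(X_\reg)\neq \emptyset$. Thus Lemma \ref{lem-reflexive-pullback} applies and produces a Higgs field $\theta_i$ on $\mE_i:=(\pi_i^*\mE_X)^{\vee\vee}$ which equals $\pi_i^*\theta_{X_\reg}$ on $\pi_i^{-1}(X_\reg\setminus \Sigma)$.

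Second I would verify the orbi-sheaf compatibility conditions for $\{\theta_i\}$. For any embedding $(\varphi,\lambda)\colon (U_k,G_k,\pi_k)\to (U_i,G_i,\pi_i)$, the relation $\pi_i\circ\varphi=\pi_k$ together with the functoriality of the double dual yields the canonical isomorphism $\Phi_\varphi\colon \mE_k\xrightarrow{\cong}\varphi^*\mE_i$. Transporting $\varphi^*\theta_i$ via $\Phi_\varphi$ gives a Higgs field on $\mE_k$ which, together with $\theta_k$, both restrict to $\pi_k^*\theta_{X_\reg}$ on the Zariski open subset $\pi_k^{-1}(X_\reg\setminus \Sigma)\subset U_k$, whose complement has codimension at least two. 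Since $\End(\mE_k)\otimes \Omega_{U_k}^1$ is reflexive (as $\mE_k$ is reflexive and $\Omega_{U_k}^1$ is locally free), the two Higgs fields agree on all of $U_k$ by Hartogs-type extension. The $G_i$-equivariance of each $\theta_i$ follows by the same argument applied to $\phi_g\colon U_i\to U_i$ for $g\in G_i$, using $\pi_i\circ \phi_g=\pi_i$. Finally, the integrability condition $\theta_\orb\wedge\theta_\orb=0$ reduces to $\theta_i\wedge \theta_i=0$ on each chart, which holds on the dense subset $\pi_i^{-1}(X_\reg\setminus\Sigma)$ and therefore everywhere by torsion-freeness of $\End(\mE_i)\otimes \Omega_{U_i}^2$.

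The whole argument is essentially a formality built upon Lemma \ref{lem-reflexive-pullback}: the one substantive point is the uniqueness built into the codimension-two extension principle for reflexive sheaves, invoked to glue the local Higgs fields $\theta_i$ into a single Higgs field on $\mE_\orb$. I do not anticipate any serious obstacle beyond this routine verification.
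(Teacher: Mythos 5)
Your proposal is correct and follows the same route as the paper, which derives the corollary directly from Lemma \ref{lem-reflexive-pullback} applied chart by chart; you merely spell out the gluing and equivariance checks that the paper leaves implicit. The only cosmetic point is that for a non-standard orbifold structure the complement of $\pi_k^{-1}(X_\reg\setminus\Sigma)$ may contain ramification divisors of codimension one, but your argument still goes through since the difference of the two Higgs fields is a section of the torsion-free sheaf $\End(\mE_k)\otimes\Omega^1_{U_k}$ vanishing on a dense open set, hence zero.
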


We have the following essential statement for standard compact complex orbifolds.
\begin{proposition}\label{prop-compatible-HNfiltration}
	Given the setup of Corollary \ref{coro-orbifold-reflexive}. Let $\alpha_0,\cdots,\alpha_{n-2}$ be nef classes and set $\Omega=\alpha_0\cdots\alpha_{n-2}$. If $\mE_{X_\reg}$ is reflexive and $X_\orb$ is standard, then the following statements holds.
	\begin{itemize}
		\item[(1)] $(\mE_{X_\reg},\theta_{X_\reg})$ is $(\alpha_0,\cdots,\alpha_{n-2})$-stable if and only if $(\mE_\orb,\theta_\orb)$ is $\Omega$-stable.
		\item[(2)] $\vec{\mu}_\Omega(\mE_\orb,\theta_\orb)=\vec{\mu}_\Omega(\mE_{X_\reg},\theta_{X_\reg})$ and the Harder-Narasimhan filtration of $(\mE_\orb,\theta_{X_\orb})$ coincides with the pull-back of the Harder-Narasimhan filtration of $(\mE_{X_\reg},\theta_{X_\reg})$ outside $Z_\orb:=\{Z_i\}$, where $Z_i$ is the ramification locus of $\pi_i$.
	\end{itemize}
\end{proposition}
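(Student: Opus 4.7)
The plan is to exhibit a slope-preserving, $\theta$-compatible bijection between saturated $\theta_\orb$-invariant orbi-subsheaves of $\mE_\orb$ and saturated $\theta_{X_\reg}$-invariant subsheaves of $\mE_{X_\reg}$ (passed to their reflexive extensions $\mF_X \subseteq \mE_X$), and then to read off both statements by comparing slopes and invoking uniqueness of the Harder--Narasimhan filtration on each side.

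First I would construct the two directions of the correspondence. Given a saturated $\theta_\orb$-invariant orbi-subsheaf $\mF_\orb \subseteq \mE_\orb$, the $G_i$-invariant push-forward $\mF_X := ((\pi_i)_* \mF_i)^{G_i}$ is a reflexive subsheaf of $\mE_X$ by Lemma \ref{push-forward}. Because $X_\orb$ is standard, each $\pi_i$ is \'etale over $X_{i,\reg}$, so $\theta_\orb|_{\pi_i^{-1}(X_{i,\reg})}$ descends to $\theta_{X_\reg}|_{X_{i,\reg}}$; consequently $\mF_X|_{X_\reg}$ is $\theta_{X_\reg}$-invariant (saturation in codimension $1$ then extends to everywhere by Lemma \ref{lem-saturation-invariant} and Lemma \ref{lem-orbi-torsionsheaf}). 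Conversely, given a saturated $\theta_{X_\reg}$-invariant subsheaf $\mF_{X_\reg} \subseteq \mE_{X_\reg}$ with reflexive extension $\mF_X$, set $\mF_\orb := \{(\pi_i^* \mF_X)^{\vee\vee}\}$; the Higgs field $\theta_\orb$ provided by Corollary \ref{coro-orbifold-reflexive} preserves $\mF_\orb$, which can be checked on the \'etale locus where $\pi_i^*$ and $((\pi_i)_*-)^{G_i}$ are mutually inverse. Since both constructions agree on $\{\pi_i^{-1}(X_{i,\reg})\}$ and the terminal objects are reflexive, they are mutually inverse globally.

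Next I would compare slopes. Because $X_\orb$ is standard, Lemma \ref{descent} gives $c_1^\orb(\mF_\orb) = c_1(\mF_X)$ in $H_{2n-2}(X,\R)$ with no ramification correction (and likewise $c_1^\orb(\mE_\orb) = c_1(\mE_X)$), so $\mu_\Omega(\mF_\orb) = \mu_\Omega(\mF_X)$ with orbifold slopes computed against the pullback class. Part (1) is then immediate: any destabilizing subobject on either side transports across the bijection to one of the same rank and slope on the other, and Lemma \ref{lem-compatable-existingnotion} reduces stability on $X_\reg$ to a test on saturated reflexive extensions inside $\mE_X$.

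For part (2), I would apply the bijection to each term of the HN filtration $0 = \mE_0 \subset \cdots \subset \mE_l = \mE_X$ of Proposition \ref{defn-HN-existence}. The induced filtration $\{\mF_{k,\orb}\}$ of $\mE_\orb$ has $\theta_\orb$-invariant saturated terms; its successive quotients inherit semistability from the chart-wise slope identification, and their slopes are strictly decreasing and equal to $\vec{\mu}_\Omega(\mE_{X_\reg},\theta_{X_\reg})$. The uniqueness statement in Lemma \ref{lem-orbi-HNfiltration} then forces $\{\mF_{k,\orb}\}$ to be the HN filtration of $(\mE_\orb,\theta_\orb)$, proving both the identification of HN types and the coincidence on the \'etale locus, hence outside $Z_\orb$. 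The step I expect to be the most delicate is verifying that $\theta$-invariance propagates cleanly in both directions across the codimension-$1$ ramification locus; this is precisely where standardness of $X_\orb$ combines with the reflexivity of $\mE_{X_\reg}$ (via Lemma \ref{lem-reflexive-pullback} and the reflexivity criterion ``agreement in codimension $1$'') to reduce $\theta$-invariance to the \'etale locus, where it is automatic.
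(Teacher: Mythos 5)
Your proposal is correct and follows essentially the same route as the paper: $G_i$-invariant push-forward one way, (reflexive/torsion-free) pull-back the other, Lemmas \ref{lift} and \ref{descent} to identify slopes using standardness, Lemma \ref{lem-saturation-invariant} for $\theta$-invariance, and uniqueness of the HN filtration for (2). The only cosmetic difference is that the paper takes the saturation of $\image(\pi_i^*\mF_X\to\pi_i^*\mE_X)\cap\mE_\orb$ rather than $(\pi_i^*\mF_X)^{\vee\vee}$ (which need not land as a saturated subsheaf of $\mE_\orb$ on the nose), but since the two agree in codimension $1$ this does not affect any slope computation.
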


\begin{proof}
	We first prove (1). Note that Lemma \ref{lift} implies that $\mu_\Omega(\mE_\orb)=\mu_\Omega(\mE_X)$. To prove the `Only if part', Lemma \ref{lem-orbi-torsionsheaf} implies that it suffices to prove that for any saturated $\theta_\orb$-invariant subsheaf $0\neq\mF_\orb=\{\mF_i\}\subsetneq\mE_\orb$, we have $\mu_\Omega(\mF_\orb)\leq \mu_{\Omega_\orb}(\mE_X)$. Since $\pi_i$ is \'etale in codimension $1$, we conclude that $\mE_X=((\pi_i)_*\mE_i)^{G_i}$ since two reflexive sheaves coincides if and only if they coincides in codimension $1$. Applying Lemma \ref{push-forward},  $\mF_X:=((\pi_i)_*\mF_i)^{G_i}$ is a saturated subsheaf of $\mE_X$. Lemma \ref{lem-saturation-invariant} implies that $\mF_{X}|_{X_\reg}$ is $\theta_{X_\reg}$ invariant. Then, Lemma \ref{descent} implies that $\mu_\Omega(\mF_\orb)=\mu_\Omega(\mF_\orb)\leq \mu_\Omega(\mE_\orb)=\mu_\Omega(\mF_\orb)$. `If part' is similar by taking the saturation $\mF_\orb$ of $\{\image(\pi_i^*\mF_X\rightarrow\pi_i^*\mE_X)\}\cap\mE_\orb$ for any $\theta_{X_\reg}$-invariant subsheaf $0\neq\mF_{X_\reg}\subsetneq\mE_{X_\reg}$ and applying Lemma \ref{lift}. The proof of (2) is similar to Lemma \ref{lem-orbi-HNfiltration} (2) by combining the argument of (1) here and therefore omitted.
\end{proof}

\subsection{Calculus of HN types}\label{calculus-HN}

We restrict us to the K\"ahler case in this section, which is sufficient to this paper.
\subsubsection{The stability of HN filtration}
 The following statements is essential to our considerations on the openness of the stability and the stability of Harder-Narasimhan filtration. The ideas follows from \cite[Section 2]{Cao13} and \cite[Lemma 3.15]{DO23}. One may also obtain it using the boundedness result proved in \cite{Toma21}.

\begin{proposition}[Finiteness of slopes]\label{prop-slope-finiteness}
	Let $\mE$ be a torsion-free subsheaf and $\alpha_0,\cdots,\alpha_{n-2}$ be nef and big classes on a compact K\"ahler space $X$ of dimension $n$. Set $\Omega=\alpha_0\cdots\alpha_{n-2}$. Then for any fixed constant $C>0$, the set
	$$\mathcal{A}_{C}:=\{\mu_{\Omega}(\mF)\geq C:\mF\text{ is a subsheaf of }\mE\}$$
	is finite.
\end{proposition}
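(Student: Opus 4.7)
The plan is to reduce to the smooth setting via a resolution of singularities, pass to determinants to further reduce to the rank-one case, and then exploit the bigness of the polarization to confine the resulting first Chern classes to finitely many lattice points.

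By Lemma \ref{lem-resoorbi}, take a bimeromorphism $g\colon Y\to X$ with $Y$ smooth and $\mE_Y := g^*\mE/(\torsion)$ locally free. For any subsheaf $\mF\subset\mE$, let $\mS$ be the saturation of $\image(g^*\mF\to\mE_Y)$ in $\mE_Y$. Combining Lemma \ref{lemma-compatible-pullback} with the non-negativity of the degree of torsion sheaves (Lemma \ref{lem-orbi-torsionsheaf}) yields $\mu_\Omega(\mF)\leq \mu_{g^*\Omega}(\mS)$. Since $\rank\mS \leq \rank\mE$, it suffices to prove, for each fixed $r\in\{1,\ldots,\rank\mE\}$, the finiteness of
\[
\bigl\{c_1(\det\mS)\cdot g^*\Omega : \mS\subset \mE_Y \text{ saturated of rank } r,\ \mu_{g^*\Omega}(\mS)\geq C\bigr\}.
\]
For such $\mS$, $\det\mS$ is a line bundle on $Y$ admitting a canonical inclusion $\det\mS\hookrightarrow\Lambda^r\mE_Y$. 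Lemma \ref{slope-upperbound} supplies a uniform upper bound $C'$ on the slope, so these intersection numbers lie in $[rC, rC']$, and the classes $c_1(\det\mS)$ lie in the N\'eron--Severi lattice $\mathrm{NS}(Y)\subset H^{1,1}(Y,\R)$, which is finitely generated.

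The main obstacle is to upgrade boundedness along the single linear functional $\beta\mapsto \beta\cdot g^*\Omega$ to finiteness of the numerical classes $[\det\mS]$: its kernel in $\mathrm{NS}(Y)\otimes\R$ may contain infinitely many lattice points, leaving infinitely many a priori candidates in the strip $rC\leq\beta\cdot g^*\Omega\leq rC'$. This is where the bigness of $\alpha_0,\ldots,\alpha_{n-2}$ becomes essential. Perturbing each $g^*\alpha_i$ to a K\"ahler class on $Y$ via the trick used in the proof of Lemma \ref{slope-upperbound} (subtracting a small multiple of an effective $g$-exceptional divisor equipped with a suitable Hermitian metric), and invoking Khovanskii--Teissier/Hodge-index inequalities together with the embedding $\det\mS\hookrightarrow\Lambda^r\mE_Y$, I would derive a uniform upper bound on $c_1(\det\mS)^2\cdot\eta^{n-2}$ for a suitable positive auxiliary class $\eta$. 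This confines $c_1(\det\mS)$ to a compact subset of $\mathrm{NS}(Y)\otimes\R$, whose intersection with the lattice $\mathrm{NS}(Y)$ is finite, completing the proof.
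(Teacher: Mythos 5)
Your reduction to the smooth case and the passage to determinants both run parallel to the paper's argument, but the final step --- upgrading boundedness of the single functional $\beta\mapsto\beta\cdot g^*\Omega$ to finiteness --- is exactly where the real content lies, and your proposed mechanism does not work as stated. First, the inequality points the wrong way: by the Hodge index theorem the form $\beta\mapsto\beta^2\cdot\eta^{n-2}$ is negative definite on the $\eta^{n-1}$-primitive part of $H^{1,1}(Y,\R)$, so once $\beta\cdot\eta^{n-1}$ is bounded, an \emph{upper} bound on $\beta^2\cdot\eta^{n-2}$ is automatic and confines nothing; what you would need is a uniform \emph{lower} bound on $c_1(\det\mS)^2\cdot\eta^{n-2}$. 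Second, no route to such a lower bound is indicated: Khovanskii--Teissier inequalities apply to nef classes and $c_1(\det\mS)$ need not be nef, while the inclusion $\det\mS\hookrightarrow\Lambda^r\mE_Y$ only bounds degrees from above against nef classes and says nothing about self-intersections. A uniform lower bound on $c_1(\mS)^2\cdot\eta^{n-2}$ over all saturated subsheaves is essentially a Grothendieck-lemma/Bogomolov-type boundedness statement and is not a consequence of the lemmas you cite. (A smaller, fixable slip: since you only have $\mu_\Omega(\mF)\le\mu_{g^*\Omega}(\mS)$ for the saturation $\mS$, finiteness of the saturated slopes does not literally give finiteness of $\mathcal{A}_C$; one should work with the unsaturated image, whose $g^*\Omega$-slope equals $\mu_\Omega(\mF)$ and whose first Chern class is still integral.)

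The paper closes the gap by a different, more elementary device due to Cao: bigness of the $\alpha_i$ is used to perturb $\Omega$ into a basis $\{e_i\}$ of $H^{2n-2}(X,\mathbb{Q})$ consisting of nef and big $(n-1,n-1)$-classes with $\Omega=\sum_i\lambda_ie_i$ and all $\lambda_i>0$. Lemma \ref{slope-upperbound} bounds each $\mu_{e_i}(\mF)$ from above; combined with $\sum_i\lambda_i\mu_{e_i}(\mF)\ge C$ and $\lambda_i>0$, this bounds every $\mu_{e_i}(\mF)$ from below as well, so each coordinate $c_1(\mF)\cdot e_i$ is two-sidedly bounded. Since $c_1(\mF)$ is an integral class and the $e_i$ are rational, these pairings lie in a fixed discrete subgroup of $\mathbb{Q}$ and hence take finitely many values, and so does $\mu_\Omega(\mF)=\frac{1}{\rank\mF}\sum_i\lambda_i\,c_1(\mF)\cdot e_i$. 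In other words, bigness is exploited linearly (to spread the one bounded functional over a full rational basis of nef functionals), not quadratically through the intersection form. If you wish to salvage your route you would have to import the missing lower bound on $c_1(\det\mS)^2\cdot\eta^{n-2}$ from the boundedness results of \cite{Toma21} --- which the paper indeed mentions as an alternative --- but that is a substantially heavier input than the argument actually used.
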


\begin{proof}
	Let $\pi:Y\rightarrow X$ be a resolution of singularities of $X$ such that $E:=f^*\mE/\torsion$ is locally free given by Lemma \ref{lem-resoorbi}. Then $Y$ is a compact K\"ahler manifold. For any $\mF\subset\mE$, set $\mS=\pi^*\mF\cap E\subset E$. Lemma \ref{lemma-compatible-pullback} implies that $\mu_{(f^*\Omega)}(\mS)=\mu_{\Omega}(\mF)$. Thus it suffices to show Proposition \ref{prop-slope-finiteness} when $X$ is smooth. By adapting the idea from \cite{Cao13}, we can take a basis $\{e_i\}$ of $H^{2n-2}(X,\mathbb{Q})$ lies in a neighborhood of $\Omega$ such that $\Omega=\sum\limits_{i}\lambda_ie_i$ for some $\lambda_i>0$ and $e_i$ can be represented by a nef and big $(n-1,n-1)$-class $\w_i\in H^{n-1,n-1}(X,\R)$. The remaining proof can be refered to \cite[Pages 21]{Jin25}.
\end{proof}

As a  consequence of Proposition \ref{prop-slope-finiteness} and Lemma \ref{slope-upperbound}, we obtain the following openness of the stability (c.f. \cite[Section 7]{Jin25}).
\begin{corollary}[Openness of the stability]\label{coro-openness-stability}
	Let $(\mE_{X_\reg},\theta_{X_\reg})$ be a torsion-free Higgs sheaf on the regular locus of a compact K\"ahler space $X$ of dimension $n$. If $(\mE_{X_\reg},\theta_{X_\reg})$ is $(\alpha_0,\cdots,\alpha_{n-2})$-stable, then for any fixed nef class $\beta$, $(\mE_{X_\reg},\mE_{X_\reg})$ is $(\alpha_0+\epsilon\beta,\cdots,\alpha_{n-2}+\epsilon\beta)$-stable for $0<\epsilon\ll1$.
\end{corollary}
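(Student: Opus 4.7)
The plan is to combine the uniform upper bound on slopes from Lemma~\ref{slope-upperbound} with the finiteness statement of Proposition~\ref{prop-slope-finiteness} to extract a uniform positive gap at $\epsilon=0$ that can absorb the perturbation of the polarization over the (a priori infinite) family of test subsheaves.

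First I would expand
\[
\Omega_\epsilon := (\alpha_0+\epsilon\beta)\cdots(\alpha_{n-2}+\epsilon\beta) = \sum_{k=0}^{n-1}\epsilon^k\,\Omega^{(k)},
\]
where $\Omega^{(0)}=\Omega$ and, for $k\geq 1$, $\Omega^{(k)}$ is a finite sum of products of $n-1$ nef $(1,1)$-classes (each summand contains exactly $k$ copies of $\beta$ and $n-1-k$ of the $\alpha_j$). Applying Lemma~\ref{slope-upperbound} term-by-term to each summand yields constants $C_1,\ldots,C_{n-1}>0$ depending only on $\mE_X$ and the fixed classes, such that
\[
\mu_{\Omega^{(k)}}(\mF_X)\leq C_k \quad \text{for every subsheaf } 0\neq \mF_X\subseteq\mE_X \text{ and every } k=1,\ldots,n-1.
\]

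Next I would produce the uniform gap at $\epsilon=0$. By Lemma~\ref{lem-compatable-existingnotion}, $\Omega_\epsilon$-stability need only be tested against saturated subsheaves $0\neq\mF_X\subsetneq \mE_X$ whose restriction to $X_\reg$ is $\theta_{X_\reg}$-invariant. Proposition~\ref{prop-slope-finiteness} asserts that the set
\[
\mathcal{A}:=\bigl\{\mu_\Omega(\mF_X) : \mF_X \text{ subsheaf of } \mE_X,\ \mu_\Omega(\mF_X)\geq \mu_\Omega(\mE_X)-1\bigr\}
\]
is finite. By the assumed $\Omega$-stability, every element of $\mathcal{A}$ realized by a $\theta_{X_\reg}$-invariant saturated subsheaf is strictly less than $\mu_\Omega(\mE_X)$; finiteness then forces their supremum $s$ to satisfy $s<\mu_\Omega(\mE_X)$. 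Setting $\delta_0:=\min\bigl(1,\,\mu_\Omega(\mE_X)-s\bigr)>0$, I obtain
\[
\mu_\Omega(\mE_X)-\mu_\Omega(\mF_X)\geq \delta_0
\]
for every relevant test subsheaf $\mF_X$ (the case $\mu_\Omega(\mF_X)<\mu_\Omega(\mE_X)-1$ is handled by $\delta_0\leq 1$).

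Finally, for any such $\mF_X$ and any $\epsilon\in(0,1)$, expanding $\Omega_\epsilon$ gives
\begin{align*}
\mu_{\Omega_\epsilon}(\mF_X)-\mu_{\Omega_\epsilon}(\mE_X)
&= \bigl(\mu_\Omega(\mF_X)-\mu_\Omega(\mE_X)\bigr) + \sum_{k=1}^{n-1}\epsilon^k\bigl(\mu_{\Omega^{(k)}}(\mF_X)-\mu_{\Omega^{(k)}}(\mE_X)\bigr) \\
&\leq -\delta_0 + \sum_{k=1}^{n-1}\epsilon^k\bigl(C_k-\mu_{\Omega^{(k)}}(\mE_X)\bigr),
\end{align*}
whose right-hand side is independent of $\mF_X$ and tends to $-\delta_0<0$ as $\epsilon\to 0^+$. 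Choosing $\epsilon_0>0$ so that this bound is strictly negative on $(0,\epsilon_0)$ yields $\Omega_\epsilon$-stability for all $\epsilon\in(0,\epsilon_0)$.

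The main obstacle is precisely the uniform gap $\delta_0$: the pointwise strict inequality from $\Omega$-stability is useless against an infinite family of potential destabilizing subsheaves, and it is exactly Proposition~\ref{prop-slope-finiteness} (together with Lemma~\ref{slope-upperbound}, which ensures the higher-order perturbation terms are uniformly bounded above) that rescues the argument. Everything else is a routine first-order perturbation analysis.
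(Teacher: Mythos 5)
Your proposal is correct and follows essentially the same route as the paper: Proposition~\ref{prop-slope-finiteness} supplies the uniform gap $\delta_0$ at $\epsilon=0$, and Lemma~\ref{slope-upperbound} controls the perturbation terms uniformly over all test subsheaves. Your write-up merely makes explicit the expansion of $\Omega_\epsilon$ and the extraction of the gap that the paper compresses into ``a direct computation.''
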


\begin{proof}
	Proposition \ref{prop-slope-finiteness} implies that there exists some constant $C_1>0$ such that for any $\theta_{X_\reg}$-invariant saturated subsheaf $\mF_{X_\reg}$ with $0<\rank(\mF_X)<\rank(\mE_X)$,
	$\mu_{(\alpha_0,\cdots,\alpha_{n-2})}(\mF_X)<\mu_{(\alpha_0,\cdots,\alpha_{n-2})}(\mE_X)-C.$
	Based on Lemma \ref{slope-upperbound}, a direct computation concludes that
	\begin{equation}\label{equa-fixed-limiting}
		\lim\limits_{\epsilon\rightarrow0}\mu_{(\alpha_0+\epsilon\beta,\cdots,\alpha_{n-2}+\epsilon\beta)}(\mF_X)-\mu_{(\alpha_0,\cdots,\alpha_{n-2})}(\mF_X)<C\epsilon
	\end{equation} for some uniform constant $C$. Then the proof can be completed.
\end{proof}

Building on Lemma \ref{upperslope} and Proposition \ref{prop-slope-finiteness}, the same argument of \cite[Proposition 2,3]{Cao13} implies that

\begin{proposition}[The stability of HN filtration]\label{prop-stable-HNfiltration}
	Let $(\mE_{X_\reg},\theta_{X_\reg})$ be a torsion-free subsheaf on a compact K\"ahler space of dimension $n$. Let $\alpha_{0},\cdots,\alpha_{n-2},\beta$ be nef and big classes on $X$ and set $\Omega_\epsilon:=(\alpha_0+\epsilon\beta)\cdots(\alpha_{n-2}+\epsilon\beta)$ for $\epsilon>0$. The Harder-Narasimhan filtration of $(\mE_{X_\reg,\theta_{X_\reg}})$ with respect to $\Omega_\epsilon$ is independent of $\epsilon$ for $0\ll\epsilon<1$.
\end{proposition}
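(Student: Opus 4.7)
The plan is to show that the HN filtration of $(\mE_{X_\reg},\theta_{X_\reg})$ with respect to $\Omega_\epsilon$ is constant on some interval $(0,\epsilon_0)$. The strategy, extending Cao's approach from the smooth projective setting, combines a uniform Lipschitz control of slopes in $\epsilon$ with the finiteness result of Proposition \ref{prop-slope-finiteness} iterated order by order against the refined polarizations appearing in the $\epsilon$-expansion of $\Omega_\epsilon$.

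First I would establish the uniform Lipschitz estimate
\begin{equation*}
|\mu_{\Omega_\epsilon}(\mF)-\mu_{\Omega_0}(\mF)|\leq C\epsilon, \qquad \forall\ \mF \subset \mE,
\end{equation*}
with $C>0$ independent of $\mF$ and $\epsilon\in[0,1]$. Expanding $\Omega_\epsilon - \Omega_0 = \sum_{j=1}^{n-1}\epsilon^j P_j$, each $P_j$ is a sum of products of the nef and big classes $\alpha_i$ and $\beta$, hence nef and big in its own right. Lemma \ref{slope-upperbound} applied to $\mF$ gives the upper bound, and the same lemma applied to $\mE/\mF$ together with $c_1(\mF)\cdot P_j = c_1(\mE)\cdot P_j - c_1(\mE/\mF)\cdot P_j$ gives the matching lower bound.

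Let $\mE_{1,\epsilon}$ denote the maximal destabilizer of $(\mE,\theta)$ for $\Omega_\epsilon$ furnished by Proposition \ref{lem-maximal-unique}. For a sequence $\epsilon_i \searrow 0^+$, the maximality and the Lipschitz estimate sandwich
\begin{equation*}
\mu_{\Omega_0}(\mE_{1,0})-2C\epsilon_i \leq \mu_{\Omega_0}(\mE_{1,\epsilon_i}) \leq \mu_{\Omega_0}(\mE_{1,0}),
\end{equation*}
so Proposition \ref{prop-slope-finiteness} forces $\mu_{\Omega_0}(\mE_{1,\epsilon_i})=\mu_{\Omega_0}(\mE_{1,0})$ for $i$ large. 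Restricting to saturated $\theta$-invariant subsheaves of this fixed $\Omega_0$-slope and re-iterating the argument against $P_1,P_2,\ldots,P_{n-1}$ in turn, one successively stabilizes the scalars $\mu_{P_j}(\mE_{1,\epsilon_i})$ along nested subsequences. After at most $n-1$ refinements the full polynomial $\epsilon \mapsto \mu_{\Omega_\epsilon}(\mE_{1,\epsilon_i})$ is independent of $i$; a further extraction makes $\rank\mE_{1,\epsilon_i}$ constant. Invoking the uniqueness clause of Proposition \ref{lem-maximal-unique} at each $\Omega_{\epsilon_i}$, and using the polynomial structure of slope differences to check that any one $\mE_{1,\epsilon_i}$ remains semistable and of maximal slope at every nearby $\Omega_{\epsilon_j}$ in the subsequence, one identifies all $\mE_{1,\epsilon_i}$ with a common sheaf $\mE_{1,\ast}$. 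Iterating on the torsion-free quotient $\mE/\mE_{1,\ast}$ produces the full filtration.

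The main obstacle is this final identification step: stabilizing all scalar slope invariants of $\mE_{1,\epsilon_i}$ does not a priori force the sheaves themselves to coincide, since distinct subsheaves could share the same slope profile. Resolving this requires transferring semistability across the neighboring polarizations $\Omega_{\epsilon_i}$, which rests on the polynomial nature of $\epsilon\mapsto\mu_{\Omega_\epsilon}(\mF)$ (degree at most $n-1$, hence finitely many sign changes per pair of subsheaves) and on repeated applications of Proposition \ref{prop-slope-finiteness} — with attention to verifying that each refined polarization $P_j$ is nef and big so that the finiteness result remains available.
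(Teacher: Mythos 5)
Your strategy is the one the paper intends: combine the uniform slope bound of Lemma \ref{slope-upperbound} with the finiteness of Proposition \ref{prop-slope-finiteness} and run the argument of \cite[Prop.~2.3]{Cao13}. Two steps in your write-up, however, do not work as stated. The first is the two-sided uniform Lipschitz estimate $|\mu_{\Omega_\epsilon}(\mF)-\mu_{\Omega_0}(\mF)|\leq C\epsilon$ for \emph{all} subsheaves $\mF\subset\mE$: this is false. Lemma \ref{slope-upperbound} bounds degrees of subsheaves only from \emph{above}, and your proposed lower bound via $c_1(\mF)\cdot P_j=c_1(\mE)\cdot P_j-c_1(\mE/\mF)\cdot P_j$ would need a uniform \emph{upper} bound on degrees of quotients, which does not exist (degrees of subsheaves, hence of quotients, are unbounded in one direction: already line subsheaves of $\mO_X^{\oplus 2}$ can have $c_1\cdot P_j\to-\infty$). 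Only the one-sided bound $\mu_{\Omega_\epsilon}(\mF)\leq\mu_{\Omega_0}(\mF)+C\epsilon$ is uniform. Your sandwich for $\mu_{\Omega_0}(\mE_{1,\epsilon_i})$ nevertheless survives, because the lower bound should be obtained by comparison with the \emph{fixed} sheaf $\mE_{1,0}$, whose numbers $\mu_{P_j}(\mE_{1,0})$ are finitely many constants: $\mu_{\Omega_0}(\mE_{1,\epsilon_i})\geq\mu_{\Omega_{\epsilon_i}}(\mE_{1,\epsilon_i})-C\epsilon_i\geq\mu_{\Omega_{\epsilon_i}}(\mE_{1,0})-C\epsilon_i\geq\mu_{\Omega_0}(\mE_{1,0})-C'\epsilon_i$, where only the uniform upper bounds $\mu_{P_j}(\mE_{1,\epsilon_i})\leq C_j$ and the maximality of $\mE_{1,\epsilon_i}$ at $\Omega_{\epsilon_i}$ are used.

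The second issue is structural: you organize everything along a sequence $\epsilon_i\searrow 0$ and repeatedly pass to subsequences, so as written you only conclude that the filtration is eventually constant \emph{along some subsequence}, which is strictly weaker than constancy on an interval $(0,\epsilon_0)$. The fix (and what Cao's argument actually delivers) is to observe that, by applying Proposition \ref{prop-slope-finiteness} iteratively to $\Omega_0, P_1,\ldots,P_{n-1}$, the coefficient vectors $(\mu_{\Omega_0}(\mF),\mu_{P_1}(\mF),\ldots,\mu_{P_{n-1}}(\mF))$ of the $\theta_{X_\reg}$-invariant saturated subsheaves that can realize the maximal slope at some small $\epsilon$ form a finite set; hence $\epsilon\mapsto\max_\mF\mu_{\Omega_\epsilon}(\mF)$ is a maximum of finitely many polynomials, achieved for \emph{all} $0<\epsilon<\epsilon_0$ by the lexicographically largest vector, after which the uniqueness-with-maximal-rank clause of Proposition \ref{lem-maximal-unique} identifies the maximal destabilizers at any two such $\epsilon$ with a single sheaf. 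With these two repairs your argument closes and coincides with the paper's (which defers entirely to \cite{Cao13}).
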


Similar consequences also holds in the context of complex K\"ahler orbifolds. Recall the following elementary statement.
\begin{lemma}[c.f. {\cite[Lemma 3.5]{DO23}}]\label{wulemma-kahler}
	Let $X_\orb$ be a compact complex orbifold and $X$ be its quotient space. A class $\alpha\in H^2(X,\R)$ contains a K\"ahler (resp. nef, big) form on $X$ if and only if it contains an orbifold K\"ahler (resp. nef, big) form on $X_\orb$.
\end{lemma}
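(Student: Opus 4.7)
The plan is to establish each direction via an explicit correspondence between forms with local potentials on $X$ and $G_i$-invariant forms on orbifold charts $\{U_i\}$ of $X_\orb$, using the averaging operator $\varphi\mapsto \frac{1}{|G_i|}\sum_{g\in G_i}g^*\varphi$, and then invoke the de Rham isomorphism $H^2_{dR}(X_\orb,\R)\cong H^2(X,\R)$ recalled in Section \ref{orbifolds} to match cohomology classes.

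First I would set up the basic correspondence on charts. Pullback via $\pi_i:U_i\to X_i\hookrightarrow X$ sends a $(1,1)$-form with local potentials on $X$ to a smooth $G_i$-invariant $(1,1)$-form on $U_i$, because the local potentials are smooth in a local embedding and $\pi_i$ is a finite holomorphic map. Conversely, given a $G_i$-invariant smooth form $\omega_i$ on $U_i$, on a small enough invariant neighborhood one can write $\omega_i=\im\pp\varphi_i$ and replace $\varphi_i$ by its $G_i$-average to obtain a $G_i$-invariant potential; since the invariant subring $\mO_{U_i}^{G_i}$ is naturally identified with $\mO_{X_i}$ (and analogously for smooth and pluriharmonic functions, by the definitions in Section \ref{complexspaces}), this invariant potential descends to a local potential on $X_i$. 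The compatibility axioms of the orbifold atlas ensure these local data glue to give a genuine $(1,1)$-form with local potentials on $X$, and both operations are mutually inverse in cohomology.

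With this correspondence in hand, each of the three properties transfers directly. For the K\"ahler case, strict positivity of $\{\omega_i\}$ is $G_i$-invariant, hence survives averaging of potentials and descends to strict positivity in the local embedding sense on $X_i$; in the other direction, $\pi_i^*\omega$ is strictly positive on the \'etale locus of $\pi_i$ and extends by $G_i$-invariance across the ramification locus. For the nef case, the inequality $u+\im\pp f_\epsilon\geq -\epsilon\omega_X$ pulls back to the orbifold nef inequality with background metric $\{\pi_i^*\omega_X\}$; conversely, a collection of orbifold $\epsilon$-perturbations $\{f_{i,\epsilon}\}$ can be $G_i$-averaged and glued (using a partition of unity subordinate to the cover $\{X_i\}$) to produce a smooth $f_\epsilon$ on $X$ with the desired estimate. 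For the big case, $\alpha^n$ is a numerical intersection number, and since the de Rham isomorphism identifies the pairings on $H^*(X,\R)$ and $H^*_{dR}(X_\orb,\R)$ (up to a positive factor $|G_i|$ only over individual charts, which does not affect the sign of the global integral), the condition $\alpha^n>0$ is preserved in both directions.

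The main technical obstacle is verifying that the averaging and descent respect the precise definitions of \emph{smooth, continuous, and pluriharmonic functions in a local embedding} used in Section \ref{complexspaces}, and that strict positivity (resp.\ the $\epsilon$-quasi-negativity for nefness) is preserved across the ramification locus of $\pi_i$. This reduces to the elementary but essential observation that a $G_i$-equivariant smooth (resp.\ strictly plurisubharmonic) function on $U_i$ is in natural bijection with a smooth (resp.\ strictly plurisubharmonic, in the local embedding sense) function on $X_i=U_i/G_i$, together with a standard partition-of-unity argument to globalize. No other ingredient beyond the de Rham isomorphism and the averaging principle is needed.
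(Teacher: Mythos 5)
The paper does not actually prove this lemma: it is stated as an ``elementary statement'' and deferred entirely to \cite[Lemma 3.5]{DO23}, so there is no in-paper proof to compare against. Your averaging/descent scheme is the standard argument and is essentially what the cited reference carries out, so the overall approach is sound. Two points deserve tightening. First, the step you call ``elementary but essential'' --- that a $G_i$-invariant smooth (resp.\ strictly plurisubharmonic) function on $U_i$ is the same thing as a smooth (resp.\ strictly positive, in the local-embedding sense) function on $X_i=U_i/G_i$ --- is the real content of the lemma and is not elementary: smoothness of the descended function in an ambient embedding is G.~Schwarz's theorem on smooth invariants, and strict positivity of the descended potential in the ambient $\C^N$ is not automatic from strict positivity upstairs (one typically corrects by adding a small multiple of an invariant strictly psh function such as the pullback of $|w|^2$ under the Hilbert embedding). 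You should either cite this or spell it out. Second, in the nef direction from $X_\orb$ to $X$, gluing local $\epsilon$-perturbations $\{f_{i,\epsilon}\}$ by a partition of unity would produce uncontrolled terms of the form $\im\,\p\rho_i\wedge\bp f_{i,\epsilon}$; this gluing is in fact unnecessary, because by the paper's definition of a nef class the $\epsilon$-perturbed representative already differs from the fixed one by $\im\pp$ of a \emph{single global} orbifold function, which is automatically $G_i$-invariant on each chart and hence descends directly. With these two repairs your proof is complete and agrees with the route taken in \cite{DO23}.
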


\begin{proposition}\label{prop-orbifold-stability}
	Let $X_\orb$ be a complex K\"ahler orbifold and $X$ be its quotient space. Suppose that $(\mE_\orb,\theta_\orb)$ is a torsion-free Higgs orbi-sheaf and $\alpha_0,\cdots,\alpha_{n-2},\beta$ are nef and big classes on $X_\orb$. Set $\Omega_\epsilon:=\{\alpha_0+\epsilon\beta,\cdots,\alpha_{n-2}+\epsilon\beta\}$. We have
	\begin{itemize}
		\item[(1)] If $(\mE_\orb,\theta_\orb)$ is $\Omega$-stable, then $(\mE_\orb,\theta_\orb)$ is $\Omega_\epsilon$-stable for $0<\epsilon\ll1$.
		\item[(2)] The Harder-Narasimhan filtration of $(\mE_\orb,\theta_\orb)$ with respect to $\Omega_\epsilon$ is independent of $\epsilon$ for $0<\epsilon\ll1$.
	\end{itemize}
\end{proposition}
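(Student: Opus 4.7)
The plan is to mimic, in the orbifold setting, the proofs of Corollary \ref{coro-openness-stability} and Proposition \ref{prop-stable-HNfiltration}. Both of those rest on exactly two ingredients: a uniform slope upper bound (Lemma \ref{slope-upperbound}) and the finiteness of the set of slopes above a fixed threshold (Proposition \ref{prop-slope-finiteness}). Once the orbifold analogs of these two statements are established, the perturbation estimate and the identification of the HN pieces via the characterizations \eqref{the maximal slope}--\eqref{the minimal slope} together with Lemma \ref{lem-orbi-HNfiltration} go through verbatim.

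For the \emph{orbifold slope upper bound}, I would apply Lemma \ref{lem-resoorbi} to obtain a projective bimeromorphism $g_\orb:W_\orb\to X_\orb$ making $E_\orb:=g_\orb^T\mE_\orb$ a vector orbi-bundle. For any orbi-subsheaf $\mF_\orb\subset\mE_\orb$, the saturation $\mS_\orb$ of $g_\orb^*\mF_\orb\cap E_\orb$ inside $E_\orb$ satisfies $\mu_{g^*\Omega}(\mS_\orb)=\mu_\Omega(\mF_\orb)$ by Lemma \ref{lem-compatible-orbifold-pullback}. Fixing a smooth Hermitian metric $K_\orb$ on $E_\orb$, the Chern--Weil formula \eqref{chern-weil} bounds the integrand defining $\deg_{g^*\Omega}(\mS_\orb)$ pointwise by a constant multiple of $g^*\eta_0\wedge\cdots\wedge g^*\eta_{n-2}$. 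Exactly as in Lemma \ref{slope-upperbound}, choose a Hermitian form $\omega_{W_\orb}=g^*\omega_\orb+\delta c_1(-D,h)$ with $D$ supported on the exceptional locus and $\delta>0$ small, and observe that the exceptional correction vanishes when wedged with pull-back classes. This yields $\mu_\Omega(\mF_\orb)\leq C\int_{X_\orb}\omega_\orb\wedge\eta_0\wedge\cdots\wedge\eta_{n-2}$ with $C$ depending only on $\mE_\orb$ and $\omega_\orb$.

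For the \emph{orbifold finiteness of slopes}, after the same reduction to $E_\orb$ on $W_\orb$, I would adapt the argument of Proposition \ref{prop-slope-finiteness}: using the de Rham isomorphism $H^{2n-2}(X_\orb,\mathbb{Q})\cong H^{2n-2}(X,\mathbb{Q})$ and Lemma \ref{wulemma-kahler}, pick a rational basis $\{e_i\}$ of $H^{2n-2}(X_\orb,\mathbb{Q})$ in a neighborhood of $g^*\Omega$ whose elements are represented by nef and big $(n-1,n-1)$ orbifold classes, write $g^*\Omega=\sum_i\lambda_i e_i$ with $\lambda_i>0$, and invoke the Cao-style integrality/boundedness argument of \cite{Cao13} (in the form used at \cite[p.~21]{Jin25}) on each $c_1^\orb(\mS_\orb)\cdot e_i$ to conclude that $\{\mu_\Omega(\mF_\orb):\mu_\Omega(\mF_\orb)\geq C\}$ is finite.

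Granted these two inputs, (1) follows exactly as in Corollary \ref{coro-openness-stability}: the finiteness yields a uniform gap $C_0>0$ with $\mu_\Omega(\mF_\orb)<\mu_\Omega(\mE_\orb)-C_0$ for every $\theta_\orb$-invariant proper saturated $\mF_\orb$, while the slope upper bound makes $|\mu_{\Omega_\epsilon}(\mF_\orb)-\mu_\Omega(\mF_\orb)|\leq C_1\epsilon$ uniform in $\mF_\orb$ by the same expansion as in \eqref{equa-fixed-limiting}, so the strict stability inequality persists for small $\epsilon$. Part (2) then follows by applying (1) to each semistable graded piece of the HN filtration of $(\mE_\orb,\theta_\orb)$ with respect to $\Omega_\epsilon$ and invoking the uniqueness in Lemma \ref{lem-orbi-HNfiltration}. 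The main obstacle will be the finiteness step, which requires abundance of nef and big orbifold $(n-1,n-1)$-classes near $g^*\Omega$ and the verification that the integrality argument of \cite{Cao13} survives in the orbifold Bott--Chern ring; this reduces to the underlying-space case via Lemma \ref{wulemma-kahler}, Lemma \ref{lem-compatible-orbifold-pullback} and \eqref{equa-orbi-computation}.
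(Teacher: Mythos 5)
Your overall architecture matches the paper's: both reduce the proposition to (a) a uniform upper bound on slopes of orbi-subsheaves and (b) the finiteness of the set of slopes above a fixed threshold, after which the perturbation estimate of Corollary \ref{coro-openness-stability} and the graded-piece/uniqueness argument via Lemma \ref{lem-orbi-HNfiltration} go through unchanged. Two remarks. First, your re-derivation of the slope upper bound is unnecessary: the paper already has it in the orbifold setting as Lemma \ref{boundness}. Second, and more substantively, the finiteness step is where you diverge, and your version is underspecified. The paper does \emph{not} rerun the Cao-style integrality argument in orbifold cohomology; it pushes an orbi-subsheaf $\mF_\orb\subset\mE_\orb$ down to the quotient space via the $G_i$-invariant push-forward $\mF:=((\pi_i)_*\mF_i)^{G_i}$ (Lemma \ref{push-forward}), uses Lemma \ref{descent} to identify $\mu_\Omega(\mF_\orb)$ with $\mu_\Omega(\mF)$, and then quotes Proposition \ref{prop-slope-finiteness} on the compact K\"ahler space $X$ directly. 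Your route would instead need the discreteness of $\{c_1^\orb(\mS_\orb)\cdot e_i\}$, and orbifold first Chern classes are only rational with denominators controlled by the local group orders, so the integrality input to \cite{Cao13} must be replaced by a bounded-denominator statement; this is fixable, but the lemmas you cite for the "reduction to the underlying-space case" (Lemma \ref{lem-compatible-orbifold-pullback} and \eqref{equa-orbi-computation}) concern bimeromorphisms between orbifolds and partial orbifold resolutions of klt spaces, respectively, and neither provides the descent to the quotient space that you actually need — that role is played by Lemma \ref{push-forward} together with Lemma \ref{descent}. With that substitution your proof closes; as written, the key citation is misplaced.
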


It suffices to prove the orbifold version of Proposition \ref{prop-slope-finiteness}. Let $\mE_X:=\{((\pi_i)_*\mE_i)^{G_i}\}$. Fix a constant $C$. For any orbi-subsheaf $\mF_\orb$ lies in $\{\mu_\Omega(\mS_\orb)\geq C:\mS_\orb\subset \mE_\orb\}$, Lemma \ref{push-forward} implies that $\mF:=((\pi_i)_*\mF_i)^{G_i}\subset \mE$. Applying Lemma \ref{descent}, we conclude that $\mF$ lies in $\{\mu_\Omega(\mS)\geq C:\mS\subset\mE\}$, which is a finite set. Applying Lemma \ref{descent}, we conclude that $\{\mu_\Omega(\mS_\orb)\geq C:\mS_\orb\subset \mE_\orb\}$ is finite. Thus the proof is complete.

\subsubsection{Proof of Corollary \ref{semistable-tensorproduct}}
Using $L^p$ approximate critical Hermitian metrics to compute the slope, we immediately obtain the following statement based on Corollary \ref{coro-characterization} (see e.g. \cite[Section 4.1]{LZZ21}).
\begin{lemma}\label{orbifold-HNtype-Calculus}
	Let $(E,\theta_{E})$ and $(F,\theta_{F})$ be two Higgs bundles on a compact Gauduchon manifold $(X,\w)$. For any $k\geq 1$, we have that
 $$\mu_{r,\w}(\Lambda^p(E,\theta_{E}))\geq p\cdot\mu_{r,\w}(E,\theta_{E}),\  \mu_{1,\w}(\Lambda^p(E,\theta_{E}))\leq p\cdot\mu_{1,\w}(E,\theta_{E}).$$
		$$\mu_{r,\w}(S^p(E,\theta_{E}))= p\cdot\mu_{r,\w}(E,\theta_{E}),\  \mu_{1,\w}(S^p(E,\theta_{E}))= p\cdot\mu_{1,\w}(E,\theta_{E}).$$
		$$\mu_{r,\w}((E\otimes F,\theta_{E}\otimes\theta_F))=\mu_{r,\w}(E,\theta)+\mu_{r,\w}(F,\theta_F)$$
		$$\mu_{1,\w}((E\otimes F,\theta_{E}\otimes\theta_F))=\mu_{1,\w}(E,\theta)+\mu_{1,\w}(F,\theta_F) $$
\end{lemma}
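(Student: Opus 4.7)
The plan is to use Corollary \ref{coro-characterization} to convert each slope bound into an extremal curvature bound: $\mu_{1,\omega}(E,\theta)$ is realised (up to the $\Vol(X,\omega)/2\pi$ scaling) as the least $t$ for which some Hermitian metric $H$ satisfies $\sqrt{-1}\Lambda_\omega F_{H,\theta} \leq t\Id_E$, and $\mu_{r,\omega}(E,\theta)$ as the greatest such $t$ with $\sqrt{-1}\Lambda_\omega F_{H,\theta} \geq t\Id_E$. With this translation in hand, one simply has to track how those curvature bounds behave under the induced metrics and Higgs fields on $\Lambda^p E$, $S^p E$, and $E \otimes F$.

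The central algebraic input is additivity of the mean curvature under tensor products. With $H := H_E \otimes H_F$ and $\theta := \theta_E \otimes \Id_F + \Id_E \otimes \theta_F$, endomorphisms acting on disjoint tensor factors commute, so the cross-bracket terms $[\theta_E \otimes \Id, \Id \otimes \theta_F^{*H_F}]$ and $[\Id \otimes \theta_F, \theta_E^{*H_E} \otimes \Id]$ vanish, giving
\begin{equation*}
\sqrt{-1}\Lambda_\omega F_{H,\theta} \;=\; \sqrt{-1}\Lambda_\omega F_{H_E,\theta_E} \otimes \Id_F \;+\; \Id_E \otimes \sqrt{-1}\Lambda_\omega F_{H_F,\theta_F}.
\end{equation*}
Embedding $S^p E, \Lambda^p E \hookrightarrow E^{\otimes p}$ and restricting the induced metric and Higgs field, the resulting mean curvature is the restriction of the derivation $\sum_{i=1}^{p} \Id^{\otimes(i-1)} \otimes \sqrt{-1}\Lambda_\omega F_{H_E,\theta_E} \otimes \Id^{\otimes(p-i)}$, whose pointwise eigenvalues are $p$-fold sums of eigenvalues of $\sqrt{-1}\Lambda_\omega F_{H_E,\theta_E}$. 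Choosing $H_E, H_F$ via Corollary \ref{coro-characterization} to realise the optimal upper (resp.\ lower) curvature bounds up to $\varepsilon > 0$, the induced metrics immediately yield the additive curvature estimates, hence the four inequalities $\mu_{1,\omega}(\Lambda^p) \leq p\mu_{1,\omega}$, $\mu_{r,\omega}(\Lambda^p) \geq p\mu_{r,\omega}$, and the analogues for $S^p$ and $\otimes$.

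For the reverse inequalities needed for equalities in the $S^p$ and tensor cases, the plan is to exhibit explicit saturated $\theta$-invariant subsheaves. Let $\mS_E \subset E$ and $\mS_F \subset F$ denote the first steps of the respective Harder--Narasimhan filtrations, so $\mu(\mS_E) = \mu_{1,\omega}(E,\theta_E)$ and $\mu(\mS_F) = \mu_{1,\omega}(F,\theta_F)$. Then $\mS_E \otimes \mS_F \subset E \otimes F$ is $\theta_{E\otimes F}$-invariant, and the identity $c_1(A \otimes B) = \rank(B) c_1(A) + \rank(A) c_1(B)$ yields $\mu(\mS_E \otimes \mS_F) = \mu(\mS_E) + \mu(\mS_F)$, forcing $\mu_{1,\omega}(E\otimes F) \geq \mu_{1,\omega}(E) + \mu_{1,\omega}(F)$. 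The symmetric case uses $S^p \mS_E \subset S^p E$, which exists regardless of $\rank(\mS_E)$---the absence of a rank obstruction here is precisely why equality holds for $S^p$ but only a one-sided inequality holds for $\Lambda^p$. Finally, the two $\mu_{r,\omega}$-equalities follow by duality from $(E,\theta)^* = (E^*, -\theta^T)$ together with $(E\otimes F)^* \cong E^* \otimes F^*$, $(S^p E)^* \cong S^p(E^*)$, and the identity $\mu_{r,\omega}(V) = -\mu_{1,\omega}(V^*)$ coming from dualising the Harder--Narasimhan filtration.

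The main difficulty will be the curvature computation of the second paragraph: verifying cleanly the cancellation of cross brackets in the Hitchin--Simpson curvature and correctly identifying the induced mean curvature on the symmetric and antisymmetric subspaces of $E^{\otimes p}$. Once these identifications are in place, all remaining steps reduce to direct applications of Corollary \ref{coro-characterization} and standard slope arithmetic on tensor and symmetric powers.
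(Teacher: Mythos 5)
Your proposal is correct and follows essentially the route the paper intends: the paper's one-line proof defers to Corollary \ref{coro-characterization} and the $L^p$-approximate critical Hermitian structure (citing \cite[Section 4.1]{LZZ21}), which is exactly your scheme of deriving the one-sided bounds from the additivity of the Hitchin--Simpson mean curvature under induced metrics and the reverse bounds from the explicit invariant subsheaves $S^p\mS_E$ and $\mS_E\otimes\mS_F$. Note that you have (correctly) read Corollary \ref{coro-characterization} with the roles of $\sup$ and $\inf$ interchanged relative to its printed statement, which as printed would force $\mu_{1,\w}=\mu_{r,\w}$; your version is the right one and is what the argument needs.
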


As a consequence, we obtain that
\begin{proposition}
	Let $(\mE_{X_\reg},\theta_{{\mE}_{X_\reg}})$ and $(\mF_{X_\reg},\theta_{\mF_{X_\reg}})$ be two torsion-free Higgs orbi-sheaves on the regular locus of a compact K\"ahler klt space $X$ of dimension $n$. Let $\alpha_0,\cdots,\alpha_{n-2}$ be nef and big classes and set $\Omega_\orb=\alpha_0\cdots\alpha_{n-2}$. The analogous consequences of the above Lemma holds.
\end{proposition}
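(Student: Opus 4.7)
The plan is to reduce the statement to the locally free orbi-bundle case on a Kähler orbifold, where Corollary \ref{coro-characterization} is available, and then run the argument of \cite[Section 4.1]{LZZ21} in the orbifold setting. First, I would invoke Lemma \ref{lemma-existence} to obtain a partial orbifold resolution $f:Y\to X$ equipped with its standard orbifold structure $Y_\orb=\{V_i,G_i,\pi_i\}$; the indeterminacy locus of $f^{-1}$ has codimension at least three, and Lemma \ref{wulemma-kahler} ensures the nef and big classes $\alpha_0,\cdots,\alpha_{n-2}$ pull back to nef and big classes on $Y_\orb$. By Lemma \ref{lem-reflexive-pullback} the reflexive Higgs sheaves lift to $f^{[*]}(\mE_{X_\reg},\theta_{\mE_{X_\reg}})$ and $f^{[*]}(\mF_{X_\reg},\theta_{\mF_{X_\reg}})$ on $Y_\reg$, and Proposition \ref{prop-pullback-HNfiltration} preserves their HN types. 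Corollary \ref{coro-orbifold-reflexive} together with Proposition \ref{prop-compatible-HNfiltration} transports them further to reflexive Higgs orbi-sheaves $(\mE_\orb,\theta_{\mE_\orb})$ and $(\mF_\orb,\theta_{\mF_\orb})$ on $Y_\orb$ with the same HN type.

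Next, the operations $\Lambda^p$, $S^p$, and $\otimes$ (taking reflexive hulls of the outputs) commute with these reductions away from codimension two: on the locally free locus the orbifold constructions agree with the usual ones, and all sheaves in sight are reflexive. A further application of Lemma \ref{lem-resoorbi} and Lemma \ref{lem-orbi-pullback-HNfiltration} then allows me to assume $\mE_\orb$ and $\mF_\orb$ are locally free Higgs orbi-bundles, again without changing the HN types of the tensor constructions, by the compatibility of orbifold Chern classes with codimension-two agreement via Lemma \ref{lem-compatible-orbifold-pullback}.

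For locally free orbi-bundles on a Kähler, hence Gauduchon, orbifold, Corollary \ref{coro-characterization} characterizes $\mu_{1,\w}$ and $\mu_{r,\w}$ variationally. Given $\epsilon>0$, I would choose Hermitian metrics $H$ on $\mE_\orb$ and $K$ on $\mF_\orb$ whose Hitchin-Simpson mean curvatures are bounded above by $\frac{2\pi}{\Vol(X,\w)}\mu_{1,\w}\Id+\epsilon\Id$ (and analogously below for $\mu_{r,\w}$). The induced metrics on $\mE_\orb\otimes\mF_\orb$, $S^p\mE_\orb$, and $\Lambda^p\mE_\orb$ have mean-curvature eigenvalues that are algebraic combinations of those of $\im\Lambda_\w F_{H,\theta_\mE}$ and $\im\Lambda_\w F_{K,\theta_\mF}$: a tensor product adds eigenvalues, $S^p$ uses $p$-fold sums of a single eigenvalue, and $\Lambda^p$ sums of $p$ distinct eigenvalues. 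Substituting these pointwise bounds back into Corollary \ref{coro-characterization} yields the required estimates on $\mu_{1,\w}$ and $\mu_{r,\w}$ of the tensor operations, with equality for $S^p$ (because the extremal symmetric combination is realized by the extremal eigenvalue repeated $p$ times) and only inequalities for $\Lambda^p$.

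The principal obstacle will be the bookkeeping to ensure that the tensor operations, reflexive hulls, orbifold lifts, and HN filtrations cohere through codimension-two agreements, and in particular that the HN type of the reflexive hull of a tensor operation on $X_\reg$ is computed by its locally free orbifold model upstairs. Once this is in order, the analytic input (Theorem \ref{thm2} and Corollary \ref{coro-characterization}) is already in place and the remaining pointwise comparisons of induced mean-curvature eigenvalues are standard linear algebra.
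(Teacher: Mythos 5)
Your overall strategy (reduce to locally free Higgs bundles and exploit the variational characterization of the extremal HN slopes coming from the $L^p$-approximate critical Hermitian structure) is the same as the paper's, but there are two points worth flagging. First, the detour through the partial orbifold resolution and Lemma \ref{lem-resoorbi} on the orbifold is unnecessary here: the proposition only involves slopes, i.e.\ first Chern classes, so no orbifold Chern classes enter, and the paper simply takes a full resolution of singularities $f:Y\to X$ to a smooth compact K\"ahler manifold with $E_Y=f^*\mE_X/\torsion$ locally free, transports the HN types via Proposition \ref{prop-pullback-HNfiltration} (noting that $\mE_{X_\reg}\otimes\mF_{X_\reg}$ is only compared with $E_Y\otimes F_Y$ modulo torsion, which is harmless since everything is locally free in codimension one), and then applies Lemma \ref{orbifold-HNtype-Calculus}. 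Your route can be made to work but buys nothing for this statement.

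The genuine gap is the polarization. After pulling back, the classes $f^*\alpha_i$ are only nef and big, never K\"ahler on the exceptional locus, so $f^*\Omega$ is not of the form $\w^{n-1}$ for any Gauduchon metric, and Corollary \ref{coro-characterization} and the eigenvalue computation of Lemma \ref{orbifold-HNtype-Calculus} --- which require an honest metric $\w$ to even write down $\im\Lambda_\w F_{H,\theta}$ --- do not apply to $f^*\Omega$ directly. The paper closes this by perturbing to $\Omega_\epsilon=(f^*\alpha_0+\epsilon\w)\cdots(f^*\alpha_{n-2}+\epsilon\w)$ with $\w$ K\"ahler on $Y$ and invoking Proposition \ref{prop-stable-HNfiltration} (resp.\ Proposition \ref{prop-orbifold-stability} in the orbifold setting), which says the HN filtration is independent of $\epsilon$ for $0<\epsilon\ll1$, so the slope identities proved for the K\"ahler polarizations pass to the limit. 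Your write-up applies the variational characterization as if the given polarization were already a Gauduchon power; without the perturbation-and-stabilization step the argument does not reach the stated nef and big polarization, and this is precisely where the finiteness-of-slopes machinery of Section \ref{calculus-HN} is needed.
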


\begin{proof}
	We only outline a proof for $(\mE_{X_\reg}\otimes\mF_{X_\reg},\theta_{\mE_{X_\reg}}\otimes\theta_{\mF_{X_\reg}})$. Since $\mE_{X_\reg}\otimes\mF_{X_\reg}$ may be not torsion free, we are refers to the torsion-free part of $\mE_{X_\reg}\otimes\mF_{X_\reg}$, which are well-defined as they coincides in codimension $1$. Let $f:Y\rightarrow X$ be a resolution of singularities such that $Y$ is a compact K\"ahler manifold and $E_Y=f^*\mE_X\otimes/\torsion,F_Y=f^*\mF_X\otimes/\torsion$ are locally free. Recall Lemma \ref{lem-torsionfree} that $E_Y,E_X$ admit Higgs fields $\theta_{E_Y},\theta_{E_X}$ which coincide with the pullback of $\theta_{\mE_{Y_\reg}},\theta_{\mE_{X_\reg}}$, respectively. Proposition \ref{prop-pullback-HNfiltration} implies that the $\Omega$-HN type of  $(\mE_{X_\reg},\theta_{{\mE}_{X_\reg}}),(\mF_{X_\reg},\theta_{\mF_{X_\reg}})$ coincide with the $f^*\Omega$-HN type of $(E_X,\theta_{E_X}),(E_Y,\theta_{E_Y})$, respectively, so do $((\mE_{X_\reg}\otimes\mF_{X_\reg})/\torsion,\theta_{\mE_{X_\reg}}\otimes\theta_{\mF_{X_\reg}})$ and $(E_X\otimes E_Y,\theta_{E_X}\otimes \theta_{E_Y})$ since $\mE_{X_\reg},\mF_{X_\reg}$ is locally free in codimension $1$. By applying Proposition \ref{prop-stable-HNfiltration} to $\Omega_\epsilon:=(f^*\alpha_0+\epsilon\w)\cdots(f^*\alpha_{n-2}+\epsilon\w)$ for a fixed K\"ahler class on $Y$, we conclude that Lemma \ref{orbifold-HNtype-Calculus} also holds with respect to $f^*\Omega$. Thus the proof is complete.
\end{proof}

Since the a Higgs sheaf $(\mE_{X_\reg}),\theta_{X_\reg}$ is $(\alpha_0,\cdots,\alpha_{n-2})$-semistable if and only if $\mu_{1,\w}=\mu_{r,\w}$, the $(\alpha_0,\cdots,\alpha_{n-2})$-generically nefness means that $\mu_{r,\w}\geq 0$. The proof of Corollary \ref{semistable-tensorproduct} is complete.

\section{Orbifold Bogomolov-Miyaoka-Yau inequality on K\"ahler klt spaces}\label{BMY}

\subsection{Proof of Theorem \ref{singulargeneralized}}
Let $X$ be a compact K\"ahler klt space of dimension $n$ equipped with a K\"ahler form $\w$. Suppose that $(\mE_{X_\reg},\theta_{X_\reg})$ is a reflexive Higgs sheaf of $\rank\ r$ on the regular locus of $X$ and $\alpha$ are nef and big class on $X$. For simplicity, we define $\widehat{\Delta}(\mE_X):=(2\widehat{c}_2(\mE_X)-\frac{r-1}{r}\widehat{c}_1^2(\mE_X))$ and similarly define $\Delta^\orb$.

\vspace{0.1cm}

Let $g:Y\rightarrow X$ be a projective bimeromorphism from a compact complex complex space $Y$ with only quotient singularities to $X$ given by Lemma \ref{lemma-existence} and $Y_\orb:=\{(U_i,G_i,\pi_i)\}$ be the standard orbifold structure of $Y$. Set $\mE_\orb:=\{(f\circ\pi_i)^{[*]}\mE_X\}$. Applying Lemma \ref{lem-resoorbi} and adapt its notations, we have $\widehat{\Delta}(\mE_X)\cdot \alpha^{n-2}=\Delta^\orb(E_\orb)\cdot g^*f^*\alpha^{n-2}.$
Lemma \ref{lem-reflexive-pullback} implies that $\mE_{Y_\reg}:=f^{[*]}\mE_X|_{Y_\reg}$ admits a Higgs field $\theta_{Y_\reg}$ such that $(\mE_{Y_\reg},\theta_{Y_\reg})=f^*(\mE_{X_\reg},\theta_{X_\reg})$ on $f^{-1}(X_\reg\setminus \Sigma)$ where $\Sigma$ is the non-locally-free locus of $\mE_X$. Also, Lemma \ref{coro-orbifold-reflexive} implies that $\mE_\orb$ admits a Higgs field $\theta_\orb'$, which naturally induces a Higgs field $\theta_\orb$ of $E_\orb$. By applying Proposition \ref{prop-pullback-HNfiltration}, Proposition \ref{prop-compatible-HNfiltration} and Lemma \ref{lem-orbi-pullback-HNfiltration} successively, we conclude that
\begin{proposition}
	Let $\alpha_0,\cdots,\alpha_{n-2}$ be any nef and big classes on $X$ and set $\Omega=\alpha_0,\cdots\alpha_{n-2}$. We have that $\vec{\mu}_{g^*f^*\Omega}(E_\orb,\theta_\orb)=\vec{\mu}_\Omega(\mE_{X_\reg},\theta_{X_\reg})$; $(E_\orb,\theta_\orb)$ is $g^*f^*\alpha$-stable if and only if $(\mE_{X_\reg},\theta_{X_\reg})$ is $\alpha$-stable.
\end{proposition}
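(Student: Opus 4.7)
The plan is to chain together three transfer results from Sections~\ref{orbifolds}--\ref{higgsheaf-regular} along the tower $W_\orb\to Y_\orb\to Y\to X$: Proposition \ref{prop-pullback-HNfiltration} for bimeromorphisms of compact normal spaces, Proposition \ref{prop-compatible-HNfiltration} for passage from a quotient space to its standard orbifold structure, and Lemma \ref{lem-orbi-pullback-HNfiltration} for projective bimeromorphisms of compact complex orbifolds. Each of these states precisely that a reflexive or torsion-free Higgs (orbi-)sheaf downstairs and its (reflexive) pull-back upstairs share the same HN type and are stable for the corresponding polarizations, so composing the three yields both assertions of the proposition at once.

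For the first step, I would apply Proposition \ref{prop-pullback-HNfiltration} to the partial orbifold resolution $Y\to X$, with $(\mE_{Y_\reg},\theta_{Y_\reg})$ defined as the reflexive pull-back furnished by Lemma \ref{lem-reflexive-pullback} (which in turn relies on Kebekus--Schnell's functorial pull-back of reflexive differentials, Lemma \ref{lem-functorialpull-back}). The required closed subset $Z\subset X$ is taken to be the union of $X_{sing}$ (of codimension $\geq 2$ since $X$ is klt) with the indeterminacy locus of the inverse (of codimension $\geq 3$ by Lemma \ref{lemma-existence}). For the second step, I would take the standard orbifold structure $Y_\orb$ of $Y$ and form the reflexive Higgs orbi-sheaf $\mE_\orb':=\{(\pi_i^*\mE_Y)^{\vee\vee}\}$ together with the Higgs field $\theta_\orb'$ furnished by Corollary \ref{coro-orbifold-reflexive}; standardness of $Y_\orb$ and reflexivity of $\mE_{Y_\reg}$ verify the hypotheses of Proposition \ref{prop-compatible-HNfiltration}. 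For the third step, the orbifold resolution from Lemma \ref{lem-resoorbi} upgrades $\mE_\orb'$ to the vector Higgs orbi-bundle $(E_\orb,\theta_\orb)$ on the smooth orbifold $W_\orb$, and the two orbi-sheaves agree off the non-locally-free locus of $\mE_\orb'$, which has codimension $\geq 2$ by reflexivity, so Lemma \ref{lem-orbi-pullback-HNfiltration} applies with that locus playing the role of $Z_\orb$.

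The only non-routine point along the chain is the construction of mutually compatible Higgs fields $\theta_{Y_\reg}$, $\theta_\orb'$, $\theta_\orb$ at each level, together with the verification that $\theta$-invariant saturations remain $\theta$-invariant under each pull-back. The former is exactly what Lemma \ref{lem-reflexive-pullback} and Corollary \ref{coro-orbifold-reflexive} provide (via Kebekus--Schnell), while the latter follows from Lemma \ref{lem-saturation-invariant}. Once these are in place, composing the three equalities of HN type yields $\vec{\mu}_{g^*f^*\Omega}(E_\orb,\theta_\orb)=\vec{\mu}_\Omega(\mE_{X_\reg},\theta_{X_\reg})$, while composing the three biconditionals gives the stability equivalence.
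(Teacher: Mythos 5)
Your proposal is correct and follows exactly the paper's own route: the paper likewise constructs the Higgs fields at each level via Lemma \ref{lem-reflexive-pullback} and Corollary \ref{coro-orbifold-reflexive} and then applies Proposition \ref{prop-pullback-HNfiltration}, Proposition \ref{prop-compatible-HNfiltration} and Lemma \ref{lem-orbi-pullback-HNfiltration} successively along the tower $W_\orb\to Y_\orb\to Y\to X$. Your write-up is in fact more explicit than the paper's about verifying the hypotheses (the choice of $Z$, the codimension bounds, and the $\theta$-invariance of saturations via Lemma \ref{lem-saturation-invariant}).
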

Thus, to prove Theorem \ref{singulargeneralized}, it suffices to prove that
$$\Delta^\orb\cdot g^*f^*\alpha^{n-2}\geq-\frac{n}{n-1}\sum\limits_{i=1}^r\frac{(\mu_{i,g^*f^*\alpha}-\mu_{g^*f^*\alpha}(E_\orb))^2}{[\alpha]^n}.$$
Note that $X$ is K\"ahler implies that $W$ is also K\"ahler since each $f$ and $g$ is projective. Lemma \ref{wulemma-kahler} implies that $W_\orb$ admits an orbifold K\"ahler metric $\w_\orb$. Applying Corollary \ref{coro-astheno} to $(E_\orb,\alpha_\orb)$ with respect to $(g^*f^*\alpha+\epsilon\w_\orb)$, then Proposition \ref{prop-stable-HNfiltration} completes the proof by taking $\epsilon\rightarrow 0$.

\subsubsection{Proof of Corollary \ref{singularbg}}
Recall the following elementary statement.
\begin{lemma}\label{lem-upper-low}
	Given the situation of Lemma \ref{defn-HN-existence} and adapting the notations of Definition \ref{defn-HN-type}, there exists a constant $C_r>0$ depends only on $r$ such that $$-(r-1)(\mu_{1,\Omega}-\mu_\Omega(\mE_X))\leq\mu_{i,\Omega}-\mu_\Omega(\mE_X)\leq\mu_{1,\Omega}-\mu_\Omega(\mE_X).$$
\end{lemma}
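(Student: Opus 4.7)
The statement is purely combinatorial once the definition of the HN type is unwound, so my plan is to reduce it to an elementary inequality about an ordered $r$-tuple of real numbers with prescribed average.

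First, I would recall from Definition~\ref{defn-HN-type} that the entries are arranged in non-increasing order $\mu_{1,\Omega}\geq\mu_{2,\Omega}\geq\cdots\geq\mu_{r,\Omega}$, and that
\[
\mu_\Omega(\mE_X)\;=\;\frac{1}{r}\sum_{j=1}^{r}\mu_{j,\Omega},
\]
because the HN quotients $\mQ_k$ partition $\mE_X$ and each $\mu_{j,\Omega}$ equals the slope of the quotient containing the $j$-th rank slot. In particular $\mu_{1,\Omega}\geq \mu_\Omega(\mE_X)\geq \mu_{r,\Omega}$. The upper inequality is then immediate: for every $i$,
\[
\mu_{i,\Omega}-\mu_\Omega(\mE_X)\;\leq\;\mu_{1,\Omega}-\mu_\Omega(\mE_X).
\]

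For the lower inequality, since $\mu_{i,\Omega}\geq \mu_{r,\Omega}$ it suffices to bound $\mu_{r,\Omega}-\mu_\Omega(\mE_X)$ from below. Using the ordering to estimate the first $r-1$ terms of the sum defining $\mu_\Omega(\mE_X)$ by $\mu_{1,\Omega}$, I get
\[
r\,\mu_\Omega(\mE_X)\;=\;\sum_{j=1}^{r}\mu_{j,\Omega}\;\leq\;(r-1)\mu_{1,\Omega}+\mu_{r,\Omega},
\]
which rearranges to $\mu_{r,\Omega}-\mu_\Omega(\mE_X)\geq -(r-1)\bigl(\mu_{1,\Omega}-\mu_\Omega(\mE_X)\bigr)$. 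Combining the two steps yields the desired chain with the constant $C_r=r-1$.

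There is no serious obstacle here: everything follows from the ordering and the fact that $\mu_\Omega(\mE_X)$ is the arithmetic mean of the HN type. The only point worth verifying carefully is that $\mu_\Omega(\mE_X)$ really equals the mean $\frac{1}{r}\sum_j \mu_{j,\Omega}$, which is a consequence of the additivity of the first Chern class (Lemma~\ref{lem-aditivity}) applied to the HN filtration, combined with the definition of the slope in \eqref{equa-slope}.
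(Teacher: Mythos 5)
Your proof is correct and follows essentially the same route as the paper: the paper's one-line argument rests on the identity $\sum_{k}r_k(\mu_\Omega(\mQ_k)-\mu_\Omega(\mE_X))=0$ (equivalently, your observation that $\mu_\Omega(\mE_X)$ is the arithmetic mean of the HN type, which indeed comes from additivity of $c_1$ along the filtration) together with the ordering $\mu_{i,\Omega}\leq\mu_{1,\Omega}$. Your explicit rearrangement $r\,\mu_\Omega(\mE_X)\leq(r-1)\mu_{1,\Omega}+\mu_{r,\Omega}$ is exactly how that identity yields the lower bound with $C_r=r-1$.
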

\begin{proof}
	Set $r_i:=\rank(\mQ_i)$. The statement follows from $\sum\limits_{i=1}^lr_i(\mu_{i,\Omega}-\mu_\Omega(\mE_X))=0$ and $\mu_{i,\Omega}<\mu_{1,\Omega}$.
\end{proof}

Now let us prove Corollary \ref{singularbg}. Set $\alpha_\epsilon:=\alpha+\epsilon\w$ for $\epsilon>0$. By applying Theorem \ref{singulargeneralized} and the above Lemma, it suffices to show that $\lim\limits_{\epsilon\rightarrow0}\frac{1}{\alpha_\epsilon}(\mu_{1,\alpha_\epsilon}-\mu_{\alpha_\epsilon}(\mE_X))^2=0$. Recall Lemma \ref{lem-maximal-unique} that there exists a saturated subsheaf $\mS_\epsilon\subset\mE_X$ with $\mu_{\alpha_\epsilon}(S_\epsilon)=\mu_{1,\alpha_\epsilon}$. Since the semistable condition ensures $\mu_\alpha(\mS_\epsilon)\leq \mu_\alpha(\mE)$, a direct computation yields that
$\mu_{\alpha_\epsilon}(\mS_\epsilon)-\mu_{\alpha_\epsilon}(\mE_X)\leq\sum\limits_{j=1}^{n-1}C_{n-1}^j\epsilon^{j-1}(\mu_{\alpha^{n-j-1}\w^{j}}(\mS_\epsilon)-\mu_{\alpha^{n-j-1}\w^{j}}(\mE_X)).$
Lemma \ref{upperslope} implies that $\mu_{\alpha^{n-1-j}\w^j}(\mS_\epsilon)\leq C_1\alpha^{n-1-j}\w^{j+1}$ for some constant $C$ independent of $\epsilon$. A similar argument of Lemma \ref{upperslope} implies that $$-\mu_{\alpha^{n-j-1}\w^j}(\mE_X)\leq  C_2\alpha^{n-1-j}\w^{j+1}$$ for some $C_2$. In particular, $0\leq\mu_{\alpha_\epsilon}(\mS_\epsilon)-\mu_{\alpha_\epsilon}(\mE_X)\leq O(\epsilon^{2})$. Since $\mu_{\alpha_\epsilon}(\mS_\epsilon)-\mu_{\alpha_\epsilon}\geq0$, a direct computation yields that
$$\lim\limits_{\epsilon\rightarrow 0}\frac{1}{\alpha_\epsilon}(\mu_{\alpha_\epsilon}(\mS_\epsilon)-\mu_{\alpha_\epsilon})^2\leq O(\epsilon^{2+v(\alpha)-n})$$ and the proof is complete.

\subsection{The Miyaoka-Yau inequality}\label{miyaoka-yau}

Through this section, $X$ is assumed to be a compact K\"ahler klt space of dimension $n$ with $K_X$ nef.

\subsubsection{The strategy}
Recall the following elementary definition.
\begin{definition}[The natural Higgs sheaf]\label{defn-naturalsheaf}
	The natural Higgs sheaf $(\mE_{X_\reg}=:\Omega_{X_\reg}^{1}\oplus \mO_{X_\reg},\theta_X)$ is defined by
	$$\theta_X:\Omega_{X_\reg}^{1}\oplus\mO_{X_\reg}\rightarrow (\Omega_{X_\reg}^{1}\oplus\mO_X)\otimes\Omega_{X_\reg}^{1}, (a+b)\mapsto (0+\frac{1}{\sqrt{n+1}})\otimes a$$.
\end{definition}. 

Since the trivial extension $\mE_X$ equals $\Omega_X^{[1]}\oplus\mO_X$. Lemma \ref{calculus} implies that
$$\widehat{\Delta}(\mE_X)=2\widehat{c}_2(\Omega_X^{[1]}\oplus\mO_X)-\frac{n}{n+1}\widehat{c}_1^2(\Omega_X^{[1]}\oplus\mO_X)=2\widehat{c}_2(\mT_X)-\frac{n}{n+1}\widehat{c}_1(\mT_X)^2.$$
Let $\omega_X$ be a fixed K\"ahler class on $X$, then $\alpha_\delta=K_X+\delta[\w_X]$ is K\"ahler for every $\delta>0$. Combining Theorem \ref{singulargeneralized} and Lemma \ref{lem-upper-low}, we have
\begin{equation}\label{estimate}
	\widehat{\Delta}(\mE_X)\cdot \alpha_\delta^{n-2}\geq-\frac{C_n(\mu_{1,\alpha_\delta}(\mE_X)-\mu_{\alpha_\delta}(\mE_X))^2}{(\alpha_\delta)^n}.
\end{equation}
The key is to analyze the limiting behavior of the right-hand side of \eqref{estimate} as $\delta\rightarrow0^+$. Let us approach the problem on the resolution of singularities. By the Hironaka's resolution of singularities (see e.g. \cite[Theorem 2.0.3]{w08}), there exists a projective bimeromorphism $h:\widetilde{X}\rightarrow X$ from a compact K\"ahler manifold $\widetilde{X}$ to $X$ such that the $h$-exceptional divisor $D$ has simple normal crossing. Consider the natural Higgs sheaf $(\mE_{\widetilde{X}},\theta_{\widetilde{X}})$ of $\widetilde{X}$. By applying Proposition \ref{prop-pullback-HNfiltration} to, we have that

 Since $X$ is klt, we have the following adjunction formula
\begin{equation}\label{adjunction}
	K_{\widetilde{X}}=h^*K_X+D,
\end{equation}
where $D=\sum\limits {a_j}D_j$ is the exceptional divisor of $h$ with coefficients $a_j>-1$. Fixing a K\"ahler metric $\w_{\widetilde{X}}$ and a Hermitian metric $h_i$ of $\mO_{\widetilde{X}}(D_i)$, we will prove the following statement.
\begin{proposition}\label{upperslope}
	There is a constant $C>0$ that depends only on the curvature $\Theta_{h_i}$, the fixed K\"ahler metric $\w_{\widetilde{X}}$ and $n$ such that for every $\delta>0$, we have
	\begin{align*}
		\mu_{\alpha_\delta}(\mF_\delta)-\mu_{\alpha_\delta}(\mE_{\widetilde{X}})\leq C\delta [\w_X]\cdot(\alpha_\delta)^{n-1},
	\end{align*}
	$\mF_\delta$ is a $\theta_{\widetilde{X}}$-invariant saturated subsheaf of $\mE_{\widetilde{X}}$ that achieves the maximal slope w.r.t. $\alpha_\delta$. In particular, $(\mE_{X_\reg},\theta_{X_\reg})$ is $K_X$-semistable.
\end{proposition}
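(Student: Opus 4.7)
My plan is to combine Proposition~\ref{prop-pullback-HNfiltration} (to reduce the problem from $\widetilde{X}$ to $X$) with an explicit structural analysis of $\theta_X$-invariant saturated subsheaves of the natural Higgs sheaf and Guenancia's $K_X$-semistability of $\Omega_X^{[1]}$ \cite{Gue16}.

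First, I would pass from $\widetilde{X}$ to $X$. After composing $h$ with the partial orbifold resolution of Lemma~\ref{lemma-existence}, Proposition~\ref{prop-pullback-HNfiltration} implies that the maximal-slope $\theta_{\widetilde{X}}$-invariant saturated subsheaf $\mF_\delta \subset \mE_{\widetilde{X}}$ corresponds to a saturated $\theta_{X_\reg}$-invariant $\mF_\delta^X \subset \mE_X$ with $\mu_{h^*\alpha_\delta}(\mF_\delta) = \mu_{\alpha_\delta}(\mF_\delta^X)$. Since $h^*\gamma \cdot D = 0$ for any pullback cycle $h^*\gamma$ of complementary dimension by the projection formula, we also have $\mu_{h^*\alpha_\delta}(\mE_{\widetilde{X}}) = \mu_{\alpha_\delta}(\mE_X)$. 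Thus the inequality reduces to bounding $\mu_{\alpha_\delta}(\mF_\delta^X) - \mu_{\alpha_\delta}(\mE_X)$ on $X$.

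Second, I would analyze the structure of $\theta_X$-invariant saturated subsheaves of $\mE_X = \Omega_X^{[1]} \oplus \mO_X$. From Definition~\ref{defn-naturalsheaf}, $\theta_X$ vanishes on the $\mO_X$-summand and maps the $\Omega_X^{[1]}$-summand into $\mO_X \otimes \Omega_X^{[1]} \subset \mE_X \otimes \Omega_X^{[1]}$; the $\theta_X$-invariance condition then forces any such proper $\mF \subsetneq \mE_X$ to contain the $\mO_X$-summand, so $\mF = \mF_1 \oplus \mO_X$ with $\mF_1 \subseteq \Omega_X^{[1]}$ saturated of rank $r_1 \leq n$, whence $c_1(\mF) = c_1(\mF_1)$ and $\rank \mF = r_1 + 1$. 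Guenancia's $K_X$-semistability \cite{Gue16} gives $c_1(\mF_1) \cdot K_X^{n-1} \leq r_1 K_X^n / n$, and a short arithmetic using $r_1 \leq n$ and $K_X^n \geq 0$ yields $\mu_{K_X}(\mF) \leq K_X^n/(n+1) = \mu_{K_X}(\mE_X)$. This proves the ``in particular'' $K_X$-semistability upon noting $\mu_{\alpha_\delta}(\mF) \leq \mu_{\alpha_\delta}(\mF_\delta^X)$ for every $\theta$-invariant saturated $\mF$ and letting $\delta \to 0$.

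For the quantitative bound, I would expand $\alpha_\delta^{n-1} = K_X^{n-1} + \delta \omega_X \cdot Q_\delta$ where $Q_\delta$ is a polynomial in nef classes; the $K_X^{n-1}$-piece of $\mu_{\alpha_\delta}(\mF_\delta^X) - \mu_{\alpha_\delta}(\mE_X)$ is already non-positive by the previous step, and the residual $\delta$-correction is controlled by Lemma~\ref{slope-upperbound} applied to each term $c_1(\mF_1) \cdot \omega_X^{j+1} K_X^{n-2-j}$ appearing in $\delta\, c_1(\mF_1)\,\omega_X\, Q_\delta$. The main obstacle will be extracting exactly the rate $C \delta \cdot \omega_X \cdot \alpha_\delta^{n-1}$: a naive estimate yields only $C \cdot \omega_X \cdot \alpha_\delta^{n-1}$, and the extra factor $\delta$ requires either a careful asymptotic comparison of $\delta \omega_X^2 Q_\delta$ with $\omega_X \alpha_\delta^{n-1}$ as $\delta \to 0$, or the Chern--Weil argument indicated in the introduction, where the slope excess is written via formula~\eqref{chern-weil} as an integral of a second-fundamental-form term that vanishes in the $\delta=0$ limit by the $K_X$-semistability above and is $O(\delta)$ with coefficient depending only on $\Theta_{h_i}$ and $\omega_{\widetilde{X}}$.
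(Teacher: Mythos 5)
Your reduction to $X$ and your structural analysis of the $\theta_X$-invariant saturated subsheaves of $\mE_X=\Omega_X^{[1]}\oplus\mO_X$ (any such $\mF$ other than the $\mO_X$-summand sits in an extension $0\to\mO_X\to\mF\to\mF_1\to0$ with $\mF_1\subseteq\Omega_X^{[1]}$, so $c_1(\mF)=c_1(\mF_1)$ and $\rank\mF=r_1+1$; the extension need not split, but only $c_1$ and rank matter) are sound, and they do yield the qualitative ``in particular'' clause \emph{once one grants} $K_X$-semistability of $\Omega_X^{[1]}$. But there are two genuine gaps. First, the quantitative bound $\mu_{\alpha_\delta}(\mF_\delta)-\mu_{\alpha_\delta}(\mE_{\widetilde X})\leq C\delta[\w_X]\cdot\alpha_\delta^{n-1}$ is the entire point of the proposition: it is exactly the rate needed in Section~\ref{miyaoka-yau} to make $(\mu_{1,\alpha_\delta}-\mu_{\alpha_\delta})^2/\alpha_\delta^n$ beat the factor $\delta^{-(n-2-i)}$ when $v(K_X)<n$, and it cannot be extracted from the $\delta=0$ semistability statement by expanding $\alpha_\delta^{n-1}$. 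As you note yourself, the naive estimate loses the factor $\delta$: the $j$-th term of the expansion contributes $\delta^{j-1}c_1(\mF_1)\cdot\w_X^j K_X^{n-1-j}$, which Lemma~\ref{slope-upperbound} only bounds by a multiple of $\delta^{j-1}\w_X^{j+1}K_X^{n-1-j}$, i.e.\ $\delta^{-1}$ times the corresponding term of $\w_X\cdot\alpha_\delta^{n-1}$; and when $v(K_X)\leq n-2$ the would-be dominant class $\w_X\cdot K_X^{n-1}$ vanishes, so no intersection-theoretic comparison recovers the missing $\delta$ (your ``careful asymptotic comparison'' would amount to semistability with respect to the mixed polarization $(\w_X,K_X,\dots,K_X)$, which does not follow from $K_X$-semistability). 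Second, the proposition must hold for arbitrary compact K\"ahler klt $X$, whereas \cite{Gue16} is a projective-setting result; the ``in particular'' clause is precisely the K\"ahler extension of Guenancia's theorem, so quoting it as an input is at best circular in the intended generality.

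The paper's actual proof is the analytic argument you only gesture at in your last sentence, and it produces the factor $\delta$ for free. On the resolution one writes $K_{\widetilde X}+[\beta_{\delta,t}]=h^*\alpha_\delta+t[\w_{\widetilde X}]+D$ with $\beta_{\delta,t}=\delta h^*\w_X+t\w_{\widetilde X}$, solves the twisted K\"ahler--Einstein equation $\rc(\w_{\delta,t,\epsilon})=-\w_{\delta,t,\epsilon}+\beta_{\delta,t}-\Theta_\epsilon$ with $\Theta_\epsilon$ a smooth approximation of the integration current of $D$, and computes the Hitchin--Simpson mean curvature of the induced metric on $\mE_{\widetilde X}$: it equals $\tfrac{n}{n+1}\Id$ plus an error controlled by $\w_{\delta,t,\epsilon}^{-1}(\beta_{\delta,t}-\Theta_\epsilon)$. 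The Chern--Weil formula \eqref{chern-weil} then bounds the slope excess of any $\theta$-invariant saturated subsheaf by $C\bigl([\beta_{\delta,t}]+D\bigr)\cdot\bigl(h^*\alpha_\delta+t[\w_{\widetilde X}]\bigr)^{n-1}$ plus a term $\sum_i\int_{\widetilde X}\frac{\epsilon^2}{|s_i|^2+\epsilon^2}\,\w_{\widetilde X}\wedge\w_{\delta,t,\epsilon}^{n-1}$ that vanishes as $\epsilon\to0$ by the klt hypothesis (\cite[Claim 9.5]{GGK19}); letting $t\to0$ kills the $D$- and $t$-contributions because $h(D)$ has codimension at least $2$, leaving exactly $C\delta[\w_X]\cdot\alpha_\delta^{n-1}$. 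You should either carry out this metric construction or find a genuinely new source for the $O(\delta)$ rate; the soft argument alone does not suffice.
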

Recall that $v=\max\{k\in\N: (K_X)^k\cdot[\w_X]^{n-k}>0\}$. Once Lemma \ref{upperslope} is proved, upon combining it with inequality \eqref{estimate}, a direct computation yields that
\begin{align*}
	\widehat{\Delta}(\mE_X)\cdot K_X^{i}\cdot[\w_X]^{n-2-i}\delta^{n-2-i}\geq
	\begin{cases}
		-C'\delta^2 & \text{if\ \ }v=n ;\\
        -C'\delta^{n-v}	& \text{if\ \ }v\leq n-1,	
	\end{cases}
\end{align*}
where $i=\min\{n-2,v\}$. The proof of the Miyaoka-Yau inequality is completed by taking the limit $\delta\rightarrow 0^+$. It remains to show Proposition \ref{upperslope}.

\subsubsection{The proof of Proposition \ref{upperslope}}\label{keyupper}
Let $\w_{\widetilde{X}}$ be a fixed K\"ahler metric on $\widetilde{X}$. Let $s_i$ be a section of $\mO_{\widetilde{X}}(D_i)$. Fix a Hermitian metric $h_i$ on $\mO_{\widetilde{X}}(D_i)$ and denote the associated curvature form by $\Theta_i$. Set $\beta_{\delta,t}:=\delta h^*\w_X+t\w_{\widetilde{X}}$, then $h^*\alpha_\delta+t[\w_{\widetilde{X}}]=h^*K_X+\delta h^*[\w_X]+t[\w_{\widetilde{X}}]$ is K\"ahler for every $t>0$ and
\[
\mu_{h^*\alpha_\delta}(\mF_\delta)-\mu_{h^*\alpha_\delta}(E_{\widetilde{X}})=\lim\limits_{t\rightarrow 0^+}\left(\mu_{h^*\alpha_\delta+t[\w_{\widetilde{X}}]}(\mF_\delta)-\mu_{h^*\alpha_\delta+t[\w_{\widetilde{X}}]}(E_{\widetilde{X}})\right).
\]
Hence, it suffices to estimate the upper bound of $\mu_{h^*\alpha_\delta+t[\w_{\widetilde{X}}]}(\mF_\delta)-\mu_{h^*\alpha_\delta+t[\w_{\widetilde{X}}]}(E_{\widetilde{X}})$. It follows from the adjunction formula \eqref{adjunction} that
$K_{\widetilde{X}}+[\beta_{\delta,t}]=h^*\alpha_\delta+t[\w_{\widetilde{X}}]+D.$
Consider the sequence of smooth representatives of $[D]$ defined by
$$\Theta_{\epsilon}:=\sum\limits{a_i}\left(\Theta_i+\pp \log(|s_i|^2_{h_j}+\epsilon^2)\right)=\sum\limits a_j\left(\frac{\epsilon^2|D's_i|^2}{(|s_i|^2+\epsilon^2)^2}+\frac{\epsilon^2\cdot\Theta_i}{|s_i|^2+\epsilon^2} \right)$$
which converges to $[D]$ in the current sense. By virtue of \cite{Yau78}, there exists a unique K\"ahler metric $\w_{\delta,t,\epsilon}\in h^*\alpha_\delta+t[\w_{\widetilde{X}}]$ such that
$\rc(\w_{\delta,t,\epsilon})=-\w_{\delta,t,\epsilon}+\beta_{\delta,t}-\Theta_{\epsilon}.$
For simplicity, we omit the subscript $\delta,t$ in the remaining context of this subsubsection. Naturally, $\w_\epsilon$ induces a Hermitian metric $H_\epsilon$ of $E_{\widetilde{X}}^*=T_X\oplus \mO_X$ that can be written as
\[
H_\epsilon=\left(\begin{matrix}
	\w_\epsilon & 0 \\
	0 & 1
\end{matrix}\right).
\]
Denote the Chern curvature of $H_\epsilon$ by $F_{H_\epsilon}\in \Lambda^{1,1}(\End E_{\widetilde{X}}^*)$, a direct computation yields 
\[
\im\Lambda_{\w_\epsilon}(F_{H_\epsilon})=\left(\begin{matrix}
	\w_\epsilon^{-1}\rc(\w_\epsilon) & 0 \\
	0 & 0
\end{matrix}\right)=\left(\begin{matrix}
-\Id_{E_{\widetilde{X}}} & 0 \\
0 & 0
\end{matrix}\right)+\left(\begin{matrix}
\w_\epsilon^{-1}(\beta-\Theta_{\epsilon}) & 0 \\
0 & 0
\end{matrix}\right).
\]
Write $H_\epsilon^{-1}$ for the induced Hermitian metric of $E_{\widetilde{X}}$. Consider the Hitchin-Simpson connection
 $D_{H_\epsilon^{-1},\theta_{\widetilde{X}}}$. Then the mean curvature of $D_{H_\epsilon^{-1},\theta_{\widetilde{X}}}$ with respect to $\w_\epsilon$ is
\[
\im\Lambda_{\w_\epsilon}F_{H_\epsilon^{-1},\theta_{\widetilde{X}}}=\im\Lambda_{\w_\epsilon}F_{H_\epsilon^{-1}}+\im\Lambda_{\w_\epsilon}[\theta_{\widetilde{X}},\theta_{\widetilde{X}}^{*H_\epsilon^{-1}}],
\]
where $F_{H_\epsilon^{-1},\theta_{\widetilde{X}}}$ is the curvature of $D_{H_\epsilon^{-1},\theta_{\widetilde{X}}}$. For any $x\in\widetilde{X}$, we can choose local holomorphic coordinates $(z_1,\cdots,z_n)$ centered at $x$ such that $\w_{i\bar{j}}=\delta_{i\bar{j}}$. Then $\theta_{\widetilde{X}}$ can be locally written as $\theta_{\widetilde{X}}=A_idz^i$, where
\[
(A_i)_{jk} = 
\begin{cases}
	\frac{1}{\sqrt{n+1}}, & \text{if } j=n+1 \text{ and } k=i; \\
	0, & \text{otherwise}.
\end{cases}
\]
Note that $\theta_{\widetilde{X}}^{*H_\epsilon^{-1}}=A_i^Td\bar{z}^i$ where $A_i^T$ is transposition of $A_i$. A direct computation yields that
\[
\im\Lambda_{\w_\epsilon}[\theta_{\widetilde{X}},\theta_{\widetilde{X}}^{*H_\epsilon^{-1}}]=\sum\limits A_i\circ A_i^T-A_i^T\circ A_i=\left(
\begin{matrix}
	-\frac{1}{n+1}\Id_{T_{\widetilde{X}}} & 0 \\
	0 & \frac{n}{n+1}
\end{matrix}
\right),
\]
\begin{equation}\label{mc}
	\begin{split}
	   \im\Lambda_{\w_\epsilon}F_{H_\epsilon^{-1},\theta_{\widetilde{X}}}
	   &=-(\im\Lambda_{\w_\epsilon}(F_{H_\epsilon}))^{\tau}+\im\Lambda_{\w_\epsilon}[\theta_{\widetilde{X}},\theta_{\widetilde{X}}^{*H_\epsilon^{-1}}]\\
	   &=
	   \frac{n}{n+1}\Id_{E_{\widetilde{X}}}
	   +\left(
	   \begin{matrix}
	   	(-\omega_\epsilon^{-1}(-\Theta_\epsilon+\beta))^\tau & 0\\
	   	0 & 0
	   \end{matrix}
	   \right),
	\end{split}
\end{equation}
where $\tau$ stands for the transposition operator from $\Hom(E,E)$ to $\Hom(E^*,E^*)$. Since $\mF_\delta$ is a $\theta_{\widetilde{X}}$-invariant saturated subsheaf, we have the following Chern-Weil formula for $\mF_\delta$ 
\begin{align*}
	\deg_{\w_{\epsilon}}(\mF_\delta)&=\int_{\widetilde{X}\setminus \Sigma}\left(\im\tr(\Lambda_{\w_\epsilon}F_{H_\epsilon^{-1},\theta_{\widetilde{X}}}\circ \pi^{H_\epsilon^{-1}}_{\mF_\delta})-|\bp\pi^{H_\epsilon^{-1}}_{\mF_\delta}|^2\right)\cdot\w_\epsilon^n\\
	&\leq \int_{\widetilde{X}\setminus \Sigma}\im\tr(\Lambda_{\w_\epsilon}F_{H_{\epsilon}^{-1},\theta_{\widetilde{X}}}\circ \pi^{H_\epsilon^{-1}}_{\mF_\delta})\cdot\w_\epsilon^n
\end{align*}
where $\Sigma=S_{n-1}(\mF_\delta)\cup S_{n-1}(E_{\widehat{X}}/\mF_\delta)$ is the singular set of $\mF_\delta$ and $\pi_{\mF_\delta}^{H_\epsilon^{-1}}$ is the orthogonal projection onto $\mF_\delta$ with respect to $H_\epsilon^{-1}$. Combining with \eqref{mc}, we have
\begin{align*}
	\mu_{\w_\epsilon}(\mF_\delta)\leq\frac{n}{n+1}\Vol(\widetilde{X},\w_{\epsilon})+\frac{1}{n\cdot\rank(\mF_\delta)}\int_{\widetilde{X}\setminus\Sigma}\tr(\left(
	\begin{matrix}
		(-\omega_\epsilon^{-1}(-\Theta_\epsilon+\beta))^\tau & 0\\
		0 & 0
	\end{matrix}
	\right)\circ\pi_{\mF_\delta}^{H_\epsilon^{-1}})\cdot\w_\epsilon^n,
\end{align*}
\begin{align*}
	\mu_{\w_\epsilon}(E_{\widetilde{X}})=\frac{n}{n+1}\Vol(\widetilde{X},\w_\epsilon)+\frac{1}{n(n+1)}\int_{\widetilde{X}}\tr\left(
	\begin{matrix}
		(-\omega_\epsilon^{-1}(-\Theta_\epsilon+\beta))^\tau & 0\\
		0 & 0
	\end{matrix}
	\right)\cdot\w_\epsilon^n.
\end{align*}
Therefore, it's enough to estimate the bound of 
$$\int_{\widetilde{X}\setminus \Sigma_{\pi}}\left|\tr(\left(
\begin{matrix}
	(\omega_\epsilon^{-1}\beta)^\tau & 0\\
	0 & 0
\end{matrix}
\right)\circ \pi)\right|\cdot\w_\epsilon^n \text{ and } \int_{\widetilde{X}\setminus \Sigma_{\pi}}\left|\tr(\left(
\begin{matrix}
	(\omega_\epsilon^{-1}\Theta_{\epsilon})^\tau & 0\\
	0 & 0
\end{matrix}
\right)\circ \pi)\right|\cdot\w_\epsilon^n$$ for any $\theta_{\widetilde{X}}$-invariant weakly holomorphic orbi-subbundle $\pi$ of $E_{\widetilde{X}}$, where $\Sigma_{\pi}$ is the singular set of the associated saturated subsheaf of $\pi$. As $\beta$ is positive,
$$\left|\tr(\left(
\begin{matrix}
	(\omega_\epsilon^{-1}\beta)^\tau & 0\\
	0 & 0
\end{matrix}
\right)\circ \pi)\right|\cdot\w_\epsilon^n\leq \tr\left(
\begin{matrix}
	(\omega_\epsilon^{-1}\beta)^\tau & 0\\
	0 & 0
\end{matrix}
\right)\cdot\w_\epsilon^n=\tr_{\w_{\epsilon}}\beta\cdot\w_\epsilon^n=\frac{1}{n}\beta\wedge\w_\epsilon^{n-1}$$
on $\widetilde{X}\setminus \Sigma_{\pi}$. We adapt the idea of \cite[P. 524]{Gue16} to estimate the error term $\left|\tr(\left(
\begin{matrix}
	(\omega_\epsilon^{-1}\Theta_{\epsilon})^\tau & 0\\
	0 & 0
\end{matrix}
\right)\circ \pi)\right|$. Set
$$\Theta_{\epsilon,1}:=\sum\limits a_j\frac{\epsilon^2|D's_i|^2}{(|s_i|^2+\epsilon^2)^2}\ \ \text{and}\ \ \Theta_{\epsilon,2}:=\sum\limits a_j\frac{\epsilon^2\cdot\Theta_i}{|s_i|^2+\epsilon^2},$$
then $\Theta_{\epsilon}=\Theta_{\epsilon,1}+\Theta_{\epsilon,2}$. Given that $\Theta_{\epsilon,1}$ is nonnegative, we have
$$\left|\tr(\left(
\begin{matrix}
	(\omega_\epsilon^{-1}\Theta_{\epsilon,1})^\tau & 0\\
	0 & 0
\end{matrix}
\right)\circ \pi)\right|\cdot\w_\epsilon^n\leq \frac{1}{n}\Theta_{\epsilon,1}\wedge\w_\epsilon^n.$$
Since $-C_1\w_{\widetilde{X}}\leq\Theta_i\leq C_1\w_{\widetilde{X}}$ for some constant $C_1>0$ that depends only on $\Theta_i$ and $\w_{\widetilde{X}}$, we get 
$$\left|\tr(\left(
\begin{matrix}
	(\omega_\epsilon^{-1}\Theta_{\epsilon,2})^\tau & 0\\
	0 & 0
\end{matrix}
\right)\circ \pi)\right|\cdot\w_\epsilon^n\leq \sum\limits_i\frac{C_1\epsilon^2}{|s_i|^2+\epsilon^2}\cdot \w_{\widetilde{X}}\wedge\w_\epsilon^{n-1}.$$
Therefore,
\begin{align*}
    \left|\tr(\left(
     \begin{matrix}
	 (\omega_\epsilon^{-1}\Theta_{\epsilon})^\tau & 0\\
	  0 & 0
     \end{matrix}
    \right)\circ \pi)\right|\cdot\w_\epsilon^n&\leq \frac{1}{n}\Theta_{\epsilon,1}\wedge\w_\epsilon^{n-1}+\sum\limits_i\frac{C_1\epsilon^2}{|s_i|^2+\epsilon^2}\cdot \w_{\widetilde{X}}\wedge\w_\epsilon^{n-1}\\
	&=\frac{1}{n}\Theta_\epsilon\wedge\w_\epsilon^{n-1}-\frac{1}{n}\Theta_{\epsilon,2}\wedge\w_\epsilon^{n-1}+\sum\limits_i\frac{C_1\epsilon^2}{|s_i|^2+\epsilon^2}\cdot \w_{\widetilde{X}}\wedge\w_\epsilon^{n-1}\\
	&\leq C\Theta_\epsilon\wedge\w_\epsilon^{n-1}+C\sum\limits_i\frac{\epsilon^2}{|s_i|^2+\epsilon^2}\cdot \w_{\widetilde{X}}\wedge\w_\epsilon^{n-1}
\end{align*}
for some constant $C$ depending only on $n$, $\Theta_i$ and $\w_{\widetilde{X}}$. In concludion, we arrive that
\begin{align*}
	\mu_{\w_\epsilon}(\mF_\delta)-\mu_{\w_\epsilon}(E_{\widetilde{X}})&\leq C\int_{\widetilde{X}}(\beta\wedge\w_\epsilon^{n-1}+\Theta_{\epsilon}\wedge\w_\epsilon^{n-1}+\sum\limits_i\frac{\epsilon^2}{|s_i|^2+\epsilon^2}\cdot\w_{\widetilde{X}}\wedge\w_\epsilon^{n-1}).
\end{align*}
Drawing on the argument from \cite[Claim 9.5]{GGK19}, which is applicable to our setting provided that $X$ is klt, we thereby derive
$$\lim\limits_{\epsilon\rightarrow0}\int_{\widetilde{X}}\sum\limits_i\frac{\epsilon^2}{|s_i|^2+\epsilon^2}\cdot \w_{\widetilde{X}}\wedge\w_{\delta,t,\epsilon}^{n-1}=0.$$
Consequently, the following inequality is obtained:
\begin{align*}
	&\mu_{h^*\alpha_\delta+t[\w_{\widetilde{X}}]}(\mF_\delta)-\mu_{h^*\alpha_\delta+t[\w_{\widetilde{X}}]}(E_{\widetilde{X}})
	=\lim\limits_{\epsilon\rightarrow0}\left(\mu_{\w_\epsilon}(\mF_\delta)-\mu_{\w_\epsilon}(E_{\widetilde{X}})\right)\\
	\leq&\lim\limits_{\epsilon\rightarrow0^+}C\left(\int_{\widetilde{X}}\beta_{\delta,t}\wedge\w_{\delta,t,\epsilon}^{n-1}+\Theta_{\epsilon}\wedge\w_{\delta,t,\epsilon}^{n-1}+\sum\limits_i\frac{\epsilon^2}{|s_i|^2+\epsilon^2}\cdot\w_{\widetilde{X}}\wedge\w_{\delta,t,\epsilon}^{n-1}\right)\\
	=&\lim\limits_{\epsilon\rightarrow0^+} C\left([\beta_{\delta,t}]\cdot(h^*\alpha_\delta+t[\w_{\widetilde{X}}])^{n-1}+D\cdot (h^*\alpha_\delta+t[\w_{\widetilde{X}}])^{n-1}+\int_{\widetilde{X}}\sum\limits_i\frac{\epsilon^2}{|s_i|^2+\epsilon^2}\cdot \w_{\widetilde{X}}\wedge\w_\epsilon^{n-1}\right)\\
	=&C\left([\beta_{\delta,t}]\cdot(h^*\alpha_\delta+t[\w_{\widetilde{X}}])^{n-1}+D\cdot(h^*\alpha_\delta+t[\w_{\widetilde{X}}])^{n-1}\right),
\end{align*}
where the third equality follows from the fact that $\w_{\delta,t,\epsilon}$ and $\Theta_{\epsilon}$ are in the cohomology classes $g^*\alpha_\delta+t[\w_{\widetilde{X}}]$ and $D$, respectively. Let $t\rightarrow0^+$, and we get the desired inequality because $h(D)$ has codimension at least $2$. Thus, the proof of Proposition \ref{upperslope} reaches its completion.
 \hfill $\square$

\appendix

\bigskip
	
\medskip



\bibliographystyle{plain}

\end{document}